\newcommand{\RR}{\mathbb R}
\newcommand{\NN}{\mathbb N}
\newcommand{\cW}{\mathcal{W}}
\newcommand{\cM}{\mathcal{M}}
\newcommand{\eps}{\varepsilon}
\newcommand{\lam}{\lambda}
\newcommand{\abs}[1]{\left\vert#1\right\vert}
\newcommand{\norm}[1]{\left\vert\left\vert#1\right\vert\right\vert}
\newcommand{\dist}[2] {\operatorname{dist}\left(#1;#2\right)}
\newcommand{\mysetr}[2] {\left\{~#1~\left|~#2~\right.\right\}}
\newcommand{\mysetl}[2] {\left\{\left.~#1~\right|~#2~\right\}}
\newcommand{\loc}{\operatorname{loc}}
\newcommand{\identity}{\operatorname{id}}
\newcommand{\mbf}[1]{\boldsymbol{#1}}
\newcommand{\To}{\longrightarrow}
\renewcommand{\labelenumi}{(\roman{enumi})}
\newenvironment{myenum}{\vspace*{-2ex}\begin{enumerate}}{\end{enumerate}\vspace*{-2ex}}
\newtheorem{theorem}{Theorem}[section]
\newtheorem{thm}[theorem]{Theorem}
\newtheorem{cor}[theorem]{Corollary}
\newtheorem{lem}[theorem]{Lemma}
\newtheorem{prop}[theorem]{Proposition}
\theoremstyle{definition}
\newtheorem{defn}[theorem]{Definition}
\theoremstyle{remark}
\newtheorem{rem}[theorem]{Remark}
\renewenvironment{proof}[1][\proofname]{\par
  \normalfont
  \trivlist
  \item[\hskip\labelsep\itshape
    \bfseries{#1.}]\ignorespaces
}{%
  \qed\endtrivlist
}
\author{
Stefan Kr\"omer and Markus Lilli\\~\\ 
Institut für Mathematik\\
Universit\"at Augsburg\\
86135 Augsburg, Germany\\
stefan.kroemer@math.uni-augsburg.de,\\
lilli@math.uni-augsburg.de\\
}
\date{%
June 5, 2007
}
\title{On properness and related properties of quasilinear systems on unbounded domains\footnote{MSC 35G30, 35J55, 35A35, 49J45}}
\begin{document}
\maketitle
\begin{abstract}
The purpose of this paper is to provide tools for analyzing the compactness properties of sequences
in Sobolev spaces, in particular if the sequence
gets mapped onto a compact set by some nonlinear operator.
Here, our focus lies on a very general class of nonlinear operators arising in quasilinear systems of partial differential equations of second order, in divergence form.
Our approach, based on a suitable decomposition lemma, admits the discussion of problems with some inherent loss of compactness, for example due to a domain with infinite measure or a lower order term with critical growth. As an application, we obtain a characterization of properness which is considerably easier to verify
than the definition. The methods presented can also be used to check Palais--Smale conditions for variational problems.
\end{abstract}
\newpage
\tableofcontents
%
\section{Introduction}\label{sec:intro}
This work is motivated by the articles of Stuart and
Rabier \cite{RaStu01a} and Gebran and Stuart \cite{GeStu05a}. In the former, elliptic equations on the whole space are studied, and the approach is generalized to quasilinear elliptic systems on exterior domains in the latter.
Both papers focus on elliptic quasilinear differential operators of second order mapping $W^{2,p}$ into $L^p$, where $p>N$. By compact embedding, this choice of spaces entails that perturbations which only contain derivatives up to first order are "locally" compact, i.e., compact if restricted to a bounded subdomain of $\Omega$. 
As a consequence, properness for such an operator restricted to a bounded subset of $W^{2,p}(\Omega)$ on a bounded regular domain $\Omega$ can be obtained using well known a-priori estimates for linear systems with continuous coefficients. If, on the other hand, $\Omega$ is unbounded, one has to deal with the possibility that mass might
escape into the outer regions of $\Omega$ and vanish in the limit, a lack of compactness otherwise not present. 
The results in \cite{RaStu01a} and \cite{GeStu05a} provide conditions to rule out the aforementioned 
behavior. However, to apply those results (for example as described in \cite{RaStu01b}, to carry out a global continuation argument along a real parameter using a suitable degree), one has to face a serious obstacle inherent in the choice of spaces: Usually, a-priori estimates in the corresponding space are required (to prevent a blow-up of a continuum of solutions prior to reaching any parameter value of interest, for instance). Even if the leading part is of of divergence form and the equation considered admits a simple a-priori estimate in its weak setting (in $W^{1,q}$, e.g., if the leading operator is of $q$-Laplace type), this estimate cannot always be lifted to $W^{2,p}$. In particular, this is a problem in the case of quasilinear elliptic systems, where regularity theory is available only in special situations and known to fail in general (for an overview see \cite{Gia83B}, e.g.). 
Hence it seems expedient to derive a characterization of properness applicable in the "natural" weak setting of the equation, which is the content of Section~\ref{sec:app1}. An important new difficulty which arises is that perturbations containing first order derivatives are no longer locally compact. 
  
Technically, our approach differs from that of \cite{RaStu01a}. Instead, we extend ideas employed in \cite{FoMuePe98a} (see also \cite{GraMe06a}) for problems on bounded domains. 
A key observation is the fact that on a bounded domain $\Omega$, any given bounded sequence $u_n$ in $W^{1,p}(\Omega)$ can be decomposed into a sum of two sequences (say, $u_n=v_n+w_n$) in such a way that $\abs{\nabla v_n}^p+\abs{v_n}^p$ is equiintegrable and $w_n$ converges to zero in measure ("decomposition lemma", cf.~Lemma 1.2 in
\cite{FoMuePe98a}). This
means that the two qualitatively different types of noncompactness in $W^{1,p}(\Omega)$ can be separated:
$v_n$ does not concentrate and $w_n$ does not oscillate whereas $u_n$ might do both. (Here, recall that if a sequence converges in measure and it is equiintegrable as above then it is strongly convergent due to Vitali's Theorem.) 
To use the decomposition lemma to check properness or a Palais--Smale condition, a second ingredient
is needed: As already observed in \cite{FoMuePe98a}, nonlinear functionals satisfying a suitable local Lipschitz condition behave asymptotically additive in the limit $n\to \infty$ with respect to such a decomposition $u_n=v_n+w_n$ ("orthogonality principle", cf.~Lemma 2 in \cite{GraMe06a}). 
We generalize these arguments in the following ways: First of all, recall that if $\Omega$ has infinite measure, other types of divergent bounded sequences are possible. The two typical examples for such sequences $u_n$ in $W_0^{1,p}(\Omega)$ are "traveling bulks of mass" (i.e, $\int_{B_1(y_n)\cap\Omega} \abs{u_n}^p
\not\to 0$ for a sequence of points $y_n\in\Omega$ with $\abs{y_n}\to\infty$) and vanishing (in the sense of P.-L.~Lions \cite{Li84a}, for instance a sequence such that $u_n\to 0$ in $W^{1,\infty}$ and $u_n\not\to 0$
in $W^{1,p}$). 
Of course, the sequence of gradients might do the same.
One major aim of this paper is to obtain a suitable extension of the decomposition lemma which allows us to deal with this kind of behavior as well, by decomposing into more than two sequences (actually, we end up with five, cf.~Lemma~\ref{lem:dec}), while the orthogonality principle remains valid
(cf.~Theorem~\ref{lem:W1p-prop}). Similar as in \cite{FoMuePe98a}, the decomposition is obtained by truncating the original sequence in a suitable way, thus splitting the truncated part and its remainder. However, we use three different ways of truncating, adapted to the presence of an unbounded domain: 
The first type of truncation cuts off unbounded parts of $\Omega$ via multiplication with suitable smooth functions which vanish outside some ball.
Second, we truncate gradients above large levels. 
Note that this is not a trivial operation because a na\"{i}ve approach would destroy gradient structure.
Our definition is based on Theorem~\ref{thm:hcutoffOmega}, which employs arguments involving maximal operators and the extension of Lipschitz functions.
It is similar to the one used in \cite{FoMuePe98a} and \cite{DoHuMue00a}, respectively, apart from the fact that we truncate in a way which preserves Dirichlet boundary conditions while avoiding the assumption that the domain is bounded.
The third method of truncation is again based on the aforementioned truncation of gradients, but now we truncate at small levels, to remove parts of a function which are uniformly small in $W^{1,\infty}$ but at the same time "spread out" in $\Omega$ in a way which prevents that the their norm in $W^{1,p}$ is small.
In place of the orthogonality principle for real--valued functionals, we derive an analogue 
which is valid for a large class of nonlinear operators $F:X\to Y$ between two Banach spaces. The main class of examples are perturbed quasilinear differential operators of second order in divergence form mapping $W_0^{1,p}\cap W_0^{1,q}$ into its dual space. The key assumption we employ in that context is that $F$ is uniformly continuous on bounded subsets of $X$.
If we assume that $F(u_n)$ converges in $Y$, it turns out that the images of each of the components of the decomposition of $u_n$ converge (up to a subsequence). This in fact entails a characterization of properness:
To show properness of $F$ (on closed bounded sets), one has to show that every bounded sequence $u_n$ such that $F(u_n)$ converges has a convergent subsequence. In view of the results above, we now may assume in addition
that $u_n$ is a sequence of one of the types encountered in the decomposition lemma, each of which is carrying at most one of the types of noncompactness mentioned above. As a consequence, obtaining the existence of a convergent subsequence becomes considerably easier.

In contrast to the concentration-compactness lemmas \cite{Li84a}, \cite{Li85a} and the main result 
of \cite{FoMuePe98a}, we do not rely on limiting notions describing the lack of compactness, such as suitable measures capturing concentration effects. In particular, we avoid related assumptions such as continuity in the independent variable and homogeneity of the terms with critical growth, as well as the associated loss of information.
Moreover, perturbation terms containing derivatives can be treated. 
We also mention that a specialized type of decomposition lemma is well known
as a method to verify a Palais-Smale condition for various semilinear elliptic variational problems, its prototype being \cite{Stru84a}. An indroduction to this topic and some applications are
given in \cite{Stru00B}, for further versions and applications the reader is referred to
\cite{Stru00B}, \cite{Li88a} (Theorem III.4), \cite{CaMa02a} (Lemma 2.9), \cite{Wi03a} and \cite{AdStru00a} (e.g.). Results concerning the Palais-Smale condition for variational problems of more general form can be found in \cite{DraHua99a}, \cite{DraStaZo03a} and \cite{Squa06B}. 
Apart from the aforementioned papers \cite{RaStu01a} and \cite{GeStu05a}, properness of nonlinear operators
arising in context of elliptic equations and systems without variational structure (of general form) has been discussed in \cite{VoVo02a} and \cite{VoVo03a} with the focus on unbounded domains, and in
\cite{Zvy90a} and \cite{ZvyDmi92a} for bounded domains. 

This paper is structured as follows: Section~\ref{sec:abstractproper} contains an abstract framework which allows us to derive suitable abstract versions of the orthogonality principle (Theorem~\ref{thm:op}) and the associated characterization of properness (Theorem~\ref{thm:abstractprop}), assuming that a decomposition lemma is valid (axiom \eqref{phi3}). It works with an abstract notion of truncation, a common denominator of all three methods of truncation mentioned above. These are defined and discussed in Section 3, 
culminating in an associated decomposition lemma (Lemma~\ref{lem:dec}). In the remaining sections, the abstract results are applied. In particular, in Section~\ref{sec:app1} we obtain a characterization of properness for quasilinear operators of second order in divergence form (not necessarily elliptic) in the weak setting (Theorem~\ref{thm:W1p-prop}), based on our version of the decomposition lemma
and an associated (asymptotical) decomposition of the nonlinear operator (Theorem~\ref{lem:W1p-prop}). 
Our methods can also be used to verify the Palais--Smale condition in variational context, which of course is closely related to properness of the Fréchet derivative of the energy. In Section~\ref{sec:app1b}, we provide another tool for that purpose which supplements the results of Theorem~\ref{lem:W1p-prop}, namely an orthogonality principle for energies or integral constraints, respectively (Theorem~\ref{lem:energyop}).
In the final section, we revisit the setting discussed in \cite{RaStu01a} and \cite{GeStu05a}. In that context, our method yields a direct proof of the equivalence of properness (on closed bounded sets) and "properness at 0" (Corollary~\ref{cor:2Fprop}, cf.~Theorem 6.5 and Theorem 7.9 in \cite{RaStu01a} and Theorem 5.7 in \cite{GeStu05a}) which avoids the use of limit problems (and the associated assumptions concerning the asymptotic behavior of the coefficient functions and the domain), thus answering a question raised in \cite{RaStu01a}. 


%

%



\subsection{Notation and preliminaries}
As usual, $W^{k,p}(\Omega;V)$ is the Sobolev space of functions $u:\Omega\to V$ with distributional derivatives up to order $k$ in $L^p$, where $\Omega\subset \RR^N$ is a domain 
(i.e., open and connected) and $V$ is some finite dimensional euclidean vector space. 
The closed subspace $W_0^{k,p}(\Omega;V)$ consists of the closure of $C_0^\infty(\Omega)$,
the smooth functions with compact support, in $W^{k,p}$.
Norms of infinite dimensional spaces are denoted by $\norm{\cdot}_X$, where the corresponding space $X$ is given in the index, whereas finite dimensional norms are denoted by $\abs{\cdot}$, as is the real modulus. 
Moreover, if $A\subset \RR^N$ is a measurable set, $\abs{A}$ is its Lebesgue measure.
The letter $I$ always stands for the identity map on a set which should be clear from the context.
For super-level sets of a function $f:D\to \RR$, we sometimes use the abbreviated notation
$\{ f\geq h \}:=\{x\in D\mid f(x)\geq h\}$, where of course the inequality sign can be exchanged to 
define a (strict) (sub/super-) level set instead.
We also recall the following property of Sobolev functions, which will be used without further reference:
\begin{lem}[Lemma 7.7 in \cite{GiTru83B}, e.g.]
Let $\Omega\subset \RR^N$ be an arbitrary domain, $p\in [1,\infty]$ and $v\in W^{1,p}(\Omega)$. If $D\subset \Omega$ is measurable and $v(x)=0$ for a.e. $x\in D$, then also $\nabla v(x)=0$ for a.e.~$x\in D$.
\end{lem}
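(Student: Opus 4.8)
The plan is to reduce the claim to the special case $D=\{v=0\}$ and then to deduce $\nabla v=0$ a.e.\ on $\{v=0\}$ from the \emph{chain rule for Sobolev functions}, by composing $v$ with a family of functions flattening the graph of $v$ near the level $0$. First I would note that, as $v=0$ a.e.\ on $D$, the set $D\setminus\{v=0\}$ is Lebesgue null, so it suffices to prove $\nabla v=0$ a.e.\ on $\{v=0\}$. This is a local statement, and $\{v=0\}$ can be covered by countably many open balls $B$ with $\overline B\subset\Omega$; on each of them $v\in W^{1,p}(B)\subset W^{1,1}(B)$ since $\abs B<\infty$, so I may assume $p=1$ (which in particular settles the case $p=\infty$) and work on a fixed such ball $B$.

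Next I would fix, for each $\eps>0$, an odd function $\eta_\eps\in C^1(\RR)$ with $\eta_\eps(0)=0$, $0\le\eta_\eps'\le1$ on $\RR$, $\eta_\eps'\equiv0$ near $0$, and $\eta_\eps'\equiv1$ outside $[-\eps,\eps]$ (an explicit piecewise polynomial will do). Since $\eta_\eps$ is $C^1$ with bounded derivative, the chain rule for Sobolev functions gives $\eta_\eps\circ v\in W^{1,1}(B)$ with $\nabla(\eta_\eps\circ v)=\eta_\eps'(v)\,\nabla v$; if one prefers not to quote this, it follows by approximating $v$ in $W^{1,1}(B)$ by functions from $C^\infty(B)\cap W^{1,1}(B)$, applying the elementary chain rule, and passing to the limit using $\abs{\eta_\eps'}\le1$, the continuity of $\eta_\eps'$, and dominated convergence. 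I would then let $\eps\downarrow0$ in
\[
\int_B(\eta_\eps\circ v)\,\partial_i\varphi\,dx=-\int_B\eta_\eps'(v)\,(\partial_i v)\,\varphi\,dx,\qquad\varphi\in C_0^\infty(B),\ i=1,\dots,N,
\]
noting that a.e.\ in $B$ one has $\eta_\eps(v)\to v$ with $\abs{\eta_\eps(v)}\le\abs v\in L^1(B)$, and $\eta_\eps'(v)\,\partial_i v\to\chi_{\{v\neq0\}}\,\partial_i v$ with $\abs{\eta_\eps'(v)\,\partial_i v}\le\abs{\partial_i v}\in L^1(B)$ (here $\eta_\eps'\equiv0$ near $0$ is what pins down the limit on $\{v=0\}$). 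By dominated convergence this yields $\int_B v\,\partial_i\varphi\,dx=-\int_B\chi_{\{v\neq0\}}\,(\partial_i v)\,\varphi\,dx$ for all $\varphi$, hence $\partial_i v=\chi_{\{v\neq0\}}\,\partial_i v$ a.e.\ in $B$ for every $i$, i.e.\ $\nabla v=0$ a.e.\ on $B\cap\{v=0\}$. Covering $\{v=0\}$ by such balls, and recalling that $D\setminus\{v=0\}$ is null, finishes the proof.

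The only genuinely delicate point is the chain-rule step: verifying $\nabla(\eta_\eps\circ v)=\eta_\eps'(v)\,\nabla v$, and — if one establishes it by smooth approximation — controlling the limit in the product $\eta_\eps'(v_k)\,\nabla v_k$, where $\eta_\eps'(v_k)$ converges only a.e.\ and boundedly while $\nabla v_k$ converges in $L^1(B)$ (one splits the difference in the usual way). Everything else is routine dominated-convergence bookkeeping; the one thing not to overlook is the initial localization when $p=\infty$, since there $\eta_\eps\circ v\to v$ merely in $L^1_{\loc}(\Omega)$.
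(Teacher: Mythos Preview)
Your proof sketch is correct and follows the standard textbook argument via the chain rule for Sobolev functions. However, the paper does not give its own proof of this lemma: it is stated with the citation ``Lemma 7.7 in \cite{GiTru83B}'' and used without further justification, so there is nothing in the paper to compare your argument against. Your approach is in fact very close to the one in Gilbarg--Trudinger.
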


\section{An abstract characterization of properness}\label{sec:abstractproper}
Let $X$, $Y$ be a normed vector spaces. 
We consider nonlinear operators of the following type:
\begin{alignat*}{1}
	&\begin{array}{l}
		F:D\to Y~\text{is a continuous function,}\\
		\text{defined on a closed additive subgroup $D$ of $X$.}
	\end{array}\label{F0}\tag{F:0}
\end{alignat*}
\begin{rem}\label{rem:0inD}
The case of an affine subspace $D$ such that $0\notin D$ 
can be recovered as follows: 
For an arbitrary but fixed $x_0\in D$ consider $\tilde{F}:\tilde{D}\to Y$, 
$\tilde{F}(x):=F(x+x_0)$, instead of $F$, where $0\in \tilde{D}:=\{x-x_0\mid x\in D\}$. 
\end{rem}
We study properness only on bounded subsets of $X$.
\begin{defn}[Properness]
	The function $F$ is called \emph{proper} (on closed bounded subsets of $D$) if 
	\begin{align*}
		& \text{every {bounded} sequence $(u_n)\subset D$ such that $F(u_n)$ converges in $Y$}\\
		& \text{has a subsequence which converges in $X$}.
	\end{align*}
\end{defn}
Below, truncation techniques play a major role. All different types of truncation employed in the applications fit into the following abstract framework.  
\begin{defn}[Truncation operators]\label{def:cutoff}~\\
Let $X$ be a normed vector space, $D\subset X$ and let 
$\phi_n:D\to D$, $n\in \NN$, be a sequence of maps. 
We call $\phi_n$ a \emph{family of truncation operators} on $D\subset X$ if it 
satisfies \eqref{phi1} and \eqref{phi3} below. 
That is,
the maps $\phi_n$ are \emph{equibounded} in the sense that
\begin{align*}
& \begin{aligned}
	{}&\norm{\phi_n(u)}_X\leq C\norm{u}_X~\text{for every $u\in D$ and $n\in \NN$}\\
	{}&\text{with a constant $C\geq 0$ independent of $u$ and $n$,}
\end{aligned}\label{phi1}\tag{$\phi$:1}
\intertext{and}
& \begin{aligned}
	{}&\text{every bounded sequence $(u_n)\subset D$}\\
	{}&\text{has a subsequence $(u_{k(n)})$ such that \eqref{uknequiint} holds.}
	\end{aligned}\label{phi3}\tag{$\phi$:2}
\end{align*}	
Here, the latter means that
\begin{align}
& \begin{aligned}
	{}&\left(\phi_n-\phi_{j(n)}\right)(u_{k(n)})\underset{n\to \infty}{\To} 0
	~\text{for every sequence $(j(n))\subset \NN$}\\
	{}&\text{such that $j(n)\underset{n\to\infty}{\To}\infty$ and $j(n)<n$ for every $n\in \NN$.}
\end{aligned}\label{uknequiint}
\end{align}
\end{defn}
\begin{rem}\label{remcutoffdef}~
\begin{myenum}
\item Note that $\phi_n$ does not have to be linear or continuous. As a matter of fact, 
in case of our second and third example below (truncation of gradients) the truncation operators 
are nonlinear and continuity is not clear, cf.~Remark~\ref{rem:truncgradvec}.
\item Instead of \eqref{phi1}, it would actually suffice to have that $\bigcup_{n\in\NN}\phi_n(W)$
is bounded in $X$, for every bounded set $W\subset D$. 
We still use \eqref{phi1} because it is notationally convenient, and in all of our examples below,
the family $\phi_n$ is linearly equibounded as required in \eqref{phi1}, anyway.
\item Axiom \eqref{phi3} is satisfied by each of the three types of truncation operators
introduced in Section~\ref{secCOOpExamples}.
The property of the subsequence in \eqref{phi3} 
can be characterized as follows:
\begin{align}
& \begin{aligned}
	{}&\left(\phi_n-\phi_{j(n)}\right)(u_{n})\underset{n\to\infty}{\To} 0
	~\text{for every sequence $(j(n))\subset \NN$}\\
	{}&\text{such that $j(n)\underset{n\to\infty}{\To}\infty$ 
	and $j(n)<n$ for every $n\in \NN$}
\end{aligned}\label{coequiintA}
\intertext{if and only if}
& \begin{aligned}
	{}&\text{for every $\eps>0$ there exists $j_0=j_0(\eps)\in \NN$ such that}\\
	{}&
	\norm{\left(\phi_n-\phi_{j}\right)(u_{n})}_X< \eps~\text{for every $n,j\in \NN$ with
	$n>j\geq j_0$}.
\end{aligned}\label{coequiintB}
\end{align}
Roughly speaking, the "tail" $\left(\phi_n-\phi_{j}\right)(u_{n})$ of $\phi_n(u_n)$ starting "at height $j$" becomes small as $j\to\infty$, uniformly in $n$. For instance, in case of truncation of gradients, \eqref{coequiintA} means that that the sequence $\phi_n(u_n)$ does not concentrate in $W^{1,p}$ (in the sense of Definition~\ref{def:concentrate}), cf.~Proposition~\ref{prop:coheight2}.
\item If $\phi_n$, $\tilde{\phi}_n$ are families of truncation operators on $D\subset X$ and $
\tilde{D}\subset \tilde{X}$, respectively, such that $\phi_n=\tilde{\phi}_n$ on $\hat{D}:=D\cap \tilde{D}$,
then the restrictions $\hat{\phi}_n:=\phi_n|_{\hat D}$ are a family of truncation operators on
$\hat{D}\subset \hat{X}:=X\cap \tilde{X}$ (with norm $\norm{\cdot}_{\hat{X}}=\norm{\cdot}_{X}+
\norm{\cdot}_{\tilde{X}})$. Moreover, if $Z$ is a closed subspace of $X$ which is invariant
under $\phi_n$ for every $n$, then 
$\hat{\phi}_n:=\phi_n|_{\hat D}$ is a family of truncation operators on $\hat D:=D\cap Z\subset Z$.
\item For the arguments employed in the 
present section, \eqref{phi1} suffices as assumption on $\phi_n$. 
However, Theorem~\ref{thm:op}
and Theorem~\ref{thm:abstractprop} 
only apply to sequences $u_n$ with a subsequence $u_{k(n)}$ satisfying \eqref{uknequiint}.
Hence we also require \eqref{phi3}.
\end{myenum}
\end{rem}
For technical reasons, we also want to truncate elements of the image of $F$.
Roughly speaking, we need that this outer truncation has a similar effect
as truncating the argument of $F$. The precise requirement is the following:
\begin{defn}[Compatibility]\label{def:compat}~\\
Suppose that $F:D\to Y$ is a function on $D\subset X$, $\phi_n$ is a family of truncation operators on $X$
and $\psi_n:R\to Y$ is a family of maps defined on a set $R\subset Y$
which contains the range of $F$.
We say that $\phi_n$ is \emph{compatible} to $\psi_n$ 
with respect to $F$ if 
for all bounded subsets $W\subset D$,
\begin{align}
	&\sup_{v,w\in W} \norm{\psi_m\left[F(v+(I-\phi_n)(w))\right]-\psi_m\left[F(v)\right]}_Y
	\underset{n\to\infty}{\To} 0~~\text{and}\label{compat1}\\
	&\sup_{v,w\in W} \norm{(I-\psi_n)\left[F(v+\phi_m(w))\right]-(I-\psi_n)\left[F(v)\right]}_Y
	\underset{n\to\infty}{\To} 0,\label{compat2}
\end{align}
where $m\in \NN$ is arbitrary but fixed. 
\end{defn}
Of course, merely being a sequence of functions is not yet enough structure
for the family $\psi_n:R\to Y$.
In addition, we assume that
the family $\psi_n$ is equicontinuous, uniformly on bounded subsets of $R$, i.e.,
\begin{alignat*}{1}
{}&\begin{array}{l}
    \sup_{n\in\NN}\sup_{w_1,w_2\in W,~\norm{w_1-w_2}_Y<\delta}
    \norm{\psi_n(w_1)-\psi_n(w_2)}_Y\underset{\delta\searrow 0}{\To}0,\\
    \text{for every bounded subset $W$ of $R$.}
  \end{array}
  \label{copsi1}\tag{$\psi$:1}
\intertext{In Theorem~\ref{thm:abstractprop} below, we also require that}
  {}&\begin{array}{l}
    \text{the sequence $(\psi_n(y))_{n\in\NN}$ converges in $Y$, for every fixed $y\in R$.}
  \end{array}
  \label{copsi2}\tag{$\psi$:2}  
\end{alignat*} 
%
We now list the remaining assumptions on $F$.
For a given family $\phi_n$ of truncation operators on $D\subset X$, suppose that
 \begin{alignat*}{1}
  &\begin{array}{l}
  	\text{there exists a set $R\subset Y$ which contains the closure of the range of $F$}\\
  	\text{and a family of maps $\psi_n:R\to Y$ ($n\in \NN$) such that \eqref{copsi1} holds}\\
  	\text{and such that $\phi_n$ and $\psi_n$ are compatible with respect to $F$.}\\
	\end{array}\label{F1}\tag{F:1}
\intertext{Last but not least, we assume uniform continuity of $F$ on bounded subsets of $D$, 
at least up to a perturbation as follows:}
  &\begin{array}{l}
		\text{There is a function $F_1:D\to R$, uniformly continuous on bounded}\\
		\text{subsets of $D\subset X$, such that for every bounded subset $W\subset D$,}\\
		(I-\psi_n)[F(w)]-(I-\psi_n)[F_1(w)]\underset{n\to\infty}{\To} 0~~\text{in $Y$, uniformly in $w\in W$.}
	\end{array}\label{F2}\tag{F:2}
\end{alignat*} 
\begin{rem}\label{rem:F12}~
As to the role of $R$, note that it is convenient to allow sets larger than the range of $F$
or its closure. Otherwise, the possible choices of $F_1$ in \eqref{F2} would be restricted too much. 
In many cases, one may use $R:=Y$ or a suitable closed subspace. 
\end{rem}
%
The main results of this section are subsumed in the following theorems. The first one is an abstract generalization of Lemma 2 in \cite{GraMe06a} ("orthogonality priciple").
\begin{thm}[Asymptotical additivity of $F$]
\label{thm:op}
Let $\phi_n:D\to D$ be an equibounded family in the sense of \eqref{phi1}.
Moreover, assume that $F$ satisfies \eqref{F0}--\eqref{F2} 
and let $(u_n)\subset D$ be a bounded sequence. Then
\begin{align}\label{Forthop}
	F(u_{k(n)})-F(\phi_{n}(u_{k(n)}))+F(0)-F((I-\phi_{n})(u_{k(n)}))\underset{n\to \infty}{\To} 0,
\end{align}
for every subsequence $u_{k(n)}$ which satisfies \eqref{uknequiint}.
\end{thm}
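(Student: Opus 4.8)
The plan is to show that $\Delta_n:=F(u_{k(n)})-F(\phi_n(u_{k(n)}))+F(0)-F((I-\phi_n)(u_{k(n)}))$ tends to $0$ in $Y$, by splitting $\Delta_n$ with the help of the trivial identity $y=\psi_\ell(y)+(I-\psi_\ell)(y)$ (valid for $y\in R$ and every $\ell\in\NN$) applied to each of the four images occurring in $\Delta_n$, with one common level $\ell=\ell(n)$ to be chosen by a diagonal argument. This writes $\Delta_n=\Delta_n'+\Delta_n''$, where $\Delta_n'$ collects the $\psi_\ell$-parts and $\Delta_n''$ the $(I-\psi_\ell)$-parts, and it suffices to prove $\Delta_n'\To 0$ and $\Delta_n''\To 0$. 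Throughout, boundedness of $(u_n)$ together with \eqref{phi1} ensures that $u_{k(n)}$, $\phi_n(u_{k(n)})$, $(I-\phi_n)(u_{k(n)})$, $0$ and also $\phi_m(u_{k(n)})$ for each fixed $m$ stay in one fixed bounded set $W\subset D$, so that all hypotheses which are uniform on bounded sets apply.

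\emph{The $\psi_\ell$-part.} Writing $u_{k(n)}=\phi_n(u_{k(n)})+(I-\phi_n)(u_{k(n)})$ and $(I-\phi_n)(u_{k(n)})=0+(I-\phi_n)(u_{k(n)})$, the compatibility estimate \eqref{compat1} (used with $v=\phi_n(u_{k(n)})$, resp.\ $v=0$, and $w=u_{k(n)}$) gives $\norm{\Delta_n'}_Y\le 2\,c_m(n)$ for every \emph{fixed} $m$, where $c_m(n):=\sup_{v,w\in W}\norm{\psi_m[F(v+(I-\phi_n)(w))]-\psi_m[F(v)]}_Y\To 0$ as $n\to\infty$. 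A standard diagonal argument over $m\in\NN$ then yields a sequence $\ell(n)\To\infty$, growing slowly enough that $c_{\ell(n)}(n)\To 0$; with this choice $\norm{\Delta_n'}_Y\To 0$.

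\emph{The $(I-\psi_\ell)$-part.} Since $u_{k(n)}$ satisfies \eqref{uknequiint}, Remark~\ref{remcutoffdef}(iii) (the equivalence of \eqref{coequiintA} and \eqref{coequiintB}, applied to the sequence $u_{k(n)}$) yields: given $\eps>0$ there is a \emph{fixed} $m=m(\eps)\in\NN$ with $\norm{(\phi_n-\phi_m)(u_{k(n)})}_X<\eps$ for all large $n$. Hence $\phi_n(u_{k(n)})=\phi_m(u_{k(n)})+\rho_n$ and $u_{k(n)}=(I-\phi_n)(u_{k(n)})+\phi_m(u_{k(n)})+\rho_n$ with $\norm{\rho_n}_X<\eps$. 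Grouping $\Delta_n''$ as $\big[(I-\psi_\ell)F(u_{k(n)})-(I-\psi_\ell)F((I-\phi_n)(u_{k(n)}))\big]-\big[(I-\psi_\ell)F(\phi_n(u_{k(n)}))-(I-\psi_\ell)F(0)\big]$, the small perturbation $\rho_n$ is absorbed by passing from $F$ to the uniformly continuous $F_1$ and back: this is legitimate because $\ell(n)\To\infty$, so \eqref{F2} gives $(I-\psi_{\ell(n)})[F(z_n+\rho_n)]-(I-\psi_{\ell(n)})[F(z_n)]=(I-\psi_{\ell(n)})[F_1(z_n+\rho_n)]-(I-\psi_{\ell(n)})[F_1(z_n)]+o(1)$ for bounded $(z_n)$, and the uniform continuity of $F_1$ on $W$ together with the equicontinuity \eqref{copsi1} of the $\psi_\ell$ bounds the remaining term by a modulus $\omega(\eps)$ independent of $n$. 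The remaining $\phi_m(u_{k(n)})$-term is absorbed by the second compatibility estimate \eqref{compat2}, used with the fixed level $m$, with $v=(I-\phi_n)(u_{k(n)})$ resp.\ $v=0$, and with outer index $\ell(n)\To\infty$, contributing $o(1)$. Altogether $\limsup_n\norm{\Delta_n''}_Y\le C\,\omega(\eps)$; letting $\eps\searrow 0$ gives $\Delta_n''\To 0$, whence $\Delta_n\To 0$.

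The main obstacle is precisely this clash of index regimes: \eqref{compat1} controls the $\psi$-part only for each \emph{fixed} truncation level, whereas \eqref{compat2} controls the $(I-\psi)$-part only \emph{as the level tends to $\infty$}; the diagonal level $\ell(n)\To\infty$ must be chosen slowly enough for the first estimate yet still divergent for the second. What remains is essentially bookkeeping: keeping all intermediate arguments in one fixed bounded set (so that \eqref{phi1}, \eqref{copsi1}, \eqref{compat1}, \eqref{compat2}, \eqref{F2} are applicable), and respecting the order of quantifiers (first $\eps$, then the frozen level $m(\eps)$, only then $n\to\infty$).
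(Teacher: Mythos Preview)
Your argument is correct and follows essentially the same route as the paper: split via $\psi_\ell$ and $(I-\psi_\ell)$, handle the $\psi$-part by \eqref{compat1} and the $(I-\psi)$-part by \eqref{compat2} after replacing $\phi_n$ by a frozen $\phi_m$ via \eqref{uknequiint}, with the residual absorbed through \eqref{F2} and the uniform continuity of $F_1$ together with \eqref{copsi1}. The only organisational difference is the index bookkeeping: the paper keeps the $\psi$-level at $n$ and pushes the $\phi$-level to a fast subsequence $h(n)\ge m(n)$ (packaging the two halves as Propositions~\ref{prop:compat1} and~\ref{prop:compat2} and closing by the ``every subsequence has a further subsequence'' trick), whereas you keep the $\phi$-level at $n$ and let the $\psi$-level $\ell(n)\to\infty$ grow slowly by a diagonal choice---this is the same mechanism under reparametrisation.
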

To check properness of $F$, we only consider sequences $(u_n)$ such that $F(u_n)$ converges in $Y$. 
In this case, even more can be said.
\begin{thm}[Abstract characterization of properness]\label{thm:abstractprop}
Let $\phi_n:D\to D$ be an equibounded family in the sense of \eqref{phi1}
and assume that \eqref{F0}--\eqref{F2} hold with a family $\psi_n$ 
which also satisfies \eqref{copsi2}. Moreover,
let $(u_{n})\subset D$ be a bounded sequence
such that the limit $G:=\lim_{n\to\infty}F(u_n)$ exists in $Y$.
Then we have that
\begin{align*}
	F(\phi_n(u_{k(n)}))\to G+H~~\text{and}~~F\left((I-\phi_n)(u_{k(n)})\right)\to F(0)-H~~\text{in $Y$}
\end{align*}
as $n\to\infty$, for any subsequence $(u_{k(n)})$ of $(u_n)$ which satisfies \eqref{uknequiint}. 
Here, 
\begin{align*} 
	H:=\lim_{n\to \infty} \left[(I-\psi_n)\left(F(0)\right)-(I-\psi_n)(G)\right]\in Y.
\end{align*}	
\end{thm}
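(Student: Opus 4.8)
The plan is to reduce, via the orthogonality principle (Theorem~\ref{thm:op}), to the single claim $F(\phi_n(u_{k(n)})) \to G + H$. Indeed, since $(u_{k(n)})$ satisfies \eqref{uknequiint} and $F(u_{k(n)}) \to G$, Theorem~\ref{thm:op} gives $F(\phi_n(u_{k(n)})) + F((I-\phi_n)(u_{k(n)})) \to G + F(0)$, so the second limit in the statement follows automatically from the first. Beforehand one checks that $H$ is well defined: $F(0)$ and $G$ lie in $R$ (by \eqref{F1}, $R$ contains the closure of the range of $F$), and $(I-\psi_n)(F(0)) - (I-\psi_n)(G) = (F(0)-G) - (\psi_n(F(0)) - \psi_n(G))$ converges by \eqref{copsi2}. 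Writing $\psi_\infty(y) := \lim_n\psi_n(y)$ for $y\in R$, this also shows $G+H = \psi_\infty(G) + \bigl(F(0) - \psi_\infty(F(0))\bigr)$, which is the value we must recover.

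The key device is to split $F(\phi_n(u_{k(n)})) = \psi_{m(n)}[F(\phi_n(u_{k(n)}))] + (I-\psi_{m(n)})[F(\phi_n(u_{k(n)}))]$ at an \emph{intermediate} truncation level $m(n)$, chosen so that $m(n)\to\infty$ but slowly enough for both halves to converge. For the first half: by \eqref{compat1}, for each fixed $m$ the quantity $b_n^{(m)} := \sup_{v,w\in W}\norm{\psi_m[F(v+(I-\phi_n)(w))]-\psi_m[F(v)]}_Y$ tends to $0$, so a diagonal argument yields $m(n)\to\infty$ with $b_n^{(m(n))}\to 0$. Choosing $v := \phi_n(u_{k(n)})$ and $w := u_{k(n)}$ (so that $v+(I-\phi_n)(w) = u_{k(n)}$) then gives $\norm{\psi_{m(n)}[F(\phi_n(u_{k(n)}))] - \psi_{m(n)}[F(u_{k(n)})]}_Y \to 0$; since $F(u_{k(n)})\to G$, the equicontinuity \eqref{copsi1} together with \eqref{copsi2} at $G$ yields $\psi_{m(n)}[F(\phi_n(u_{k(n)}))]\to\psi_\infty(G)$.

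For the second half, $(I-\psi_{m(n)})[F(\phi_n(u_{k(n)}))]$, I would proceed in three steps. Using the reformulation \eqref{coequiintB} of \eqref{uknequiint}, fix a large \emph{fixed} level $\ell$ so that $\norm{(\phi_n-\phi_\ell)(u_{k(n)})}_X$ is as small as desired for all $n>\ell$; since all relevant arguments stay in a common bounded set by \eqref{phi1}, the perturbation hypothesis \eqref{F2}, the uniform continuity of $F_1$ and \eqref{copsi1} let one replace $\phi_n(u_{k(n)})$ by $\phi_\ell(u_{k(n)})$ inside $(I-\psi_{m(n)})\circ F$ up to arbitrarily small error once $n$, hence $m(n)$, is large — here it is essential that the outer index $m(n)\to\infty$, so that \eqref{F2} applies. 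Next, \eqref{compat2} with the now fixed $m=\ell$, $v:=0$, $w:=u_{k(n)}$ gives $(I-\psi_{m(n)})[F(\phi_\ell(u_{k(n)}))] - (I-\psi_{m(n)})[F(0)]\to 0$. Finally $(I-\psi_{m(n)})[F(0)] = F(0)-\psi_{m(n)}[F(0)]\to F(0)-\psi_\infty(F(0))$ by \eqref{copsi2}. Adding the two halves yields $F(\phi_n(u_{k(n)}))\to \psi_\infty(G) + F(0) - \psi_\infty(F(0)) = G+H$, and Theorem~\ref{thm:op} then delivers $F((I-\phi_n)(u_{k(n)}))\to F(0)-H$.

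The main obstacle is the simultaneous tuning of the level $m(n)$. The low-frequency part $\psi_n\circ F$ is not asymptotically uniformly continuous — only $(I-\psi_n)\circ F$ is, through \eqref{F2} — so convergence of $\psi_{m(n)}[F(\phi_n(u_{k(n)}))]$ cannot be read off from mere continuity and must come from \eqref{compat1}, which forces $m(n)$ to grow slowly; conversely, convergence of $(I-\psi_{m(n)})[F(\phi_n(u_{k(n)}))]$ forces $m(n)\to\infty$ (to invoke \eqref{F2}) and, for \eqref{compat2} to be usable, requires first freezing the moving truncation $\phi_n$ to a fixed $\phi_\ell$, which is exactly what the uniform tail estimate \eqref{coequiintB} makes possible. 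Producing one sequence $m(n)$ that meets all of these competing demands — together with the bookkeeping that the various suprema are taken over a common bounded set containing all the $\phi_n(u_{k(n)})$ and their images — is the technical heart of the argument.
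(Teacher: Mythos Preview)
Your proposal is correct and follows essentially the same route as the paper. The paper packages the argument into Proposition~\ref{prop:compat1} and Proposition~\ref{prop:compat2} (which it reuses for Theorem~\ref{thm:op}) and then argues by ``every subsequence has a further subsequence'', keeping the $\psi$-index at $n$ and passing to $h(n)\geq m(n)$ for the $\phi$/sequence index; you instead keep $n$ for the $\phi$/sequence index and introduce a slowly growing $\psi$-index $m(n)$ via a direct diagonal construction---these are dual organizations of the same estimates (compat1, compat2, \eqref{F2}, \eqref{copsi1}, \eqref{copsi2}, and the tail bound \eqref{coequiintB}).
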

\begin{rem}
Theorem~\ref{thm:abstractprop} tells us that for the purpose of showing properness of $F$ 
(on closed bounded sets), it is enough to study bounded sequences with special properties, namely 
sequences of the type $\phi_n(u_{k(n)})$ such that \eqref{uknequiint} holds and sequences
of "tails" $(I-\phi_n)(u_{k(n)})$.
Also note that $H=0$ if $\psi_n(y)\to y$ for every fixed $y\in R$.
\end{rem}
\subsubsection*{Proofs of Theorem~\ref{thm:op} and Theorem~\ref{thm:abstractprop}}
We first collect a few basic consequences of~\eqref{compat1} and~\eqref{compat2}.
\begin{prop}\label{prop:compat1}
Suppose that the assumptions of Theorem~\ref{thm:op} are satisfied and 
let $W$ be a bounded subset of $D\subset X$.
Then there exists a sequence $(m(n))\subset \NN$ ($m(n)\geq n$), such that for every sequence $(h(n))\subset \NN$ with $h(n)\geq m(n)$ for every $n\in\NN$,
\begin{align} 
	&\sup_{w\in W}\norm{\psi_{n}\left[F(w)\right]-\psi_{n}\left[F(\phi_{h(n)}(w))\right]}_Y
	\underset{n\to\infty}{\To} 0\quad \text{and}\label{psi1}\\
	&\sup_{w\in W}\norm{\psi_{n}\left[F((I-\phi_{h(n)})(w))\right]-\psi_{n}\left[F(0)\right]}_Y
	\underset{n\to\infty}{\To} 0.\label{psi2}
\end{align}
\end{prop}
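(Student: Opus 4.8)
The plan is to extract the two statements~\eqref{psi1} and~\eqref{psi2} from the compatibility conditions~\eqref{compat1} and~\eqref{compat2} by a diagonal-type argument that constructs $m(n)$. First I would rewrite~\eqref{compat1} and~\eqref{compat2} in a form more directly usable here: applying~\eqref{compat1} with the bounded set $W$ and taking $v=w$, we get that for each fixed $m\in\NN$,
\begin{align*}
	\rho_m(n):=\sup_{w\in W}\norm{\psi_m\left[F((I-\phi_n)(w))\right]-\psi_m\left[F(w)\right]}_Y\underset{n\to\infty}{\To} 0,
\end{align*}
and similarly from~\eqref{compat2} with $v=w$,
\begin{align*}
	\sigma_m(n):=\sup_{w\in W}\norm{(I-\psi_n)\left[F(\phi_m(w))\right]-(I-\psi_n)\left[F(w)\right]}_Y\underset{n\to\infty}{\To} 0.
\end{align*}
The quantity~\eqref{psi2} is already essentially $\rho_n(h(n))$ after relabelling; the quantity~\eqref{psi1} needs a little more care since there the truncation $\phi_{h(n)}$ sits inside $F$ while the outer truncation index $n$ is free, so I would first note that
\begin{align*}
	\norm{\psi_n[F(w)]-\psi_n[F(\phi_{h(n)}(w))]}_Y
\end{align*}
is controlled by $\sigma_{h(n)}(n)$ only if the roles match; instead I rewrite it using~\eqref{compat1} with $v=\phi_{h(n)}(w)$, $w$ replaced by $w$ — but $w\neq\phi_{h(n)}(w)+(I-\phi_n)(w)$ in general, so this naive substitution fails. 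The honest route is to recognize that~\eqref{psi1} is exactly the content of~\eqref{compat2} read with the outer map: $(I-\psi_n)[F(\phi_{h(n)}(w))]-(I-\psi_n)[F(0+\phi_{h(n)}(w))]$ is trivial, so one instead applies~\eqref{compat2} with $v=0$, giving $\sup_{w\in W}\norm{(I-\psi_n)[F(\phi_m(w))]-(I-\psi_n)[F(0)]}_Y\to0$; this is the building block for~\eqref{psi2} rather than~\eqref{psi1}, confirming that~\eqref{psi1} must come from~\eqref{compat1} and~\eqref{psi2} from~\eqref{compat2} (or vice versa) — I would sort out the exact pairing by matching which index is "fixed" ($m$) and which runs to infinity in each of~\eqref{compat1}–\eqref{compat2}.

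The heart of the argument is the diagonal construction of $(m(n))$. For each fixed outer index $n$, condition~\eqref{compat1} (with $m=n$ fixed) says the relevant sup tends to $0$ as the inner truncation index $\to\infty$; hence there exists $N_1(n)\in\NN$ such that the sup in~\eqref{psi1} is $<1/n$ whenever the inner index is $\geq N_1(n)$, and monotonicity/nestedness considerations (or simply the fact that $(I-\phi_j)$ with larger $j$ removes less, together with the equiboundedness~\eqref{phi1}) let me take this bound uniform for all inner indices $h\geq N_1(n)$ — if such monotonicity is not literally available I instead invoke the equivalent reformulation in Remark~\ref{remcutoffdef}(iii), i.e.~condition~\eqref{coequiintB}-style uniformity, to get a genuine tail bound valid for all $h\geq N_1(n)$ at once. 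Similarly pick $N_2(n)$ from~\eqref{compat2} so that~\eqref{psi2} is $<1/n$ for all inner indices $\geq N_2(n)$. Then set $m(n):=\max\{n,N_1(n),N_2(n)\}$. For any $(h(n))$ with $h(n)\geq m(n)$ we get both sups $<1/n\to0$, which is exactly the claim.

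I expect the main obstacle to be precisely this point about uniformity in the inner index: conditions~\eqref{compat1}–\eqref{compat2} as stated are limits as $n\to\infty$ for a single fixed $m$, which a priori only gives smallness for each individual large inner index, not simultaneously for the whole tail $\{h:h\geq N(n)\}$. Resolving it requires either (a) showing the relevant quantities are monotone (or asymptotically so) in the inner index — plausible because $(I-\phi_h)(w)\to$ "less removed" and $\phi_h(w)\to$ "more kept" as $h$ grows, so $F(v+(I-\phi_h)(w))\to F(v)$ in a controlled way — or (b) restating~\eqref{compat1}–\eqref{compat2} from the outset in the "$\varepsilon$–$j_0$" form analogous to~\eqref{coequiintB}, which is how the axioms are really meant and is what makes the tail bound uniform. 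Once that uniformity is in hand, the rest is the routine diagonal selection sketched above, and the case of a general bounded $W$ adds nothing since every constant in Definition~\ref{def:compat} is already taken uniform over $v,w\in W$.
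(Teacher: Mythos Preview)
Your proposal has a genuine gap and also a red herring.

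First, the red herring. Your ``main obstacle'' about uniformity in the inner index is not an obstacle at all: the statement in~\eqref{compat1} is that for fixed outer index $m$, a certain quantity tends to $0$ as the inner index $n\to\infty$. By the definition of a limit this already means: for every $\varepsilon>0$ there is $N=N(m,\varepsilon)$ such that the quantity is $<\varepsilon$ for \emph{all} inner indices $\geq N$. No monotonicity, no reformulation in the style of~\eqref{coequiintB}, is needed. Taking $\varepsilon=1/n$ and $m(n)=\max\{n,N(n,1/n)\}$ gives the tail bound directly.

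The real gap is in how you obtain~\eqref{psi1}. You correctly try the substitution $v=\phi_{h(n)}(w)$ in~\eqref{compat1}, and in fact this \emph{does} work: with the correct index matching (outer ``$m$''$=n$, inner ``$n$''$=h(n)$) one has
\[
\phi_{h(n)}(w)+\big(I-\phi_{h(n)}\big)(w)=w
\]
exactly, so $\psi_n[F(w)]-\psi_n[F(\phi_{h(n)}(w))]$ is precisely an instance of the expression in~\eqref{compat1}. Your mismatch ``$w\neq\phi_{h(n)}(w)+(I-\phi_n)(w)$'' comes from confusing the two indices. The genuine obstruction is different: $\phi_{h(n)}(w)$ need not lie in $W$, so you cannot take the supremum over $v,w\in W$ and plug in $v=\phi_{h(n)}(w)$. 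The fix --- and this is the idea the paper uses --- is to replace $W$ from the outset by the enlarged set
\[
U:=W\cup\{0\}\cup\bigcup_{m\in\NN}\phi_m(W),
\]
which is still bounded by~\eqref{phi1}. One then applies~\eqref{compat1} once, with the bounded set $U$, and reads off both conclusions by specializing: $v=\phi_{h(n)}(w)$ gives~\eqref{psi1}, and $v=0$ gives~\eqref{psi2}. In particular~\eqref{compat2} plays no role here (note that~\eqref{compat2} concerns $(I-\psi_n)$, not $\psi_n$, so it cannot yield either of~\eqref{psi1} or~\eqref{psi2}); it enters only later, in Proposition~\ref{prop:compat2}. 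Your $\rho_m(n)$ is also not what you get from $v=w$ in~\eqref{compat1}; that substitution produces $F(2w-\phi_n(w))$, not $F((I-\phi_n)(w))$.
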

\begin{proof}
Let $U:=W \cup \{0\}\cup \bigcup_{m\in\NN}\phi_m(W)$, which is a bounded set in $X$
since $\phi_n$ is equibounded.
By virtue of~\eqref{compat1}, we can choose a strictly increasing sequence $(m(n))\subset \NN$ 
such that for every sequence $h(n)\geq m(n)$,
\begin{align} \label{psi3}
\begin{aligned}
	\sup_{v,w\in U}\norm{\psi_{n}\left[F(v+(I-\phi_{h(n)})(w))\right]
	-\psi_{n}\left[F(v)\right]}_Y\leq \frac{1}{n}.
\end{aligned}	
\end{align}
With $v=\phi_{h(n)}(w)$, this entails \eqref{psi1}, whereas with $v=0$, we get~\eqref{psi2}.
\end{proof}
\begin{prop} \label{prop:compat2}
Suppose that the assumptions of Theorem~\ref{thm:op} are satisfied.
Then for every bounded sequence $(u_n)\subset X$ which satisfies \eqref{coequiintA},
\begin{align}\label{lemcomp21}
	&(I-\psi_{n})\left[F\left(0\right)\right]
	-(I-\psi_{n})\left[F\left(\phi_{h(n)}(u_{h(n)})\right)\right]	
	\underset{n\to \infty}{\To}0\quad\text{and}\\
	&(I-\psi_{n})\left[F\left(u_{h(n)}\right)\right]
	-(I-\psi_{n})\left[F\left((I-\phi_{h(n)})(u_{h(n)})\right)\right]
	\underset{n\to \infty}{\To}0,\label{lemcomp22}
\end{align}
for every sequence $h(n)\geq n$.
\end{prop}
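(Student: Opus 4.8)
The plan is to reduce each of \eqref{lemcomp21} and \eqref{lemcomp22} to comparing $F$ at $\phi_{h(n)}(u_{h(n)})$ with $F$ at $\phi_m(u_{h(n)})$ for a conveniently chosen but \emph{fixed} index $m$: for such an $m$ the compatibility condition \eqref{compat2} is available as stated, while replacing $\phi_m$ by $\phi_{h(n)}$ is controlled through \eqref{coequiintA}. To set the stage, use \eqref{phi1} and $\sup_n\norm{u_n}_X<\infty$ to fix one bounded set $W\subset D$ containing $0$, all $u_n$, all $\phi_m(u_n)$ ($m,n\in\NN$), all $(I-\phi_{h(n)})(u_{h(n)})$ and all $(I-\phi_{h(n)})(u_{h(n)})+\phi_m(u_{h(n)})$; all of these lie in $D$ since $D$ is an additive subgroup. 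Let $F_1:D\to R$ be the uniformly continuous surrogate from \eqref{F2}, so that $F_1(W)$ is a bounded subset of $R$ (it is bounded since $F_1$ is uniformly continuous on $W$); let $\omega$ be a modulus of uniform continuity of $F_1$ on $W$, and let $\tilde\omega$ be a common modulus of equicontinuity of the maps $I-\psi_n$ on $F_1(W)$, which exists by \eqref{copsi1}, the identity being $1$-Lipschitz. Then $\omega(\delta),\tilde\omega(\delta)\to 0$ as $\delta\searrow 0$.

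The key step is the auxiliary estimate: \emph{for sequences $(a_n),(b_n)$ in $W$ with $\sup_n\norm{a_n-b_n}_X\le\delta$ one has}
\begin{align*}
	\limsup_{n\to\infty}\norm{(I-\psi_n)\left[F(a_n)\right]-(I-\psi_n)\left[F(b_n)\right]}_Y\le\tilde\omega(\omega(\delta)).
\end{align*}
Indeed, by \eqref{F2} both $(I-\psi_n)[F(a_n)]-(I-\psi_n)[F_1(a_n)]$ and $(I-\psi_n)[F(b_n)]-(I-\psi_n)[F_1(b_n)]$ tend to $0$ (uniformly over $W$, hence along $(a_n)$ and $(b_n)$), whereas $\norm{F_1(a_n)-F_1(b_n)}_Y\le\omega(\delta)$ by uniform continuity of $F_1$ on $W$, so $\norm{(I-\psi_n)[F_1(a_n)]-(I-\psi_n)[F_1(b_n)]}_Y\le\tilde\omega(\omega(\delta))$ by equicontinuity of $I-\psi_n$ on $F_1(W)$. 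This auxiliary estimate is where the structural hypotheses are genuinely needed: $F$ alone is only continuous, so an $X$-small change of its argument need not produce a $Y$-small change, and one must route the argument through the uniformly continuous $F_1$ from \eqref{F2} and the equicontinuous outer truncations $\psi_n$ from \eqref{copsi1}.

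Now fix $\eps>0$. Since $(u_n)$ satisfies \eqref{coequiintA}, it also satisfies the equivalent property \eqref{coequiintB} (Remark~\ref{remcutoffdef}); let $j_0=j_0(\eps)$ be as there and put $m:=j_0$. For all $n$ with $h(n)>m$ --- i.e.\ for all large $n$, since $h(n)\ge n$ --- we then have $\norm{\phi_{h(n)}(u_{h(n)})-\phi_m(u_{h(n)})}_X<\eps$. For \eqref{lemcomp21}, write
\begin{align*}
	(I-\psi_n)\left[F(0)\right]-(I-\psi_n)\left[F(\phi_{h(n)}(u_{h(n)}))\right]
	={}&\Big((I-\psi_n)\left[F(0)\right]-(I-\psi_n)\left[F(\phi_m(u_{h(n)}))\right]\Big)\\
	&+\Big((I-\psi_n)\left[F(\phi_m(u_{h(n)}))\right]-(I-\psi_n)\left[F(\phi_{h(n)}(u_{h(n)}))\right]\Big).
\end{align*}
The first bracket tends to $0$ by \eqref{compat2} with the fixed $m$, $v=0$ and $w=u_{h(n)}\in W$; the $\limsup$ of the norm of the second bracket is $\le\tilde\omega(\omega(\eps))$ by the auxiliary estimate applied with $a_n=\phi_m(u_{h(n)})$, $b_n=\phi_{h(n)}(u_{h(n)})$. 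Hence the $\limsup$ of the norm of the left-hand side is $\le\tilde\omega(\omega(\eps))$, and letting $\eps\searrow 0$ gives \eqref{lemcomp21}. For \eqref{lemcomp22}, set $c_n:=(I-\phi_{h(n)})(u_{h(n)})\in W$ (and note $c_n+\phi_m(u_{h(n)})\in W\cap D$), and decompose $(I-\psi_n)[F(u_{h(n)})]-(I-\psi_n)[F(c_n)]$ as $\big((I-\psi_n)[F(u_{h(n)})]-(I-\psi_n)[F(c_n+\phi_m(u_{h(n)}))]\big)+\big((I-\psi_n)[F(c_n+\phi_m(u_{h(n)}))]-(I-\psi_n)[F(c_n)]\big)$: the second term tends to $0$ by \eqref{compat2} with $v=c_n$, $w=u_{h(n)}$, and since $\norm{u_{h(n)}-(c_n+\phi_m(u_{h(n)}))}_X=\norm{\phi_{h(n)}(u_{h(n)})-\phi_m(u_{h(n)})}_X<\eps$, the auxiliary estimate bounds the $\limsup$ of the norm of the first term by $\tilde\omega(\omega(\eps))$. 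Letting $\eps\searrow 0$ gives \eqref{lemcomp22}.

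I expect the auxiliary estimate to be the crux: everything hinges on converting the $X$-smallness of $\phi_{h(n)}(u_{h(n)})-\phi_m(u_{h(n)})$, which is exactly what \eqref{coequiintA} (equivalently \eqref{coequiintB}) supplies, into $Y$-smallness of the corresponding values of $(I-\psi_n)\circ F$, and this step is impossible with continuity of $F$ alone --- it forces one to pass to $F_1$ and to use the equicontinuity of the $\psi_n$. The remaining work is the routine bookkeeping of arranging a single bounded set $W\subset D$ on which the uniform statements coming from \eqref{phi1}, \eqref{compat2}, \eqref{F2} and \eqref{copsi1} are all simultaneously available.
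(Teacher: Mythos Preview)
Your proof is correct and follows essentially the same strategy as the paper: both arguments use \eqref{compat2} with a truncation index strictly below $h(n)$, and then bridge the gap to $\phi_{h(n)}$ via \eqref{coequiintA}/\eqref{coequiintB} combined with the uniform continuity of $F_1$ from \eqref{F2} and the equicontinuity of the $\psi_n$ from \eqref{copsi1}. The only cosmetic difference is that the paper picks a slowly growing sequence $\tilde m(n)<n$ with $\tilde m(n)\to\infty$ and applies \eqref{compat2} along $j(n)\le\tilde m(n)$, whereas you freeze a single $m=j_0(\eps)$ via \eqref{coequiintB} and let $\eps\searrow 0$ at the end---two standard formalizations of the same idea. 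One small remark: your claim that $F_1(W)$ is bounded ``since $F_1$ is uniformly continuous on $W$'' is not literally a consequence of uniform continuity on a bounded set in this generality; the paper glosses over the same point, and in all applications $F_1$ is explicitly shown to map bounded sets to bounded sets, so this is harmless here.
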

\begin{proof}
As a consequence of~\eqref{compat2}, we can choose a sequence $\tilde{m}(n)<n$  
with $\tilde{m}(n)\to \infty$ (slow enough) such that 
\begin{equation}
\begin{aligned}
	(I-\psi_{n})\left[F(v+\phi_{j(n)}(w))\right]-(I-\psi_{n})\left[F(v)\right] 
	\underset{n\to\infty}{\To} 0~~\text{in $Y$,}&\\ 
	\text{uniformly in}~v,w\in W:=\{0\}\cup\bigcup_{m,n\in \NN}\left\{u_n,\phi_m(u_n)\right\}&,
\end{aligned}\label{lemcomp23}
\end{equation}
for every sequence $j(n)\leq \tilde{m}(n)$. Here, note that $W$ is bounded since $(u_n)$ is bounded and the $\phi_n$ are equibounded. Moreover,
\begin{align*}
	(I-\phi_{j(n)})(u_{h(n)})-(I-\phi_{h(n)})(u_{h(n)})=(\phi_{h(n)}-\phi_{j(n)})(u_{h(n)})
	\underset{n\to \infty}{\To} 0
\end{align*}
as long as $h(n)\geq n>j(n)$ for all $n$ and $j(n)\to \infty$, due to \eqref{coequiintA}. Consequently,
\begin{equation} \label{lemcomp24}
\begin{aligned} 
	&(I-\psi_{n})\left[F\left((I-\phi_{j(n)})(u_{h(n)})\right)\right] \\&\quad\qquad
	-(I-\psi_{n})\left[F\left((I-\phi_{h(n)})(u_{h(n)})\right)\right]
	\underset{n\to \infty}{\To}0
\end{aligned}
\end{equation}
and
\begin{equation} \label{lemcomp24b}
\begin{aligned} 
	&(I-\psi_{n})\left[F\left(\phi_{j(n)}(u_{h(n)})\right)\right] \\&\quad\qquad
	-(I-\psi_{n})\left[F\left(\phi_{h(n)}(u_{h(n)})\right)\right]
	\underset{n\to \infty}{\To}0.
\end{aligned}
\end{equation}
If $F=F_1$, \eqref{lemcomp24} and \eqref{lemcomp24b} are due to the uniform continuity of $F_1$ on bounded sets
(also recall that the $\psi_n$ are uniformly equicontinuous on bounded subsets of $Y$). This argument also
yields the general case since the remainder is negligible in the limit by \eqref{F2}.
Assertion \eqref{lemcomp21} now is an immediate consequence of \eqref{lemcomp23} (with $v=0$, $w=u_{h(n)}$) and \eqref{lemcomp24b}. Assertion \eqref{lemcomp22} follows from \eqref{lemcomp23} (with $v=(I-\phi_{j(n)})(u_{h(n)})$, $w=u_{h(n)}$) and \eqref{lemcomp24}.
\end{proof}
\begin{proof}[Proof of Theorem~\ref{thm:op}]
For simplicity, assume (w.l.o.g.) that $u_{k(n)}=u_n$, whence
\eqref{uknequiint} turns into \eqref{coequiintA}.
Let $m(n)$ be the subsequence of $n$ obtained in Proposition~\ref{prop:compat1} and choose an arbitrary sequence $h(n)\geq m(n)$. We first claim that
\begin{align}\label{Forthop2}
	F(u_{h(n)})-F(\phi_{h(n)}(u_{h(n)}))+F(0)-F((I-\phi_{h(n)})(u_{h(n)}))\underset{n\to\infty}{\To} 0.
\end{align}
Since the left hand side of~\eqref{Forthop2} is the sum of the terms listed below, it is enough to show that
\begin{align*}
	&\psi_{n}\left[F\left(u_{h(n)}\right)\right]
	-\psi_{n}\left[F\left(\phi_{h(n)}(u_{h(n)})\right)\right]
	\underset{n\to\infty}{\To}0,\\
	&\psi_{n}\left[F\left(0\right)\right]
	-\psi_{n}\left[F\left((I-\phi_{h(n)})(u_{h(n))})\right)\right]
	\underset{n\to\infty}{\To}0,\\
	&(I-\psi_{n})\left[F\left(0\right)\right]
	-(I-\psi_{n})\left[F\left(\phi_{h(n)}(u_{h(n))})\right)\right]
	\underset{n\to\infty}{\To}0~\text{and}\\
	&(I-\psi_{n})\left[F\left(u_{h(n))}\right)\right]
	-(I-\psi_{n})\left[F\left((I-\phi_{h(n)})(u_{h(n))})\right)\right]
	\underset{n\to\infty}{\To}0.
\end{align*}
Here, the first two lines follow from Proposition~\ref{prop:compat1}, whereas the last two are a consequence of Proposition~\ref{prop:compat2}. The same argument yields that every subsequence of $n$ has a subsequence $h(n)$ 
such that~\eqref{Forthop2} holds, which implies convergence of the whole sequence as asserted in~\eqref{Forthop}.
\end{proof}
For the proof of Theorem~\ref{thm:abstractprop}, we need one additional ingredient.
\begin{prop}\label{prop:Fconvcutoff}
Suppose that the assumptions of Theorem~\ref{thm:abstractprop} are satisfied.
Then
\begin{align*}
	\text{$F\left(\phi_n(u_{k(n)})\right)\to G+H$ in $Y$ as $n\to \infty$,}
\end{align*}
for every subsequence $u_{k(n)}$ of $u_n$ 
which satisfies \eqref{uknequiint}.
\end{prop}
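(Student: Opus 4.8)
The plan is to decompose $F(\phi_n(u_{k(n)}))$ into its $\psi_n$-part and its $(I-\psi_n)$-part, identify the limit of each separately, and then pass from a convenient shifted subsequence back to the full sequence exactly as at the end of the proof of Theorem~\ref{thm:op}. After passing to the given subsequence and relabelling I may assume $u_{k(n)}=u_n$, so that \eqref{uknequiint} reads \eqref{coequiintA}. Since $G$ is a limit of points of the range of $F$ it lies in $R$, and $F(0)\in R$ as well, so by \eqref{copsi2} the limits $\psi_\infty(G):=\lim_n\psi_n(G)$ and $\psi_\infty(F(0)):=\lim_n\psi_n(F(0))$ exist. Writing $H=\lim_n\bigl[(F(0)-\psi_n(F(0)))-(G-\psi_n(G))\bigr]=(F(0)-\psi_\infty(F(0)))-(G-\psi_\infty(G))$, it therefore suffices to prove $F(\phi_n(u_n))\to F(0)-\psi_\infty(F(0))+\psi_\infty(G)=G+H$.

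First I would fix a bounded set $W\subset D$ containing every $u_n$ and let $(m(n))$ be the strictly increasing sequence furnished by Proposition~\ref{prop:compat1} for this $W$. I claim that for any $h(n)\ge m(n)$ one has $F(\phi_{h(n)}(u_{h(n)}))\to G+H$. Indeed, applying \eqref{psi1} with $w=u_{h(n)}\in W$ gives $\psi_n[F(u_{h(n)})]-\psi_n[F(\phi_{h(n)}(u_{h(n)}))]\to 0$; since $F(u_{h(n)})\to G$, the uniform equicontinuity \eqref{copsi1} yields $\psi_n[F(u_{h(n)})]-\psi_n(G)\to 0$, and \eqref{copsi2} gives $\psi_n(G)\to\psi_\infty(G)$, so altogether $\psi_n[F(\phi_{h(n)}(u_{h(n)}))]\to\psi_\infty(G)$. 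For the complementary part, Proposition~\ref{prop:compat2} applies (the original sequence satisfies \eqref{coequiintA} and $h(n)\ge n$), so \eqref{lemcomp21} gives $(I-\psi_n)[F(0)]-(I-\psi_n)[F(\phi_{h(n)}(u_{h(n)}))]\to 0$, while $(I-\psi_n)[F(0)]=F(0)-\psi_n(F(0))\to F(0)-\psi_\infty(F(0))$ by \eqref{copsi2}. Adding the two limits proves the claim.

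It remains to remove the shift $h$. Given any subsequence of $\NN$, pick a strictly increasing $h(n)$ with $h(n)\ge m(n)$ whose values all lie in that subsequence (possible since $m(n)\to\infty$), and apply the previous paragraph to conclude $F(\phi_{h(n)}(u_{h(n)}))\to G+H$ along it; since the subsequence was arbitrary, $F(\phi_n(u_n))\to G+H$. The only genuine subtlety I anticipate is exactly this index bookkeeping: the compatibility estimates \eqref{psi1} and \eqref{psi2}, and hence the clean identification of the $\psi_n$-part, are available only for the shifted index $h(n)\ge m(n)$ and not for $n$ itself, so one is forced to argue first for $\phi_{h(n)}(u_{h(n)})$ and only afterwards close the gap through the subsequence principle, using that the strictly increasing $m$ eventually dominates any prescribed threshold. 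Everything else is a mechanical combination of Propositions~\ref{prop:compat1} and~\ref{prop:compat2} with the equicontinuity \eqref{copsi1} and pointwise convergence \eqref{copsi2} of the family $\psi_n$.
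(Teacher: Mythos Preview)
Your proof is correct and follows essentially the same approach as the paper's: reduce to $u_{k(n)}=u_n$, use Proposition~\ref{prop:compat1} and Proposition~\ref{prop:compat2} along a shifted sequence $h(n)\ge m(n)$, and then recover the full limit via the subsequence principle. The only cosmetic difference is that you split $F(\phi_{h(n)}(u_{h(n)}))$ directly into its $\psi_n$- and $(I-\psi_n)$-parts and identify each limit separately, whereas the paper decomposes the difference $F(u_{h(n)})-F(\phi_{h(n)}(u_{h(n)}))+H$ into four pieces; the same four convergences are used either way.
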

\begin{proof}
Recall that
$H=\lim_{n\to \infty} \left[(I-\psi_n)\left(F(0)\right)-(I-\psi_n)(G)\right]\in Y$.
For simplicity, assume (w.l.o.g.) that $u_{k(n)}=u_n$, whence \eqref{coequiintA}
replaces \eqref{uknequiint}. 
It is enough to show that every subsequence of $n$ (not relabeled) has another subsequence $h(n)$ such that
\begin{align}\label{lemFcu1}
	F\left(u_{h(n)}\right)-F\left(\phi_{h(n)}(u_{h(n)})\right)+H
	\underset{n\to \infty}{\To}0
\end{align}
in $Y$.
We claim that \eqref{lemFcu1} is valid whenever $h(n)\geq m(n)$ for every $n$, where $m(n)$ is the subsequence of $n$ obtained in Proposition~\ref{prop:compat1}. 
Since we can decompose the left hand side of \eqref{lemFcu1} accordingly,
\eqref{lemFcu1} follows once we show that
\begin{align*}
	&\psi_n\left[F\left(u_{h(n)}\right)\right]-\psi_n\left[F\left(\phi_{h(n)}(u_{h(n)})\right)\right]
	\underset{n\to\infty}{\To}0,\\
	&(I-\psi_n)\left[F(0)\right]-(I-\psi_n)\left[F\left(\phi_{h(n)}(u_{h(n)})\right)\right]
	\underset{n\to\infty}{\To}0,\\
	&(I-\psi_n)\left[F\left(u_{h(n)}\right)\right]-(I-\psi_n)(G)
	\underset{n\to\infty}{\To}0~~\text{and}\\
	&(I-\psi_n)(G)-(I-\psi_n)\left[F(0)\right]+H
	\underset{n\to\infty}{\To}0.
\end{align*}
The first two lines hold due to Proposition~\ref{prop:compat1} and Proposition~\ref{prop:compat2}, respectively. 
The term on the left hand side of the third line converges to zero since $F(u_{h(n)})\to G$ and the $\psi_n$ are equicontinuous at $G$, and 
the term in the last line does the same due to our choice of $H$. 
\end{proof}
\begin{proof}[Proof of Theorem~\ref{thm:abstractprop}]
Combining Theorem~\ref{thm:op} and Proposition~\ref{prop:Fconvcutoff}, the assertion
follows immediately.
\end{proof}
%
\section{Examples for families of truncation operators\label{secCOOpExamples}}
We first introduce some terminology used troughout the rest of this paper. 
The notion of equiintegrability is commonly used for functions in $L^1$, as are the terms 
"vanishing" and "tight" (cf.~\cite{Li84a}), but we find it convenient to extend them to $W^{k,p}$ in a canonical way. The reader should be warned that the precise definitions in the literature might differ in the framework of unbounded domains. In particular, "equiintegrable" is sometimes used in a sense equivalent to what we term "does not concentrate".
\begin{defn}
\label{def:concentrate}
Let $\Omega\subset \RR^N$ be a domain (possibly unbounded) and 
let $V$ be a finite dimensional euclidean vector space with norm $\abs{\cdot}$.
Furthermore, let $u_n$ be a sequence in
$W^{k,p}(\Omega;V)$, where $k\in \NN_0$ and $p\in [1,\infty)$, and let $\alpha$ be a multiindex
with length $\abs{\alpha}\leq k$. 
We say that \emph{$(u_n)$ does not concentrate 
in $W^{k,p}(\Omega;V)$} if
\begin{align}\label{defconcomega}
	\sup_{E\subset \Omega,~ \abs{E}\leq \delta}~
	\int_E \abs{D^\alpha u_n}^p\,dx
	\underset{\delta\searrow 0}{\To} 0~~&\text{uniformly in $n\in \NN$, for every $\abs{\alpha}\leq k$}.
\intertext{%
We say that \emph{$(u_n)$ is tight in $W^{k,p}(\Omega;V)$} if
}
  \label{deftight}
	\int_{\Omega\setminus B_R(0)} \abs{D^\alpha u_n}^p\,dx
	\underset{R\to \infty}{\To} 0~~&\text{uniformly in $n\in \NN$, for every $\abs{\alpha}\leq k$}.
\intertext{%
If both \eqref{defconcomega} and \eqref{deftight} are satisfied we say that \emph{$u_n$ is equiintegrable in \emph{$W^{k,p}(\Omega;V)$}}. To describe a possible lack of tightness
in greater detail, we employ following two terms:
We say that \emph{$(u_n)$ does not spread out in $W^{k,p}(\Omega;V)$} if
}
  \label{defspread}
	\int_\Omega \min\left\{\delta,\abs{D^\alpha u_n}^p\right\}\, dx
	\underset{\delta\searrow 0}{\To} 0~~&\text{uniformly in $n\in \NN$, for every $\abs{\alpha}\leq k$}.
\intertext{%
Finally, we say that \emph{$(u_n)$ is vanishing in $W^{k,p}(\Omega;V)$} if
}
  \label{defvanish}
	\sup_{y\in \RR^N}\int_{B_1(y)\cap \Omega} \abs{D^\alpha u_n}^p\, dx
	\underset{n\to\infty}{\To} 0~~&\text{for every $\abs{\alpha}\leq k$}.
\end{align}
\end{defn}
\begin{rem}\label{rem:spread} ~ 
\begin{myenum}
\item If $q<p$, any sequence which is bounded in $W^{k,p}(\Omega;V)$ 
does not concentrate in $W^{k,q}(\Omega;V)$, as a
consequence of Hölder's inequality.
\item Observe that \eqref{deftight} implies \eqref{defspread}, i.e.,
every tight sequence does not spread out.
A typical example for a sequence "purely" spreading out in $L^p(\RR^N)$ is 
$v_n:=n^{-N/p}\chi_{E_n}$ with any sequence of measurable sets $E_n$ satisfying $\abs{E_n}=C n^N$ for a constant $C>0$ ($E_n:=B_n(0)$, e.g.). Here, $\chi_E$ denotes the indicator function of the set $E$ given in the index. Carefully note that
if a bounded sequence in $L^p(\RR^N)$ does not spread out and is vanishing at the same time, this still does not imply strong convergence to zero, unless additional assumptions are made (cf.~Lemma~\ref{lem:cospread3}).
For instance, the sequence $\chi_{E_n}$ of indicator functions of
\begin{align*}
	E_n:=\left\{(x_1,\ldots,x_N)\, \left| \,\abs{x_1}\leq n^{N-1}~\text{and}~\abs{x_j}\leq 
	n^{-1}~\text{for}~j=2,\ldots,N \right.\right\}\subset \RR^N
\end{align*}
provides a counterexample. 
\item If $q\leq p$, any sequence $(u_n)$ in $W^{k,q}(\Omega;V)$ which does not spread out in $W^{k,q}$ 
also does not spread out in $W^{k,p}$. In particular, note that \eqref{defspread} makes sense
even if some or all of the members $u_n$ do not have finite norm in $W^{k,p}$.
\end{myenum}
\end{rem}

\subsection{Auxiliary results}
%
%
We first record an interesting relation between spreading and vanishing.
\begin{lem}\label{lem:cospread3}
Let $\Omega\subset \RR^N$ be an arbitrary domain
and let $p\in [1,\infty)$.
If $(u_n)$ is a bounded sequence in $W_0^{1,p}(\Omega;\RR^M)$ which 
does not spread out in $L^{p}(\Omega;\RR^M)$ and which is vanishing in $L^{p}(\Omega;\RR^M)$,
then $u_n\to 0$ strongly in $L^p(\Omega;\RR^M)$.
\end{lem}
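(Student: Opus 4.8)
The plan is to combine an interpolation-type inequality (using the gradient bound from $W_0^{1,p}$) that controls the $L^p$ norm of $u_n$ on a unit cube by its $L^p$ norm on the slightly larger cube to a power less than one, times a lower-order contribution, and then to sum over a lattice of cubes. Concretely, I would first cover $\RR^N$ by a locally finite family of open cubes $Q_i$ of side length $2$ (say centered at the points of $\ZZ^N$), such that each point lies in a bounded number of the doubled cubes, and use a Sobolev--Poincaré-type inequality on each $Q_i$. The key local estimate will be of the form
\begin{align*}
	\int_{Q_i}\abs{u_n}^p\,dx
	\leq C\Big(\int_{Q_i}\abs{u_n}^p\,dx\Big)^{\theta}
	\Big(\int_{Q_i}\big(\abs{u_n}^p+\abs{\nabla u_n}^p\big)\,dx\Big)^{1-\theta}
\end{align*}
for some $\theta\in(0,1)$; such an inequality follows from the Gagliardo--Nirenberg--Sobolev interpolation inequality on cubes. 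Since $u_n\in W_0^{1,p}(\Omega)$, we extend $u_n$ by zero to all of $\RR^N$, so these integrals make sense on every cube regardless of its position relative to $\Omega$.

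Next I would bound the right-hand side: the vanishing hypothesis \eqref{defvanish} (with $k=0$) gives $\varepsilon_n:=\sup_{y}\int_{B_1(y)\cap\Omega}\abs{u_n}^p\,dx\to 0$, and since each cube $Q_i$ is contained in a ball of fixed radius, $\int_{Q_i}\abs{u_n}^p\,dx\leq C\varepsilon_n$ uniformly in $i$. Feeding this into the local estimate yields
\begin{align*}
	\int_{Q_i}\abs{u_n}^p\,dx
	\leq C\,\varepsilon_n^{\theta}\int_{Q_i}\big(\abs{u_n}^p+\abs{\nabla u_n}^p\big)\,dx.
\end{align*}
Summing over $i$ and using the bounded-overlap property of the cover, the right-hand side is bounded by $C\varepsilon_n^{\theta}\big(\norm{u_n}_{L^p}^p+\norm{\nabla u_n}_{L^p}^p\big)$, which is $C\varepsilon_n^{\theta}$ times the uniformly bounded $W^{1,p}$-norm; hence $\norm{u_n}_{L^p(\Omega)}^p\to 0$.

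Where does "does not spread out" enter? In the argument above I have not yet used it, and indeed I expect the naive interpolation to require $p$ in the subcritical range or to need a careful choice of exponents; for general $p$ one may not have a clean interpolation inequality with a genuinely small exponent $\theta$ on a fixed cube. The remedy — and the role of \eqref{defspread} — is to replace the fixed unit cubes by cubes of a small side length $\rho$: on a cube of side $\rho$, the Sobolev--Poincaré inequality for $W_0^{1,p}$-type functions (or a scaling argument) gives $\int_{Q}\abs{u_n}^p\leq C\rho^{p}\int_{Q}\abs{\nabla u_n}^p + (\text{boundary-layer correction})$, but since $u_n$ need not vanish on $\partial Q$, one must instead use the full-space interpolation inequality locally; the "does not spread out" hypothesis then ensures that the truncated contribution $\int_\Omega\min\{\delta,\abs{u_n}^p\}$ is small uniformly in $n$, which is exactly what is needed to handle the part of $u_n$ that is pointwise small but spread over a large region. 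So the refined plan is: split $\abs{u_n}^p = \min\{\delta,\abs{u_n}^p\} + (\abs{u_n}^p-\delta)^+$; the first term has small integral by \eqref{defspread}; for the second, the set $\{\abs{u_n}>\delta^{1/p}\}$ has measure $\leq C/\delta$ (by boundedness in $L^p$), and on it one can run a covering/interpolation argument exploiting vanishing, since the relevant mass is now concentrated where $\abs{u_n}$ is not too small.

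The main obstacle I anticipate is making the interaction between the two hypotheses precise: neither "vanishing" nor "does not spread out" alone suffices (the counterexamples in Remark~\ref{rem:spread}(ii) show this), and the proof must genuinely use both — vanishing to kill the "bulk" part on each unit-scale piece, and non-spreading to kill the "thin tail" part that vanishing misses. Getting the quantitative bookkeeping right — choosing $\delta=\delta(n)\to 0$ slowly enough that both the $\min\{\delta,\cdot\}$ term and the interpolation remainder go to zero, while the number of cubes needed stays under control — is the delicate step; I would expect the cleanest route to be: fix $\eta>0$, choose $\delta$ small so that $\sup_n\int_\Omega\min\{\delta,\abs{u_n}^p\}<\eta$, then on the complementary set use the Gagliardo--Nirenberg inequality on unit cubes together with $\varepsilon_n\to 0$ to make the remaining contribution $<\eta$ for $n$ large, and finally let $\eta\to 0$.
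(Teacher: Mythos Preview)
Your refined plan has the right architecture and is essentially what the paper does: truncate $u_n$ at a small level, use ``does not spread out'' to kill the sub-level part, and use vanishing plus the gradient bound to kill the super-level part via a covering by unit balls. The paper couples the truncation level to $\varepsilon_n$ by setting $\delta_n:=\varepsilon_n^{1/(p+1)}$, but your $\eta$--$\delta$ scheme would work just as well.

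There is, however, a genuine gap in the local estimate you write down, and it propagates to the summation. Your displayed inequality
\[
\int_{Q_i}\abs{u_n}^p \;\leq\; C\Big(\int_{Q_i}\abs{u_n}^p\Big)^{\theta}\Big(\int_{Q_i}\big(\abs{u_n}^p+\abs{\nabla u_n}^p\big)\Big)^{1-\theta}
\]
is trivially true (take $C=1$: it just says $a\le a^\theta b^{1-\theta}$ when $a\le b$), so it carries no information. Worse, even a genuine interpolation inequality of the shape $\int_{Q_i}\abs{u_n}^p\le C\varepsilon_n^\theta\big(\int_{Q_i}\norm{\cdot}_{W^{1,p}}^p\big)^{1-\theta}$ cannot be summed over $i$: with an exponent $1-\theta<1$ on the right, $\sum_i b_i^{1-\theta}$ need not be controlled by $\big(\sum_i b_i\big)^{1-\theta}$ when infinitely many $b_i$ are nonzero. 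Your later remark ``use the GN inequality on unit cubes together with $\varepsilon_n\to 0$'' does not resolve this, because you never explain how to get a local bound with the \emph{same} power $p$ on both sides.

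The missing mechanism is this: after truncation at level $\delta^{1/p}$ the resulting function $v_n$ satisfies $\abs{\{v_n\neq 0\}\cap B_1(y)}\le \varepsilon_n/\delta$ uniformly in $y$ (Chebyshev plus vanishing). The paper then uses a Poincar\'e inequality whose constant scales with the measure of the support,
\[
\norm{w}_{L^p(B)} \;\le\; C\,\abs{\{w\neq 0\}}^{1/N}\,\norm{\nabla w}_{L^p(B)}\qquad(w\in W_0^{1,p}(B)),
\]
applied (via a partition of unity) on each unit ball. This keeps the exponent equal to $p$ on both sides, so summation over the bounded-overlap cover gives $\norm{v_n}_{L^p(\Omega)}^p\le C(\varepsilon_n/\delta)^{p/N}\norm{v_n}_{W^{1,p}(\Omega)}^p\to 0$. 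This support-dependent Poincar\'e inequality (equivalently, Sobolev embedding into $L^{p^*}$ followed by H\"older against the support) is the step your write-up is missing; once you insert it, your refined plan coincides with the paper's proof.
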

Postponing the proof for a moment, we now state three results forming the basis for a decomposition lemma 
associated to the truncations introduced below, which essentially comes down to verifying \eqref{phi3} in each case. Actually, they are decomposition lemmas for sequences in $L^p$. For instance,
in Lemma~\ref{lem:Lpequiintloc} the corresponding decomposition is
$v_{k(n)}=\eta_n\circ v_{k(n)}+(\identity_\RR-\eta_n)\circ v_{k(n)}$.
Lemma~\ref{lem:Lpequiintloc} is a close relative of Chacon's biting lemma (for the latter see \cite{Pe97B}, e.g.).
In its essence, it is well known; in particular, it is implicitly proved in \cite{FoMuePe98a} in case of a bounded domain, with an argument involving Young measures. 
By contrast, the proof given below is elementary. 
%
\begin{lem} \label{lem:Lpequiintloc}
Let $\Omega\subset \RR^N$ be an arbitrary domain (possibly unbounded) and $p\in [1,\infty)$.
Then every bounded sequence $(v_n)\subset L^p(\Omega)$ has a subsequence $(v_{k(n)})$ such that
the sequence $(\eta_n\circ v_{k(n)})$ does not concentrate in $L^p(\Omega)$, i.e.,
\begin{align}\label{lemLpeqil0}
	\sup_{E\subset \Omega,~\abs{E}\leq \delta}~
	\int_E \abs{\eta_n[v_{k(n)}(x)]}^p\,dx 
	\underset{\delta\to 0}{\To} 0,~\text{uniformly in $n\in \NN$}.
\end{align}
Here, for every $\lam>0$,
\begin{equation}\label{defetalam}
\begin{aligned}
	\eta_\lam: \RR\to \RR,~~\eta_\lam(t):=\left\{ \begin{array}{rl}
		\lam\quad &\text{if}~t\in (\lam,\infty),\\
		t\quad &\text{if}~t\in [-\lam,\lam],\\
		-\lam\quad &\text{if}~t\in (-\infty,-\lam).
	\end{array}\right.
\end{aligned}
\end{equation}
\end{lem}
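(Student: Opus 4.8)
The statement is a diagonal-selection argument: I want a single subsequence of $(v_n)$ along which the truncated sequences $\eta_n\circ v_{k(n)}$ are uniformly non-concentrating in $L^p(\Omega)$. The plan is to first control the concentration of each fixed truncation level $\eta_m\circ v_n$ and then let the level grow slowly along a suitable subsequence. First I would observe that for fixed $\lambda>0$ the functions $\abs{\eta_\lambda\circ v_n}^p$ are dominated by $\min\{\lambda^p,\abs{v_n}^p\}$, hence the only obstruction to non-concentration comes from the (uniformly bounded) $L^1$ masses $g_n^{(m)}:=\abs{\eta_m\circ v_n}^p\in L^1(\Omega)$, which satisfy $\int_\Omega g_n^{(m)}\,dx\leq C$ with $C$ independent of $n$ and $m$ (since $(v_n)$ is $L^p$-bounded). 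So this is really a biting-type statement about the $L^1$-bounded family $g_n^{(m)}$.

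\textbf{Key steps.} Step 1: For each fixed $m\in\NN$, apply Chacon's biting lemma (or a direct elementary argument) to the $L^1$-bounded sequence $\bigl(\abs{\eta_m\circ v_n}^p\bigr)_n$ to extract a subsequence that is equiintegrable away from a "biting set" — but here the cleaner route, and the one matching the claim that the proof is elementary, is to extract a subsequence along which $\abs{\eta_m\circ v_n}^p$ converges weakly-$*$ in the sense of measures to some finite measure $\mu_m$ on $\Omega$, and then use the fact that $\mu_m$ has at most countably many atoms and, more to the point, that for the non-concentration estimate one may discard a small-measure set carrying the "bad" mass. Step 2: Diagonalize over $m\in\NN$ to obtain one subsequence $(v_{k(n)})$ that works simultaneously for every level $m$. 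Step 3: The crucial point — choose the level as a function of the index. Along the diagonal subsequence, for each $m$ there is $\delta_m>0$ with $\sup_n\sup_{\abs{E}\leq\delta_m}\int_E\abs{\eta_m\circ v_{k(n)}}^p\,dx\leq 1/m$; I would then pick an increasing sequence of thresholds so that replacing $\eta_m$ by $\eta_n$ (the $n$-th level appearing in \eqref{lemLpeqil0}) only helps, because $\abs{\eta_a(t)}\leq\abs{\eta_b(t)}$ is false in general but $\abs{\eta_a\circ v}^p\le \abs{\eta_b\circ v}^p$ whenever $a\le b$ — so larger truncation level gives a larger function. This monotonicity in the level is exactly the obstacle: I need the concentration bound to hold at the level actually used, namely level $n$, not at a fixed finite level. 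The fix is to arrange, via the diagonalization, that the bound at level $m$ holds for all indices $n$ in the final subsequence with $n$ large enough relative to $m$, and then for a given small $\delta$ choose $m$ with $\delta\le\delta_m$; for indices $n$ with truncation level $\le m$ the level-$m$ bound dominates, while for the finitely many small indices one uses absolute continuity of $\int_E\abs{v_{k(n)}}^p$ for each fixed $n$.

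\textbf{Main obstacle.} The genuinely delicate part is reconciling "truncation level equals the running index $n$" with "non-concentration estimates are naturally obtained at each fixed level $m$." Because $\abs{\eta_n\circ v}^p$ increases with $n$ toward $\abs{v}^p$, and $\abs{v}^p$ may genuinely concentrate, one cannot simply bound $\int_E\abs{\eta_n\circ v_{k(n)}}^p$ by a level-independent quantity. The resolution must exploit that $n\to\infty$ together with the level, plus the $L^1$-boundedness of the truncated masses, so that the "escaping" mass is, for each $\delta$, confined to sets of measure $\le\delta$ only for the relevant range of indices — this is where the order of quantifiers (first pass to the diagonal subsequence, then for given $\delta$ choose $m=m(\delta)$, then split indices into $n\le N(m)$ and $n>N(m)$) has to be set up carefully. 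I expect the rest (domination by $\min\{\lambda^p,\abs{v}^p\}$, the biting/weak-$*$ compactness of $L^1$-bounded families of measures, absolute continuity of the integral for each fixed $n$) to be routine.
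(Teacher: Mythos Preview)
Your Step~3 has the cases backward, and this is a genuine gap. For a given $\delta$, you choose a level $m=m(\delta)$ and correctly observe that for indices $n\le m$ monotonicity gives $\int_E|\eta_n\circ v_{k(n)}|^p\le\int_E|\eta_m\circ v_{k(n)}|^p\le m^p|E|$, which is small. But the remaining indices are those with $n>m$, and there are \emph{infinitely} many of them; invoking ``absolute continuity of $\int_E|v_{k(n)}|^p$ for each fixed $n$'' cannot give a uniform bound over this infinite tail. (Also note that the bound $\sup_n\sup_{|E|\le\delta_m}\int_E|\eta_m\circ v_{k(n)}|^p\le 1/m$ is trivially true with $\delta_m=m^{-p-1}$ and requires no extraction whatsoever, so your Steps~1--2 are not actually doing any work.) The biting lemma and weak-$*$ compactness are red herrings here; they do not, by themselves, control $\int_E|\eta_n\circ v_{k(n)}|^p$ for $n>m$.

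The idea you are missing is to control the \emph{total-mass increment} between truncation levels. Extract a subsequence (diagonal over levels $m$, then a further subsequence in the index) so that $S_n:=\int_\Omega|\eta_n\circ v_{k(n)}|^p$ converges; this is possible because the numbers $\int_\Omega|\eta_m\circ v_j|^p$ are uniformly bounded and increase in $m$. Once this is arranged, for $n>n_0$ write
\[
\int_E|\eta_n\circ v_{k(n)}|^p=\int_E\bigl(|\eta_n\circ v_{k(n)}|^p-|\eta_{n_0}\circ v_{k(n)}|^p\bigr)+\int_E|\eta_{n_0}\circ v_{k(n)}|^p,
\]
bound the first term by $\int_\Omega(\cdots)=S_n-S_{n_0}$ (which is small for $n_0$ large, uniformly in $n>n_0$), and bound the second by $n_0^p|E|$. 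The case $n\le n_0$ is then the finite one handled by $\int_E|\eta_n|^p\le n_0^p|E|$. This is exactly the mechanism that makes the large-$n$ tail harmless, and it is what your outline lacks.
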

\begin{lem} \label{lem:Lpequiintoutside}
Let $\Omega\subset \RR^N$ be an arbitrary domain and $p\in [1,\infty)$.
Then every bounded sequence $(v_n)\subset L^p(\Omega)$ has a subsequence $(v_{k(n)})$ such that
$(\chi_n\cdot v_{k(n)})$ is tight in $L^p(\Omega)$, i.e.,
\begin{align}\label{lemLpeqio0}
	\int_{\Omega\setminus B_{R}(0)} \abs{\chi_n(x)v_{k(n)}(x)}^p\,dx 
	\underset{R\to\infty}{\To} 0,~\text{uniformly in $n\in\NN$}.
\end{align}
Here, 
\begin{equation}\label{defchin}
\begin{aligned}
	\chi_n: \Omega\to \RR,~\chi_n(x):=\chi_{B_n(0)}:=\left\{ \begin{array}{rl}
		1\quad &\text{if $x\in B_n(0)$},\\
		0\quad &\text{if $x\notin B_n(0)$}.
	\end{array}\right.
\end{aligned}
\end{equation}
\end{lem}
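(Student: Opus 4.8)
\textbf{Proof plan for Lemma~\ref{lem:Lpequiintoutside}.}
The plan is to mimic the standard diagonal/exhaustion argument that underlies Chacon-type biting lemmas, but now cutting off \emph{in space} rather than in amplitude. Since $(v_n)$ is bounded in $L^p(\Omega)$, say $\norm{v_n}_{L^p(\Omega)}^p\leq C$ for all $n$, the sequence of ``tail masses''
\begin{align*}
	a_{n,R}:=\int_{\Omega\setminus B_R(0)}\abs{v_n}^p\,dx
\end{align*}
is bounded by $C$, nonincreasing in $R$, and tends to $0$ as $R\to\infty$ for each fixed $n$ (by dominated convergence, since $\abs{v_n}^p\in L^1(\Omega)$ and $\abs{\Omega\setminus B_R(0)}$ shrinks in the measure-theoretic sense $\chi_{\Omega\setminus B_R(0)}\to 0$ pointwise). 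The quantity to control is $\int_{\Omega\setminus B_R(0)}\abs{\chi_n v_{k(n)}}^p = a_{k(n),\,\min\{n,R\}}$ if $R\geq n$ this is just $0$ trivially once $R\ge n$; the real content is uniformity for $R<n$, i.e. bounding $a_{k(n),R}$ uniformly in $n$ for large $R$.

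First I would extract the subsequence by a diagonal procedure over the integer radii $R=1,2,3,\dots$. For $R=1$: the numbers $(a_{n,1})_n\subset[0,C]$ have a convergent subsequence; pass to it and call its limit $\ell_1$. Inductively, having a subsequence along which $a_{n,m}\to\ell_m$ for $m=1,\dots,R-1$, refine it so that $a_{n,R}\to\ell_R$ as well. The $\ell_R$ are nonincreasing in $R$ and bounded below by $0$, hence $\ell_R\to \ell_\infty\geq 0$; in fact one checks $\ell_\infty=0$ as follows — along the diagonal subsequence, for any $R$ we have $\ell_R\le \liminf_n a_{n,1}=\ell_1\le C$, and more usefully, picking the diagonal subsequence $(v_{k(n)})$ with $k(n)$ strictly increasing and such that $\abs{a_{k(n),R}-\ell_R}<1/n$ whenever $R\le n$. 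Then I would argue: given $\eps>0$, choose $R_0$ with $\ell_{R_0}<\eps/2$ (possible since $\ell_R\searrow\ell_\infty$; the claim $\ell_\infty=0$ is not even needed, only that the $\ell_R$ are small for large $R$, which follows because $\ell_R=\lim_n a_{k(n),R}\le \limsup_n a_{k(n),R}$ and... here is where care is needed — see below). For $n\geq \max\{R_0,2/\eps\}$ and any $R\geq R_0$ with $R\le n$, $a_{k(n),R}\le a_{k(n),R_0}<\ell_{R_0}+1/n<\eps$; for $R>n$ the integral is $0$. This gives \eqref{lemLpeqio0}.

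The main obstacle is establishing that $\ell_R\to 0$ (equivalently, that the diagonal limits of the tail masses decay), because a priori the $\ell_R$ could stabilize at a positive value — this is exactly the ``escape of mass to infinity'' phenomenon and it is \emph{not} excluded by mere boundedness in $L^p$. The resolution is that we are allowed to pass to a subsequence and we do \emph{not} claim $\ell_\infty=0$ is forced; rather, the cutoff $\chi_n$ is tailored to the subsequence so that the bad tail of $v_{k(n)}$ beyond radius $n$ is simply erased, and the uniformity over $R<n$ is what the diagonal choice buys us. So the correct statement to prove is just: for every $\eps>0$ there is $R_0$ and $n_0$ so that $a_{k(n),R}<\eps$ for all $n\ge n_0$ and $R_0\le R\le n$. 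Since $a_{k(n),R}$ is nonincreasing in $R$, it suffices to handle $R=R_0$: we need $\limsup_n a_{k(n),R_0}$ small as $R_0\to\infty$, i.e. $\ell_{R_0}\to 0$. To get this, instead of an arbitrary diagonal subsequence I would choose it more carefully using a doubly-indexed extraction: build $k(n)$ so that $a_{k(n),\,\rho(n)}<1/n$ for some $\rho(n)\to\infty$ with $\rho(n)<k(n)$ — possible because for each fixed index $m$, $a_{m,R}\to 0$ as $R\to\infty$, so one picks $k(n)$ and then $\rho(n)$ large depending on $k(n)$; but one must also keep $\rho(n)\le n$ so that $\chi_n$ (radius $n$) already kills everything beyond $\rho(n)$ once $n\ge$ something. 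Reconciling ``$\rho(n)$ large enough for $v_{k(n)}$'' with ``$\rho(n)\le n$'' is the delicate point: it forces $k(n)$ to grow slowly relative to $n$, which is fine since we only need \emph{a} subsequence. Concretely: enumerate, and at step $n$ pick $k(n)>k(n-1)$ to be the smallest available index such that $\int_{\Omega\setminus B_n(0)}\abs{v_{k(n)}}^p\,dx<1/n$ — such an index exists among infinitely many $v_j$ only if we first know that $\inf_j \int_{\Omega\setminus B_n(0)}\abs{v_j}^p\,dx$ can be made $<1/n$, which need NOT hold. Hence the honest route is the two-stage one: first diagonalize over radii to get limits $\ell_R$, then observe that the subsequence can be \emph{further} thinned so that $a_{k(n),R}$ is within $1/n$ of $\ell_R$ for all $R\le n$ simultaneously, and finally — this is the key lemma-internal claim — $\ell_R\to 0$. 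I would prove $\ell_R\to 0$ by contradiction: if $\ell_R\ge\beta>0$ for all $R$, then along the subsequence, for each fixed $R$, $\int_{\Omega\setminus B_R(0)}\abs{v_{k(n)}}^p\ge\beta/2$ for $n$ large; combined with $\int_\Omega\abs{v_{k(n)}}^p\le C$, pick radii $R_1<R_2<\cdots$ and indices so that $v_{k(n)}$ has mass $\ge\beta/4$ in each annulus $B_{R_{i+1}}\setminus B_{R_i}$ for $i=1,\dots,\lfloor 4C/\beta\rfloor+1$ simultaneously (using that the tail mass at $R_i$ is $\ge\beta/2$ and the total is $\le C$, an annulus decomposition forces this), contradicting total mass $\le C$. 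This annulus-counting contradiction is the heart of the argument; everything else is bookkeeping of the diagonal extraction, and I would keep it terse.
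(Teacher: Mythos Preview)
Your proposal has a genuine gap: the claim $\ell_R\to 0$ is false in general, and your annulus-counting contradiction does not work. Take $\Omega=\RR^N$ and $v_m:=\chi_{B_1(me_1)}$. Then $a_{m,R}=\abs{B_1}$ for every $R<m-1$, so along any subsequence $\ell_R=\abs{B_1}>0$ for all $R$. Your contradiction argument tries to force a single $v_{k(n)}$ to carry mass $\geq \beta/4$ in many disjoint annuli simultaneously, but nothing prevents all the tail mass from sitting in one small ball that slides off to infinity; in the example each $v_m$ meets at most two unit-width annuli.

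The deeper issue is that you misidentified the quantity to control. For $R<n$,
\[
\int_{\Omega\setminus B_R(0)}\abs{\chi_n v_{k(n)}}^p\,dx
=\int_{\Omega\cap (B_n(0)\setminus B_R(0))}\abs{v_{k(n)}}^p\,dx
=a_{k(n),R}-a_{k(n),n},
\]
not $a_{k(n),R}$ itself. You only need this \emph{annulus} mass to be small, and for that $\ell_\infty=0$ is irrelevant; what matters is that $\ell_R\searrow\ell_\infty$, which is automatic from monotonicity. With your diagonal choice giving $\abs{a_{k(n),R}-\ell_R}<1/n$ for all integers $R\leq n$, one gets for $R_0\leq R\leq n$
\[
a_{k(n),R}-a_{k(n),n}\leq a_{k(n),R_0}-a_{k(n),n}
\leq (\ell_{R_0}-\ell_n)+\tfrac{2}{n}
\leq (\ell_{R_0}-\ell_\infty)+\tfrac{2}{n},
\]
and the right-hand side is $<\eps$ once $R_0$ and $n$ are large; finitely many remaining $n$ are handled individually. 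This is exactly what the paper does, phrased dually: it tracks the \emph{inner} masses $S_n^j:=\int_{\Omega\cap B_n(0)}\abs{v_{k_1(j)}}^p\,dx$, diagonalises so that $S_n^j\to S_n^\infty$, and uses that $S_n^\infty$ is increasing and bounded (hence Cauchy) to make $S_n^{k_2(n)}-S_{n_0}^{k_2(n)}$ small. So your framework is fine; the fix is to drop the attempt at $\ell_\infty=0$ and work with $\ell_{R_0}-\ell_\infty$ instead.
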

\begin{lem} \label{lem:Lpequispread}
Let $\Omega\subset \RR^N$ be an unbounded domain and $p\in [1,\infty)$.
Then every bounded sequence $(v_n)\subset L^p(\Omega)$ has a subsequence $(v_{k(n)})$ such that
\begin{align}\label{lemLpeqisp0}
	\sup_{E\subset\Omega,~\abs{E}\leq n} \int_{E} \min\{\delta,|v_{k(n)}(x)|^p\}\,dx 
	\underset{\delta\to 0}{\To} 0,~\text{uniformly in $n\in\NN$}.
\end{align}
In particular, the sequence $w_n:=\chi_{E_n}\cdot v_{k(n)}$ does not spread out in $L^p(\Omega)$
if $E_n$ is an arbitrary sequence of measurable sets satisfying $\abs{E_n}\leq n$,
where $\chi_{E_n}$ denotes the indicator function of the set $E_n$.
\end{lem}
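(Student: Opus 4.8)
The plan is to pass to $g_n:=\abs{v_n}^p$, a bounded sequence in $L^1(\Omega)$ with, say, $\norm{g_n}_{L^1}\le M$ for all $n$, and to control the left-hand side of \eqref{lemLpeqisp0} through the distribution functions $\mu_n(t):=\abs{\{g_n>t\}}$, $t>0$. The starting point is the layer-cake identity $\min\{\delta,g_n(x)\}=\int_0^\delta\chi_{\{g_n>t\}}(x)\,dt$, which by Fubini gives, for every measurable $E\subset\Omega$ with $\abs{E}\le m$,
\[
	\int_E\min\{\delta,g_n\}\,dx=\int_0^\delta\abs{E\cap\{g_n>t\}}\,dt\le\int_0^\delta\min\{m,\mu_n(t)\}\,dt,
\]
since $\abs{E\cap\{g_n>t\}}\le\min\{\abs{E},\mu_n(t)\}$. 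Hence it is enough to produce a subsequence $(v_{k(n)})$ of the original one with $\sup_n\int_0^\delta\min\{n,\mu_{k(n)}(t)\}\,dt\to 0$ as $\delta\searrow 0$; the ``in particular'' assertion then follows at once, since $\min\{\delta,\abs{\chi_{E_n}v_{k(n)}}^p\}=\chi_{E_n}\min\{\delta,\abs{v_{k(n)}}^p\}$ gives $\int_\Omega\min\{\delta,\abs{w_n}^p\}\,dx=\int_{E_n}\min\{\delta,\abs{v_{k(n)}}^p\}\,dx$ whenever $\abs{E_n}\le n$, which is exactly \eqref{defspread} for $w_n$.

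The point to stress is that crude estimates are useless here: both $\int_0^\delta\min\{n,\mu_{k(n)}(t)\}\,dt\le n\delta$ and the Chebyshev bound $\mu_{k(n)}(t)\le M/t$ fail to be small once $n$ is large compared with $1/\delta$, and indeed the full sequence need not have the property (take $v_n:=n^{-N/p}\chi_{B_n(0)}$). The remedy is a two-stage extraction. First, since each $\mu_n$ is non-increasing with $\mu_n(t)\le M/t$ for every $t>0$, Helly's selection theorem applied on $[1/l,l]$ together with a diagonal argument lets us pass to a subsequence, not relabeled, with $\mu_n(t)\to\mu_\infty(t)$ for every $t>0$, where $\mu_\infty$ is non-increasing and finite; Fatou's lemma gives $\int_0^\infty\mu_\infty\le\liminf_n\int_0^\infty\mu_n=\liminf_n\norm{g_n}_{L^1}\le M$, so that $\beta(\delta):=\int_0^\delta\mu_\infty(t)\,dt\to 0$ as $\delta\searrow 0$. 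Second, for each fixed $n$ the functions $t\mapsto\min\{n,\mu_j(t)\}$ are bounded by $n$ and converge a.e.\ on $(0,1]$ to $\min\{n,\mu_\infty(t)\}$ as $j\to\infty$, so by dominated convergence we may pick $k(n)$, strictly increasing with $k(n)>n$, such that $\int_0^1\abs{\min\{n,\mu_{k(n)}(t)\}-\min\{n,\mu_\infty(t)\}}\,dt<1/n$. Combining this with the trivial bound $n\delta$ and with $\int_0^\delta\min\{n,\mu_\infty(t)\}\,dt\le\beta(\delta)$, we obtain for all $n$ and all $\delta\in(0,1]$ the estimate $\int_0^\delta\min\{n,\mu_{k(n)}(t)\}\,dt\le\min\bigl\{\,n\delta,\ \beta(\delta)+\tfrac1n\,\bigr\}$.

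Finally I would conclude by a short bookkeeping step: given $\eps>0$, choose $N$ with $1/N<\eps/2$ and then $\delta_0\in(0,1]$ with $\beta(\delta_0)<\eps/2$ and $N\delta_0<\eps$. For $\delta\le\delta_0$ and $n\le N$ the estimate gives $\int_0^\delta\min\{n,\mu_{k(n)}(t)\}\,dt\le n\delta\le N\delta_0<\eps$, while for $n>N$ it gives $\le\beta(\delta)+1/n<\eps/2+\eps/2=\eps$; hence $\sup_n\int_0^\delta\min\{n,\mu_{k(n)}(t)\}\,dt<\eps$ for every $\delta\le\delta_0$, which is the required convergence and finishes the proof. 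The only genuinely delicate point is this interplay: one must first extract the limit profile $\mu_\infty$ — which encodes that the limit ``does not spread out'', via $\beta(\delta)\to 0$ — and then let the indices $k(n)$ grow fast enough that $\mu_{k(n)}$ is already close to $\mu_\infty$ throughout $(0,1]$ well before $n$ becomes large, thereby neutralizing the obstruction that the naive bounds blow up as $n\to\infty$.
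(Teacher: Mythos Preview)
Your proof is correct, and the initial reduction via the layer-cake formula to the distribution functions $\mu_n(t)=\abs{\{\abs{v_n}^p>t\}}$ is exactly the same as the paper's: both arrive at the key estimate
\[
	\sup_{\abs{E}\le n}\int_E\min\{\delta,\abs{v_m}^p\}\,dx\le\int_0^\delta\min\{n,\mu_m(t)\}\,dt.
\]
The difference lies in how the subsequence is extracted from here. The paper simply observes that $(\mu_m)$ (there called $g_m$) is a bounded sequence in $L^1((0,\infty))$ and that $\min\{n,\mu_m(t)\}=\abs{\eta_n(\mu_m(t))}$, so that the required uniform smallness is precisely an instance of the already-proven Lemma~\ref{lem:Lpequiintloc} applied to that sequence; one sentence finishes the argument. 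You instead carry out a self-contained two-stage extraction: Helly's selection theorem to obtain a pointwise limit profile $\mu_\infty\in L^1$, followed by dominated convergence to force $\mu_{k(n)}$ close to $\mu_\infty$ on $(0,1]$ before $n$ grows. Your route makes the mechanism (the integrable limit profile controlling the spreading) more visible and avoids relying on the earlier lemma, at the cost of being longer; the paper's route is shorter and highlights that Lemma~\ref{lem:Lpequispread} is really Lemma~\ref{lem:Lpequiintloc} transported to the distribution-function side.
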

\subsubsection*{Proofs of Lemma~\ref{lem:cospread3}--Lemma~\ref{lem:Lpequispread}}

For the proof of Lemma~\ref{lem:cospread3}, we need 
Poincaré's inequality in the following form:
\begin{lem}\label{lem:poincare}
Let $D\subset \RR^N$ be 
a smooth 
bounded domain, 
$B_1:=B_1(0)$ the unit ball in $\RR^N$ and $p\in [1,\infty)$. Then for every $v\in W_0^{1,p}(D)$,
we have that
\begin{align}\label{poincare}
	\norm{v}_{L^p(D)}\leq S^{-1} \abs{B_1}^{-\frac{1}{N}}\abs{\{v\neq 0\}}^{\frac{1}{N}}
	\norm{\nabla v}_{L^p(D)},
\end{align}
where $S=S(N,p)>0$ is the optimal Poincaré constant on the unit ball, i.e.,
\begin{align*}
	S:=\inf\left\{ 
	\norm{\nabla w}_{L^p(B_1)}~\left|~w\in W_0^{1,p}(B_1)
	~\text{and}~\norm{w}_{L^p(B_1)}=1\right.\right\}
\end{align*}
\end{lem}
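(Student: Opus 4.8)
The plan is to compare $v$ with a radially symmetric function supported on a ball of the same mass; on that ball, \eqref{poincare} reduces by scaling to the very definition of $S$. To set this up, first reduce to the case $v\geq 0$: since $\abs{v}\in W_0^{1,p}(D)$ with $\abs{\nabla\abs{v}}=\abs{\nabla v}$ a.e.\ and $\{\abs{v}\neq 0\}=\{v\neq 0\}$, it suffices to prove \eqref{poincare} with $\abs{v}$ in place of $v$. Extending $\abs{v}$ by zero outside $D$ (which stays in $W^{1,p}(\RR^N)$ because $v\in W_0^{1,p}(D)$), we may henceforth assume $v\in W^{1,p}(\RR^N)$, $v\geq 0$, with compact support, all the norms over $D$ now agreeing with the corresponding norms over $\RR^N$.

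Next, let $v^\ast$ be the symmetric decreasing rearrangement of $v$ and put $r:=\abs{B_1}^{-1/N}\abs{\{v\neq 0\}}^{1/N}$, chosen so that $\{v^\ast>0\}=B_r(0)$ up to a null set and $\abs{B_r(0)}=\abs{\{v\neq 0\}}$. I would then invoke two classical facts: equimeasurability gives $\norm{v^\ast}_{L^p(\RR^N)}=\norm{v}_{L^p(\RR^N)}$, and the P\'olya--Szeg\H{o} inequality (valid for every $p\in[1,\infty)$) gives $v^\ast\in W^{1,p}(\RR^N)$ with $\norm{\nabla v^\ast}_{L^p(\RR^N)}\leq\norm{\nabla v}_{L^p(\RR^N)}$. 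Since $v^\ast$ vanishes a.e.\ on the open set $\RR^N\setminus\overline{B_r(0)}$, its support is contained in the compact set $\overline{B_r(0)}$.

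The last step is a scaling computation on balls. For $\rho>0$ and $u\in W_0^{1,p}(B_\rho(0))$, the rescaled function $w:=u(\rho\,\cdot)$ lies in $W_0^{1,p}(B_1)$ and satisfies $\norm{\nabla w}_{L^p(B_1)}/\norm{w}_{L^p(B_1)}=\rho^{-1}\norm{\nabla u}_{L^p(B_\rho(0))}/\norm{u}_{L^p(B_\rho(0))}$, so the definition of $S$ forces $\norm{u}_{L^p(B_\rho(0))}\leq(\rho/S)\norm{\nabla u}_{L^p(B_\rho(0))}$. Given $r'>r$, mollification shows $v^\ast\in W_0^{1,p}(B_{r'}(0))$ because $\supp v^\ast$ is a compact subset of $B_{r'}(0)$; applying the previous inequality with $\rho=r'$ and using that $v^\ast$ is supported in $B_r(0)\subset B_{r'}(0)$, we get $\norm{v^\ast}_{L^p(\RR^N)}\leq(r'/S)\norm{\nabla v^\ast}_{L^p(\RR^N)}$. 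Letting $r'\downarrow r$ and combining with the two relations of the previous paragraph,
\begin{align*}
	\norm{v}_{L^p(D)}=\norm{v^\ast}_{L^p(\RR^N)}
	&\leq\frac{r}{S}\norm{\nabla v^\ast}_{L^p(\RR^N)}
	\leq\frac{r}{S}\norm{\nabla v}_{L^p(\RR^N)}\\
	&=S^{-1}\abs{B_1}^{-\frac{1}{N}}\abs{\{v\neq 0\}}^{\frac{1}{N}}\norm{\nabla v}_{L^p(D)},
\end{align*}
which is exactly \eqref{poincare}.

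The computations here are routine, and I do not expect a genuine obstacle: the proof is essentially a bookkeeping of well-known facts. The points that most require care are the correct statements of the P\'olya--Szeg\H{o} inequality and of equimeasurability for symmetric decreasing rearrangements (both classical and valid down to $p=1$), together with the observation that $v^\ast$, having compact support, belongs to $W_0^{1,p}$ of every ball strictly larger than that support --- this is what transfers the sharp constant $S$ from the unit ball. Passing to the limit $r'\downarrow r$ is a minor device to sidestep any discussion of whether $v^\ast$ itself vanishes on $\partial B_r(0)$ in the sense of traces.
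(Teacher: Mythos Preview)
Your proof is correct and follows exactly the approach the paper indicates: Schwarz (symmetric decreasing) rearrangement combined with rescaling to reduce to the Poincar\'e inequality on $B_1$. The paper merely sketches this and omits the details, which you have supplied carefully, including the minor device of passing through $r'>r$ to avoid trace questions on $\partial B_r(0)$.
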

\begin{proof}
By rescaling the variable and Schwarz rearrangement (or spherical symmetric rearrangement, as it is called in \cite{BroZie88a}), the assertion can be obtained as a consequence of Poincaré's inequality on $B_1$. 
We omit the details.
\end{proof}
\begin{proof}[Proof of Lemma~\ref{lem:cospread3}]
We only consider the scalar case $M=1$, the general case can be obtained by arguing 
component-wise. Define a sequence
\begin{align*}
	\delta_n:=\left(
	\sup_{y\in \RR^N}\int_{B_1(y)} \abs{u_n(x)}^p \,dx
	\right)^{\frac{1}{p+1}},
\end{align*}
where $u_n$ is extended with zero outside of $\Omega$.
Moreover, for $x\in \RR^N$ let
\begin{align*}
	v_n(x):=\left\{\begin{array}{rll}
	u_n(x)-\delta_n &~& \text{if}~u_n(x)>\delta_n,\\
	u_n(x)+\delta_n &~& \text{if}~u_n(x)<-\delta_n,\\
	0 && \text{elsewhere},
	\end{array}\right.
\end{align*}
which defines a function in $W_0^{1,p}(\Omega)\subset W^{1,p}(\RR^N)$.
Note that for every measurable set $E\subset\Omega$,
\begin{align*}
	\norm{v_n}_{L^p(E)}\leq \norm{u_n}_{L^p(E)}
	~~\text{and}~~\norm{\nabla v_n}_{L^p(E)}\leq \norm{\nabla u_n}_{L^p(E)}.
\end{align*}
Since $u_n$ does not spread out in $L^p(\Omega)$, $u_n-v_n\to 0$ strongly in $L^p$. Hence it is enough to show that $v_n\to 0$ in $L^p$.
Our choice of $\delta_n$ entails that
\begin{align*}
	(\delta_n)^p\abs{\{x\in B_1(y)\,:\, \abs{u_n(x)}>\delta_n\}}
	\leq \int_{B_1(y)} \abs{u_n(x)}^p\,dx\leq (\delta_n)^{p+1},
\end{align*}
for every $y\in \RR^N$. Since $u_n$ is vanishing in $L^p(\Omega)$, we infer that
\begin{align}\label{pcos31}
  \abs{\{x\in B_1(y)\mid v_n(x)\neq 0\}}\leq \delta_n\underset{n\to\infty}{\To}0.
\end{align}
Now choose a covering of $\RR^N$ by a countable family of unit balls 
$B^{(i)}=B_1(y_i)$ ($i\in \NN$), 
locally finite in the sense that
\begin{align}\label{pcos32a}
	\text{each $x\in \RR^N$ is contained in at most $J$ different balls $B^{(i)}$},
\end{align}
where $J=J(N)\in\NN$ is a constant.
By a suitable corresponding smooth partition of unity, 
$v_n$ can be decomposed into a locally finite sum
\begin{align*}
	v_n=\sum_{i\in \NN} w_n^{(i)},~~
	\text{where $w_n^{(i)}\in W_0^{1,p}(B^{(i)})$.} 
\end{align*}
Moreover, 
\begin{align}\label{pcos32}
	\{w_n^{(i)}\neq 0\}\subset B^{(i)}\cap\{v_n\neq 0\} ~~\text{and}~~
	\|\nabla w_n^{(i)}\|_{L^p(B^{(i)})}\leq C_1 \norm{v_n}_{W^{1,p}(B^{(i)})},
\end{align}
where $C_1=C_1(N)$ is a constant. By Poincaré's
inequality in the form \eqref{poincare}, 
\begin{align*}
	\|w_n^{(i)}\|_{L^{p}(B^{(i)})}^p\leq S^{-p} \abs{B_1(0)}^{-\frac{p}{N}}
	|\{w_n^{(i)}\neq 0\}|^{\frac{p}{N}}
	\|\nabla w_n^{(i)}\|_{L^{p}(B^{(i)})}^p.
\end{align*}
Due to \eqref{pcos32a}, \eqref{pcos32} and \eqref{pcos31}, adding up yields
\begin{align*}
	\norm{v_n}_{L^{p}(\Omega)}^p
	&\leq \int_{\Omega} J^p \big(\sum_{i\in\NN} \big|w_n^{(i)}\big|^p\Big)\,dx\\
	&\leq J^p S^{-p} \abs{B_1(0)}^{-\frac{p}{N}}
	\sum_{i\in\NN} \abs{B^{(i)}\cap \{v_n\neq 0\}}^{\frac{p}{N}} 
	\|\nabla w_n^{(i)}\|_{L^{p}(B^{(i)})}^p\\
	&\leq J^p S^{-p} \abs{B_1(0)}^{-\frac{p}{N}} (C_1)^p 
	(\delta_n)^{\frac{p}{N}} J \norm{v_n}_{W^{1,p}(\Omega)}^p
	\underset{n\to\infty}{\To}0,
\end{align*}
which entails the assertion.
\end{proof}
\begin{proof}[Proof of Lemma~\ref{lem:Lpequiintloc}]
We inductively define a chain $(k_{1,n}(j))_j$ of subsequences of $j$:
Let $k_{1,0}(j):=j$ for $j\in \NN$. For fixed $n\in \NN$, choose $k_{1,n}(j)$ 
as a subsequence of $k_{1,n-1}(j)$ in such a way that
$\int_{\Omega}\abs{\eta_n[v_{k_{1,n}(j)}]}^p\,dx$
converges in $\RR$ as $j\to\infty$.
The diagonal subsequence inherits this property, i.e., for every fixed $n$,
\begin{align*}
	S_n^j:=\int_{\Omega}\abs{\eta_n[v_{k_{1}(j)}]}^p\,dx~~\text{converges as $j\to\infty$,
	where $k_1(j):=k_{1,j}(j)$.}
\end{align*}
Let
\begin{align*}
	S_n^\infty:=\lim_{j\to \infty} S_n^j\quad\text{and}\quad S_\infty:=\lim_{j\to \infty} S_n^\infty.
\end{align*}
Here, the limit $S_\infty$ exists since $S_n^\infty$ is increasing, and $S_\infty\leq\sup_{m\in \NN}\norm{v_m}_{L^p(\Omega)}<\infty$.
Furthermore, there exists a subsequence $k_2(n)$ of $n$ in such a way that 
\begin{align*}
 	\abs{S_n^j-S_n^\infty}\leq \frac{1}{n}\quad\text{whenever}~j\geq k_2(n).
\end{align*}
We claim that~\eqref{lemLpeqil0} holds with $k(n):=k_1(k_2(n))$. For the proof, let $\eps>0$.
First choose a number $n_0=n_0(\eps)\in \NN$ such that
\begin{align}\label{lemLpeqil2}
	\abs{S_n^{k_2(n)}-S_{n_0}^{k_2(n)}}<\frac{\eps}{2}\quad\text{for every $n> n_0$}, 
\end{align}
which is possible since
\begin{align*}
	& \abs{S_n^{k_2(n)}-S_{n_0}^{k_2(n)}}\\
	& \leq  \abs{S_n^{k_2(n)}-S_n^\infty}
	+\abs{S_n^\infty-S_\infty}
	+\abs{S_\infty-S_{n_0}^\infty}
	+\abs{S_{n_0}^\infty-S_{n_0}^{k_2(n)}} \\
	& \leq \frac{1}{n}+\abs{S_n^\infty-S_\infty}
	+\abs{S_\infty-S_{n_0}^\infty}+\frac{1}{n_0},
\end{align*}
and the last line becomes small if $n_0$ is large enough and $n>n_0$.
It is enough to show that
\begin{align}\label{lemLpeqil3}
	&\int_E \abs{\eta_n[v_{k(n)}(x)]}^p\,dx<\eps~\text{whenever}~\abs{E}<\delta_0:=\frac{\eps}{2n_0^p},
\end{align}
for every $n\in \NN$ and $E\subset \Omega$ (measurable). 
We distinguish the two cases $n\leq n_0$ and $n>n_0$: In the former case, 
\eqref{lemLpeqil3} holds since
\begin{align*}
	&\int_E \abs{\eta_n[v_{k(n)}(x)]}^p\,dx\leq \abs{E} n^p
		<\frac{\eps}{2}<\eps\quad \text{if $n\leq n_0$ and $\abs{E}<\delta_0$},
\end{align*}
whereas in the latter case, we have that
\begin{align*}
	&\int_E \abs{\eta_n[v_{k(n)}(x)]}^p\,dx \\
	&= \int_{E} \left(\abs{\eta_n[v_{k(n)}(x)]}^p
	-\abs{\eta_{n_0}[v_{k(n)}(x)]}^p\right)\,dx
	+\int_{E} \abs{\eta_{n_0}[v_{k(n)}(x)]}^p\,dx\\
	&\leq \int_{\Omega} \left(\abs{\eta_n[v_{k(n)}(x)]}^p
	-\abs{\eta_{n_0}[v_{k(n)}(x)]}^p\right)\,dx+\abs{E} n_0^p	\\
	&=S_n^{k_2(n)}-S_{n_0}^{k_2(n)}+\abs{E} n_0^p\\
	&<\frac{\eps}{2}+\frac{\eps}{2}=\eps \quad\text{if $n>n_0$ and $\abs{E}<\delta_0$},
\end{align*}
due to~\eqref{lemLpeqil2} and the definition of $\delta_0$.
\end{proof}
\begin{proof}[Proof of Lemma~\ref{lem:Lpequiintoutside}]
We proceed analogously to the proof of Lemma~\ref{lem:Lpequiintloc}. 
Just define $E=E(\delta):=\Omega\setminus B_{\frac{1}{\delta}}(0)$, use
$\chi_n\cdot v_m$ instead of $\eta_n\circ v_m$ ($n,m\in \NN$) and replace
the estimate 
\begin{align*}
	\int_E \abs{\eta_{m}[v_{k(n)}(x)]}^p\,dx\leq \abs{E} m^p
\end{align*}
employed twice in the proof of \eqref{lemLpeqil3} (where $m=n$ or $m=n_0$, respectively) with	
\begin{align} \label{lemLpeqio1}
	\int_E \abs{\chi_{m}(x)v_{k(n)}(x)}^p\,dx\leq
	\int_{E\cap B_{m}(0)} \abs{v_{k(n)}(x)}^p\,dx.
\end{align}
Here, note that the right hand side of~\eqref{lemLpeqio1}
is zero (and thus smaller than $\eps/2$) if 
$m\leq n_0$ and $\delta<\delta_0:=\frac{1}{n_0}$.
\end{proof}
\begin{proof}[Proof of Lemma~\ref{lem:Lpequispread}]
For $m\in\NN$ define
\begin{align*}
	g_m:(0,\infty)\to [0,\infty),~~g_m(t):=\abs{\left\{x\in \Omega\,:\,|v_{m}(x)|^p\geq t \right\}},
\end{align*}
which is a decreasing, upper semicontinuous function. Moreover,
$(g_m)_{m\in\NN}$ is a bounded sequence in $L^1((0,\infty))$ since
\begin{align*}
	\int_0^\infty g_m(t)\,dt=\int_{\Omega} |v_{m}(x)|^p\,dx,
\end{align*}
due to Cavalieri's principle. The latter also entails that
\begin{align*}
	\sup_{\abs{E}\leq n}\int_E \min\{\delta, |v_{m}(x)|^p\}\,dx
	\leq \int_0^\delta \min\{n,g_m(t)\}\,dt
	= \int_0^\delta \abs{\eta_n(g_m(t))}\,dt.
\end{align*}
where $\eta_n$ is defined in \eqref{defetalam}. 
Lemma~\ref{lem:Lpequiintloc} now provides a suitable choice of $k(n)$.
\end{proof}

\subsection{Cutoff of outer regions in unbounded domains}\label{ssec:coouterregions}
Let $\Omega\subset \RR^N$ be an arbitrary domain, $M\in \NN$ and $p\in [1,\infty)$.
\begin{defn}\label{def:trunc1}
For $n\in \NN$ and every function $u:\Omega\to \RR^M$ define
\begin{align} \label{defphin1}
	\phi^{(1)}_n(u)(x):=\nu(\abs{x}-n)u(x),
\end{align}
where $\nu:\RR\to \RR$ is a fixed function of class $C^\infty$ which is decreasing on $\RR$ and satisfies
\begin{align*}
	& \nu(r)=1~\text{if}~r\leq 0,~\nu(r)=0~\text{if}~r\geq 1,\quad\text{and}\quad
	\abs{\frac{d^j}{dr^j}\nu}\leq C_j~\text{on}~(0,1)
\end{align*}
for suitable constants $C_j>0$ (for example, $C_1=2$ and $C_2=16$ can be achieved).
\end{defn}
The maps $\phi^{(1)}_n$ are a family of truncation operators on $W^{k,p}(\Omega;\RR^M)$:
\begin{prop}\label{prop:codomain}
Let $\Omega\subset \RR^N$ be a domain, $p\in [1,\infty)$ and $k\in \NN\cup \{0\}$.
Then the family of linear operators $\phi^{(1)}_n$ defined above satisfies \eqref{phi1} and \eqref{phi3},
for $D=X=W^{k,p}(\Omega;\RR^M)$.
Moreover, \eqref{uknequiint} implies that
$\phi^{(1)}_n(u_{k(n)})$ is tight in $W^{m,r}(\Omega;\RR^M)$ for every pair $m\in \{0,\ldots,k\}$, $r\in [p,\infty)$ such that $X$ is continuously (but not necessarily compactly) embedded in $W^{m,r}(\Omega;\RR^M)$. 
Conversely, if $\phi^{(1)}_n(u_{k(n)})$ is tight in $W^{k,p}(\Omega;\RR^M)$ then
\eqref{uknequiint} is satisfied.
\end{prop}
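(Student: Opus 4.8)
The plan is to base everything on the identity $\phi^{(1)}_j\circ\phi^{(1)}_n=\phi^{(1)}_j$ whenever $j<n$ — it holds because $x\mapsto\nu(\abs{x}-j)$ is supported in $\overline{B_{j+1}(0)}$, where $\nu(\abs{x}-n)\equiv 1$ (using $j+1\le n$) — and on the resulting formula
\[
	(\phi^{(1)}_n-\phi^{(1)}_j)(u)=(I-\phi^{(1)}_j)\big(\phi^{(1)}_n(u)\big)=\big(1-\nu(\abs{\cdot}-j)\big)\phi^{(1)}_n(u),
\]
a function supported in $\Omega\setminus B_j(0)$. As a preliminary I record that $\sup_{n\in\NN}\norm{\nu(\abs{\cdot}-n)}_{W^{k,\infty}(\RR^N)}<\infty$ and $\sup_{n\in\NN}\norm{1-\nu(\abs{\cdot}-n)}_{W^{k,\infty}(\RR^N)}<\infty$: the derivatives of order $j\ge 1$ of $x\mapsto\nu(\abs{x}-n)$ are supported in the annulus $\{n\le\abs{x}\le n+1\}$ and, by the chain rule, are there dominated by combinations of $\abs{\nu^{(i)}(\abs{x}-n)}$ ($i\le j$) with powers $\abs{x}^{i-j}$, which are uniformly bounded once $n\ge 1$ (then $\abs{x}\ge n\ge 1$), the case $n=0$ being a single fixed smooth compactly supported multiplier. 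Leibniz' rule then gives $\norm{\phi^{(1)}_n(u)}_{W^{k,p}}\le C\norm{u}_{W^{k,p}}$ with $C$ independent of $n$, which is \eqref{phi1}; linearity of $\phi^{(1)}_n$ is clear.

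For \eqref{phi3}, given a bounded sequence $(u_n)$ in $W^{k,p}(\Omega;\RR^M)$ I apply Lemma~\ref{lem:Lpequiintoutside} successively to the finitely many scalar components of the functions $D^\gamma u_n$, $\abs{\gamma}\le k$ (each extraction preserving the tightness already gained), but with the balls $B_n(0)$ there replaced by $B_{n+1}(0)$ — immaterial to that lemma's proof since the radii still increase to $\infty$ — to get a subsequence $(u_{k(n)})$ such that $\big(\chi_{B_{n+1}(0)}D^\gamma u_{k(n)}\big)_n$ is tight in $L^p$ for every $\abs{\gamma}\le k$. For any $(j(n))$ with $j(n)\to\infty$, $j(n)<n$, the multiplier $\nu(\abs{\cdot}-n)-\nu(\abs{\cdot}-j(n))$ has modulus $\le 1$, is supported in $\overline{B_{n+1}(0)}\setminus B_{j(n)}(0)$, and has derivatives up to order $k$ bounded uniformly in $n$ and supported in the two thin annuli at radii $\sim j(n)$ and $\sim n$; Leibniz' rule therefore gives
\[
	\norm{(\phi^{(1)}_n-\phi^{(1)}_{j(n)})(u_{k(n)})}_{W^{k,p}}^p
	\le C\sum_{\abs{\gamma}\le k}\int_{B_{n+1}(0)\setminus B_{j(n)}(0)}\abs{D^\gamma u_{k(n)}}^p\,dx
	=C\sum_{\abs{\gamma}\le k}\int_{\Omega\setminus B_{j(n)}(0)}\abs{\chi_{B_{n+1}(0)}D^\gamma u_{k(n)}}^p\,dx,
\]
which tends to $0$ as $n\to\infty$ by tightness and $j(n)\to\infty$. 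The step I expect to demand the most care is exactly the presence of the annulus $\{n<\abs{x}\le n+1\}$ in the support of the multiplier: there $\nu(\abs{\cdot}-n)$ still varies while $\chi_{B_n(0)}$ already vanishes, which forces the passage from $\chi_{B_n(0)}$ to the enlarged window $\chi_{B_{n+1}(0)}$ in the extraction; keeping only $\Omega\setminus B_{j(n)}(0)$ and discarding the outer cut $n+1$ would leave an uncontrolled tail, as traveling-bump sequences show.

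The last two assertions I obtain through the equivalence: \eqref{uknequiint} holds if and only if $w_n:=\phi^{(1)}_n(u_{k(n)})$ is tight in $W^{k,p}(\Omega;\RR^M)$. For the "if" direction — the \emph{converse} claim — assume $w_n$ is tight in $W^{k,p}$; the displayed formula gives $(\phi^{(1)}_n-\phi^{(1)}_{j(n)})(u_{k(n)})=(1-\nu(\abs{\cdot}-j(n)))w_n$ for $j(n)<n$, whence $\norm{(\phi^{(1)}_n-\phi^{(1)}_{j(n)})(u_{k(n)})}_{W^{k,p}}^p\le C\sum_{\abs{\gamma}\le k}\int_{\Omega\setminus B_{j(n)}(0)}\abs{D^\gamma w_n}^p\,dx\to 0$ as soon as $j(n)\to\infty$, which is \eqref{uknequiint}. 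For the "only if" direction — needed for the \emph{moreover} claim — I invoke the characterization \eqref{coequiintB} of \eqref{uknequiint} for the sequence $(u_{k(n)})$ (cf.\ Remark~\ref{remcutoffdef}) together with the fact that $w_n=(\phi^{(1)}_n-\phi^{(1)}_j)(u_{k(n)})$ on $\Omega\setminus B_{j+1}(0)$ for $j<n$ (there $\phi^{(1)}_j(u_{k(n)})\equiv 0$): given $\eps>0$ with corresponding $j_0$, the choice $j=j_0$ bounds $\int_{\Omega\setminus B_R(0)}\abs{D^\gamma w_n}^p\,dx$ by $\eps^p$ for all $R\ge j_0+1$ and $n>j_0$, while $\supp w_n\subset\overline{B_{n+1}(0)}$ handles the finitely many $n\le j_0$; hence $w_n$ is tight in $W^{k,p}$.

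Finally, to upgrade tightness of $w_n=\phi^{(1)}_n(u_{k(n)})$ from $W^{k,p}$ to $W^{m,r}(\Omega;\RR^M)$ when $X=W^{k,p}\hookrightarrow W^{m,r}$ continuously, I localise: put $\tilde w_{n,R}:=(1-\nu(\abs{\cdot}-R))w_n\in W^{k,p}(\Omega;\RR^M)$. The uniform multiplier bound of the first paragraph gives $\norm{\tilde w_{n,R}}_{W^{k,p}(\Omega)}\le C\big(\sum_{\abs{\gamma}\le k}\int_{\Omega\setminus B_R(0)}\abs{D^\gamma w_n}^p\,dx\big)^{1/p}\to 0$ uniformly in $n$ as $R\to\infty$ by $W^{k,p}$-tightness, hence $\norm{\tilde w_{n,R}}_{W^{m,r}(\Omega)}\to 0$ uniformly in $n$ by the continuous embedding; since $\tilde w_{n,R}=w_n$ on $\Omega\setminus B_{R+1}(0)$, this gives $\sup_n\int_{\Omega\setminus B_{R'}(0)}\abs{D^\alpha w_n}^r\,dx\to 0$ as $R'\to\infty$ for all $\abs{\alpha}\le m$, i.e.\ tightness of $w_n$ in $W^{m,r}$. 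Apart from the annulus/window matching flagged above, every step is Leibniz' rule plus careful bookkeeping of supports.
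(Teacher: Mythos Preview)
Your proof is correct and follows essentially the same approach as the paper: equiboundedness via Leibniz' rule, extraction of the subsequence via Lemma~\ref{lem:Lpequiintoutside} applied to the $|D^\alpha u_n|$, and the tightness equivalence via the observation that $\phi^{(1)}_n(u_{k(n)})$ agrees with $(\phi^{(1)}_n-\phi^{(1)}_j)(u_{k(n)})$ outside $B_{j+1}(0)$. The only substantive difference is that you explicitly flag and handle the annulus $\{n<|x|\le n+1\}$ by applying Lemma~\ref{lem:Lpequiintoutside} with the window $B_{n+1}(0)$ rather than $B_n(0)$; the paper's proof glosses over this point (its stated tightness of $\chi_{B_n(0)}|D^\alpha u_{k(n)}|$ does not literally control that annulus), so your version is in fact slightly cleaner here. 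For the $W^{m,r}$ tightness the paper applies the embedding directly to $(\phi^{(1)}_n-\phi^{(1)}_j)(u_{k(n)})$ rather than to your auxiliary $\tilde w_{n,R}$, but this is the same idea with a different cutoff.
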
%
\begin{rem}
In particular, the family $\phi^{(1)}_n$ also is a family of truncation operators
on $X=W^{2,p}(\Omega;\RR^M)\cap W_0^{1,p}(\Omega;\RR^M)$ and on
$X=W_0^{1,p}(\Omega;\RR^M)\cap W_0^{1,q}(\Omega;\RR^M)$, respectively, cf.~Remark~\ref{remcutoffdef} (iv).
Here, $p,q\in [1,\infty)$ are arbitrary.
\end{rem}
\begin{proof}[Proof of Proposition~\ref{prop:codomain}]
For brevity, we write $\phi_n$ instead of $\phi_n^{(1)}$ below.
By definition, it is clear that the $\phi_n$ are an equibounded family of linear operators on $X$. The limiting properties
are also satisfied. 
Next, we prove~\eqref{phi3}.
Since the derivatives of $\nu$ are bounded, we have that
\begin{align*}
	\norm{(\phi_{n}-\phi_{j}) u_{k(n)}}_{W^{k,p}(\Omega;\RR^M)}~\leq~ C \norm{u_{k(n)}}_{W^{k,p}\left(\Omega\cap B_{n+1}(0)
	\setminus B_{j}(0);\RR^M\right)},
\end{align*}
whenever $j<n$, where $C>0$ is a constant independent of $j$ and $n$. Thus it is enough to show that 
\begin{align}
	\norm{u_{k(n)}}_{W^{k,p}\left(\Omega\cap B_{n+1}(0)\setminus B_{j}(0);\RR^M\right)}
	~\underset{j\to \infty}{\To}~ 0~~\text{uniformly in $n\in \NN$,} \label{pubdei1}
\end{align}
for a suitable subsequence $u_{k(n)}$ of $u_n$.
The subsequence is chosen with the help of Lemma~\ref{lem:Lpequiintoutside}, subsequently applied to $v^{(\alpha)}_n:=\abs{D^\alpha u_n}$ for every multiindex of length $\abs{\alpha}\in \{0,\ldots, k\}$.
Then~\eqref{pubdei1} is an immediate consequence (use $\delta:=1/j$),
and the last sentence of the assertion follows by reasoning as above. 
It remains to show that \eqref{uknequiint} implies tightness of $\phi^{(1)}_n(u_{k(n)})$ in $W^{m,r}(\Omega;\RR^M)$. For the proof, observe that if $X$ is continuously embedded in $W^{m,r}$,
we may estimate
\begin{equation*}
\begin{aligned}
  &\norm{\phi_{n}(u_{k(n)})}_{W^{m,r}(\Omega
	\setminus B_{j+1}(0);\RR^M)}\\
	&\qquad \leq \norm{(\phi_{n}-\phi_{j}) (u_{k(n)})}_{W^{m,r}(\Omega;\RR^M)}\leq
	C\norm{(\phi_{n}-\phi_{j}) (u_{k(n)})}_{W^{k,p}(\Omega;\RR^M)}
\end{aligned}
\end{equation*}
for every $j<n$, where $C>0$ is the embedding constant. The case $j\geq n$ can be ignored,
since then $\norm{\phi_{n}(u_{k(n)})}_{W^{m,r}(\Omega\setminus B_{j+1}(0);\RR^M)}$ is zero anyway.
\end{proof}

\subsection{Truncation of gradients above large levels}%
Let $\Omega\subset \RR^N$ be a Lipschitz domain in the sense of Definition~\ref{def:Lipschitzdom}
(possibly unbounded) and $X:=W_0^{1,p}(\Omega;\RR^M)\cap W_0^{1,q}(\Omega;\RR^M)$. We now want to truncate a function $u\in X$ by replacing it with a Lipschitz function $\phi^{(2)}_n(u)$ with a global Lipschitz constant bounded by a constant multiple of $n$. Ultimately, we thus cut off concentration of a sequence at sets of measure zero. 
The truncation of vector--valued functions rests on its scalar equivalent, Theorem~\ref{thm:hcutoffOmega} in the appendix. It is  defined component-wise:
\begin{defn}[Truncation of gradients of vector functions above large levels]\label{def:truncgradmax} 
Let $\Omega\subset \RR^N$ be a Lipschitz domain and $1\leq q\leq p<\infty$. For $n\in \NN$ and for $u=(u_1,\ldots,u_M)\in W_0^{1,p}(\Omega;\RR^M)\cap W_0^{1,q}(\Omega;\RR^M)$, the truncation $\phi^{(2)}_n(u)$ of $u$ at level $n$, 
$\phi^{(2)}_n(u)=\big([\phi^{(2)}_n(u)]_1,\ldots,[\phi^{(2)}_n(u)]_M\big)$,
is defined by
\begin{align*}
	[\phi^{(2)}_n(u)]_i:=\phi_n(u_i)\quad\text{for $i=1,\ldots,M$},
\end{align*}
where $\phi_n$ on the right hand side is the truncation of scalar functions obtained in Theorem~\ref{thm:hcutoffOmega} (with $\lam:=n$). Accordingly, we define	
\begin{align*}
	 \hat{R}^n=\hat{R}^n(u):=\bigcap_{i=1}^{M} \hat{R}^n(u_i).
\end{align*}
\end{defn}
\begin{rem}\label{rem:truncgradvec}
The truncation of a scalar function obtained in Theorem~\ref{thm:hcutoffOmega} is not uniquely determined. We choose one arbitrarily for every function $u$ and every value of the truncation parameter $\lam$. For our purposes, this ambiguity does not matter since \eqref{phcutoffomegaubound}--\eqref{hcoOlim}, the only properties we exploit, are satisfied irrespective of the choice.
\end{rem}
All properties of the scalar truncation are inherited:
\begin{prop}\label{prop:coheight0}
Let $\Omega\subset \RR^N$ be a Lipschitz domain, $1\leq q\leq p<\infty$
and $u\in W_0^{1,p}(\Omega;\RR^M)\cap W_0^{1,q}(\Omega;\RR^M)$. Then we have that
\begin{align}
	{}&
	\abs{\nabla \phi^{(2)}_n(u)(x)}+\abs{\phi^{(2)}_n(u)(x)}~\leq~ C_0 n\quad\text{for a.e.}~x\in\Omega,
	\label{pcoh01} 
	\\{}&
	u(x)=\phi^{(2)}_n(u)(x)\quad \text{for a.e.}~x\in \hat{R}^n(u),
	\label{pcoh02} 
	\\{}&	
	\abs{\Omega \setminus \hat{R}^n(u)}
	~\leq~	(C_3)^r\frac{1}{n^r} \int_{\left\{\abs{u}+\abs{\nabla u}>\frac{n}{2}\right\}}
	\abs{u}^r+\abs{\nabla u}^r\,dx~~\text{for $r\in [1,\infty)$}, 
  \label{pcoh04} 
\end{align}
and 
\begin{equation}
	{}\begin{aligned}
	&\abs{\left\{x\in \Omega\mid\abs{\phi^{(2)}_n(u)(x)}+\abs{\nabla \phi^{(2)}_n(u)(x)}>\delta\right\}}\\
	&~\leq C_1 \abs{\left\{x\in \RR^N\mid C_0
	\min\{n,\cM(\abs{u}+\abs{\nabla u})(x)\}>\delta\right\}}
	~ \text{for every $\delta\geq 0$},
	\end{aligned} \label{pcoh06} 
\end{equation}
where $\cM$ is the maximal operator of Definiton~\ref{def:maxop}. 
Here, $C_0, C_1, C_3\geq 1$ are constants which only depend on $N$, $M$
and $\Omega$.
\end{prop}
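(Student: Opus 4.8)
The plan is to reduce every assertion to the scalar truncation $\phi_n$ from Theorem~\ref{thm:hcutoffOmega}, using that $\phi^{(2)}_n(u)$ is defined componentwise, $[\phi^{(2)}_n(u)]_i=\phi_n(u_i)$, and that $\hat R^n(u)=\bigcap_{i=1}^M\hat R^n(u_i)$ by definition. The scalar truncation satisfies the scalar analogues \eqref{phcutoffomegaubound}--\eqref{hcoOlim} of the four displayed estimates, with constants $c_0,c_1,c_3\geq 1$ depending only on $N$ and $\Omega$. Throughout I would exploit the elementary pointwise bounds $\abs{u_i}\leq\abs{u}$ and $\abs{\nabla u_i}\leq\abs{\nabla u}$ (hence $\abs{u_i}+\abs{\nabla u_i}\leq\abs{u}+\abs{\nabla u}$), valid for each component of a Sobolev function, and the subadditivity/monotonicity of Lebesgue measure and of the maximal operator $\cM$. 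The only genuine issue is keeping $C_0,C_1,C_3$ independent of $n$, $\delta$ and $r$; this is done by absorbing the extra factors of $M$ produced by summing over the $M$ components.

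First I would prove \eqref{pcoh01}: from the scalar bound $\abs{\nabla\phi_n(u_i)}+\abs{\phi_n(u_i)}\leq c_0 n$ a.e.\ one gets $\abs{\phi^{(2)}_n(u)}+\abs{\nabla\phi^{(2)}_n(u)}\leq\sum_{i=1}^M\big(\abs{\phi_n(u_i)}+\abs{\nabla\phi_n(u_i)}\big)\leq M c_0 n$ a.e., so $C_0:=M c_0$ works. Next \eqref{pcoh02}: on $\hat R^n(u)=\bigcap_i\hat R^n(u_i)$ each component satisfies $u_i=\phi_n(u_i)$ a.e.\ by the scalar version of \eqref{pcoh02}, hence $u=\phi^{(2)}_n(u)$ a.e.\ there.

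For \eqref{pcoh04} I would write $\Omega\setminus\hat R^n(u)=\bigcup_{i=1}^M\big(\Omega\setminus\hat R^n(u_i)\big)$, apply subadditivity of measure, and bound each $\abs{\Omega\setminus\hat R^n(u_i)}$ by the scalar estimate. Since $\{\abs{u_i}+\abs{\nabla u_i}>n/2\}\subset\{\abs{u}+\abs{\nabla u}>n/2\}$ and $\abs{u_i}^r+\abs{\nabla u_i}^r\leq\abs{u}^r+\abs{\nabla u}^r$ pointwise, summing over $i$ gives $\abs{\Omega\setminus\hat R^n(u)}\leq M(c_3)^r n^{-r}\int_{\{\abs{u}+\abs{\nabla u}>n/2\}}\big(\abs{u}^r+\abs{\nabla u}^r\big)\,dx$, and $M\leq M^r$ (for $r\geq 1$) lets one choose $C_3:=M c_3$, independent of $r$. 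Finally, for \eqref{pcoh06}: if $\abs{\phi^{(2)}_n(u)(x)}+\abs{\nabla\phi^{(2)}_n(u)(x)}>\delta$ then $\abs{\phi_n(u_i)(x)}+\abs{\nabla\phi_n(u_i)(x)}>\delta/M$ for some $i$, so the super-level set is contained in the union over $i$ of the scalar super-level sets at height $\delta/M$. The scalar form of \eqref{pcoh06} bounds the measure of each by $c_1\abs{\{x\in\RR^N\mid c_0\min\{n,\cM(\abs{u_i}+\abs{\nabla u_i})(x)\}>\delta/M\}}$; monotonicity of $\cM$ replaces $u_i$ by $u$, and rewriting the threshold $c_0\min\{\cdots\}>\delta/M$ as $M c_0\min\{\cdots\}>\delta$ yields the claim with $C_1:=M c_1$ and the same $C_0=M c_0\geq 1$ as in \eqref{pcoh01}, so the constants declared in the statement are consistent and depend only on $N$, $M$, $\Omega$.

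I do not expect a serious obstacle here, since the real content sits in Theorem~\ref{thm:hcutoffOmega} (proved in the appendix) and this proposition is componentwise bookkeeping. The one point requiring attention is \eqref{pcoh06}: one must check that the height shift $\delta\mapsto\delta/M$ forced by the componentwise splitting is compatible with the scalar estimate and can be re-absorbed into the constant in front of the minimum, and that this is consistent with the constant $C_0$ used in \eqref{pcoh01}, exactly as asserted.
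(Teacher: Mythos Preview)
Your proposal is correct and follows exactly the approach the paper takes: the paper's proof simply states that the properties are immediate consequences of the scalar results in Theorem~\ref{thm:hcutoffOmega} and Corollary~\ref{cor:hcoOmega}, with constants differing by a factor depending only on $M$. Your write-up is a faithful and more explicit unpacking of that one-line argument, and the bookkeeping (pigeonhole for \eqref{pcoh06}, the $M\leq M^r$ trick for \eqref{pcoh04}) is handled correctly.
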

\begin{proof}
These properties are immediate consequences of the corresponding ones listed in
Theorem~\ref{thm:hcutoffOmega} and Corollary~\ref{cor:hcoOmega}.
Corresponding constants coincide up to a constant factor which only depends on
$M$.
\end{proof}
For $p>1$, the maps $\phi^{(2)}_n$ form a family of truncation operators on 
$W_0^{1,p}\cap W_0^{1,p}$ in the sense of Definition~\ref{def:cutoff}:
\begin{prop}\label{prop:coheight}
Let $\Omega\subset \RR^N$ be a Lipschitz domain, let $1\leq q\leq p<\infty$ and assume that $p>1$. 
Then the family of (nonlinear) maps $\phi_n^{(2)}$ on $X:=W_0^{1,p}(\Omega;\RR^M)\cap W_0^{1,q}(\Omega;\RR^M)$
introduced in Definition~\ref{def:truncgradmax} above satisfies~\eqref{phi1} and \eqref{phi3}.
Moreover,
\begin{align}
	\phi^{(2)}_n(u)\underset{n\to\infty}{\To} u~~\text{in $W_0^{1,p}\cap W_0^{1,q}$, for every fixed $u$.}
	\label{pcohlim}
\end{align}
\end{prop}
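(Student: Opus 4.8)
First I would establish equiboundedness \eqref{phi1}. Since the scalar truncation $\phi_n$ from Theorem~\ref{thm:hcutoffOmega} is bounded, the bound \eqref{pcoh01} together with $u=\phi_n^{(2)}(u)$ on $\hat R^n(u)$ (cf.~\eqref{pcoh02}) and the measure estimate \eqref{pcoh04} (used with $r=p$ and $r=q$) lets one split $\int_\Omega |\nabla\phi_n^{(2)}(u)|^p$ into the part over $\hat R^n(u)$, where the integrand equals $|\nabla u|^p$, and the part over $\Omega\setminus\hat R^n(u)$, which by \eqref{pcoh01} is at most $(C_0 n)^p|\Omega\setminus\hat R^n(u)|$ and hence, by \eqref{pcoh04}, is controlled by $\int_{\{|u|+|\nabla u|>n/2\}}|u|^p+|\nabla u|^p$. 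Adding the analogous $L^p$-estimate for $\phi_n^{(2)}(u)$ itself (and the $W^{1,q}$ versions), one gets $\|\phi_n^{(2)}(u)\|_X\le C\|u\|_X$ with $C$ independent of $n$ and $u$; in fact this argument also shows $\phi_n^{(2)}(u)\to u$ in $X$ as $n\to\infty$, since $|\{|u|+|\nabla u|>n/2\}|\to 0$ and the integrable functions $|u|^p+|\nabla u|^p$, $|u|^q+|\nabla u|^q$ have no concentration on sets of vanishing measure — this gives \eqref{pcohlim}.

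Next, the heart of the matter is \eqref{phi3}. I would pick the subsequence using the decomposition lemmas for $L^p$: apply Lemma~\ref{lem:Lpequiintloc} to the scalar sequence $v_n:=|u_n|+|\nabla u_n|$ (extended by zero) to obtain $u_{k(n)}$ such that $\eta_n\circ v_{k(n)}$ does not concentrate in $L^p(\Omega)$, i.e. $\sup_{|E|\le\delta}\int_E \min\{n,v_{k(n)}\}^p\,dx\to 0$ as $\delta\to 0$ uniformly in $n$ (do the same for exponent $q$ and intersect subsequences). Now invoke the distribution-function estimate \eqref{pcoh06}: $\|(\phi_n^{(2)}-\phi_j^{(2)})(u_{k(n)})\|_X^p$ is, via \eqref{pcoh06} applied at each level and Cavalieri's principle, dominated by $C\int_\Omega \big(\min\{n,\cM(v_{k(n)})\}^p - \min\{j,\cM(v_{k(n)})\}^p\big)\,dx$ plus a contribution where $\cM(v_{k(n)})$ itself is large. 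The maximal function $\cM$ is bounded on $L^p$ for $p>1$ (this is exactly where $p>1$ enters), so $\cM(v_{k(n)})$ is a bounded sequence in $L^p$, to which the no-concentration property of $\eta_n\circ(\cdot)$ transfers; hence the tail $\int_\Omega \min\{n,\cM(v_{k(n)})\}^p - \min\{j,\cM(v_{k(n)})\}^p\,dx \to 0$ as $j\to\infty$ uniformly in $n>j$, which is precisely the reformulation \eqref{coequiintB} of \eqref{phi3}.

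The main obstacle is keeping the $L^p$-bound on the maximal operator honest in the vector-valued, non-smooth domain setting and making sure that the estimate \eqref{pcoh06} — stated for a single level $\delta$ — can be integrated over $\delta$ to yield the $X$-norm of the \emph{difference} $(\phi_n^{(2)}-\phi_j^{(2)})(u_{k(n)})$ rather than of each term separately. Concretely, one has to argue that on $\hat R^n(u_{k(n)})\cap\hat R^j(u_{k(n)})$ the two truncations agree (so the difference vanishes there), and off that set the difference is pointwise bounded by $C_0 n$ with the relevant set having measure controlled by \eqref{pcoh04} at level $j/2$; combining with \eqref{pcoh06} at intermediate levels then yields the claimed bound. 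Once \eqref{phi1}, \eqref{phi3} and \eqref{pcohlim} are in hand, Definition~\ref{def:cutoff} is verified and the proof is complete.
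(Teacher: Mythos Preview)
Your outline for \eqref{phi1} and \eqref{pcohlim} is correct and essentially matches the paper: split the integral over $\hat R^n(u)$ and its complement, use \eqref{pcoh01}--\eqref{pcoh04}, and conclude. The paper first reduces to $p=q$ (since the support of $\phi_m^{(2)}(u_n)$ has uniformly bounded measure by \eqref{pcoh04}, the $W^{1,p}$-norm dominates the $W^{1,q}$-norm for functions of this form), but your plan of treating both exponents and intersecting subsequences is fine too.

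For \eqref{phi3} your structural idea is right and agrees with the paper: the difference $(\phi_n^{(2)}-\phi_{j}^{(2)})(u_{k(n)})$ vanishes on $\hat R^n\cap\hat R^{j}$, so one only has to control the integral over the complement, whose measure $\gamma_n\to 0$ by \eqref{pcoh04}; then \eqref{pcoh06} together with Cavalieri's principle turns this into a bound of the form
\[
\sup_{|E|\le\gamma_n}\int_E \big[\min\{n,\cM(|u_{k(n)}|+|\nabla u_{k(n)}|)\}\big]^p\,dx,
\]
and Lemma~\ref{lem:Lpequiintloc} supplies the subsequence that makes this vanish.

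There is, however, a genuine gap in how you invoke Lemma~\ref{lem:Lpequiintloc}. You apply it to $v_n:=|u_n|+|\nabla u_n|$ and then assert that the no-concentration of $\eta_n\circ v_{k(n)}$ ``transfers'' to $\eta_n\circ\cM(v_{k(n)})$. That transfer is not automatic: the subsequence $k(n)$ produced by the lemma is tailored to the specific sequence you feed in, and the maximal operator, while bounded on $L^p$, does not preserve equiintegrability along a \emph{prescribed} subsequence. The fix is simple and is exactly what the paper does: apply Lemma~\ref{lem:Lpequiintloc} directly to the sequence $\cM(|u_n|+|\nabla u_n|)$ (after the rescaling $x\mapsto C_1^{-1/N}x$ to absorb the constant in \eqref{pcoh06}), which is bounded in $L^p(\RR^N)$ for $p>1$ by Lemma~\ref{lem:MaxOpcont}. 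This is also where the restriction $p>1$ enters, as you correctly note. Also, the intermediate bound you wrote, $\int_\Omega(\min\{n,\cM v\}^p-\min\{j,\cM v\}^p)\,dx$, is not what \eqref{pcoh06} plus Cavalieri gives for the \emph{difference}; you need the $\sup_{|E|\le\gamma_n}$ form above, which your ``main obstacle'' paragraph actually gets right.
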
 
The concrete meaning of \eqref{uknequiint} (or \eqref{coequiintB}) in the present setting is as follows:
\begin{prop}\label{prop:coheight2}
Let $1\leq q \leq p<\infty$ and let $\Omega\subset \RR^N$ be a Lipschitz domain.
If $(u_n)\subset X:=W_0^{1,p}(\Omega;\RR^M)\cap W_0^{1,q}(\Omega;\RR^M)$ is a bounded sequence which satisfies \eqref{coequiintB} 
then $\phi^{(2)}_n(u_n)$ does not concentrate in $W^{1,p}(\Omega;\RR^M)$, in the 
sense of Definition~\ref{def:concentrate}. If $p<N$ then $\phi^{(2)}_n(u_n)$ also does not concentrate in  $L^{p^*}(\Omega;\RR^M)$, where $p^*:=\frac{pN}{N-p}$.
\end{prop}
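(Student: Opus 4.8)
The plan is to combine the tail condition \eqref{coequiintB}, which $(u_n)$ satisfies by hypothesis, with the pointwise bound \eqref{pcoh01} for truncations at a \emph{fixed} level, a bound which is uniform in the truncated function. Write $w_n:=\phi^{(2)}_n(u_n)$; by Proposition~\ref{prop:coheight} each $w_n$ lies in $X$, hence in $W^{1,p}(\Omega;\RR^M)$, and we must show that $\sup_{E\subset\Omega,\ \abs{E}\leq\delta}\int_E\abs{w_n}^p+\abs{\nabla w_n}^p\,dx\to 0$ as $\delta\searrow 0$, uniformly in $n$. Fix $\eps>0$ and let $j_0=j_0(\eps)\in\NN$ be as in \eqref{coequiintB}. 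For $n>j_0$ I would use the (trivial) splitting $w_n=\phi^{(2)}_{j_0}(u_n)+r_n$ with $r_n:=\bigl(\phi^{(2)}_n-\phi^{(2)}_{j_0}\bigr)(u_n)$: the fixed-level truncation $\phi^{(2)}_{j_0}(u_n)$ carries a Lipschitz bound independent of $n$, while the remainder $r_n$ is small in $X$, hence small in $W^{1,p}$.

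Indeed, \eqref{coequiintB} gives $\norm{r_n}_X<\eps$, so that $\norm{r_n}_{W^{1,p}(\Omega;\RR^M)}\leq\eps$, and \eqref{pcoh01} gives $\abs{\phi^{(2)}_{j_0}(u_n)(x)}+\abs{\nabla\phi^{(2)}_{j_0}(u_n)(x)}\leq C_0 j_0$ for a.e.~$x\in\Omega$, with $C_0$ independent of $n$ (depending on $\eps$ only through $j_0$). Since $v\mapsto\bigl(\int_E\abs{v}^p+\abs{\nabla v}^p\,dx\bigr)^{1/p}$ is a seminorm, for every measurable $E\subset\Omega$ with $\abs{E}\leq\delta$ one obtains
\begin{align*}
	\left(\int_E\abs{w_n}^p+\abs{\nabla w_n}^p\,dx\right)^{1/p}
	&\leq\left(\int_E\abs{\phi^{(2)}_{j_0}(u_n)}^p+\abs{\nabla\phi^{(2)}_{j_0}(u_n)}^p\,dx\right)^{1/p}
	+\left(\int_E\abs{r_n}^p+\abs{\nabla r_n}^p\,dx\right)^{1/p}\\
	&\leq C_0 j_0\,\delta^{1/p}+c\,\eps,
\end{align*}
with $c=c(N,M,p)$ stemming from the equivalence of norms on $W^{1,p}$. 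Choosing $\delta=\delta(\eps)>0$ so small that $C_0 j_0\,\delta^{1/p}\leq\eps$ then gives $\int_E\abs{w_n}^p+\abs{\nabla w_n}^p\,dx\leq(1+c)^p\eps^p$ for all $n>j_0$ and all $E$ with $\abs{E}\leq\delta$. The finitely many indices $n\leq j_0$ are handled by absolute continuity of the Lebesgue integral: since $\abs{w_n}^p+\abs{\nabla w_n}^p\in L^1(\Omega)$ there is $\delta_n>0$ with $\int_E\abs{w_n}^p+\abs{\nabla w_n}^p\,dx<\eps^p$ whenever $\abs{E}\leq\delta_n$, and replacing $\delta$ by $\min\{\delta,\delta_1,\ldots,\delta_{j_0}\}$ makes the estimate hold for \emph{all} $n$. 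As $\eps>0$ was arbitrary, $w_n$ does not concentrate in $W^{1,p}(\Omega;\RR^M)$.

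For the case $p<N$ the argument is the same after one substitution. Since $r_n=w_n-\phi^{(2)}_{j_0}(u_n)$ is a difference of two functions in $W_0^{1,p}(\Omega;\RR^M)$, the Sobolev inequality on $W_0^{1,p}$ yields $\norm{r_n}_{L^{p^*}(\Omega;\RR^M)}\leq C_S\norm{\nabla r_n}_{L^p(\Omega)}\leq C_S\eps$, which replaces the $W^{1,p}$ estimate on the remainder; the fixed-level part is controlled by $\bigl(\int_E\abs{\phi^{(2)}_{j_0}(u_n)}^{p^*}\,dx\bigr)^{1/p^*}\leq C_0 j_0\,\delta^{1/p^*}$ via \eqref{pcoh01}, and the finitely many remaining indices use $w_n\in L^{p^*}(\Omega;\RR^M)$. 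I do not anticipate a genuine obstacle here; the one step that matters is recognizing that \eqref{coequiintB} says exactly that $\phi^{(2)}_n(u_n)$ is, up to an arbitrarily small error in $X$, a function with an $n$-uniform Lipschitz bound, and such a family trivially fails to concentrate — everything after that is routine $\eps$--$\delta$ bookkeeping.
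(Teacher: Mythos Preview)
Your argument is correct and follows essentially the same route as the paper: split $\phi^{(2)}_n(u_n)=\phi^{(2)}_{j}(u_n)+(\phi^{(2)}_n-\phi^{(2)}_{j})(u_n)$, control the first summand on small sets via the uniform pointwise bound \eqref{pcoh01} and the second via \eqref{coequiintB}, then pass to $L^{p^*}$ by Sobolev embedding. The only cosmetic difference is your treatment of the finitely many indices $n\leq j_0$: the paper observes that \eqref{pcoh01} already gives $\abs{\phi^{(2)}_n(u_n)}+\abs{\nabla\phi^{(2)}_n(u_n)}\leq C_0 n\leq C_0 j_0$ for these $n$ as well, so the \emph{same} explicit $\delta_0=\tfrac{\eps}{2}(4C_0 j_0)^{-p}$ works uniformly, avoiding your appeal to absolute continuity and the minimum over $\delta_1,\ldots,\delta_{j_0}$.
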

\begin{rem}\label{rem:decompW11}
The restriction $p>1$ in Proposition~\ref{prop:coheight} is not just technical. 
More precisely, neither \eqref{phi3} (for $\phi^{(2)}_n$) nor 
the decomposition lemma (Lemma~\ref{lem:dec} below) hold if $q=p=1$.
For instance, consider 
\begin{align*}
	u_n:(-1,1)\to \RR,~~ u_n:=\left\{\begin{array}{rll}
	-t-1  & \quad  & \text{if $t\in (-1,-\frac{1}{n})$,}\\
	(n-1)t && \text{if $t\in [-\frac{1}{n},\frac{1}{n}]$,}\\
	-t+1 && \text{if $t\in (\frac{1}{n},1)$,}
	\end{array}
	\right.
\end{align*}
which is a bounded sequence in $X=W_0^{1,1}((-1,1))$.
Now assume for a moment that \eqref{phi3} is valid for $u_n$, i.e., 
a subsequence $u_{k(n)}$ satisfies \eqref{uknequiint}. 
Since the measure of $\{\phi^{(2)}_n(u_{k(n)})\neq u_{k(n)}\}$ converges to zero
due to \eqref{pcoh02} and \eqref{pcoh04}, 
there are two sequences $t_n^-$,
$t_n^+$ in $(-1,1)$ such that for every $n\in \NN$,
\begin{align*}
	 \text{$t_n^-<-\frac{1}{k(n)}$, $t_n^+>\frac{1}{k(n)}$ and  
	 $\phi^{(2)}_n(u_{k(n)})(t_n^{\pm})=u_{k(n)}(t_n^{\pm})$, 
	 and $t_n^{\pm}\underset{n\to\infty}{\To} 0$.}
\end{align*}
Hence 
\begin{align*}
 \int_{[t_n^-,t_n^+]} [\phi^{(2)}_n(u_{k(n)})]'(s)\,ds= 
 (-t_n^+ +1)-(-t_n^- -1)\to 2\neq 0,
\end{align*}
which means that the first derivative $[\phi^{(2)}_n(u_{k(n)})]'$ does concentrate
(i.e., \eqref{defconcomega} is violated), contradicting 
Proposition~\ref{prop:coheight2}.
\end{rem}
%
\begin{proof}[Proof of Proposition~\ref{prop:coheight}]
For brevity, we write $\phi_n$ instead of $\phi_n^{(2)}$ below.
It is suffices to discuss the case $p=q$: 
As to \eqref{phi3}, observe that
due to \eqref{pcoh04}, the measure of $\{\phi_m(u_n)\neq 0\}$ is bounded uniformly in $n,m\geq 1$
whence the norm in $W^{1,p}$ dominates the norm in $W^{1,q}$ for functions of this form in case $p>q$.
Concerning the other assertions, the general case can be recovered by 
repeating the corresponding argument below with $q$ replacing $p$.\\
Due to \eqref{pcoh01} and \eqref{pcoh02}, $\abs{\nabla \phi_n(u)}+\abs{\phi_n(u)}\leq C_0n$ a.e.~in $\Omega$ and $\phi_n(u)=u$ a.e.~on $\hat{R}^n(u)$.  Consequently,~\eqref{pcoh04} implies that
\begin{equation}\label{propcoh1}
\begin{aligned}
	&\norm{\phi_n(u)-u}_{W^{1,p}(\Omega;\RR^M)}\\
	&\leq \left(2C_0^pn^p\abs{\Omega\setminus \hat{R}^n(u)}\right)^{\frac{1}{p}}
	+ \left(\int_{\Omega\setminus \hat{R}^n(u)} \left(\abs{u}^p+\abs{\nabla u}^p\right)\,dx\right)^{\frac{1}{p}}\\
	&\leq 2(2 (C_0 C_3)^p+1)^{\frac{1}{p}}
	\Big(\int_{\left\{\abs{\nabla u}+\abs{u}>\frac{n}{2}\right\}\cup
	(\Omega\setminus \hat{R}^n(u))}\left(\abs{\nabla u}^p+\abs{u}^p\right)\,dx\Big)^{\frac{1}{p}}.
\end{aligned}
\end{equation}
In particular, the maps $\phi_n$ are equibounded in the sense of~\eqref{phi1} and \eqref{pcohlim} holds.
Last but not least, we verify \eqref{phi3}. We have to show that every bounded sequence $(u_n)\subset W_0^{1,p}(\Omega;\RR^M)$ 
has a subsequence $(u_{k(n)})$ such that
\begin{align} \label{propcohequiint0}
		{}&\int_\Omega \left(\abs{\nabla(\phi_{n}-\phi_{j(n)})(u_{k(n)})}^p 
		+\abs{(\phi_{n}-\phi_{j(n)})(u_{k(n)})}^p \right)\,dx
		\underset{n\to\infty}{\To} 0.
\end{align}
for every sequence $(j(n))\subset \NN$ with $j(n)\to \infty$ as $n\to \infty$ and 
$j(n)<n$ for every $n\in\NN$. 
As a matter of fact, \eqref{pcoh06} allows
us to estimate the left hand side of \eqref{propcohequiint0} in such a way that Lemma~\ref{lem:Lpequiintloc}
can be applied. To achieve this, we proceed as follows:
\begin{align*}
		{}&\frac{1}{2^p}\int_\Omega \left(\abs{\nabla(\phi_{n}-\phi_{j(n)})(u_{k(n)})}^p 
		+\abs{(\phi_{n}-\phi_{j(n)})(u_{k(n)})}^p \right)\,dx \\
		& \leq \frac{1}{2^p}\int_{\Omega\setminus (\hat{R}^{j(n)}\cap \hat{R}^n)} 
		\left(\abs{\nabla(\phi_{n}-\phi_{j(n)})(u_{k(n)})}^p 
		+\abs{(\phi_{n}-\phi_{j(n)})(u_{k(n)})}^p \right)\,dx \\
		& \qquad\qquad\qquad \begin{array}{l}
			\text{since $\phi_{n}(u_{k(n)})=u_{k(n)}=\phi_{j(n)}(u_{k(n)})$ 
			on $\hat{R}^{j(n)}\cap \hat{R}^{n}$ by~\eqref{pcoh02}}
		\end{array}
		\\
		& \leq \int_{\Omega\setminus (\hat{R}^{j(n)}\cap \hat{R}^n)} \left(U_{n,n}\right)^p\,dx 
		+\int_{\Omega\setminus (\hat{R}^{j(n)}\cap \hat{R}^n)} \left(U_{j(n),n}\right)^p\,dx\\
		& \qquad\qquad\qquad\begin{array}{l}
			\text{where}~
			U_{m,n}:=\abs{\nabla \phi_{m}(u_{k(n)})}+\abs{\phi_{m}(u_{k(n)})}\in L^p(\Omega)
		\end{array}
		\\
		& \leq \sup_{\abs{E}\leq \gamma_n}
		\int_{E} \left[C_0\min\left\{n,v_{k(n)}(x)\right\}\right]^p\,dx
		+\sup_{\abs{E}\leq \gamma_n}
		\int_{E} \left[C_0\min\left\{j(n),v_{k(n)}(x)\right\}\right]^p\,dx\\
		& \qquad\qquad\qquad
		\begin{array}{l}
			\text{due to~\eqref{pcoh06} and Cavalieri's priciple, where}\\ 
			\text{$E$ can be any measurable subset of $\RR^N$,}\\
			\gamma_n:=\big|\Omega \setminus (\hat{R}^{j(n)}\cap \hat{R}^n)\big|,~~\text{and}\\
			v_m(x):=\cM(\abs{\nabla u_{m}}+\abs{u_{m}})\big(C_1^{-\frac{1}{N}}x\big)~~
			\text{(cf.~Definition~\ref{def:maxop})}\\
		\end{array}
		\\
		& \leq 2 C_0^p
		\sup_{\abs{E}\leq \gamma_n} 
		\int_{E} \left[\min\left\{n,v_{k(n)}(x)\right\}\right]^p\,dx.
\end{align*}
Since by (\ref{pcoh04}),
\begin{align*}
	\gamma_n=|(\Omega \setminus \hat{R}^{j(n)})\cup(\Omega \setminus \hat{R}^n)|\leq 
	(C_3)^p\left(\frac{1}{j(n)^p}+\frac{1}{n^p}\right) \sup_{m\in \NN}\norm{u_m}_{W^{1,p}(\Omega;\RR^M)}^p 
	\underset{n\to\infty}{\To} 0,
\end{align*}
Lemma~\ref{lem:Lpequiintloc} applied to $v_n\in L^p(\RR^N)$
yields the desired subsequence $k(n)$. Here, $v_n$ is a bounded sequence in $L^p(\RR^N)$
for $p>1$ due to Lemma~\ref{lem:MaxOpcont}
\end{proof}
\begin{proof}[Proof of Proposition~\ref{prop:coheight2}]
For brevity, we write $\phi_n$ instead of $\phi_n^{(2)}$ below.
We first show that $\phi_n(u_n)$ does not concentrate in $W^{1,p}(\Omega;\RR^M)$.
Let $\eps>0$. Due to \eqref{coequiintB}, there exists $j=j(\eps)\in \NN$ such that
\begin{align*}
	\norm{(\phi_n-\phi_{j})(u_n)}_{W^{1,p}(\Omega;\RR^M)}^p<\frac{\eps}{2}{2^{-p}}\quad\text{for every $n>j$}.
\end{align*}  
We claim that as a consequence, 
\begin{align*}
	\norm{\phi_n(u_n)}_{W^{1,p}(E;\RR^M)}^p<\eps~\text{if $\abs{E}<\delta_0$, uniformly in $n\in \NN$,}
\end{align*}
where $\delta_0:=\frac{\eps}{2}(4C_0j)^{-p}$. Here, $C_0>0$ is the constant introduced in
\eqref{pcoh01}.
In case $n>j$ we have
\begin{align*}
	\norm{\phi_n(u_n)}_{W^{1,p}(E;\RR^M)}^p
	&	\leq 2^p\norm{(\phi_n-\phi_{j})(u_n)}_{W^{1,p}(E;\RR^M)}^p
	+2^p\norm{\phi_j(u_n)}_{W^{1,p}(E;\RR^M)}^p\\
	&	\leq 2^p\norm{(\phi_n-\phi_{j})(u_n)}_{W^{1,p}(\Omega;\RR^M)}^p
	+(4C_0)^p j^p \abs{E} \\
	&	< \frac{\eps}{2}+\frac{\eps}{2}=\eps
\end{align*}
provided that $\abs{E}<\delta_0$. On the other hand, if $n\leq j$,
\begin{align*}
	&\norm{\phi_n(u_n)}_{W^{1,p}(E)}^p
	\leq (2C_0n)^p \abs{E}\leq (2C_0j)^p \abs{E}< \frac{\eps}{2^{p+1}}<\eps
\end{align*}
whenever $\abs{E}<\delta_0$. Finally, in case $p<N$, first recall that 
$W^{1,p}(\Omega;\RR^M)$ is continuously embedded in 
$L^{p^*}(\Omega;\RR^M)$. Hence we also have that
\begin{align*}
	\norm{(\phi_n-\phi_{j})(u_n)}_{{L^{p^*}}(\Omega;\RR^M)}^{p^*}<\frac{\eps}{2}{2^{-p^*}}\quad\text{for every $n>j$},
\end{align*}  
for a suitable $j=j(\eps)$. Arguing analogously as above with $\delta_0:=\frac{\eps}{2}(2C_0j)^{-p^*}$, we infer that
$\phi_n(u_n)$ does not concentrate in  $L^{p^*}(\Omega;\RR^M)$.
\end{proof}
%
\subsection{Truncation of gradients below small levels}%
As before let $\Omega\subset \RR^N$ be an Lipschitz domain in the sense of 
Definition~\ref{def:Lipschitzdom}. Although domains with finite measure are allowed from a technical point of view, the way of truncating described below only has practical meaning if the measure of $\Omega$ is infinite.
We want to truncate a function $u\in W_0^{1,p}(\Omega;\RR^M)\cap W_0^{1,q}(\Omega;\RR^M)$ by replacing it with a function $\phi^{(3)}_n(u)$ such that $\phi^{(3)}_n(u)$ is zero outside a set of finite measure and such that $\|u-\phi^{(3)}_n(u)\|_{W^{1,\infty}}$ is bounded by a constant times $\frac{1}{n}$.
In the applications, we use this way of truncating as a method to identify and extract components of a sequence which are "purely spreading", i.e., which converge to zero in $W^{1,\infty}$ while staying away from zero in the norm of $W^{1,p}\cap W^{1,q}$. The definition of $\phi^{(3)}_n$ is based on Theorem~\ref{thm:hcutoffOmega}, which also holds for arbitrarily small truncation levels.
\begin{defn}[Truncation of gradients vector functions below small levels]\label{def:truncvanish}~\\
Let $\Omega\subset \RR^N$ be a Lipschitz domain, $1\leq q\leq p<\infty$, $n\in\NN$ and $u=(u_1,\ldots,u_M)\in W_0^{1,p}(\Omega;\RR^M)$. The truncation
$\phi^{(3)}_n(u)=([\phi^{(3)}_n(u)]_1,\ldots,[\phi_n(u)]_M)$ of $u$ is defined by
\begin{align*}
	[\phi^{(3)}_n(u)]_i:=u_i-\phi_{1/n}(u_i)\quad\text{for $i=1,\ldots,M$},
\end{align*}
where $\phi_{1/n}$ on the right hand side denotes the truncation of scalar functions introduced in Theorem~\ref{thm:hcutoffOmega} (with $\lam:=1/n$). Accordingly, we define
\begin{align*}
	 \hat{R}^{1/n}=\hat{R}^{1/n}(u):=\bigcap_{i=1}^{M} \hat{R}^{1/n}(u_i).
\end{align*}
\end{defn}
From Theorem~\ref{thm:hcutoffOmega} and Corollary~\ref{cor:hcoOmega} we inherit the following properties:
\begin{prop}\label{prop:cospread0}
Let $\Omega\subset \RR^N$ be a Lipschitz domain, $1\leq q\leq p<\infty$, $n\in\NN$ and 
$u\in W_0^{1,p}(\Omega;\RR^M)\cap W_0^{1,q}(\Omega;\RR^M)$. 
Then we have that
\begin{align}
	{}&\begin{aligned}
	\big|\nabla [u-\phi^{(3)}_n(u)](x)\big|+\big|[u-\phi^{(3)}_n(u)](x)\big|\leq C_0 \frac{1}{n}
	\quad\text{for a.e.}~x\in\Omega,
	\end{aligned}
	\label{pcospr01}\\
	{}&\begin{aligned}
	\phi^{(3)}_n(u)(x)=0\quad \text{for a.e.}~x\in \hat{R}^{1/n}(u),
	\end{aligned}
	\label{pcospr02}\\
	{}&\begin{aligned}
   \abs{\Omega \setminus \hat{R}^{1/n}(u)}
   ~\leq~	& (C_3)^r n^r 
   \int_{\left\{\frac{1}{2n^{\beta}}\geq \abs{u}+\abs{\nabla u}> \frac{1}{2n}\right\}}
   \abs{u}^r+\abs{\nabla u}^r\,dx\\
	 & + (C_3)^r n^{1+\beta(r-1)}
	 \int_{\left\{\abs{u}+\abs{\nabla u}>\frac{1}{2n^{\beta}}\right\}}
	 \abs{u}^r+\abs{\nabla u}^r\,dx
	\end{aligned} 
	\label{pcospr03}
\intertext{for $r=p,q$ and $\beta\in (0,1]$ arbitrary. Moreover,}
  {}&\begin{aligned}	
  \frac{1}{n^r}\big|\Omega \setminus \hat{R}^{1/n}(u)\big|\underset{n\to\infty}{\To}0~~\text{for 
  $r=p,q$ and every fixed $u$,}
  \end{aligned}
  \label{pcospr04}
\intertext{and}
	{}&\begin{aligned}
	&\abs{\left\{x\in \Omega \,\left|\, \big|[u-\phi^{(3)}_n(u)](x)\big|+
	\big|\nabla [u-\phi^{(3)}_n(u)](x)\big|>\delta\right.\right\}}\\
	&\qquad \leq C_1 \abs{\left\{x\in \RR^N\,\left|\, C_0
	\min\{n^{-1},\cM(\abs{u}+\abs{\nabla u})(x)\}>\delta\right.\right\}}
	\end{aligned}\label{pcospr05}
\end{align}
for every $\delta\geq 0$, where $\cM$ is the maximal operator of Definiton~\ref{def:maxop}. 
Here, $C_0, C_1, C_3 \geq 1$ are constants which only depend on $M$, $N$ and
$\Omega$.
\end{prop}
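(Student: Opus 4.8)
The plan is to obtain all five assertions from the scalar truncation results, Theorem~\ref{thm:hcutoffOmega} and Corollary~\ref{cor:hcoOmega}, applied coordinatewise to $u=(u_1,\dots,u_M)$ at truncation level $\lam:=1/n$ --- in complete analogy with the proof of Proposition~\ref{prop:coheight0}, and using that the scalar construction remains valid for arbitrarily small levels. The key preliminary observation is that, by Definition~\ref{def:truncvanish}, $[u-\phi^{(3)}_n(u)]_i=\phi_{1/n}(u_i)$ for every $i$, so $u-\phi^{(3)}_n(u)$ is nothing but the scalar truncation $\phi_{1/n}$ applied in each coordinate, while $\hat R^{1/n}(u)=\bigcap_i\hat R^{1/n}(u_i)$. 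With this identification, \eqref{pcospr01} is the uniform $W^{1,\infty}$-bound of Theorem~\ref{thm:hcutoffOmega} (with $\lam=1/n$) summed over the $M$ components, and \eqref{pcospr02} holds because $\phi_{1/n}(u_i)=u_i$ a.e.\ on $\hat R^{1/n}(u_i)$, hence a.e.\ on the smaller set $\hat R^{1/n}(u)$.

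For \eqref{pcospr03} and \eqref{pcospr05} I would intersect the $M$ scalar statements. Since $\Omega\setminus\hat R^{1/n}(u)=\bigcup_{i=1}^M(\Omega\setminus\hat R^{1/n}(u_i))$, summing the corresponding scalar estimate of Theorem~\ref{thm:hcutoffOmega}/Corollary~\ref{cor:hcoOmega} over $i$ and bounding $\abs{u_i}+\abs{\nabla u_i}\le\abs u+\abs{\nabla u}$ gives \eqref{pcospr03}, while \eqref{pcospr05} follows by summing the scalar super-level-set bound, using monotonicity of $\cM$ together with $\abs u+\abs{\nabla u}\le\sum_i(\abs{u_i}+\abs{\nabla u_i})$. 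As in Proposition~\ref{prop:coheight0}, all constants are the scalar ones up to a factor depending only on $M$, with the $N$- and $\Omega$-dependence inherited from Theorem~\ref{thm:hcutoffOmega}.

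The only assertion without a direct scalar analogue is \eqref{pcospr04}, which I regard as the main point and would derive from \eqref{pcospr03}. Fixing $u$, choosing $\beta\in(0,1)$, dividing \eqref{pcospr03} by $n^r$ and enlarging the second integration domain to all of $\Omega$ yields
\begin{align*}
\tfrac1{n^r}\bigl|\Omega\setminus\hat R^{1/n}(u)\bigr|\le(C_3)^r\int_{\{\frac{1}{2n}<\abs u+\abs{\nabla u}\le\frac{1}{2n^{\beta}}\}}\!\bigl(\abs u^r+\abs{\nabla u}^r\bigr)dx+(C_3)^r n^{(1-\beta)(1-r)}\int_\Omega\bigl(\abs u^r+\abs{\nabla u}^r\bigr)dx .
\end{align*}
Here $\abs u^r+\abs{\nabla u}^r\in L^1(\Omega)$ because $u\in W^{1,p}\cap W^{1,q}$ and $r\in\{p,q\}$. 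The second term tends to $0$ since $(1-\beta)(1-r)<0$ for $r>1$; the first vanishes by dominated convergence, as the indicator of $\{\frac{1}{2n}<\abs u+\abs{\nabla u}\le\frac{1}{2n^{\beta}}\}$ converges to $0$ pointwise a.e.\ (a point with $\abs u+\abs{\nabla u}>0$ leaves the set once $\frac{1}{2n^{\beta}}<\abs u+\abs{\nabla u}$, and points where $\abs u+\abs{\nabla u}=0$ never enter it). This settles \eqref{pcospr04} for $r>1$.

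I expect the genuine difficulty to be the borderline case $r=1$, which arises only when $q=1$ and is reminiscent of the $p=1$ obstruction of Remark~\ref{rem:decompW11}: then the prefactor $n^{(1-\beta)(1-r)}$ no longer decays, the weak-$(1,1)$ estimate for $\cM$ only bounds $\tfrac1n\abs{\Omega\setminus\hat R^{1/n}(u)}$ without forcing it to zero, and a sharper argument (or the restriction $p>1$, under which only exponents exceeding $1$ occur in the applications invoking \eqref{pcospr04}) would be needed. Apart from this edge case, the proof is entirely a bookkeeping reduction to the scalar theorem plus one dominated-convergence step.
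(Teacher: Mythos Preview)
Your approach is correct and coincides with the paper's: the proof there simply records that \eqref{pcospr01}--\eqref{pcospr05} are the component-wise translations of \eqref{phcutoffomegaubound}, \eqref{phcutoffomegausupp}, \eqref{hcoOsmalllam}, \eqref{hcoOlim} and \eqref{phcutoffomegauhatMuineq} from Theorem~\ref{thm:hcutoffOmega} and Corollary~\ref{cor:hcoOmega}, with constants adjusted by a factor depending only on $M$. The only difference is that for \eqref{pcospr04} you re-derive the limit from \eqref{pcospr03}, whereas the paper cites \eqref{hcoOlim} directly; but your argument is precisely the one used in the proof of \eqref{hcoOlim} in Corollary~\ref{cor:hcoOmega}, so this is not a genuinely different route.

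Your caveat about $r=1$ is well taken and in fact sharper than what the paper says. The proof of \eqref{hcoOlim} in Corollary~\ref{cor:hcoOmega} relies on $1+\beta(p-1)<p$ for $\beta<1$, which fails at $p=1$; and indeed $\lambda\,|\{\cM f>\lambda\}|$ need not tend to zero as $\lambda\to 0$ for $f\in L^1$ (take $f$ a nonzero function with compact support on $\RR^N$, for which $\cM f(x)\gtrsim |x|^{-N}$ at infinity). So the assertion \eqref{pcospr04} is only secured for $r>1$ by these arguments. This is harmless for the paper's purposes: the sole use of \eqref{pcospr04} is in Proposition~\ref{prop:cospread}, which assumes $q>1$, so only exponents $r\in\{p,q\}\subset(1,\infty)$ occur. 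You may simply note this restriction rather than treat it as a gap in your argument.
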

\begin{proof}
Taking into account the definition of $\phi^{(3)}_n$ and $\hat{R}^{1/n}(u)$, the assertions are due to
\eqref{phcutoffomegaubound}, \eqref{phcutoffomegausupp}, \eqref{hcoOsmalllam}, \eqref{hcoOlim} and \eqref{phcutoffomegauhatMuineq},
respectively. The constants in these statements coincide with corresponding ones in the assertion
up to a factor which only depends on $M$.
\end{proof}
If $q>1$, the maps $\phi^{(3)}_n$ form a family of truncation operators on 
$W_0^{1,p}\cap W_0^{1,p}$ in the sense of Definition~\ref{def:cutoff}:
\begin{prop}\label{prop:cospread}
Let $1<q\leq p<\infty$ and let $\Omega\subset \RR^N$ be a Lipschitz domain. Then the family $\phi_n^{(3)}$ ($n\in \NN$) of maps on $X:=W_0^{1,p}(\Omega;\RR^M)\cap W_0^{1,p}(\Omega;\RR^M)$ satisfies~\eqref{phi1} and \eqref{phi3}.
Moreover,
\begin{align}
	\phi^{(3)}_n(u)\underset{n\to\infty}{\To} u~~\text{in $W_0^{1,p}\cap W_0^{1,q}$, for every fixed $u\in X$.}
	\label{pcovlim}
\end{align}
\end{prop}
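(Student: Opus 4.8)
The plan is to mirror the structure of the proof of Proposition~\ref{prop:coheight}, replacing the large-level truncation estimates by their small-level counterparts from Proposition~\ref{prop:cospread0}, and reducing once more to Lemma~\ref{lem:Lpequiintloc} applied to a maximal-function sequence. As before, it suffices to treat the case $q=p$: by \eqref{pcospr03} (with, say, $\beta=1$) the set $\{\phi^{(3)}_m(u_n)\neq 0\}$ has finite measure, bounded uniformly in $n,m$, so on functions of this form the $W^{1,p}$-norm controls the $W^{1,q}$-norm when $p>q$; hence \eqref{phi1} and \eqref{phi3} in the general case follow from the case $p=q$ by the argument of Remark~\ref{remcutoffdef}~(iv) together with the $p=q$ estimates, and \eqref{pcovlim} likewise reduces to the diagonal case. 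So from now on $p=q>1$.

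First I would verify \eqref{phi1} and \eqref{pcovlim}. Writing $\phi_n$ for $\phi^{(3)}_n$, decompose $\|u-\phi_n(u)\|_{W^{1,p}(\Omega;\RR^M)}^p$ over $\hat R^{1/n}(u)$ and its complement. On $\hat R^{1/n}(u)$ we have $\phi_n(u)=0$ by \eqref{pcospr02}, so $u-\phi_n(u)=u$ there, contributing $\int_{\hat R^{1/n}(u)}(\abs{u}^p+\abs{\nabla u}^p)\,dx$; but note that by \eqref{pcospr01} the integrand is pointwise at most $(C_0/n)^p$, and moreover a.e.\ on $\hat R^{1/n}(u)$ we in fact have $\abs u+\abs{\nabla u}\le C_0/n$, so this term is small. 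Wait — more carefully: on $\hat{R}^{1/n}(u)$, $u=u-\phi_n(u)$ and \eqref{pcospr01} gives $\abs{u}+\abs{\nabla u}\le C_0/n$ there, so $u\to 0$ pointwise a.e.\ as $n\to\infty$ with dominating function $\abs u+\abs{\nabla u}\in L^p$; by dominated convergence $\int_{\hat R^{1/n}(u)}(\abs u^p+\abs{\nabla u}^p)\,dx\to 0$. On $\Omega\setminus\hat R^{1/n}(u)$, we bound $\abs{u-\phi_n(u)}^p+\abs{\nabla(u-\phi_n(u))}^p\le (C_0/n)^p$ by \eqref{pcospr01}, so that contribution is at most $(C_0/n)^p\abs{\Omega\setminus\hat R^{1/n}(u)}$, which tends to $0$ by \eqref{pcospr04}. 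This proves $\phi_n(u)\to u$, i.e.\ \eqref{pcovlim}; equiboundedness \eqref{phi1} follows since $\|\phi_n(u)\|_{W^{1,p}}\le \|u\|_{W^{1,p}}+\|u-\phi_n(u)\|_{W^{1,p}}$ and the latter is uniformly bounded (e.g.\ by $C$ times $\|u\|_{W^{1,p}}$, using \eqref{pcospr03} to control $\abs{\Omega\setminus\hat R^{1/n}(u)}$ by $(C_3)^p\|u\|_{W^{1,p}}^p$ at $\beta=1$ and \eqref{pcospr01}).

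The main work is \eqref{phi3}. Given a bounded sequence $(u_n)\subset W_0^{1,p}(\Omega;\RR^M)$, I need a subsequence $u_{k(n)}$ so that for every $(j(n))$ with $j(n)<n$, $j(n)\to\infty$,
\begin{align*}
	\int_\Omega\Big(\abs{\nabla(\phi_n-\phi_{j(n)})(u_{k(n)})}^p+\abs{(\phi_n-\phi_{j(n)})(u_{k(n)})}^p\Big)\,dx\underset{n\to\infty}{\To}0.
\end{align*}
As in Proposition~\ref{prop:coheight}, on $\hat R^{1/n}(u_{k(n)})\cap\hat R^{1/j(n)}(u_{k(n)})$ both truncations vanish by \eqref{pcospr02}, so the integrand is supported on $\Omega\setminus(\hat R^{1/n}\cap\hat R^{1/j(n)})$, whose measure I call $\gamma_n$; using $\abs{a-b}^p\le 2^{p-1}(\abs a^p+\abs b^p)$ and then \eqref{pcospr05} together with Cavalieri's principle, I bound the left side by a constant times $\sup_{\abs E\le\gamma_n}\int_E[\min\{n^{-1},v_{k(n)}(x)\}]^p\,dx$ where $v_m(x):=\cM(\abs{\nabla u_m}+\abs{u_m})(C_1^{-1/N}x)$ — wait, that min is the wrong order of magnitude for applying Lemma~\ref{lem:Lpequiintloc} directly. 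The correct structure: $\min\{n^{-1},v_{k(n)}\}^p\le n^{-p(1-1/p)}\cdot\min\{n^{-1},v_{k(n)}\}$ is not homogeneous enough either; instead, since $\min\{n^{-1},t\}\le t$ always, $\int_E[\min\{n^{-1},v_{k(n)}\}]^p\le\int_E \abs{v_{k(n)}}^p$, and the point is that $\gamma_n\to 0$. Indeed \eqref{pcospr03} at $\beta=1$ gives $\gamma_n\le(C_3)^p(j(n)^p+n^p)(\text{quantities}\to 0)$ — but those quantities involve integrals over $\{\abs u+\abs{\nabla u}>1/(2n)\}$ which is all of $\mathrm{supp}\,u$ eventually, so this does NOT tend to zero; here is where the $\beta<1$ refinement matters. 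Choosing $\beta\in(0,1)$, the first term of \eqref{pcospr03} is $(C_3)^p n^p\int_{\{1/(2n^\beta)\ge\abs u+\abs{\nabla u}>1/(2n)\}}(\abs u^p+\abs{\nabla u}^p)$ which is $\le (C_3)^p n^p (1/(2n^\beta))^{p}\cdot|\{\dots\}|$... this needs the two-sided bound. The cleaner route, and the one I expect the paper takes: apply Lemma~\ref{lem:Lpequiintloc} to $v_n\in L^p(\RR^N)$ — bounded there by Lemma~\ref{lem:MaxOpcont} since $p>1$ — obtaining a subsequence on which $(\eta_m\circ v_{k(n)})$ does not concentrate, and separately use \eqref{pcospr04}/\eqref{pcospr03} to show $\gamma_n\to 0$ along that subsequence after one more diagonalization over a suitable choice of $\beta$. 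Then "not concentrating" plus $\gamma_n\to0$ forces the supremum-integral to vanish. The hard part, and the crux of the whole proposition, is precisely this coupling: showing $\gamma_n=\abs{\Omega\setminus(\hat R^{1/n}\cap\hat R^{1/j(n)})}\to 0$ uniformly in the choice of $j(n)<n$ with $j(n)\to\infty$, which is why \eqref{pcospr03} is stated with the free parameter $\beta\in(0,1]$ — one picks $\beta$ depending on a tolerance $\eps$ so that both terms on the right of \eqref{pcospr03} become small, using boundedness of $(u_n)$ in $W^{1,p}$ for the second term and smallness of the level set $\{\abs u+\abs{\nabla u}>1/(2n)\}\cap\{\le 1/(2n^\beta)\}$ (which shrinks as $n\to\infty$ once $\beta$ is fixed, because $u_{k(n)}$ ranges over a fixed bounded family — more precisely one first extracts so that the relevant tail integrals are small, then diagonalizes). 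Once $\gamma_n\to0$ is in hand, the non-concentration of $(\eta_n\circ v_{k(n)})$ delivered by Lemma~\ref{lem:Lpequiintloc} closes the argument exactly as in the proof of Proposition~\ref{prop:coheight}, and \eqref{phi3} follows.
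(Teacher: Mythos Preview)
Your treatment of \eqref{phi1} and \eqref{pcovlim} is fine and essentially matches the paper's argument via the inequality
\[
\int_\Omega \big(|\nabla[u-\phi_n(u)]|^p+|u-\phi_n(u)|^p\big)\,dx
\le \int_{\{|u|+|\nabla u|\le C_0/n\}}\big(|u|^p+|\nabla u|^p\big)\,dx
+2(C_0/n)^p\,|\Omega\setminus\hat R^{1/n}(u)|,
\]
together with \eqref{pcospr03} (for equiboundedness) and \eqref{pcospr04} (for the limit). The chain of estimates you set up for \eqref{phi3} --- using \eqref{pcospr02} to restrict to $\Omega\setminus(\hat R^{1/n}\cap\hat R^{1/j(n)})$, then \eqref{pcospr05} and Cavalieri to pass to the maximal-function sequence $v_m$ --- is also the right skeleton.

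The genuine gap is your claim that the crux is showing $\gamma_n:=|\Omega\setminus(\hat R^{1/n}\cap\hat R^{1/j(n)})|\to 0$. This is simply false, and no choice of $\beta\in(0,1]$ in \eqref{pcospr03} repairs it: already for a fixed nonzero $u$, the set $\Omega\setminus\hat R^{1/m}(u)$ is essentially $\{\cM(|u|+|\nabla u|)>1/m\}$, which \emph{grows} as $m\to\infty$ toward the support of $u$. The bound \eqref{pcospr03} at $\beta=1$ gives only $\gamma_n\le (C_3)^p n^p\sup_m\|u_m\|_{W^{1,p}}^p$, and \eqref{pcospr04} says merely $\gamma_n=o(n^p)$ for fixed $u$ --- neither yields $\gamma_n\to 0$. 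Consequently the non-concentration statement of Lemma~\ref{lem:Lpequiintloc} (small-measure sets) is the wrong tool.

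What the paper actually does is accept that $\gamma(n)$ grows like $Mn^p$ and exploit instead that the \emph{integrand} $\min\{j(n)^{-1},v_h\}^p=\min\{j(n)^{-p},|v_h|^p\}$ becomes uniformly small. The relevant auxiliary result is Lemma~\ref{lem:Lpequispread}: applied to the bounded sequence $(v_m)\subset L^p(\RR^N)$ it yields a subsequence $k(m)$ with
\[
\sup_{|E|\le m}\int_E \min\{\delta,|v_{k(m)}|^p\}\,dx\ \underset{\delta\to 0}{\To}\ 0
\quad\text{uniformly in }m,
\]
and then one sets $h(n):=k(\gamma(n))$ so that the bound $|E|\le\gamma(n)$ is absorbed and $\delta=j(n)^{-p}\to 0$ finishes the job. (The paper's citation of Lemma~\ref{lem:Lpequiintloc} at this point is misleading; the re-indexing $h(n)=k(\gamma(n))$ is the signature of Lemma~\ref{lem:Lpequispread}, whose proof in turn rests on Lemma~\ref{lem:Lpequiintloc} applied to the distribution functions of $|v_m|^p$, not to $v_m$ itself.) Once you make this correction, your argument goes through.
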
 
In the present setting, the concrete meaning of the property \eqref{uknequiint} (or \eqref{coequiintB}) 
in \eqref{phi3} is as follows:
\begin{prop}\label{prop:cospread2}
Let $\Omega\subset \RR^N$ a Lipschitz domain and let $1\leq q\leq p<\infty$.
If $(u_n)\subset X:=W_0^{1,p}(\Omega;\RR^M)\cap W_0^{1,q}(\Omega;\RR^M)$ is a bounded sequence which satisfies \eqref{coequiintB} for $\phi_n=\phi^{(3)}_n$,
then $\phi^{(3)}_n(u_n)$ does not spread out in $W^{1,q}(\Omega;\RR^M)$, in the 
sense of Definition~\ref{def:concentrate}. 
\end{prop}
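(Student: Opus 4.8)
The plan is to verify the defining property \eqref{defspread} for $\phi^{(3)}_n(u_n)$ head on, using the elementary observation that a function supported on a set of measure $\le M$ automatically satisfies $\int\min\{\delta,\abs{\cdot}^q\}\le\delta M$. Fix a multiindex $\alpha$ with $\abs{\alpha}\le 1$ and let $\eps>0$. Since by hypothesis $(u_n)$ satisfies \eqref{coequiintB} for $\phi_n=\phi^{(3)}_n$, there is $j=j(\eps)\in\NN$ such that $\norm{(\phi^{(3)}_n-\phi^{(3)}_j)(u_n)}^q_{W^{1,q}(\Omega;\RR^M)}<2^{-q}\eps$ for every $n>j$ (using $\norm{\cdot}_{W^{1,q}}\le\norm{\cdot}_X$). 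For such $n$ I would split $\phi^{(3)}_n(u_n)=(\phi^{(3)}_n-\phi^{(3)}_j)(u_n)+\phi^{(3)}_j(u_n)$ (these are honest elements of $X$, so $D^\alpha$ is additive), estimate pointwise $\abs{D^\alpha\phi^{(3)}_n(u_n)}^q\le 2^{q-1}\abs{D^\alpha(\phi^{(3)}_n-\phi^{(3)}_j)(u_n)}^q+2^{q-1}\abs{D^\alpha\phi^{(3)}_j(u_n)}^q$, and use $\min\{\delta,a+b\}\le\min\{\delta,a\}+\min\{\delta,b\}$ ($a,b\ge0$) to break $\int_\Omega\min\{\delta,\abs{D^\alpha\phi^{(3)}_n(u_n)}^q\}\,dx$ into two contributions. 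The first is harmless: $\int_\Omega\min\{\delta,2^{q-1}\abs{D^\alpha(\phi^{(3)}_n-\phi^{(3)}_j)(u_n)}^q\}\,dx\le 2^{q-1}\norm{(\phi^{(3)}_n-\phi^{(3)}_j)(u_n)}^q_{W^{1,q}}<\tfrac\eps2$.

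The second contribution is the heart of the matter, and the key point is that $D^\alpha\phi^{(3)}_j(u_n)$ vanishes, up to a null set, off $\Omega\setminus\hat{R}^{1/j}(u_n)$: indeed $\phi^{(3)}_j(u_n)=0$ a.e.\ on $\hat{R}^{1/j}(u_n)$ by \eqref{pcospr02}, and applying Lemma~7.7 of \cite{GiTru83B} componentwise, also $\nabla\phi^{(3)}_j(u_n)=0$ a.e.\ there. By \eqref{pcospr03} with $r=q$ and $\beta=1$ — so that the first integral on its right-hand side runs over the empty set —
\[
	\abs{\Omega\setminus\hat{R}^{1/j}(u_n)}\le (C_3)^q j^q\int_{\{\abs{u_n}+\abs{\nabla u_n}>1/(2j)\}}\big(\abs{u_n}^q+\abs{\nabla u_n}^q\big)\,dx\le (C_3)^q j^q R=:M(j),
\]
where $R:=\sup_n\int_\Omega(\abs{u_n}^q+\abs{\nabla u_n}^q)\,dx<\infty$ because $(u_n)$ is bounded in $W^{1,q}$. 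Hence $\int_\Omega\min\{\delta,2^{q-1}\abs{D^\alpha\phi^{(3)}_j(u_n)}^q\}\,dx\le\delta\,M(j)$, so altogether $\int_\Omega\min\{\delta,\abs{D^\alpha\phi^{(3)}_n(u_n)}^q\}\,dx<\tfrac\eps2+\delta\,M(j)$ for every $n>j$.

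The finitely many indices $n\le j$ I would treat the same way but without splitting: by \eqref{pcospr02} and Lemma~7.7 of \cite{GiTru83B}, $D^\alpha\phi^{(3)}_n(u_n)=0$ a.e.\ on $\hat{R}^{1/n}(u_n)$, and \eqref{pcospr03} (again $r=q$, $\beta=1$) gives $\abs{\Omega\setminus\hat{R}^{1/n}(u_n)}\le(C_3)^q n^q R\le M(j)$, so $\int_\Omega\min\{\delta,\abs{D^\alpha\phi^{(3)}_n(u_n)}^q\}\,dx\le\delta\,M(j)$ here too. Taking $\delta_0:=\eps/(2M(j))$ then yields $\int_\Omega\min\{\delta,\abs{D^\alpha\phi^{(3)}_n(u_n)}^q\}\,dx<\eps$ for all $\delta<\delta_0$, all $n\in\NN$ and every $\abs{\alpha}\le1$, which is exactly \eqref{defspread}. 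I expect the only delicate point to be the uniformity in $n$ of the support bound: estimate \eqref{pcospr03} is designed so that $\abs{\Omega\setminus\hat{R}^{1/n}(u)}$ may grow with $n$ (this is what the parameter $\beta$ is for), but for a \emph{fixed} truncation level $1/j$ it collapses to a finite quantity dominated by the $W^{1,q}$-bound on the $u_n$; that, together with \eqref{coequiintB} and the trivial support estimate for $\min\{\delta,\abs{\cdot}^q\}$, is what forces $\phi^{(3)}_n(u_n)$ not to spread out.
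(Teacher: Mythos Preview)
Your proof is correct and follows essentially the same route as the paper's: split $\phi^{(3)}_n(u_n)$ into the tail $(\phi^{(3)}_n-\phi^{(3)}_j)(u_n)$ controlled via \eqref{coequiintB} and the piece $\phi^{(3)}_j(u_n)$ supported on $\Omega\setminus\hat R^{1/j}(u_n)$, whose measure is bounded uniformly in $n$ by \eqref{pcospr02}--\eqref{pcospr03}. The only cosmetic differences are your use of the sharper constant $2^{q-1}$ and your explicit appeal to Lemma~7.7 of \cite{GiTru83B} for the vanishing of $\nabla\phi^{(3)}_j(u_n)$ on $\hat R^{1/j}(u_n)$.
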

%
\begin{proof}[Proof of Proposition~\ref{prop:cospread}]
To simplify notation, we write $\phi_n=\phi^{(3)}_n$ below. 
It is enough to discuss the case $p=q$; the general case can be recovered
by repeating each argument with $q$ replacing $p$.
Due to \eqref{pcospr01} and \eqref{pcospr02},
we have that 
\begin{equation}
\begin{aligned}
	&\int_\Omega \left(\abs{\nabla[u-\phi_n(u)]}^p+\abs{u-\phi_n(u)}^p\right)\,dx\\
	&\qquad\leq \int_{\{\abs{\nabla u}+\abs{u}\leq C_0\frac{1}{n}\}} 
	\left(\abs{\nabla u}^p+\abs{u}^p\right)\,dx
	+ 2(C_0)^p \frac{1}{n^p} \abs{\Omega\setminus \hat{R}^{1/n}(u)}
\end{aligned}\label{pcov1}
\end{equation}
Using \eqref{pcospr04}, we obtain \eqref{pcovlim}, and \eqref{phi1} is a consequence of \eqref{pcospr03} for $\beta=1$.
Last but not least, we verify \eqref{phi3}. We have to show that every bounded sequence $(u_n)\subset W_0^{1,p}(\Omega;\RR^M)$ 
has a subsequence $(u_{h(n)})$ such that
\begin{align} \label{pcov6}
		{}&\int_\Omega \left(\abs{\nabla(\phi_{n}-\phi_{j(n)})(u_{h(n)})}^p 
		+\abs{(\phi_{n}-\phi_{j(n)})(u_{h(n)})}^p \right)\,dx
		\underset{n\to \infty}{\to} 0.
\end{align}
for every sequence $(j(n))\subset \NN$ with $j(n)\to \infty$ as $n\to \infty$ and 
$j(n)<n$ for every $n\in\NN$. 
First note that
\begin{align*}
  \gamma(n):=Mn^p,~~\text{with $M\in \NN$ fixed such that $M\geq 2(C_3)^p\sup_{m\in\NN}\norm{u_m}_{W^{1,p}}^p$}, 
\end{align*}
satisfies
\begin{align}\label{pcov7}
	\gamma(n)
	\geq \sup_{m\in\NN}\abs{\Omega \setminus \big(\hat{R}^{1/j(n)}(u_m)\cap \hat{R}^{1/n}(u_m)\big)},
\end{align}
as a consequence of \eqref{pcospr03} and the fact that $n>j(n)$.
We now estimate the left hand side of \eqref{pcov6}. For every $h\in \NN$, 
we have that
\begin{align*}
		{}&\frac{1}{2^p}\int_\Omega \left(\abs{\nabla(\phi_{n}-\phi_{j(n)})(u_{h})}^p 
		+\abs{(\phi_{n}-\phi_{j(n)})(u_{h})}^p \right)\,dx \\
		& \leq \frac{1}{2^p}\int_{\Omega\setminus(\hat{R}^{1/j(n)}\cap \hat{R}^{1/n})}  
		\left(\abs{\nabla(\phi_{n}-\phi_{j(n)})(u_{h})}^p 
		+\abs{(\phi_{n}-\phi_{j(n)})(u_{h})}^p \right)\,dx \\
		& \qquad\qquad \begin{array}{l}
			\text{since $\phi_{n}(u_{h})=0$ on $\hat{R}^{1/n}=\hat{R}^{1/n}(u_{h})$	and}\\ 
			\text{$\phi_{j(n)}(u_{h})=0$ on 
			$\hat{R}^{1/j(n)}=\hat{R}^{1/j(n)}(u_{h})$ by~\eqref{pcospr02}}
		\end{array}
		\\
		& \leq \int_{\Omega\setminus (\hat{R}^{1/j(n)}\cap \hat{R}^{1/n})} \left(U_{n,h}\right)^p\,dx 
		+\int_{\Omega\setminus (\hat{R}^{1/j(n)}\cap \hat{R}^{1/n})} \left(U_{j(n),h}\right)^p\,dx\\
		& \qquad\qquad\begin{array}{l}
			\text{where}~U_{m,h}:=\abs{\nabla (u_{h}-\phi_{m}(u_{h}))}
			+\abs{u_{h}-\phi_{m}(u_{h})}\in L^p(\Omega)
		\end{array}
		\\
		& \leq 
		\sup_{\abs{E}\leq\gamma(n)}
		\int_{E} \left(U_{n,h}\right)^p\,dx 
		+\sup_{\abs{E}\leq\gamma(n)}
		\int_{E} \left(U_{j(n),h}\right)^p\,dx \\
		& \qquad\qquad\begin{array}{l}
			\text{due to~\eqref{pcov7}, where $E$ can be any measurable subset of $\Omega$}
		\end{array}
		\\
		& \leq 
		\sup_{\abs{E}\leq\gamma(n)}
		\int_{E} \left[C_0\min\left\{n^{-1},v_{h}\right\}\right]^p dx
		+\sup_{\abs{E}\leq\gamma(n)}
		\int_{E} \left[C_0\min\left\{j(n)^{-1},v_{h}\right\}\right]^p dx
		\\
		& \qquad\qquad
		\begin{array}{l}
			\text{due to~\eqref{pcospr05} and Cavalieri's principle, where}\\
			\text{$E$ can be any measurable subset of $\RR^N$ and}\\
			v_h(x):=\cM(\abs{\nabla u_{h}}+\abs{u_{h}})(C_1^{-\frac{1}{N}}x)
			~~\text{(cf.~Definition~\ref{def:maxop})}
		\end{array}
		\\
		& \leq 2 C_0^p\sup_{\abs{E}\leq \gamma(n)}
 	  \int_{E} \left[\min\left\{j(n)^{-1},v_{h}(x)\right\}\right]^p\,dx.
\end{align*}
Hence, Lemma~\ref{lem:Lpequiintloc} applied to the sequence $(v_n)\subset L^p(\RR^N)$ 
(which is bounded in $L^p$ for $p>1$, due to Lemma~\ref{lem:MaxOpcont}) 
yields a subsequence $k(n)$ of $n$ such that \eqref{pcov6} is satisfied for $h(n):=k(\gamma(n))$.
\end{proof}
\begin{proof}[Proof of Proposition~\ref{prop:cospread2}]
Again, we write $\phi_n$ instead of $\phi_n^{(3)}$ below.
We first verify that $\nabla\phi_n(u_n)$ does not spread out in $L^{q}(\Omega;\RR^M)$.
Let $\eps>0$. Due to \eqref{coequiintB}, there exists $j=j(\eps)\in \NN$ such that
\begin{align}\label{pcos2e1}
	\norm{\nabla(\phi_n-\phi_{j})(u_n)}_{L^q(\Omega;\RR^M)}^q<\frac{\eps}{2}{2^{-q}}\quad\text{for every $n>j$}.
\end{align}  
It is enough to show that for any $\delta\geq 0$,
\begin{align}\label{pcos2e2}
	\int_\Omega \min\{\delta,\abs{\nabla \phi_n(u_n)}^q\}\,dx<\eps~
	\text{for every $n\in \NN$, if $\delta<\delta_0:=\frac{\eps}{2}(2C_3jS)^{-q}$.}
\end{align}
Here, $C_3>0$ is the constant introduced in \eqref{pcospr03}, and
$S>0$ is a constant such that $\norm{\phi_m(u_n)}_{W^{1,q}}\leq S$, uniformly in $m,n\in\NN$.
For $n\leq j$, \eqref{pcos2e2} is a consequence of the estimate
\begin{align}\label{pcos2e3}
  \sup_{n\in\NN,m\leq j}\int_\Omega \min\{\delta,\abs{\nabla \phi_m(u_n)}^q\}\,dx
  \leq \sup_{n\in\NN,m\leq j} \delta \abs{\{\phi_m(u_n)\neq 0\}}
  \leq \delta (C_3jS)^q,
\end{align}
which is due to \eqref{pcospr02} and \eqref{pcospr03}.
On the other hand, if $n>j$, 
\begin{align*}
  & \int_\Omega \min\{\delta,\abs{\nabla \phi_n(u_n)}^q\}\,dx\\
  &	\leq 2^q
  \int_\Omega \min\{\delta,\abs{\nabla \phi_n(u_n)-\nabla \phi_j(u_n)}^q\}\,dx
  +2^q\int_\Omega \min\{\delta,\abs{\nabla \phi_j(u_n)}^q\}\,dx\\
  &	\leq 2^q
  \int_\Omega \abs{\nabla \phi_n(u_n)-\nabla \phi_j(u_n)}^q\,dx
  +(2C_3jS)^q \delta 
  < \frac{\eps}{2}+\frac{\eps}{2}=\eps
\end{align*}
provided that $\delta<\delta_0$, due to \eqref{pcos2e1} and \eqref{pcos2e3}. 
Analogously, we infer that $\phi_n(u_n)$ does not spread out in $L^{q}(\Omega;\RR^M)$.
\end{proof}
\subsection{A decomposition lemma}
Consider a sequence $(u_n)$ in $W_0^{1,p}(\Omega)\cap W_0^{1,q}(\Omega)$ and 
three subsequences $(k_i(n))_n$ of $n$, $i=1,2,3$.
A subsequence of $(u_n)$ then decomposes as follows:
\begin{align}
	u_{k(n)}=U_n^{0}+U_n^{1}+U_n^{2}+U_n^{3}+U_n^{4},~~\text{with $k(n):=k_2(k_1(k_3({n})))$},
	\label{ldecu}
\end{align}
where
\begin{equation}\label{ldec0}
\begin{aligned}
	&\begin{alignedat}{5}
	&U_n^{0}&&:=\big[~~~~~I && \circ ~~~\phi_{k_3(n)}^{(1)} && \circ ~~~\phi_{k_1(k_3(n))}^{(2)}  
		&& \big](u_{k(n)}),\\ 
	&U_n^{1}&&:=\big[~~~~~I && \circ ~~~\phi_{k_3(n)}^{(1)} && \circ(I-\phi_{k_1(k_3(n))}^{(2)})
		&& \big](u_{k(n)}),\\ 
	&U_n^{2}&&:=\big[~~~~~I && \circ (I-\phi_{k_3(n)}^{(1)}) && \circ(I-\phi_{k_1(k_3(n))}^{(2)})
		&& \big](u_{k(n)}),\\
	&U_n^{3}&&:=\big[~~~~\phi_{n}^{(3)} && \circ (I-\phi_{k_3(n)}^{(1)}) 
		&& \circ ~~~\phi_{k_1(k_3(n))}^{(2)} && \big](u_{k(n)}),\\ 
	&U_n^{4}&&:=\big[(I-\phi_{n}^{(3)}) && \circ (I-\phi_{k_3(n)}^{(1)}) 
		&& \circ ~~~\phi_{k_1(k_3(n))}^{(2)} && \big](u_{k(n)}).
	\end{alignedat}
\end{aligned}
\end{equation}
%
%
Combining properties of $\phi_n^{(1)}$, $\phi_n^{(2)}$ and $\phi_n^{(3)}$ derived above, we obtain 
\begin{lem}[decomposition lemma]\label{lem:dec}
Let $1<q\leq p<\infty$
and let $(u_n)$ be a bounded sequence in $X:=W_0^{1,p}(\Omega;\RR^M)\cap W_0^{1,q}(\Omega;\RR^M)$,
where $\Omega$ is a Lipschitz domain in $\RR^N$ in the sense of Definition~\ref{def:Lipschitzdom}.
Then 
each of the sequences $U_n^0,\ldots,U_n^4$ defined in \eqref{ldec0} 
is bounded in $W_0^{1,p}(\Omega)\cap W_0^{1,q}(\Omega)$.
Moreover, there exist three subsequences $k_1(n)$, $k_2(n)$ and $k_3(n)$ of $n$ such that
\eqref{ldec1} below holds in each of the cases (i)--(iv) listed thereafter.
For any such choice, the component sequences $U_n^0,\ldots,U_n^4$ 
carry the following properties:
\begin{align*}
	\begin{alignedat}{2}
			&&\mbox{\normalfont{(a)}}&
			\begin{array}[t]{l}
				U_n^{0}~\text{is equiintegrable in $W^{1,p}$, $W^{1,q}$ and $L^{p^*}$.}
			\end{array}\\
			&&\mbox{\normalfont{(b)}}&
			\begin{array}[t]{l}
		 		\text{$U_n^{1}$ is tight in $W^{1,p}$, $W^{1,q}$ and $L^{p^*}$, and}~
		 		\text{$\abs{\{U_n^{1}\neq 0\}}\underset{n\to\infty}{\To} 0$.}
			\end{array}\\
			&&\mbox{\normalfont{(c)}}&
			\begin{array}[t]{l}
			  \text{$U_n^{2}\underset{n\to\infty}{\To} 0$ in $W^{1,\infty}_{\loc}$, and}~
				\text{$\abs{\{U_n^{2}\neq 0\}}\underset{n\to\infty}{\To} 0$.}
			\end{array}\\
			&&\mbox{\normalfont{(d)}}&
			\begin{array}[t]{l}
				\text{$U_n^{3}$ does not spread out in $W^{1,q}$, 
				$U_n^{3}\underset{n\to\infty}{\To} 0$ in $W^{1,\infty}_{\loc}$,}\\
				\text{and $U_n^{3}$ does not concentrate in $W^{1,p}$ and $L^{p^*}$.}\\
			\end{array}\\
			&&\mbox{\normalfont{(e)}}&
			\begin{array}[t]{l}
			  \text{$U_n^{4}\underset{n\to\infty}{\To} 0$ in $W^{1,\infty}$.} 
			\end{array}
	\end{alignedat}
\end{align*}
Here, $p^*:=\frac{Np}{N-p}$ 
if $p<N$, whereas $p^*\geq p$ can be chosen arbitrarily (but fixed) above
if $p\geq N$. The subsequences $k_1(n)$, $k_2(n)$ and $k_3(n)$ have been chosen in such a 
way that 
\begin{align}
& \begin{aligned}
	{}&\left(\phi^{(i)}_{m}-\phi^{(i)}_{j(m)}\right)(v_{h(m)})\underset{m\to \infty}{\To} 0
	~\text{in X, for every sequence $(j(m))\subset \NN$}\\
	{}&\text{with $j(m)\underset{n\to\infty}{\To}\infty$ and $j(m)<m$ for every $m\in \NN$,}
\end{aligned}\label{ldec1}
\end{align}
holds in each of the cases
\begin{alignat*}{2}
	&\mbox{\normalfont{(i)}} ~&& \begin{array}[t]{l}
	i=2,
	~h(m)=k_2(m),~~
	v_{h(m)}= u_{h(m)},~~
	\end{array}\\
	&\mbox{\normalfont{(ii)}} ~&& \begin{array}[t]{l}
	i=1,
	~h(m)=k_1(m),~~
	v_{h(m)}= \phi_{h(m)}^{(2)}(u_{k_2(h(m))}),
	\end{array}\\
	&\mbox{\normalfont{(iii)}} ~&& \begin{array}[t]{l}
	i=1,
	~h(m)=k_1(m),~~
	v_{h(m)}= (I-\phi_{h(m)}^{(2)})(u_{k_2(h(m))}),
	\end{array}\\
	\text{and}~&\mbox{\normalfont{(iv)}} ~&& \begin{array}[t]{l}
	i=3,
	~h(m)=k_3(m),~~
	v_{h(m)}= \big[(I-\phi_{h(m)}^{(1)}) \circ \phi_{k_1(h(m))}^{(2)} \big](u_{k_2(k_1(h(m)))}).
	\end{array}
\end{alignat*}
\end{lem}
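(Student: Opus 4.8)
The plan is to glue together the three ``single-type'' results established above --- Propositions~\ref{prop:coheight} and~\ref{prop:coheight2} for $\phi^{(2)}$, Proposition~\ref{prop:codomain} for $\phi^{(1)}$, and Propositions~\ref{prop:cospread} and~\ref{prop:cospread2} for $\phi^{(3)}$ --- and to push the resulting qualitative properties through the remaining truncations and the multiplications by $\nu(\abs{\cdot}-n)$. No new estimate is needed: \eqref{pcoh01}--\eqref{pcoh06}, \eqref{pcospr01}--\eqref{pcospr05} and Lemma~\ref{lem:Lpequiintloc} already provide everything. The substantive work is the correct nesting of the three subsequence extractions.

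I would first extract the subsequences in the order $k_2$, $k_1$, $k_3$. Axiom~\eqref{phi3} for $\phi^{(2)}$ (Proposition~\ref{prop:coheight}) applied to $(u_n)$ yields $k_2$, which is exactly case~(i). Since $\phi^{(2)}$ is equibounded, $\big(\phi^{(2)}_m(u_{k_2(m)})\big)_m$ and $\big((I-\phi^{(2)}_m)(u_{k_2(m)})\big)_m$ are bounded in $X$, so \eqref{phi3} for $\phi^{(1)}$ (Proposition~\ref{prop:codomain}) applied first to the former and then, along the subsequence so obtained, to the latter produces $k_1$ making both~(ii) and~(iii) hold. Here one uses that property~\eqref{coequiintA}, equivalently the uniform-tail condition~\eqref{coequiintB} (Remark~\ref{remcutoffdef}(iii)), is inherited by every further subsequence --- the raw form~\eqref{coequiintA} does not visibly survive a subsequence, so the detour through~\eqref{coequiintB} is essential here. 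Finally, $w_m:=\big[(I-\phi^{(1)}_m)\circ\phi^{(2)}_{k_1(m)}\big](u_{k_2(k_1(m))})$ is bounded in $X$, and \eqref{phi3} for $\phi^{(3)}$ (Proposition~\ref{prop:cospread}, which needs $q>1$) applied to $(w_m)_m$ gives $k_3$ and case~(iv). With $k(n):=k_2(k_1(k_3(n)))$, substituting the outer index shows that each $v_{k_i(m)}$ in (i)--(iv) becomes precisely the argument of the corresponding $\phi^{(i)}$ in \eqref{ldec0}; the identity $u_{k(n)}=U_n^0+\dots+U_n^4$ is a threefold application of $A=\Phi(A)+(I-\Phi)(A)$, and boundedness of each $U_n^j$ in $W_0^{1,p}\cap W_0^{1,q}$ follows from \eqref{phi1} for all three families, as $I-\Phi$ is equibounded whenever $\Phi$ is and compositions of equibounded maps are equibounded.

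It then remains to check (a)--(e) component by component. Put $a_m:=\phi^{(2)}_m(u_{k_2(m)})$ and $z_n:=(I-\phi^{(1)}_{k_3(n)})\big(a_{k_1(k_3(n))}\big)$, so that $U_n^0=\nu(\abs{\cdot}-k_3(n))\,a_{k_1(k_3(n))}$, $U_n^3=\phi^{(3)}_n(z_n)$ and $U_n^4=(I-\phi^{(3)}_n)(z_n)$, and note $a_{k_1(k_3(n))}=\phi^{(2)}_{k_1(k_3(n))}(u_{k(n)})$. For~(a): by case~(i) the sequence $(u_{k_2(n)})_n$ satisfies~\eqref{coequiintB} for $\phi^{(2)}$, so Proposition~\ref{prop:coheight2} gives that $(a_n)_n$, hence $(a_{k_1(k_3(n))})_n$, does not concentrate in $W^{1,p}$ and, when $p<N$, in $L^{p^*}$; multiplying by $\nu(\abs{\cdot}-k_3(n))\in[0,1]$, whose gradient is bounded, changes the relevant integrands only by a fixed factor, so $U_n^0$ does not concentrate in these spaces either (for $q<p$ in $W^{1,q}$ and, for $p\ge N$, in $L^{p^*}$ this is automatic from the uniform $W^{1,p}$-bound, cf.~Remark~\ref{rem:spread}(i)); and by case~(ii) the sequence $(a_{k_1(n)})_n$ satisfies~\eqref{uknequiint} for $\phi^{(1)}$, so by Proposition~\ref{prop:codomain} $\phi^{(1)}_n(a_{k_1(n)})$ is tight in $W^{1,p}$, $W^{1,q}$, $L^{p^*}$ (using $X\hookrightarrow W^{1,p},W^{1,q},L^{p^*}$), and $U_n^0$ is a subsequence of this, hence tight. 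For~(b): $U_n^1=\nu(\abs{\cdot}-k_3(n))\,(I-\phi^{(2)}_{k_1(k_3(n))})(u_{k(n)})$ is supported in $\Omega\setminus\hat{R}^{k_1(k_3(n))}(u_{k(n)})$ by~\eqref{pcoh02}, and by~\eqref{pcoh04} the measure of that set is $\le(C_3)^p\,k_1(k_3(n))^{-p}\sup_m\norm{u_m}_{W^{1,p}}^p\to 0$; tightness follows from case~(iii) and Proposition~\ref{prop:codomain} as for $U_n^0$. For~(c): $U_n^2$ is supported in $\Omega\setminus B_{k_3(n)}(0)$ (since $\nu(\abs{x}-k_3(n))=1$ for $\abs{x}\le k_3(n)$), which gives $U_n^2\to 0$ in $W^{1,\infty}_\loc$, and also in $\Omega\setminus\hat{R}^{k_1(k_3(n))}(u_{k(n)})$, which gives $\abs{\{U_n^2\neq 0\}}\to 0$ as in~(b). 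For~(d): non-concentration of $z_n$ in $W^{1,p}$ and $L^{p^*}$ follows from that of $a_{k_1(k_3(n))}$ exactly as for $U_n^0$; since~\eqref{pcospr01} gives $\norm{z_n-\phi^{(3)}_n(z_n)}_{W^{1,\infty}(\Omega)}\le C_0/n$, $U_n^3$ differs from $z_n$ by a $W^{1,\infty}$-null sequence and hence still does not concentrate in $W^{1,p}$ and $L^{p^*}$; that $U_n^3$ does not spread out in $W^{1,q}$ is Proposition~\ref{prop:cospread2} applied to $(z_n)_n$, which satisfies~\eqref{coequiintB} for $\phi^{(3)}$ by case~(iv); and $U_n^3\to 0$ in $W^{1,\infty}_\loc$ because on any fixed ball $B\subset\Omega$ one has $z_n\equiv 0$ for $n$ large, whence $\norm{U_n^3}_{W^{1,\infty}(B)}\le C_0/n$ again by~\eqref{pcospr01}. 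For~(e): $U_n^4=(I-\phi^{(3)}_n)(z_n)$, and~\eqref{pcospr01} gives directly $\norm{U_n^4}_{W^{1,\infty}(\Omega)}\le C_0/n\to 0$.

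The main obstacle is not a single estimate --- those all sit in the preceding propositions --- but the bookkeeping: nesting the three extractions so that the one diagonal subsequence $k(n)=k_2(k_1(k_3(n)))$ realizes (i)--(iv) simultaneously, and verifying that ``does not concentrate'', ``tight'' and ``does not spread out'' survive both the outer operations ($I-\phi^{(1)}$, multiplication by $\nu$, and the $\phi^{(3)}$-correction, this last through the uniform $C_0/n$ bound of~\eqref{pcospr01}) and the passage to further subsequences; for~\eqref{coequiintA} the latter is exactly where the reformulation~\eqref{coequiintB} enters.
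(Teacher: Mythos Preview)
Your proof is correct and follows essentially the same route as the paper's own argument: extract $k_2$, $k_1$, $k_3$ in that order via \eqref{phi3} for $\phi^{(2)}$, $\phi^{(1)}$, $\phi^{(3)}$ respectively, then read off (a)--(e) from Propositions~\ref{prop:codomain}, \ref{prop:coheight2}, \ref{prop:cospread2} together with the pointwise bounds \eqref{pcoh01}--\eqref{pcoh04} and \eqref{pcospr01}. Your explicit remark that subsequence stability of \eqref{coequiintA} goes through the equivalent uniform-tail form \eqref{coequiintB} is a useful clarification that the paper leaves implicit.
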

\begin{rem}\label{rem:ldec} ~
\begin{myenum}
\item The terms "equiintegrable", "does not concetrate", "does not spread out" and "tight" 
are meant in the sense of Definition~\ref{def:concentrate}.
\item In (c) and (d), respectively, it is possible that $U_n^{2},U_n^{3}\notin W^{1,\infty}_{\loc}(\Omega;\RR^M)$,
for some or even every $n$. If $V_n$ is a sequence in $X$,
by saying that $V_n\to 0$ \emph{in} $W^{1,\infty}_{\loc}$, 
we mean that
\begin{align*}
	&\text{for every $R>0$, there is some $n_0=n_0(R)$ sufficiently large}\\
	&\text{such that $V_n\in W^{1,\infty}(B_R(0)\cap \Omega;\RR^M)$	for every $n\geq n_0$, and}\\
  &\norm{V_n}_{W^{1,\infty}(B_R(0)\cap \Omega;\RR^M)}\underset{n\to\infty}{\To} 0~~
  \text{for every fixed $R$}.
\end{align*}
\item The sequence $U_n^2$ eventually vanishes on fixed every bounded subset of $\Omega$, i.e., for every $R>0$, there is a $n_0=n_0(R)$ sufficiently large such that $U_n^2(x)=0$ for a.e.~$x\in \Omega\cap B_R(0)$ if $n\geq n_0$. This property does not necessarily hold for $U_n^3$, because $\phi_n^{(3)}$ typically modifies the support of its argument. If desired, it can be regained 
artificially in case of $U_n^3$. For that purpose, pass to another subsequence $u_{\tilde{k}(n)}$ of $u_n$
(with $\tilde{k}(n):=k(n^{N+1})$) and use
the following modification of \eqref{ldec0}:
\begin{equation*}
\begin{alignedat}{2}
	&&&
	\tilde{U}_n^{j} := U_{n^{N+1}}^{j}~~\text{for $j=1,2,3$},\\
  &&&\tilde{U}_n^{3} := (I-\phi_{n}^{(1)}) (U_{n^{N+1}}^{3})~\text{and}~
	\tilde{U}_n^{4} := U_{n^{N+1}}^{4}+\phi_{n}^{(1)} (U_{n^{N+1}}^{3}). 
\end{alignedat}
\end{equation*}
Here, the main change 
is the term $\phi_{n}^{(1)}(U_{n^{N+1}}^{3})$ which was transferred
from $U_n^{3}$ to $U_n^{4}$. It has no influence on the relative compactness
of $u_n$: As a matter of fact,
$\phi_{n}^{(1)}(U_{n^{N+1}}^{3})\to 0$ strongly in $W^{1,\infty}$, 
$W^{1,p}$ and $W^{1,q}$.
\item In (d), if $U_n^3$ does not contain a "moving bulk of mass" in $L^q$, i.e.,
if $U_n^3$ is vanishing in $L^q(\Omega;\RR^M)$), then $U_n^3\to 0$ strongly in
$L^q(\Omega;\RR^M)$, due to Lemma~\ref{lem:cospread3}.
In this case, we automatically have that
$U_n^3\to 0$ in $L^{p^*}(\Omega;\RR^M)$, too,
since $U_n^3$ does not concentrate in $L^{p^*}$ (and $q\leq p^*$).
%
\item In the assertion of Lemma~\ref{lem:dec}, condition \eqref{ldec1}
governing the choice of subsequences
and the exact definition of the component sequences
$U_n^j$ are explicitly stated
to provide an interface for the application of 
our abstract results, Theorem~\ref{thm:op} and Theorem~\ref{thm:abstractprop}.
\item The cases (i)--(iv) for \eqref{ldec1}
consist of all possible combinations of $i\in\{1,2,3\}$, $m=m(n)\in\NN$, $(h(m))_m\subset \NN$ and 
$(v_{h(m)})_m\subset W_0^{1,p}(\Omega)$ such that
$\phi^{(i)}_{m}(v_{h(m)})$ matches an expression contained 
in \eqref{ldec0}. 
\end{myenum}
\end{rem}
\begin{proof}[Proof of Lemma~\ref{lem:dec}]
Recall that each of the three families $\phi_n^{(i)}$ ($i=1,2,3$) satisfies \eqref{phi1}
and \eqref{phi3}, as shown in Proposition~\ref{prop:codomain}, 
Proposition~\ref{prop:coheight} and Proposition~\ref{prop:cospread}, respectively.
In particular, they are equibounded, whence $U_n^0,\ldots,U_n^4$ are bounded sequences.
The subsequences $k_i(n)$ are chosen 
using \eqref{phi3}: 
First, choose $k_2(n)$ such that \eqref{ldec1} is valid in case (i),
then choose $k_1(n)$ such that \eqref{ldec1} is valid both in case (ii) and (iii),
and finally choose $k_3(n)$ such that \eqref{ldec1} is valid in case (iv).
As to the properties of $U_n^0,\ldots,U_n^4$ in (a)--(e), first note that
\begin{alignat*}{2}
	&V^{\text{(i)}}_n&&:=\phi_{n}^{(2)}\big(u_{k_2(n)}\big)~~
	\text{does not concentrate in $W^{1,p}$ and $L^{p^*}$,} \\
	&V^{\text{(ii)}}_n&&:=\big[\phi_{n}^{(1)}\circ \phi_{k_1(n)}^{(2)}\big]
	\big(u_{k_2(k_1(n))}\big)~~
	\text{is tight in $W^{1,p}$, $W^{1,q}$ and $L^{p^*}$,}\\
	&V^{\text{(iii)}}_n&&:=\big[\phi_{n}^{(1)}\circ \big(I-\phi_{k_1(n)}^{(2)}\big)\big]
	\big(u_{k_2(k_1(n))}\big)~~
	\text{is tight in $W^{1,p}$, $W^{1,q}$ and $L^{p^*}$, and}\\
	&V^{\text{(iv)}}_n&&:=\big[\phi_{n}^{(3)}\circ \big(I-\phi_{k_3(n)}^{(1)}\big)\circ
	\phi_{k_1(k_3(n))}^{(2)}\big]	\big(u_{k(n)}\big)~~
	\text{does not spread out in $W^{1,q}$,}
\end{alignat*}	
due to \eqref{ldec1} 
and Proposition~\ref{prop:coheight2}, Proposition~\ref{prop:codomain} and 
Proposition~\ref{prop:cospread2}, respectively. (As to $V^{\text{(iv)}}_n$, recall that
$k(n)=k_2(k_1(k_3(n)))$.)
In particular, $U_n^0=V^{\text{(ii)}}_{k_3(n)}$ and $U_n^1=V^{\text{(iii)}}_{k_3(n)}$ are tight
in $W^{1,p}$, $W^{1,q}$ and $L^{p^*}$, and
$U_n^3=V^{\text{(iv)}}_n$ does not spread out in $W^{1,p}$. 
Moreover, 
\begin{align*}
  \phi^{(1)}_n \big(V^{\text{(i)}}_{k_1(n)}\big)~~\text{and}~~
  (I-\phi^{(1)}_n)\big(V^{\text{(i)}}_{k_1(n)}\big)~~
  \text{do not concentrate in $W^{1,p}$ and $L^{p^*}$},
\end{align*}
since $\phi^{(1)}_{n}$ does not interfere with this property.
Neither does $\phi_n^{(3)}$ (cf.~\eqref{pcospr01}), whence $U_n^0$ and $U_n^2$ 
do not concentrate in $W^{1,p}$ and $L^{p^*}$ (and thus they do not concentrate 
in $W^{1,q}$, cf.~Remark~\ref{rem:spread}).
Furthermore, $U_n^4\to 0$ in $W^{1,\infty}$ due to
\eqref{pcospr01}. 
As a consequence of the properties of $\phi_n^{(2)}$ collected
in Proposition~\ref{prop:coheight0} and the definition of $\phi_n^{(1)}$,
$\abs{\{x\in\Omega\mid U_n^s(x)\neq 0\}}\to 0$ as $n\to\infty$, for $s=1,2$.
The definition of $\phi_n^{(1)}$ also entails that
\begin{align*}
	U_n^{2}(x)=0~~\text{for every $x\in B_{k_3(n)}(0)\cap \Omega$}.
\end{align*}
In particular, $\big\|U_n^{2}\big\|_{W^{1,\infty}(B_R(0)\cap \Omega)}\to 0$ 
as $n\to\infty$, for every fixed $R>0$. 
Since $\big\|(I-\phi^{(3)}_n)(U_n^{2})\big\|_{W^{1,\infty}(\Omega)}
\to 0$ by \eqref{pcospr01},
the latter also holds for $U_n^{3}$ in place of $U_n^{2}$;
recall that $U_n^{3}=U_n^{2}-(I-\phi^{(3)}_n)(U_n^{2})$.
\end{proof}
%
\section{Quasilinear systems in divergence form}\label{sec:app1}
Let $\Omega\subset \RR^N$ ($N\in \NN$) be a Lipschitz domain in the sense of Definition~\ref{def:Lipschitzdom} (possibly unbounded), 
$1<q\leq p<N$ and $M\in \NN$.
We consider quasilinear differential operators of second order of the form
\begin{equation}\label{1Fdef}
\begin{aligned}
	&F:X\to X',~\text{with $X:=W_0^{1,p}(\Omega;\RR^M)\cap W_0^{1,q}(\Omega;\RR^M)$, defined by}\\
	&F(u)[\varphi]:=\int_\Omega Q(x,u,\nabla u):\nabla \varphi
	+\left[g_1(x,u,\nabla u)+g_2(x,u,\nabla u)\right]\cdot\varphi\,dx
\end{aligned}
\end{equation}
for every $u\in X$ and every test function $\varphi\in X$. Here, $:$ and $\cdot$ denote the euclidean scalar products in $\RR^{M\times N}$ and $\RR^M$, respectively. $X$ is a Banach space with the canonical norm
$\norm{u}_X:=\norm{u}_{W^{1,p}}+\norm{u}_{W^{1,q}}$ and $X'$ denotes the dual of $X$.
We assume that
\begin{alignat*}{3}
	&Q:~&&\Omega \times \RR^M \times \RR^{M\times N}\to \RR^{M\times N}~~
	&&\text{is a Carathéodory function and}\label{Q0}\tag{Q:0} \\
	&g_j:~&&\Omega \times \RR^M \times \RR^{M\times N}\to \RR^{M}~~
	&&\text{is a Carathéodory function for $j=1,2$},\label{g0}\tag{g:0}
\end{alignat*}
i.e, the functions are measurable in their first variable $x$ and continuous in the other variables for a.e.~$x\in \Omega$.
To state the remaining assumptions,
the following abbreviations are useful.
For $\alpha,\varrho \in [0,q-1]$, $x\in \Omega$ and $s,t\in [0,\infty)$
we define
\begin{align}
{}&\begin{alignedat}{2}
	&H_\alpha(x,s,t)&~:=~&C\,\big(h_q(x)+s+t\big)^{q-1-\alpha}
	\,+\,C\,\big(h_p(x)+s^{\frac{p^*}{p}}+t\big)^{p-1-\alpha},\\
	&I_\alpha(x,s,t)&~:=~&C\,\big(h_q(x)+s+t\big)^{q-1-\alpha}
	\,+\,C\,\big(h_p(x)+s^{\frac{p^*}{p}}+t\big)^{p-\frac{p}{p^*}-\alpha},\\
	&J_\varrho(x,s,t)&~:=~&
	\begin{aligned}[t]
	h_{\infty}(x)\big(s+t\big)^{q-1}
	\,&+\,[h_q(x)]^\varrho \big(h_q(x)+s+t\big)^{q-1-\varrho}\\
	&+\,[h_{p^*}(x)]^{\varrho}\big(h_{p^*}(x)+s+t^{\frac{p}{p^*}}\big)^{p^*-1-\varrho}.
	\end{aligned}\label{HIJdef}
&\end{alignedat}
\end{align}
Here, $C>0$ is a constant, $p^*:=\frac{pN}{N-p}>p$ is the critical Sobolev exponent, 
\begin{equation}\label{hqdef}
\begin{aligned}
  &\text{$h_r$ denotes a fixed nonnegative function in $L^r(\Omega)$ if $r\in [1,\infty)$, and}\\
  & \text{$h_\infty\in L^\infty(\Omega)$ is nonnegative and satisfies}~
  \norm{h_\infty}_{L^\infty(\Omega\setminus B_R(0))}\underset{R\to\infty}{\To} 0. 
\end{aligned}
\end{equation} 
We assume that $Q$, $g_1$ and $g_2$ satisfy the growth conditions
\begin{align*}
	{}&\begin{aligned}
	\abs{Q(x,\mu,\xi)}\leq H_0(x,\abs{\mu},\abs{\xi}),
	\end{aligned}\label{Qgrowth}\tag{Q:1}
\intertext{and}
	{}&\begin{aligned}
	\abs{g_1(x,\mu,\xi)}&\leq I_0(x,\abs{\mu},\abs{\xi}),	\\
	\abs{g_2(x,\mu,\xi)}&\leq J_\varrho(x,\abs{\mu},\abs{\xi}),\quad\text{for a $\varrho\in (0,q-1]$},
	\end{aligned}\label{ggrowth}\tag{g:1}
\end{align*}
for a.e.~$x\in \Omega$ and every $(\mu,\xi)\in \RR^M\times \RR^{M\times N}$. Here, 
recall that $W_0^{1,p}$ is continuously (but not compactly) 
embedded in $L^{p^*}$ for arbitrary domains.
For $Q$ and $g_1$, we also assume a kind of Hölder continuity with respect to the last two variables
which is uniform in $x$:
\begin{align*}
	{}&\begin{aligned}
		\text{There is an $\alpha\in (0,q-1]$ such that}
	\end{aligned}\\
	{}&\begin{aligned}
		{}&\abs{Q(x,\mu_1,\xi_1)-Q(x,\mu_2,\xi_2)}\\
		{}&~\leq H_\alpha(x,\abs{\mu_1}+\abs{\mu_2},\abs{\xi_1}+\abs{\xi_2})
	\big(\abs{\mu_1-\mu_2}+\abs{\mu_1-\mu_2}^{\frac{p^*}{p}}+\abs{\xi_1-\xi_2}\big)^\alpha,
	\end{aligned}\label{QHcont}\tag{Q:2}\\
	{}&\begin{aligned}
		{}&\abs{g_1(x,\mu_1,\xi_1)-g_1(x,\mu_2,\xi_2)}\\
		{}&~\leq I_\alpha(x,\abs{\mu_1}+\abs{\mu_2},\abs{\xi_1}+\abs{\xi_2})
	\big(\abs{\mu_1-\mu_2}+\abs{\mu_1-\mu_2}^{\frac{p^*}{p}}+\abs{\xi_1-\xi_2}\big)^\alpha,
	\end{aligned}\label{g1Hcont}\tag{g:2}
\end{align*}
for a.e.~$x\in \Omega$, every $\mu_1,\mu_2\in \RR^M$ and every $\xi_1,\xi_2\in \RR^{M\times N}$.
\begin{rem}\label{rem:W1pAssumptions}~
\begin{myenum}
\item Our assumptions include the case of a lower order term with critical growth in $u$, a prototype of which would be $g_1(u)=\tilde{\gamma} \abs{u}^{p^*-2}u$ with some $\tilde{\gamma} \in \RR$. Examples of  admissible terms including the gradient are
\begin{align*}
g_1(x,\nabla u)=\gamma(x) \abs{\nabla u}^{p \frac{p^*-1}{p^*}}~\text{or}~
g_1(x, u,\nabla u)=\Gamma(x) \abs{\nabla u}^{p \frac{p^*-(1+\eps)}{p^*}}\abs{u}^{\eps -1}u,
\end{align*} 
where $\eps\in (0,p^*-1]$ is arbitrary but fixed, $\gamma\in L^\infty(\Omega;\RR^M)$ 
and $\Gamma\in L^\infty(\Omega;\RR^{M\times M})$. 
\item In the context of this section, there is no technical necessity to assume that $F$ maps $X$ into its dual. We chose that framework simply for notational convenience. As a matter of fact, with suitably modified 
assumptions \eqref{Q0}--\eqref{QHcont} and \eqref{g0}--\eqref{g1Hcont}, the results below remain true
for example if $F:X_1\to (X_2)'$, with $X_1=W_0^{1,p_1}\cap W_0^{1,q_1}$ and
$X_2=W_0^{1,p_2}\cap W_0^{1,q_2}$, where $1<q_1\leq p_1<N$ and $1<q_2\leq p_2<N$ are arbitrary.
In this more general setting, the examples above can be replaced by
\begin{align*}
g_1(x,u,\nabla u)=\gamma(x) \abs{\nabla u}^{p_1\frac{p_2^*-1}{p_2^*}}
~\text{or}~
g_1(x,u,\nabla u)=\Gamma(x) \abs{\nabla u}^{p_1 \frac{p_2^*-(1+\eps)}{p_2^*}}\abs{u}^{\eps-1}u
\end{align*}
with $\eps\in (0,p_2^*-1]$.
\item Terms with so-called natural growth with respect to the gradient are ruled out, such as
\begin{align} \label{g1explcritgrad}
g_1(u,\nabla u)=\tilde{\gamma} \abs{\nabla u}^{p}u,~~\text{for some $\tilde{\gamma}\in \RR$}.
\end{align}
Even if for some reason $F$ is studied only on a set is bounded in $L^\infty$, 
this example does not fit into one of the generalized settings explained in (ii) (unless $p_2>N$, which is a much simpler case). 
On the other hand, it is not entirely clear when such a term leads to a well posed equation, even on bounded domains. If the leading part is elliptic ($Q$ is monotone in a suitable sense), typically an additional one-sided growth condition (in the scalar case) or an angle condition 
(in the vector case) are assumed to obtain existence of a solutions to the Dirichlet problem. 
Both lead to a sign condition on $\tilde{\gamma}$ in the example \eqref{g1explcritgrad}. See \cite{ChaZha92a}, \cite{La00a} and \cite{BeBo02a} and the references therein for some results in that context.
\end{myenum}
\end{rem}
The Nemyskii operator associated to $Q$ maps into the vector space $L^{\frac{q}{q-1}}+L^{\frac{p}{p-1}}$, and
the range of the Nemytskii operator associated to $g_1$ and $g_2$ lies in
$L^{\frac{q}{q-1}}+L^{\frac{p^*}{p^*-1}}$. 
With a natural norm, these sums are Banach spaces. Let
\begin{align*}
	&L^r_s(\Omega;V):=
	\left\{\begin{array}{ll}	
	L^r(\Omega;V)+L^s(\Omega;V)&~\text{if $r<s$,}\\
	L^r(\Omega;V)\cap L^s(\Omega;V)&~\text{if $r\geq s$,}
	\end{array}\right.
\end{align*}
where $s,r\in [1,\infty]$ and $V$ is an euclidean vector space.
The corresponding norm is given by
\begin{align*}
	{}&\norm{u}_{L^r_s}:=
	\left\{ \begin{alignedat}{2}
	& \inf \big\{ \norm{v}_{L^r}+\norm{w}_{L^s}\,\big|\,
	v\in L^r,~w\in L^s~\text{and}~v+w=u \big\} &&~\text{if $r<s$,}\\
	&\norm{u}_{L^r}+\norm{u}_{L^s} &&~\text{if $r\geq s$.} 
	\end{alignedat} \right. 
\end{align*}
\renewcommand{\labelenumi}{(\roman{enumi})}
\begin{rem} \label{rem:sumnorm} ~
\begin{myenum}
\item 
Roughly speaking,  functions in $L^r_s$ decay like functions in $L^{s}$ as $\abs{x}\to \infty$ ($x\in \Omega$), while their restrictions to sets of finite measure always belong to $L^{r}$.
If $\Omega$ has finite measure, we have $L_{s}^{r}(\Omega)=L^{s}(\Omega)\cap L^{r}(\Omega)=L^{\max\{s,r\}}(\Omega)$ and the three associated norms are equivalent, for arbitrary $r,s\in [1,\infty]$.
\item For $r,s\in [1,\infty]$, $s':=\frac{s}{s-1}$ and $r':=\frac{r}{r-1}$, 
we have Hölder's inequality in the form
\begin{align}\label{LpqHineq}
	\norm{u\cdot w}_{L^1}\leq \norm{u}_{{L^{r'}_{s'}}} \norm{w}_{L^{r}_{s}}
	~~\text{for $u\in L^{r'}_{s'}(\Omega;\RR)$ and $w\in L^{r}_{s}(\Omega;\RR)$}.
\end{align}
\item If $s,r\in (1,\infty)$,
the set $L_{s}^{r}$ also can be described as follows:
\begin{align*}
	L_{s}^{r}(\Omega;V)=
	\left\{u:\Omega\to V~\text{measurable} ~\left|~ \int_{\Omega}A_{s}^{r}(\abs{u(x)})\,dx<\infty \right.\right\},
\end{align*}
the Orlicz class with respect to the "$N$-function"
\begin{align*}
	A_{s}^{r}(t):=\left\{ \begin{array}{ll} 
	s^{-1}t^{s} \quad & \text{if $t\in [0,1]$,}\\ 
	r^{-1}t^{r}+s^{-1}-r^{-1} \quad &\text{if $t\in [1,\infty)$.}
	\end{array}\right.
\end{align*}
Moreover, $L_{s}^{r}$ is isomorphic to the Orlicz space $L_{A_{s}^{r}}$
associated to $A_{s}^{r}$. For general information on Orlicz spaces, the reader is referred to
\cite{Ad75B}. 
%
%
\item For $r,s\in (1,\infty)$, $s':=\frac{s}{s-1}$ and $r':=\frac{r}{r-1}$, 
$L_{s'}^{r'}$ is isomorphic (algebraically and topologically) to the dual of $L_{s}^{r}$,
with respect to the dual pairing
\begin{align*}
	J:L_{s'}^{r'}(\Omega;V)\to \left[L_{s}^{r}(\Omega;V)\right]',\quad 
	J(u)[\varphi]:=\int_{\Omega} u(x)\cdot \varphi(x)\,dx.
\end{align*}
Here, $u\cdot \varphi$ denotes the scalar product of $u$ and $\varphi$ in the euclidean vector space $V$.
\end{myenum} \vspace*{1ex}
\end{rem}

\begin{prop}\label{1propFcont}
Let $1<q\leq p<N$, $q':=\frac{q}{q-1}$, $p':=\frac{p}{p-1}$ and ${p^*}':=\frac{p^*}{p^*-1}$, where $p^*=\frac{pN}{N-p}$, and assume that  \eqref{Q0}--\eqref{QHcont} and \eqref{g0}--\eqref{g1Hcont} are satisfied.
Then the Nemytskii operators
\begin{alignat*}{2}
	&\tilde{Q}:\tilde{X}
	\to L_{q'}^{p'}(\Omega;\RR^{M\times N}),\quad 
	&&\tilde{Q}(u,U)(x):=Q(x,u(x),U(x)),\\
	&\tilde{g}_1: \tilde{X}
	\to L_{q'}^{{p^*}'}(\Omega;\RR^M),\quad
	&&\tilde{g}_1(u,U)(x):=g_1(x,u(x),U(x))\quad\text{and}\\
	&\tilde{g}_2: \tilde{X}
	\to L_{q'}^{{p^*}'}(\Omega;\RR^M),\quad
	&&\tilde{g}_2(u,U)(x):=g_2(x,u(x),U(x))
\end{alignat*}
are well defined and continuous on
\begin{align*}
	\tilde{X}:=\big[L^q(\Omega;\RR^M)\cap L^{p^*}(\Omega;\RR^M)\big]\times 
	\big[L^q(\Omega;\RR^{M\times N})\cap L^p(\Omega;\RR^{M \times N})\big], &\\
	\text{where}~\norm{(u,U)}_{\tilde{X}}:=\norm{u}_{L^q}+
	\norm{u}_{L^{p^*}}+\norm{U}_{L^q}+\norm{U}_{L^p}.&
\end{align*}
All three operators map bounded sets onto bounded sets.
Furthermore, $\tilde{Q}$ and $\tilde{g}_1$ are uniformly continuous on bounded subsets of $\tilde{X}$,
and $\tilde{g}_2$ satisfies
\begin{equation}\label{g2equiint}
\begin{alignedat}{3}
	&\sup_{(u,U)\in \tilde{W}}~\norm{\tilde{g}_2(u,U)}_{L_{q'}^{{p^*}'}(\Omega\setminus B_R(0);\RR^M)}
	&&\underset{R\to \infty}{~\To~} 0&&,\quad\text{and}\\
	&\sup_{(u,U)\in \tilde{W}}~\sup_{E\subset \Omega,\abs{E}\leq \delta}~ 
	\norm{\tilde{g}_2(u,U)}_{L_{q'}^{{p^*}'}(E;\RR^M)} 
	&&\underset{\delta \searrow 0}{~\To~} 0&&,
\end{alignedat}
\end{equation}
for every bounded subset $\tilde{W}\subset \tilde{X}$. 
\end{prop}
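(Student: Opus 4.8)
The plan is to treat each of the three Nemytskii operators separately, reducing everything to standard facts about Nemytskii operators on $L^r$ spaces via the description of $L^r_s$ as a sum (or intersection) of Lebesgue spaces. The basic idea is that the growth conditions \eqref{Qgrowth} and \eqref{ggrowth}, when the argument $(u,U)$ ranges over $\tilde X$, split naturally into a ``$q$-part'' and a ``$p$-part'' (resp.\ ``$p^*$-part''): for instance, in $H_0(x,s,t)$ the first summand is controlled by $h_q+s+t$ raised to the power $q-1$ (which pairs with $L^q$-data and produces something in $L^{q'}$ by H\"older/the triangle inequality), while the second summand involves $h_p+s^{p^*/p}+t$ raised to $p-1$; note that $s=\abs{u}$ with $u\in L^{p^*}$ gives $s^{p^*/p}\in L^p$, and $t=\abs{U}$ with $U\in L^p$, so this second term lies in $L^{p'}$. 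Hence $\tilde Q(u,U)$ genuinely lands in $L^{p'}+L^{q'}=L^{p'}_{q'}$, and the same bookkeeping applied to $I_0$ and $J_\varrho$ gives the claimed target spaces for $\tilde g_1$ and $\tilde g_2$. Boundedness of the image of bounded sets is immediate from these same estimates, since all exponents appearing are consistent.

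Next I would establish \emph{uniform continuity on bounded sets} of $\tilde Q$ and $\tilde g_1$. Here the H\"older-type conditions \eqref{QHcont} and \eqref{g1Hcont} are the key: for $(u_1,U_1),(u_2,U_2)$ in a bounded subset $\tilde W$ of $\tilde X$, one writes $Q(x,u_1,U_1)-Q(x,u_2,U_2)$ and bounds it pointwise by $H_\alpha(x,\abs{u_1}+\abs{u_2},\abs{U_1}+\abs{U_2})$ times $(\abs{u_1-u_2}+\abs{u_1-u_2}^{p^*/p}+\abs{U_1-U_2})^\alpha$. Splitting $H_\alpha$ into its two summands as above and applying H\"older's inequality in the form \eqref{LpqHineq} (with suitable exponents: the ``$q$-factor'' $H$ is in some $L^{a}$ with the difference-factor in the conjugate space, similarly for the ``$p$-factor''), one obtains a bound of the form $C(\tilde W)\cdot\big(\norm{u_1-u_2}_{L^q}+\norm{u_1-u_2}_{L^{p^*}}+\norm{U_1-U_2}_{L^q}+\norm{U_1-U_2}_{L^p}\big)^{\alpha'}$ for some $\alpha'>0$ depending on $\alpha$ and the exponents $p,q,p^*$ (the powers $p^*/p$ inside the difference-factor interact with the outer $\alpha$ to yield a H\"older modulus, not a Lipschitz one, which is exactly why the conclusion is uniform continuity rather than Lipschitz continuity). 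Since the right-hand side is a power of $\norm{(u_1,U_1)-(u_2,U_2)}_{\tilde X}$, this is precisely uniform continuity on $\tilde W$.

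Finally, for $\tilde g_2$ I would prove the two equiintegrability statements in \eqref{g2equiint}. The point is the special structure of $J_\varrho$: its first summand carries the factor $h_\infty(x)$, which by \eqref{hqdef} is small in $L^\infty$ outside large balls (giving tightness of that piece) and, being bounded, contributes a term whose $L^{q'}$-norm over a small-measure set $E$ is controlled by $\abs{E}^{1/q'}$ times a bounded quantity (giving the non-concentration); the remaining two summands carry genuine factors $[h_q]^\varrho$ and $[h_{p^*}]^\varrho$ with $\varrho>0$, and since $h_q\in L^q$, $h_{p^*}\in L^{p^*}$, the functions $[h_q]^\varrho(\dots)^{q-1-\varrho}$ and $[h_{p^*}]^\varrho(\dots)^{p^*-1-\varrho}$ are genuinely equiintegrable in $L^{q'}$ resp.\ $L^{{p^*}'}$, uniformly over bounded $(u,U)$, because the ``$\varrho$ of integrability'' is extracted from the fixed $L^q$- resp.\ $L^{p^*}$-function while the rest stays bounded in the relevant space --- this is again an application of H\"older's inequality \eqref{LpqHineq} with a small exponent margin. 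The main obstacle I anticipate is purely bookkeeping: carefully choosing, in each of the many sub-terms, the pair of conjugate exponents so that the ``data factor'' lands in a fixed integrable space (either $L^q$, $L^{p^*}$, or $L^\infty$, possibly raised to a power less than $1$) while the ``variable factor'' stays bounded in $\tilde X$, and then assembling these into the sum-space norms $\norm{\cdot}_{L^{p'}_{q'}}$ and $\norm{\cdot}_{L^{{p^*}'}_{q'}}$. Once the exponent arithmetic is organized, each individual estimate is routine and the continuity/uniform-continuity conclusions follow by the standard argument that a Nemytskii operator which is bounded and satisfies such modulus-of-continuity estimates on bounded sets is (uniformly) continuous there.
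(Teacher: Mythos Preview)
Your proposal is correct and follows essentially the same route as the paper's proof: split $H_\alpha$ (resp.\ $I_\alpha$, $J_\varrho$) into its $q$-summand and $p$-summand, apply H\"older's inequality to each to obtain the $L^{q'}$- and $L^{p'}$- (resp.\ $L^{{p^*}'}$-) pieces, and for $\tilde g_2$ exploit the decay of $h_\infty$ outside large balls together with the factor $\varrho>0$ on $h_q$ and $h_{p^*}$ to get the uniform tightness and non-concentration. The paper's write-up is terser (it only spells out the estimate for $\tilde Q$ and declares the rest analogous or standard), but the underlying computation is the one you describe.
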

Before moving on to the proofs, we state the main results of this section.
As a matter of fact, Lemma~\ref{lem:dec}, Theorem~\ref{thm:op} and Theorem~\ref{thm:abstractprop} 
combined yield
\begin{thm}\label{lem:W1p-prop}
Let $1<q\leq p<N$ and let $\Omega$ be a Lipschitz domain in $\RR^N$ 
in the sense of Definition~\ref{def:Lipschitzdom}. 
Let $(u_n)$ be a bounded sequence in 
$X=W_0^{1,p}(\Omega;\RR^M)\cap W_0^{1,q}(\Omega;\RR^M)$
and let 
\begin{align*}
	u_{k(n)}=U_n^0+U_n^1+U_n^2+U_n^3+U_n^4
\end{align*}
denote a subsequence of $u_n$ chosen via Lemma~\ref{lem:dec}
(with $k(n)=k_2(k_1(k_3(n)))$), where $U_n^0,\ldots,U_n^4\in X$ are its bounded component
sequences with the properties (a)--(e) listed therein.
Furthermore,  let $F:X\to X'$ be the operator defined in \eqref{1Fdef},
and assume that \eqref{Q0}--\eqref{QHcont} 
and \eqref{g0}--\eqref{g1Hcont} are satisfied. 
Then we have that
\begin{equation}\label{Fconv}
\begin{aligned}
	{}\big[F(u_{k(n)})-F(U_n^{0})\big]~+~
	\sum_{i=1}^4\, \big[F(0)-F(U_n^{i})\big]
	~\underset{n\to\infty}{\To} 0
	~~\text{in $X'$.}
\end{aligned}
\end{equation}
Moreover, if $F(u_{n})$ converges in $X'$ then each of the five summands 
in \eqref{Fconv} converges to zero in $X'$.
\end{thm}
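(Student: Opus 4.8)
The plan is to derive the theorem from the abstract results of Section~\ref{sec:abstractproper} by applying them repeatedly along the tree structure of the decomposition \eqref{ldec0}. As a preliminary step I would pass to the ``density space'' $Y:=L^{p'}_{q'}(\Omega;\RR^{M\times N})\times L^{{p^*}'}_{q'}(\Omega;\RR^M)$ and write $F=J\circ\hat F$, where $\hat F(u):=\big(\tilde Q(u,\nabla u),\,\tilde g_1(u,\nabla u)+\tilde g_2(u,\nabla u)\big)$ (with $\tilde Q,\tilde g_1,\tilde g_2$ precomposed with the bounded linear map $u\mapsto(u,\nabla u):X\to\tilde X$) and $J:Y\to X'$ is the duality pairing, which is bounded and linear by H\"older's inequality \eqref{LpqHineq} and the embeddings $W_0^{1,p}\hookrightarrow L^{p^*}$, $W_0^{1,q}\hookrightarrow L^q$. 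Since $J$ is linear and continuous, it suffices to prove both assertions for $\hat F$ in $Y$; the final sentence then follows by applying $J$.

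Next I would verify that $\hat F$ satisfies the abstract axioms \eqref{F0}--\eqref{F2} with respect to \emph{each} of the three truncation families $\phi^{(1)}_n,\phi^{(2)}_n,\phi^{(3)}_n$ on $D=X$. Axiom \eqref{F0} is Proposition~\ref{1propFcont}. As outer truncations I would take $\psi^{(1)}_n=$ multiplication by $\chi_{B_n(0)}$ (componentwise) and $\psi^{(2)}_n=\psi^{(3)}_n=$ pointwise truncation of magnitudes at level $n$ (as in \eqref{defetalam}, componentwise); all are $1$-Lipschitz, hence \eqref{copsi1} holds, and they converge pointwise to the identity (dominated convergence), so \eqref{copsi2} holds and all correction terms $H$ in Theorem~\ref{thm:abstractprop} vanish. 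For the uniformly continuous part $F_1$ in \eqref{F2} I would take $\hat F_1(u):=(\tilde Q(u,\nabla u),\tilde g_1(u,\nabla u))$, which is uniformly continuous on bounded subsets of $X$ by Proposition~\ref{1propFcont}; the remainder is the $g_2$-term, and $(I-\psi^{(i)}_n)$ annihilates it uniformly on bounded sets in the limit because of \eqref{g2equiint} --- directly for $i=1$, and for $i=2,3$ because $\{\,|\tilde g_2(w,\nabla w)|>n\,\}$ has measure tending to $0$ uniformly, so the small-set part of \eqref{g2equiint} applies. The remaining and genuinely delicate point is the compatibility \eqref{compat1}--\eqref{compat2}. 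For $\phi^{(1)}_n$ it is almost free: $(I-\phi^{(1)}_n)(w)$ and its gradient vanish on $B_n(0)$, while $\phi^{(1)}_m(w)$ and its gradient vanish off $B_{m+1}(0)$, so by locality of the Nemytskii operators the densities of $\hat F\big(v+(I-\phi^{(1)}_n)(w)\big)$ and $\hat F(v)$ agree on the region seen by $\psi^{(1)}_m$ once $n\geq m$, and similarly for \eqref{compat2}. For $\phi^{(2)}_n$ and $\phi^{(3)}_n$ I would use Proposition~\ref{prop:coheight0} and Proposition~\ref{prop:cospread0}: $(I-\phi^{(2)}_n)(w)$ vanishes together with its gradient off $\hat R^n(w)$, whose complement has measure tending to $0$ uniformly on bounded sets by \eqref{pcoh04}, while $(I-\phi^{(3)}_n)(w)=\phi_{1/n}(w)$ is bounded by $C_0/n$ in $W^{1,\infty}$ by \eqref{pcospr01}; combining this with the H\"older conditions \eqref{QHcont}, \eqref{g1Hcont}, the growth bounds \eqref{Qgrowth}, \eqref{ggrowth}, and the strict gain $p^*>p$ yields the required smallness of the density differences in $Y$. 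I expect this step --- especially \eqref{compat2} for $\phi^{(2)}_n$, where one must control the ``overshoot above level $n$'' of $\hat F$ on the perturbed arguments although images of bounded sets need not be equiintegrable --- to be the main obstacle; it is where the H\"older exponent $\alpha$ and the gain $p^*>p$ are essential.

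With the abstract hypotheses established, the proof becomes an iterated application of Theorem~\ref{thm:op}. Writing $a_n:=\phi^{(2)}_{k_1(k_3(n))}(u_{k(n)})$, $b_n:=u_{k(n)}-a_n$ and $e_n:=(I-\phi^{(1)}_{k_3(n)})(a_n)$, I apply Theorem~\ref{thm:op}: to $(u_j)$ along $k_2$ with family $\phi^{(2)}_n$ (this uses case~(i) of \eqref{ldec1}), obtaining $F(u_{k(n)})-F(a_n)+F(0)-F(b_n)\to0$; to $\big(\phi^{(2)}_\ell(u_{k_2(\ell)})\big)_\ell$ along $k_1$ with $\phi^{(1)}_n$ (case~(ii)), obtaining $F(a_n)-F(U_n^0)+F(0)-F(e_n)\to0$ with $U_n^0=\phi^{(1)}_{k_3(n)}(a_n)$; to $\big((I-\phi^{(2)}_\ell)(u_{k_2(\ell)})\big)_\ell$ along $k_1$ with $\phi^{(1)}_n$ (case~(iii)), obtaining $F(b_n)-F(U_n^1)+F(0)-F(U_n^2)\to0$; and to $\big((I-\phi^{(1)}_\ell)(\phi^{(2)}_{k_1(\ell)}(u_{k_2(k_1(\ell))}))\big)_\ell$ along $k_3$ with $\phi^{(3)}_n$ (case~(iv)), obtaining $F(e_n)-F(U_n^3)+F(0)-F(U_n^4)\to0$. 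Thus the four cases (i)--(iv) in Lemma~\ref{lem:dec} are precisely the instances of \eqref{uknequiint} needed here, and the component sequences $U_n^i$ in \eqref{ldec0} are exactly the truncated/remainder parts produced at the four nodes. Adding the four relations makes $F(a_n)$, $F(b_n)$, $F(e_n)$ telescope away and leaves \eqref{Fconv}. If moreover $F(u_n)\to G$ in $X'$, I apply Theorem~\ref{thm:abstractprop} instead (with all $H=0$) at the same four nodes in the order root $\to$ node~1a $\to$ node~1b $\to$ node~2, each using the limit produced by the preceding applications: the first gives $F(a_n)\to G$ and $F(b_n)\to F(0)$; then $F(U_n^0)\to G$ and $F(e_n)\to F(0)$; then $F(U_n^1),F(U_n^2)\to F(0)$; and finally $F(U_n^3),F(U_n^4)\to F(0)$. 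Hence $F(u_{k(n)})-F(U_n^0)\to0$ and $F(0)-F(U_n^i)\to0$ for $i=1,\dots,4$, i.e.\ every summand in \eqref{Fconv} tends to zero; composing with $J$ transfers all of this to $X'$.
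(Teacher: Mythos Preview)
Your overall architecture---iterating Theorems~\ref{thm:op} and~\ref{thm:abstractprop} along the four nodes (i)--(iv) of Lemma~\ref{lem:dec} and telescoping---matches the paper's proof exactly. The factorization $F=J\circ\hat F$ through the density space $Y=L^{p'}_{q'}\times L^{{p^*}'}_{q'}$ is a legitimate alternative to the paper's choice of $Y$ (functions on the unit ball of $X$), and your verification of \eqref{compat1}--\eqref{compat2} for $\phi^{(1)}_n$ and $\phi^{(2)}_n$ with the pointwise outer truncations $\psi^{(1)}_n=\chi_{B_n}\cdot$ and $\psi^{(2)}_n=\eta_n$ can indeed be made to work along the lines you sketch.

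The gap is at $\phi^{(3)}_n$. Your choice $\psi^{(3)}_n=\eta_n$ (the same as $\psi^{(2)}_n$) is not compatible with $\phi^{(3)}_n$ in the sense of Definition~\ref{def:compat}. For \eqref{compat1} you need
\[
\sup_{v,w\in W}\bigl\|\eta_m\bigl[\hat F\bigl(v+(I-\phi^{(3)}_n)(w)\bigr)\bigr]-\eta_m\bigl[\hat F(v)\bigr]\bigr\|_Y\underset{n\to\infty}{\longrightarrow}0,
\]
and since $\eta_m$ is $1$-Lipschitz this forces $\hat F(v+(I-\phi^{(3)}_n)(w))-\hat F(v)\to 0$ in $Y$ uniformly on bounded sets. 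But $(I-\phi^{(3)}_n)(w)=\phi_{1/n}(w)$ is only small in $W^{1,\infty}$; its $L^q$- and $L^p$-norms do \emph{not} tend to zero uniformly over bounded $W$ (take a spreading sequence as in Remark~\ref{rem:spread}(ii)), and the uniform continuity estimate from Proposition~\ref{1propFcont} is precisely in terms of those norms. The H\"older exponent $\alpha$ and the gap $p^*>p$ do not help here because the perturbation lives on a set of possibly infinite measure.

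The paper resolves this with a genuinely different outer truncation: it sets $\psi^{(3)}_n(f)[\varphi]:=f[\phi^{(3)}_n(\varphi)]$, which maps $X'$ into a space of nonlinear functionals on $\overline{B_1(0)}\subset X$. The point is that $\phi^{(3)}_m(\varphi)$ has support of measure at most $(C_3m)^q$ by \eqref{pcospr03}, so when testing the density difference against $\phi^{(3)}_m(\varphi)$ one only integrates over a set of \emph{bounded} measure, on which the $W^{1,\infty}$-smallness of $(I-\phi^{(3)}_n)(w)$ does yield $L^p$-smallness (this is \eqref{pF-3F1-2} in Proposition~\ref{prop:F-3F1}). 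A further subtlety you miss is that with this $\psi^{(3)}_n$ the paper cannot verify \eqref{F2} for $F=F_1+F_2$; instead it checks the abstract axioms for $F_1$ alone at node~(iv), and transfers the conclusion to $F$ afterwards via Proposition~\ref{prop:F-3noF2}, exploiting that $U_n^3,U_n^4\to 0$ in $W^{1,\infty}_{\loc}$.
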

Theorem~\ref{lem:W1p-prop} can be used to characterize properness of $F$ 
on bounded subsets of $X$ 
as follows:
\begin{thm}\label{thm:W1p-prop}
Let $\Omega$ be a Lipschitz domain in $\RR^N$, let
$1<q\leq p<N$ and define $p^*=\frac{pN}{N-p}$. Furthermore, let $F:X\to X'$ denote the operator 
defined in \eqref{1Fdef}
and assume that \eqref{Q0}--\eqref{QHcont} 
and \eqref{g0}--\eqref{g1Hcont} are satisfied. 
Then
\begin{align*}
	\text{(i)}~&\begin{aligned}[t]
		\text{$F$ is proper on closed bounded subsets of $X$}
	\end{aligned}
\intertext{if and only if}	
  \text{(ii)}~&\begin{aligned}[t]
		&\text{every bounded sequence $(u_n)\subset X$ which satisfies}\\
		&\text{at least one of the five alternatives below, i.e.,}\\
		&\begin{alignedat}{2}
			&&\mbox{\normalfont{(a)}}&
			\begin{array}[t]{l}
				\text{$F(u_n)$~converges in $X'$, and}\\ 
				\text{$u_n$ is equiintegrable in $W^{1,p}$, $W^{1,q}$ and $L^{p^*}$,}
			\end{array}\\
			&&\mbox{\normalfont{(b)}}&
			\begin{array}[t]{l}
		 		\text{$F(u_n)\to F(0)$ in $X'$, 
		 		$\abs{\{u_n\neq 0\}}\to 0$, and}~\\ 
		 		\text{$u_n$ is tight in $W^{1,p}$, $W^{1,q}$ and $L^{p^*}$,}
			\end{array}\\
			&&\mbox{\normalfont{(c)}}&
			\begin{array}[t]{l}
			  \text{$F(u_n)\to F(0)$ in $X'$, $u_n\to 0$ in $W^{1,\infty}_{\loc}$, and
			  $\abs{\{u_n\neq 0\}}\to 0$,}
 			\end{array}\\
			&&\mbox{\normalfont{(d)}}&
			\begin{array}[t]{l}
				\text{$F(u_n)\to F(0)$ in $X'$, 
				$u_n$ does not spread out in $W^{1,q}$, }~\\
				\text{$u_n\to 0$ in $W^{1,\infty}_{\loc}$, and }
				\text{$u_n$ does not concentrate in $W^{1,p}$ and $L^{p^*}$,}
			\end{array}\\
			\text{or}~~
			&&\mbox{\normalfont{(e)}}&
			\begin{array}[t]{l}
			  \text{$F(u_n)\to F(0)$ in $X'$, 
			  and $u_n\to 0$ in $W^{1,\infty}$,} 
			\end{array}
	\end{alignedat}\\
	&\text{has a convergent subsequence.}
	\end{aligned}
\end{align*}
Here, the terms "equiintegrable", "does not concetrate", "does not spread out" and "tight" are meant in the sense of Definition~\ref{def:concentrate}.
\end{thm}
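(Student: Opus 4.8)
The plan is to prove the two implications separately; essentially all of the analytic work will already have been done in Lemma~\ref{lem:dec}, Theorem~\ref{lem:W1p-prop} and Proposition~\ref{1propFcont}, so the proof reduces to assembling these pieces.

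For the direction (i)$\Rightarrow$(ii) I would argue as follows: in each of the five alternatives (a)--(e), the image sequence $F(u_n)$ converges in $X'$ (by hypothesis in case (a), and to the prescribed limit $F(0)$ in cases (b)--(e)). Since $(u_n)$ is in addition bounded, properness of $F$ on closed bounded subsets of $X$ immediately yields a subsequence converging in $X$. This direction is thus essentially immediate.

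The substance is in (ii)$\Rightarrow$(i). To verify properness, I would take an arbitrary bounded sequence $(u_n)\subset X$ for which $F(u_n)$ converges in $X'$, say $F(u_n)\to G$, and produce an $X$-convergent subsequence. First, apply Lemma~\ref{lem:dec} to obtain the subsequence $u_{k(n)}=U_n^0+U_n^1+U_n^2+U_n^3+U_n^4$ together with the five bounded component sequences and their structural properties (a)--(e) stated there. Next, apply Theorem~\ref{lem:W1p-prop}: since $F(u_n)$ converges, each of the five summands in \eqref{Fconv} tends to zero in $X'$, which means $F(U_n^0)\to G$ and $F(U_n^i)\to F(0)$ for $i=1,2,3,4$. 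The key observation is then purely a matter of matching: combining the convergence of $F(U_n^i)$ with the corresponding structural property from Lemma~\ref{lem:dec}, the sequence $U_n^0$ satisfies alternative (a) of (ii), $U_n^1$ satisfies (b), $U_n^2$ satisfies (c), $U_n^3$ satisfies (d) and $U_n^4$ satisfies (e). By hypothesis (ii), each $U_n^i$ therefore admits a subsequence converging in $X$.

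Finally I would run a standard diagonal extraction to find a single subsequence $(\sigma(n))$ along which all five sequences $U^0_{\sigma(n)},\dots,U^4_{\sigma(n)}$ converge in $X$; then $u_{k(\sigma(n))}=\sum_{i=0}^4 U^i_{\sigma(n)}$ converges in $X$, so $(u_n)$ has a convergent subsequence and $F$ is proper. The only point requiring care is the bookkeeping in the previous paragraph: one must check that the properties of $U_n^0,\dots,U_n^4$ granted by Lemma~\ref{lem:dec}(a)--(e) coincide, line by line, with the hypotheses imposed on $(u_n)$ in alternatives (a)--(e) of the theorem, the missing $F$-convergence statements being exactly what Theorem~\ref{lem:W1p-prop} supplies. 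Since they do coincide, no additional estimate is needed at this stage; the hard analysis sits entirely in Proposition~\ref{prop:codomain}, Proposition~\ref{prop:coheight2}, Proposition~\ref{prop:cospread2} and Proposition~\ref{1propFcont}, all of which feed into Lemma~\ref{lem:dec} and Theorem~\ref{lem:W1p-prop}.
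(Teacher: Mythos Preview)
Your proposal is correct and takes exactly the same approach as the paper: the paper's proof consists of the single sentence ``By Theorem~\ref{lem:W1p-prop}, (ii) implies (i) and the converse implication is trivial,'' and you have simply spelled out what that sentence means---apply Lemma~\ref{lem:dec} to decompose a subsequence, use Theorem~\ref{lem:W1p-prop} to get $F(U_n^0)\to G$ and $F(U_n^i)\to F(0)$ for $i=1,\dots,4$, match the structural properties (a)--(e) of Lemma~\ref{lem:dec} with the alternatives (a)--(e) of (ii), and extract successive subsequences.
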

\begin{proof}
By Theorem~\ref{lem:W1p-prop}, (ii) implies (i) and the converse implication is trivial.
\end{proof} 
%
%
\begin{proof}[Proof of Proposition~\ref{1propFcont}]
By \eqref{Qgrowth}, 
$\tilde{Q}$ really maps into $L_{q'}^{p'}(\Omega;\RR^{M\times N})$.
Likewise, \eqref{ggrowth} ensures that $\tilde{g}_1$ and $\tilde{g}_2$ are well defined.
We only show the uniform continuity of $\tilde{Q}$ on bounded subsets of $\tilde{X}$. The proof for $\tilde{g}_1$ is analogous, the continuity of $\tilde{g}_2$ can be deduced from \eqref{g0} and \eqref{ggrowth} using standard arguments, and
\eqref{g2equiint} essentially is a consequence of the growth condition on $g_2$ and Hölder's inequality (the assumption $\varrho>0$ in \eqref{ggrowth} is crucial here, as is the decay of $h_{\infty}$ as $\abs{x}\to\infty$ in case of the first limit, cf.~\eqref{HIJdef} and \eqref{hqdef}).\\
Due to \eqref{QHcont},
\begin{align*}
	\abs{Q(x,u,U)-Q(x,v,V)}\leq S_1(x)^{q-1-\alpha}D(x)^\alpha+S_2(x)^{p-1-\alpha}D(x)^\alpha
\end{align*}
for every $(u,U),(v,V)\in \tilde{X}$, with the abbreviations
\begin{alignat*}{2}
	&S_1(x)&&:=\tilde{C}\big(\abs{U(x)}+\abs{u(x)}+\abs{V(x)}+\abs{v(x)}+h_q(x)\big),\\
	&S_2(x)&&:=\tilde{C}\big(\abs{U(x)}+\abs{u(x)}^\frac{p^*}{p}
	+\abs{V(x)}+\abs{v(x)}^\frac{p^*}{p}+h_p(x)\big)~\text{and}\\
	&D(x)&&:=\abs{U(x)-V(x)}+\abs{u(x)-v(x)}+\abs{u(x)-v(x)}^{\frac{p^*}{p}}.
\end{alignat*}
Here, 
$\tilde{C}=\tilde{C}(C,q,p,\alpha)\geq 0$ is a suitable constant.
Consequently,
\begin{align*}
	& \norm{Q(x,u,U)-Q(x,v,V)}_{L^{p'}_{q'}(\Omega;\RR^{M\times N})} \\
	&\qquad \begin{aligned} 
	\leq \,&
	\norm{(S_1(\cdot))^{q-1-\alpha} (D(\cdot))^\alpha}^{L^{q'}}
	+\norm{(S_2(\cdot))^{p-1-\alpha} (D(\cdot))^\alpha}^{L^{p'}}\\
	\leq \, &\norm{S_1}_{L^q}^{q-1-\alpha}
	\norm{D}_{L^q}^{\alpha}
	+\norm{S_2}_{L^p}^{p-1-\alpha}
	\norm{D}_{L^p}^{\alpha},
	\end{aligned}
\end{align*}
by Hölder's inequality. This implies that $Q$ is globally Hölder continuous 
on every bounded subset of $\tilde{X}$ since
\begin{align*}
	\norm{D}_{L^p}+\norm{D}_{L^q}
	\leq 3\norm{(u,U)-(v,V)}_{\tilde{X}},
\end{align*}
where we used that 
$\norm{u-v}_{L^r}\leq \norm{u-v}_{L^{p^*}}+\norm{u-v}_{L^q}$ for every $r\in [q,p^*]$.
\end{proof}
\subsubsection*{Proof of Theorem~\ref{lem:W1p-prop}}
The proof of Theorem~\ref{lem:W1p-prop} is based on the 
subsequent application of Theorem~\ref{thm:op} and Theorem~\ref{thm:abstractprop},
respectively, for the three types 
of truncation operators discussed in Section~\ref{secCOOpExamples}. 
In particular, this requires a suitable abstract setting
which is laid out below. 

As before, let $\phi_n^{(1)}$, $\phi_n^{(2)}$, and $\phi_n^{(3)}$ 
denote the truncation operators on 
\begin{align*}
	X=W_0^{1,p}(\Omega;\RR^M)\cap W_0^{1,q}(\Omega;\RR^M)
\end{align*}
introduced in Section~\ref{secCOOpExamples}.
We define
\begin{align*}
	\psi_n^{(j)}:X'\to Y,~~\psi_n^{(j)}(f)[\varphi]:=f[\phi^{(j)}_n(\varphi)],~~j=1,2,3,
\end{align*}
for every $f\in X'$ and every $\varphi\in X$ with $\norm{\varphi}_X\leq 1$,
where 
\begin{align*}
	&Y:=\mysetl{f:X\supset \overline{B_1(0)} \to \RR}{\norm{f}_Y<\infty},\\
	&\text{with}~\norm{f}_Y:=\sup\mysetr{\abs{f(\varphi)}}{\varphi\in \overline{B_1(0)}\subset X},
\end{align*}
is a normed vector space. Note that $X'$ 
is isometrically embedded in $Y$. 
Hence the function $F$ defined in \eqref{1Fdef}
can also be considered as a map from $X$ into $Y$. Naturally, the closure of the 
range of $F$ in $Y$ lies in $X'$. Accordingly, we define
\begin{align*}
	D:=X~~\text{and}~~R:=X'.
\end{align*}
Last but not least, we decompose 
\begin{align*}
  &\begin{aligned}
 	  &\text{$F=F_1+F_2$ with $F_{1,2}:X\to Y$, where for every $\varphi\in X$,}
 	\end{aligned}\\
 	&\begin{aligned}
 		F_1(u)[\varphi]:=&\int_\Omega Q(x,u,\nabla u):\nabla \varphi	
 		+g_1(x,u,\nabla u)\cdot\varphi\,dx\quad\text{and}\\
 		F_2(u)[\varphi]:=&\int_\Omega g_2(x,u,\nabla u)\cdot\varphi\,dx.
 	\end{aligned}
\end{align*}
\begin{rem}\label{rem:Y}~
\begin{myenum}
\item Note that $\psi_n^{(2)}$ and $\psi_n^{(3)}$ do not map into $X'$ due to the lack of linearity of 
$\phi_n^{(2)}$ and $\phi_n^{(3)}$. Our use of $Y$ compensates for that problem. 
\item Actually, $Y$ is a Banach space, but we do not exploit that fact.
\end{myenum}
\end{rem}
In a series of propositions, we now check the assumptions of
Theorem~\ref{thm:op} and Theorem~\ref{thm:abstractprop}, 
i.e., \eqref{copsi1}, \eqref{copsi2} and \eqref{F0}--\eqref{F2},
for each pair $\phi_n^{j}$, $\psi_n^{j}$ ($j=1,2,3$).
Throughout, we assume that
	$\Omega \subset \RR^N$ is a Lipschitz domain and $1<q\leq p<N$.
\begin{prop}\label{prop:F-psi1psi2}
For every $j=1,2,3$,
the family $\psi_n^{(j)}:X\to Y$ satisfies 
\eqref{copsi1} and \eqref{copsi2}. As to the latter, we even have that
\begin{align}\label{psi12limit}
	\psi_n^{(j)}(f)\underset{n\to \infty}{\To}f~\text{in $Y$, for every $f\in X'$}.
\end{align}
\end{prop}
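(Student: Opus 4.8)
The plan is to deduce the whole statement from two facts only: the equiboundedness \eqref{phi1} of the families $\phi_n^{(j)}$ (established in Section~\ref{secCOOpExamples}), and the quantitative truncation estimates derived there. First I would record a uniform Lipschitz bound for the $\psi_n^{(j)}$. For $f_1,f_2\in X'$ and $\varphi\in X$ with $\norm{\varphi}_X\leq 1$, equiboundedness gives $\norm{\phi_n^{(j)}(\varphi)}_X\leq C$, hence $\abs{(\psi_n^{(j)}(f_1)-\psi_n^{(j)}(f_2))[\varphi]}=\abs{(f_1-f_2)[\phi_n^{(j)}(\varphi)]}\leq C\norm{f_1-f_2}_{X'}$; taking the supremum over $\varphi$ and using that $X'$ is isometrically embedded in $Y$ yields $\norm{\psi_n^{(j)}(f_1)-\psi_n^{(j)}(f_2)}_Y\leq C\norm{f_1-f_2}_Y$ for all $n$. (The same estimate with $f_2=0$ shows that $\psi_n^{(j)}$ really maps $X'$ into $Y$.) This uniform Lipschitz bound gives \eqref{copsi1} immediately, with $\delta:=\eps/C$.

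It then remains to prove \eqref{psi12limit} — which implies \eqref{copsi2} — i.e. $\norm{\psi_n^{(j)}(f)-f}_Y=\sup_{\norm{\varphi}_X\leq 1}\abs{f[\phi_n^{(j)}(\varphi)-\varphi]}\to 0$ for every fixed $f\in X'$. Here I expect the main obstacle: since $\phi_n^{(j)}\to I$ only pointwise on $X$ (cf.~\eqref{pcohlim}, \eqref{pcovlim}) and not uniformly on the unit ball, the required uniformity in $\varphi$ cannot come for free and must be extracted from the structure of the ``bad part'' $\phi_n^{(j)}(\varphi)-\varphi$. Using the Lipschitz bound from the first step together with a three-$\eps$ argument, I would reduce to checking \eqref{psi12limit} on a dense subset of $X'$, namely functionals of the form $f[\psi]=\int_\Omega(g_0\psi+g\cdot\nabla\psi)\,dx$ with $g_0,g$ bounded and compactly supported in $B_{R_0}(0)\cap\Omega$ for some $R_0>0$ (these are dense in $X'$ by the standard representation of $X'$ through data in $L^{p'}+L^{q'}$ and the density of bounded compactly supported functions there). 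For such $f$, with $K:=\norm{g_0}_{L^\infty}+\norm{g}_{L^\infty}$, one has $\abs{f[\phi_n^{(j)}(\varphi)-\varphi]}\leq K\int_{B_{R_0}(0)\cap\Omega}(\abs{\phi_n^{(j)}(\varphi)-\varphi}+\abs{\nabla(\phi_n^{(j)}(\varphi)-\varphi)})\,dx$, and the task becomes to show that this right-hand side tends to $0$ uniformly for $\norm{\varphi}_X\leq 1$.

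Finally I would dispatch the three cases. For $j=1$, $\phi_n^{(1)}(\varphi)-\varphi$ is supported in $\Omega\setminus B_n(0)$ by definition, so the integral above is simply $0$ once $n>R_0$. For $j=3$, estimate \eqref{pcospr01} gives $\abs{\phi_n^{(3)}(\varphi)-\varphi}+\abs{\nabla(\phi_n^{(3)}(\varphi)-\varphi)}\leq C_0/n$ a.e.~in $\Omega$, so the integral is at most $K\abs{B_{R_0}(0)}C_0/n\to 0$, uniformly in $\varphi$. For $j=2$, \eqref{pcoh02} (together with the lemma on Sobolev functions recalled in the preliminaries) shows that $\phi_n^{(2)}(\varphi)-\varphi$ and its gradient are supported on $\Omega\setminus\hat{R}^n(\varphi)$, whose measure is $\leq(C_3)^p n^{-p}\norm{\varphi}_{W^{1,p}}^p\leq(C_3)^p n^{-p}$ by \eqref{pcoh04} (with $r=p$), while $\norm{\phi_n^{(2)}(\varphi)-\varphi}_{W^{1,p}}\leq(C+1)\norm{\varphi}_{W^{1,p}}\leq C+1$ by equiboundedness; H\"older's inequality with exponents $p$ and $p'$ then bounds the integral by $2K(C+1)(C_3)^{p-1}n^{-(p-1)}\to 0$, uniformly for $\norm{\varphi}_X\leq 1$ — this is the step where $p>1$ is essential. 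Since in every case the bound is independent of $\varphi$, \eqref{psi12limit} follows, and with it \eqref{copsi2}.
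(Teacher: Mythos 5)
Your argument is correct, but for the main point \eqref{psi12limit} it takes a genuinely different route from the paper. The paper argues indirectly: if \eqref{psi12limit} failed for some fixed $f\in X'$, one could pick witnesses $\varphi_n$ in the unit ball of $X$ with $f[(I-\phi_n^{(j)})(\varphi_n)]\not\to 0$, and this is ruled out by showing $(I-\phi_n^{(j)})(\varphi_n)\rightharpoonup 0$ weakly in $X$ (using exactly the three facts you also use: the difference vanishes on $B_n(0)$ for $j=1$, the measure bound $\abs{\{\phi_n^{(2)}(\varphi_n)\neq\varphi_n\}}\leq (C_3)^p n^{-p}$ from Proposition~\ref{prop:coheight0} for $j=2$, and the $W^{1,\infty}$-bound $C_0/n$ from Proposition~\ref{prop:cospread0} for $j=3$), so that applying the fixed continuous functional $f$ gives a contradiction. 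You instead exploit the uniform operator bound $\norm{\psi_n^{(j)}}\leq C$ to reduce, by a three-$\eps$ argument, to a dense class of functionals with bounded, compactly supported data, and then verify the limit on that class by direct estimates with explicit rates ($0$ for large $n$ when $j=1$, $O(n^{-1})$ for $j=3$, and $O(n^{-(p-1)})$ via H\"older and \eqref{pcoh02}, \eqref{pcoh04} together with the preliminary lemma on gradients of Sobolev functions for $j=2$). The trade-off: the paper's argument needs no representation of $X'$ at all, only boundedness and weak sequential compactness in the reflexive space $X$; yours avoids weak compactness and yields quantitative, uniform convergence rates on the dense class, at the price of the (standard, and correct here since $C_0^\infty(\Omega)\subset X$ is dense in both $W_0^{1,p}$ and $W_0^{1,q}$ and $p',q'<\infty$) identification of $X'$ with $(W_0^{1,p})'+(W_0^{1,q})'$ and the density of bounded compactly supported data there — the one ingredient you invoke that lies outside the paper's own toolbox. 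Your treatment of \eqref{copsi1} via the uniform Lipschitz bound coincides with the paper's.
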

\begin{proof}
Each $\psi_n^{(j)}$ is linear. 
Since all three families $\phi_n^{(j)}$ are equibounded, 
we infer \eqref{copsi1}.
For the proof of \eqref{psi12limit} we argue indirectly. 
Assume that there is
a sequence $(\varphi_n)\subset X$ with $\norm{\varphi_n}_X\leq 1$ for every $n$
such that
\begin{align*}
	(f-\psi_n^{(j)}[f])(\varphi_n)=f[(I-\phi_n^{(j)})(\varphi_n)]~~\text{does not converge to zero.}
\end{align*}
However, this is impossible, because $(I-\phi^{(j)}_n)(\varphi_n)\rightharpoonup 0$
weakly in $X$, for each $j$: Recall that
$\phi^{(1)}_n(\varphi_n)-\varphi_n=0$ on $B_n(0)$ by the definition
of $\phi^{(1)}_n$, 
that $\big|\{\phi^{(2)}_n(\varphi_n)-\varphi_n\neq 0\}\big|\leq (C_3)^pn^{-p}\to 0$ 
due to Proposition~\ref{prop:coheight0},
and that 
$\phi^{(3)}_n(\varphi_n)-\varphi_n\to 0$ in $W^{1,\infty}$ due to
Proposition~\ref{prop:cospread0}.
\end{proof}
\begin{prop}\label{prop:F-12F0F2}
Assume that \eqref{Q0}--\eqref{QHcont} 
and \eqref{g0}--\eqref{g1Hcont} hold.
Then for $j=1,2$, 
$F=F_1+F_2:X\to Y$ satisfies \eqref{F0} and \eqref{F2}
with $\phi_n=\phi_n^{(j)}$ and $\psi_n=\psi_n^{(j)}$.
\end{prop}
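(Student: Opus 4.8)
The plan is to verify the two conditions \eqref{F0} and \eqref{F2} directly, relying on Proposition~\ref{1propFcont} and on the structural properties of $\phi_n^{(1)}$ and $\phi_n^{(2)}$ established in Section~\ref{secCOOpExamples}; here $D=X$ is trivially a closed additive subgroup of $X$ and $R=X'$. For \eqref{F0} the only point is continuity of $F\colon X\to Y$. First I would observe that the linear map $u\mapsto(u,\nabla u)$ sends $X$ boundedly into $\tilde X$ (using the continuous embedding $W_0^{1,p}\hookrightarrow L^{p^*}$), so by Proposition~\ref{1propFcont} the maps $u\mapsto\tilde Q(u,\nabla u)$, $u\mapsto\tilde g_1(u,\nabla u)$ and $u\mapsto\tilde g_2(u,\nabla u)$ are continuous from $X$ into $L_{q'}^{p'}$, resp.\ into $L_{q'}^{{p^*}'}$. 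Combined with the bounds $\|\nabla\varphi\|_{L_q^p}\le\|\varphi\|_X$ and $\|\varphi\|_{L_q^{p^*}}\le C\|\varphi\|_X$ and Hölder's inequality \eqref{LpqHineq}, this gives $\|F(u)-F(v)\|_Y\le\|F(u)-F(v)\|_{X'}\to 0$ as $v\to u$ in $X$; the same estimate shows the closure of the range of $F$ lies in $R=X'$.

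For \eqref{F2}, I take $F_1$ as already decomposed. That $F_1\colon X\to X'=R$ is uniformly continuous on bounded subsets of $X$ follows from the same Hölder estimate applied only to $Q$ and $g_1$, now using that $\tilde Q$ and $\tilde g_1$ are uniformly continuous on bounded subsets of $\tilde X$ (Proposition~\ref{1propFcont}) and that $u\mapsto(u,\nabla u)$ maps bounded sets to bounded sets. For the convergence assertion I would use that $\psi_n^{(j)}$ is linear (proof of Proposition~\ref{prop:F-psi1psi2}), so that
\begin{align*}
	(I-\psi_n^{(j)})\big[F(w)\big]-(I-\psi_n^{(j)})\big[F_1(w)\big]
	=(I-\psi_n^{(j)})\big[F_2(w)\big],
\end{align*}
and, evaluating at $\varphi\in X$ with $\|\varphi\|_X\le 1$,
\begin{align*}
	(I-\psi_n^{(j)})\big[F_2(w)\big](\varphi)
	=F_2(w)\big[(I-\phi_n^{(j)})(\varphi)\big]
	=\int_\Omega\tilde g_2(w,\nabla w)\cdot(I-\phi_n^{(j)})(\varphi)\,dx.
\end{align*}

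The key observation is then that $\eta:=(I-\phi_n^{(j)})(\varphi)$ is bounded in $L_q^{p^*}$ uniformly in $n$ and in $\|\varphi\|_X\le 1$ (equiboundedness of $\phi_n^{(j)}$ together with $X\hookrightarrow L^{p^*}\cap L^q$), and is supported in a set $A_n$ that shrinks as $n\to\infty$: for $j=1$ one has $A_n=\Omega\setminus B_n(0)$, since $\phi_n^{(1)}$ acts as the identity on $B_n(0)$; for $j=2$ one has $A_n=\Omega\setminus\hat R^n(\varphi)$ by \eqref{pcoh02}, with $|A_n|\le(C_3)^pn^{-p}$ by \eqref{pcoh04} (applied with $r=p$). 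Hölder's inequality \eqref{LpqHineq} then yields $\big|(I-\psi_n^{(j)})[F_2(w)](\varphi)\big|\le C\,\|\tilde g_2(w,\nabla w)\|_{L_{q'}^{{p^*}'}(A_n)}$; taking the supremum over $\varphi$ and over $w$ in a bounded set $W\subset X$ (so that $\{(w,\nabla w):w\in W\}$ is bounded in $\tilde X$), the right-hand side tends to $0$ by the first limit in \eqref{g2equiint} when $j=1$, and by the second limit in \eqref{g2equiint} (with $\delta=(C_3)^pn^{-p}$) when $j=2$. This is exactly the third requirement in \eqref{F2}.

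I expect no genuine obstacle here: the computations are routine, and the one substantive ingredient — the equiintegrability/tightness \eqref{g2equiint} of the Nemytskii operator of $g_2$ — is already supplied by Proposition~\ref{1propFcont}. The only thing requiring care is tracking the two separate uniformities (over the test functions $\varphi$ in the unit ball, and over $w$ in the bounded set $W$) while keeping the Hölder pairing in the $L_s^r$-scale correctly matched, and noting that the bound on $|A_n|$ for $j=2$ is uniform in $\varphi$ so that the supremum over $\varphi$ causes no trouble.
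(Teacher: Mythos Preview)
Your proof is correct and follows essentially the same approach as the paper's own proof: reduce \eqref{F0} and the uniform continuity of $F_1$ to Proposition~\ref{1propFcont}, use linearity of $\psi_n^{(j)}$ to rewrite the remaining convergence in \eqref{F2} as $F_2(w)\big[(I-\phi_n^{(j)})(\varphi)\big]$, and then exploit the support structure of $(I-\phi_n^{(j)})(\varphi)$ (via the definition of $\phi_n^{(1)}$ for $j=1$, via \eqref{pcoh02}--\eqref{pcoh04} for $j=2$) together with H\"older's inequality and the two limits in \eqref{g2equiint}. Your observation that the measure bound on $A_n$ for $j=2$ is uniform in $\varphi$ is exactly the point the paper leaves implicit, so your write-up is in fact a slightly more explicit version of the same argument.
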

\begin{proof}
Due to Proposition~\ref{1propFcont}, we have \eqref{F0} 
as well as the uniform continuity of $F_1$ 
on bounded subsets of $X$ as required in \eqref{F2}. 
It remains to show that
\begin{align*}
	\sup_{w\in W}~ \sup_{\varphi\in X,\,\norm{\varphi}_X\leq 1}~
	F_2(w)[(I-\phi_n^{(j)})(\varphi)]~\underset{n\to\infty}{\To}0
\end{align*}
for every bounded set $W\subset X$.
For a proof, first recall that
$(I-\phi_n^{(j)})[\varphi]$ is bounded
in $X$, uniformly in $\varphi$ with $\norm{\varphi}_X\leq 1$,
due to \eqref{phi1}.
Combining this with the first line of \eqref{g2equiint} yields the assertion in case $j=1$.
In case $j=2$, one employs the second line of \eqref{g2equiint} instead,
as well as \eqref{pcoh02}, \eqref{pcoh04} and Hölder's inequality. 
\end{proof}
As it turns out in the proof of Theorem~\ref{lem:W1p-prop}, 
in case of the third truncation method ($\phi_n=\phi_n^{(3)}$ and $\psi_n=\psi_n^{(3)}$),
we need \eqref{F0}--\eqref{F2} just for $F=F_1$ (instead of $F=F_1+F_2$).
Essentially, this is due to the "subcritical" behavior of $F_2$ with respect 
to sequences converging to zero in $W^{1,p}_{\loc}$:
\begin{prop}\label{prop:F-3noF2}
Assume that \eqref{Q0}--\eqref{QHcont} 
and \eqref{g0}--\eqref{g1Hcont} hold,
and let $T_n$ be a bounded sequence in $X$
such that $T_n\to 0$ in $W^{1,p}_{\loc}$ and $L^{p^*}_{\loc}$.
Then $[F(T_n)-F(0)]-[F_1(T_n)-F_1(0)]\to 0$ in $X'$.
\end{prop}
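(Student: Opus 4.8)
Since $F=F_1+F_2$ by construction, we have
$[F(T_n)-F(0)]-[F_1(T_n)-F_1(0)]=F_2(T_n)-F_2(0)$,
so it is enough to show that $F_2(T_n)-F_2(0)\to 0$ in $X'$. Recall that
$F_2(u)[\varphi]=\int_\Omega \tilde{g}_2(u,\nabla u)\cdot\varphi\,dx$. Using H\"older's inequality
in the form \eqref{LpqHineq} together with the continuous embedding of $X$ into
$L_{q}^{p^*}(\Omega;\RR^M)$, one obtains
\begin{align*}
	\norm{F_2(u)-F_2(v)}_{X'}\leq C_\Omega\,
	\norm{\tilde{g}_2(u,\nabla u)-\tilde{g}_2(v,\nabla v)}_{L_{q'}^{{p^*}'}(\Omega;\RR^M)}
	\quad\text{for all $u,v\in X$,}
\end{align*}
where $C_\Omega$ is the embedding constant. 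Hence the plan is to prove that
\begin{align*}
	\tilde{g}_2(T_n,\nabla T_n)\underset{n\to\infty}{\To}\tilde{g}_2(0,0)
	\quad\text{in $L_{q'}^{{p^*}'}(\Omega;\RR^M)$.}
\end{align*}

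To establish this I would split $\Omega$ into the large ball $\Omega\cap B_R(0)$ and its complement
and estimate the two contributions to the sum--space norm separately, using the elementary bound
$\norm{f}_{L_{q'}^{{p^*}'}(\Omega)}\leq\norm{f}_{L_{q'}^{{p^*}'}(\Omega\cap B_R(0))}
+\norm{f}_{L_{q'}^{{p^*}'}(\Omega\setminus B_R(0))}$.
For the exterior piece, note that $(T_n,\nabla T_n)$ is a bounded sequence in $\tilde{X}$
(since $T_n$ is bounded in $X$ and $X\hookrightarrow L_{q}^{p^*}$), so the first line of
\eqref{g2equiint}, applied to this bounded set and, separately, to the bounded singleton $\{(0,0)\}$,
shows that both $\sup_n\norm{\tilde{g}_2(T_n,\nabla T_n)}_{L_{q'}^{{p^*}'}(\Omega\setminus B_R(0))}$
and $\norm{\tilde{g}_2(0,0)}_{L_{q'}^{{p^*}'}(\Omega\setminus B_R(0))}$ tend to $0$ as $R\to\infty$.
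For the interior piece, the hypotheses $T_n\to 0$ in $W^{1,p}_{\loc}$ and in $L^{p^*}_{\loc}$ give,
for each fixed $R$, that $T_n\to 0$ in $W^{1,p}(\Omega\cap B_R(0))\cap L^{p^*}(\Omega\cap B_R(0))$;
since $\abs{\Omega\cap B_R(0)}<\infty$ and $q\leq p$, this entails $(T_n,\nabla T_n)\to(0,0)$ in
$\tilde{X}$ computed over $\Omega\cap B_R(0)$, and then the continuity of the Nemytskii operator
$\tilde{g}_2$ from Proposition~\ref{1propFcont} --- which localises to the subdomain $\Omega\cap B_R(0)$
by extending functions by zero outside $\Omega\cap B_R(0)$ --- yields
$\tilde{g}_2(T_n,\nabla T_n)\to\tilde{g}_2(0,0)$ in $L_{q'}^{{p^*}'}(\Omega\cap B_R(0))$.
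Combining the two pieces by the familiar $\eps/3$ argument (first fix $R$ so that the exterior
contributions are small uniformly in $n$, then let $n\to\infty$ for the interior contribution)
finishes the proof.

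I expect the only delicate point to be the bookkeeping with the sum--space norm
$L_{q'}^{{p^*}'}$ under the decomposition of $\Omega$ and under restriction of $\tilde{g}_2$ to
a ball; beyond that the argument is routine. In particular, note that it uses only the tightness
statement (the first line of \eqref{g2equiint}) and not the no--concentration statement, which reflects
the ``subcritical'' behaviour of $g_2$ announced before the proposition.
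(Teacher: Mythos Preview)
Your proof is correct and follows essentially the same approach as the paper: the paper's two-line proof simply asserts that $\norm{\tilde g_2(T_n,\nabla T_n)-\tilde g_2(0,0)}_{L^{{p^*}'}_{q'}(\Omega)}\to 0$ by combining the first line of \eqref{g2equiint} with the continuity of $\tilde g_2$ at $0$, which is exactly the decomposition into exterior and interior parts that you spell out explicitly. Your observation that only the tightness part of \eqref{g2equiint} is needed (not the no-concentration part) is also correct and matches the paper.
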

\begin{proof}
It suffices to show that 
$\norm{g_2(\cdot,T_n,\nabla T_n)-g_2(\cdot,0,0)}_{L^{{p^*}'}_{q'}(\Omega)}\to 0$ 
as $n\to\infty$,
which is a consequence of the first line in \eqref{g2equiint} and
the continuity of the Nemytskii operator $\tilde{g}_2$ at $0$
(cf.~Proposition~\ref{1propFcont}).
\end{proof}
\begin{prop}\label{prop:F-3F0F2}
Assume that \eqref{Q0}--\eqref{QHcont} 
and \eqref{g0}--\eqref{g1Hcont} hold.
Then $F=F_1:X\to Y$ satisfies \eqref{F0} and \eqref{F2}
with $\phi_n=\phi_n^{(3)}$ and $\psi_n=\psi_n^{(3)}$.
\end{prop}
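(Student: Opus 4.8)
The plan is to observe that, in this section, the operator playing the role of the abstract $F$ is $F_1$ itself, so the statement reduces immediately to Proposition~\ref{1propFcont}, and the perturbation that \eqref{F2} allows can be taken to vanish identically. Recall the concrete choices here: $D=X$ (a closed additive subgroup of $X$), $R=X'$, $\psi_n=\psi_n^{(3)}$, and that on $X'\subset Y$ the $Y$-norm agrees with the $X'$-norm.

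For \eqref{F0}, I would argue as follows. By Proposition~\ref{1propFcont}, $\tilde Q$ and $\tilde g_1$ are well defined and continuous from $\tilde{X}$ into $L_{q'}^{p'}(\Omega;\RR^{M\times N})$ and $L_{q'}^{{p^*}'}(\Omega;\RR^{M})$ respectively, while $u\mapsto(u,\nabla u)$ maps $X$ boundedly and continuously into $\tilde{X}$ (using $W_0^{1,p}\hookrightarrow L^{p^*}$ and the evident embeddings for the remaining components). H\"older's inequality in the form \eqref{LpqHineq} then gives
\[
	\abs{F_1(u)[\varphi]}\leq\norm{\tilde Q(u,\nabla u)}_{L_{q'}^{p'}}\norm{\nabla\varphi}_{L_{q}^{p}}
	+\norm{\tilde g_1(u,\nabla u)}_{L_{q'}^{{p^*}'}}\norm{\varphi}_{L_{q}^{p^*}}
\]
for every $\varphi\in X$; since $\norm{\nabla\varphi}_{L_{q}^{p}}+\norm{\varphi}_{L_{q}^{p^*}}\leq C\norm{\varphi}_X$, this shows $F_1(u)\in X'\subset Y$ and that $F_1\colon X\to X'$, hence $F_1\colon X\to Y$, is continuous. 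In particular $F_1$ maps $D$ into $R$, which is also what \eqref{F2} requires of its auxiliary map.

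It remains to verify \eqref{F2}. Since $F=F_1$, I would take the auxiliary function in \eqref{F2} to be $F_1$ itself; then $(I-\psi_n^{(3)})[F(w)]-(I-\psi_n^{(3)})[F_1(w)]\equiv 0$ and the required convergence is trivial. The only remaining point is the uniform continuity of $F_1$ on bounded subsets of $X$, which again follows from Proposition~\ref{1propFcont}: for $u,v$ in a bounded set $W\subset X$,
\[
	\norm{F_1(u)-F_1(v)}_Y\leq C\big(\norm{\tilde Q(u,\nabla u)-\tilde Q(v,\nabla v)}_{L_{q'}^{p'}}
	+\norm{\tilde g_1(u,\nabla u)-\tilde g_1(v,\nabla v)}_{L_{q'}^{{p^*}'}}\big),
\]
and the right-hand side tends to $0$ as $\norm{u-v}_X\to 0$ uniformly in $u,v\in W$, because $\tilde Q$ and $\tilde g_1$ are uniformly continuous on bounded subsets of $\tilde{X}$ and $u\mapsto(u,\nabla u)$ sends bounded sets into bounded sets. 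I do not anticipate any genuine obstacle: this argument is essentially the first sentence of the proof of Proposition~\ref{prop:F-12F0F2}, only shorter, since the term $F_2$ that had to be controlled there is now absent by construction. The one point that \emph{would} be delicate, namely that $F_2$ spoils \eqref{F2} for the third truncation, has already been sidestepped by working with $F_1$ in place of $F=F_1+F_2$, as justified via Proposition~\ref{prop:F-3noF2}; all that is left is the routine bookkeeping of the duality pairings between the spaces $L_s^r$ (cf.\ Remark~\ref{rem:sumnorm}).
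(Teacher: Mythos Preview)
Your proposal is correct and takes essentially the same approach as the paper: invoke Proposition~\ref{1propFcont} for \eqref{F0} and for the uniform continuity of $F_1$ on bounded sets, and note that with $F=F_1$ the convergence in \eqref{F2} is trivial. The paper's proof is just the one-line version of what you wrote.
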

\begin{proof}
Due to Proposition~\ref{1propFcont}, we have \eqref{F0} 
as well as the uniform continuity of $F_1$ 
on bounded subsets of $X$ as required in \eqref{F2}. 
\end{proof}
This leaves us with the proof of \eqref{F1}, i.e.,
the compatibility of $\phi_n^{(j)}$ and $\psi_n^{(j)}$ with respect to $F$ in the sense of
\eqref{compat1} and \eqref{compat2}.
\begin{prop}\label{prop:F-1F1}
Assume that \eqref{Q0}--\eqref{QHcont} 
and \eqref{g0}--\eqref{g1Hcont} hold.
Then \eqref{F1} is satisfied for $F=F_1+F_2:X\to Y$ with
$\phi_n=\phi_n^{(1)}$ and $\psi_n=\psi_n^{(1)}$.
\end{prop}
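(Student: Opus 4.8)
The abstract setting fixed before Proposition~\ref{prop:F-psi1psi2} has already chosen $R:=X'$, which (as noted there) contains the closure of the range of $F$ in $Y$, and Proposition~\ref{prop:F-psi1psi2} establishes \eqref{copsi1} for $\psi_n^{(1)}$. Hence the only thing left to prove for \eqref{F1} with $F=F_1+F_2$, $\phi_n=\phi_n^{(1)}$ and $\psi_n=\psi_n^{(1)}$ is the compatibility of $\phi_n^{(1)}$ and $\psi_n^{(1)}$ with respect to $F$, i.e.\ \eqref{compat1} and \eqref{compat2}. The plan is to obtain both by a pure support argument: since $\phi^{(1)}$ is truncation by multiplication with a cutoff $\nu(\abs{\cdot}-m)$, truncating a test function near $B_{m+1}(0)$ and truncating an argument of $F$ at a far-off level $n\ge m+1$ cannot interact, so the relevant differences vanish identically for $n\ge m+1$.

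Concretely, I would first record the following locality facts. For a test function $\varphi\in X$ with $\norm{\varphi}_X\le 1$, both $\phi_m^{(1)}(\varphi)(x)=\nu(\abs{x}-m)\varphi(x)$ and its gradient vanish for a.e.\ $x$ with $\abs{x}\ge m+1$, because $\nu(r)=\nu'(r)=0$ for $r\ge1$ (for the gradient one additionally invokes Lemma~7.7 of \cite{GiTru83B}); dually, $(I-\phi_n^{(1)})(w)(x)=(1-\nu(\abs{x}-n))w(x)$ and $(I-\phi_n^{(1)})(\varphi)(x)$, together with their gradients, vanish for a.e.\ $x$ with $\abs{x}\le n$, since $\nu(\abs{x}-n)=1$ and $\nu'(\abs{x}-n)=0$ there. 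Moreover, by \eqref{Qgrowth} and \eqref{ggrowth} together with H\"older's inequality (as in the proof of Proposition~\ref{1propFcont}) the integrand defining $F(u)[\eta]=\int_\Omega Q(x,u,\nabla u):\nabla\eta+[g_1+g_2](x,u,\nabla u)\cdot\eta\,dx$ lies in $L^1(\Omega)$ and vanishes a.e.\ on $\{\eta=0\}\cap\{\nabla\eta=0\}$; hence $F(u)[\eta]$ is unaffected if $u$ is modified, together with its gradient, outside $\{\eta\neq0\}\cup\{\nabla\eta\neq0\}$.

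With these observations in hand, let $W\subset X$ be bounded, $m\in\NN$ fixed, and $v,w\in W$, $\norm{\varphi}_X\le1$. For \eqref{compat1}: whenever $n\ge m+1$, the integrand of $F(v+(I-\phi_n^{(1)})(w))[\phi_m^{(1)}(\varphi)]$ is supported in $\{\abs{x}<m+1\}$, and there $(I-\phi_n^{(1)})(w)$ and its gradient vanish; hence $F(v+(I-\phi_n^{(1)})(w))[\phi_m^{(1)}(\varphi)]=F(v)[\phi_m^{(1)}(\varphi)]$, i.e.\ $\psi_m^{(1)}[F(v+(I-\phi_n^{(1)})(w))]=\psi_m^{(1)}[F(v)]$, so the supremum in \eqref{compat1} equals $0$ for every $n\ge m+1$. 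For \eqref{compat2}: since $(I-\psi_n^{(1)})(f)[\varphi]=f[(I-\phi_n^{(1)})(\varphi)]$ and the test function $(I-\phi_n^{(1)})(\varphi)$ together with its gradient vanishes on $\{\abs{x}\le n\}$, the integrand of $F(v+\phi_m^{(1)}(w))[(I-\phi_n^{(1)})(\varphi)]$ is supported in $\{\abs{x}>n\}$, where (for $n\ge m+1$) $\phi_m^{(1)}(w)$ and its gradient vanish; hence $F(v+\phi_m^{(1)}(w))[(I-\phi_n^{(1)})(\varphi)]=F(v)[(I-\phi_n^{(1)})(\varphi)]$ and the supremum in \eqref{compat2} is again $0$ for $n\ge m+1$. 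This yields \eqref{F1}. There is no real obstacle here; the only point needing a bit of care is the integrability of the integrand of $F$ (so that restricting the domain of integration to $\{\eta\neq0\}\cup\{\nabla\eta\neq0\}$ and ignoring the values of $u$ elsewhere is legitimate), and the essential mechanism is simply that the cutoff $\nu(\abs{\cdot}-m)$ has compact support.
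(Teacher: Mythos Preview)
Your proof is correct and follows exactly the same approach as the paper: both arguments observe that by the support properties of $\nu(\abs{\cdot}-m)$, the expressions in \eqref{compat1} and \eqref{compat2} vanish identically for every $n\ge m+1$. You simply spell out in detail what the paper states in one sentence.
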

\begin{proof}
By definition of $\phi_n^{(1)}$ and $\psi_n=\psi_n^{(1)}$,
the members of the sequences considered in \eqref{compat1} and \eqref{compat2} 
are zero for every $n\geq m+1$.
\end{proof}
\begin{prop}\label{prop:F-2F1}
Assume that \eqref{Q0}--\eqref{QHcont} 
and \eqref{g0}--\eqref{g1Hcont} hold.
Then \eqref{F1} is satisfied for $F=F_1+F_2:X\to Y$ with
$\phi_n=\phi_n^{(2)}$ and $\psi_n=\psi_n^{(2)}$.
\end{prop}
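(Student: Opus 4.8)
The plan is to reduce \eqref{F1} to the two compatibility conditions: since $R:=X'$ already contains the closure of the range of $F$ (as fixed in the setup above) and $\psi_n^{(2)}$ satisfies \eqref{copsi1} by Proposition~\ref{prop:F-psi1psi2}, what remains is exactly \eqref{compat1} and \eqref{compat2} for $\phi_n=\phi_n^{(2)}$, $\psi_n=\psi_n^{(2)}$, $F=F_1+F_2$. I fix a bounded set $W\subset X$ with $\rho:=\sup_{u\in W}\norm{u}_X$ and record the ingredients from Proposition~\ref{prop:coheight0}: $\phi_\ell^{(2)}$ is equibounded (the constant in \eqref{phi1} independent of $\ell$); $\abs{\nabla\phi_\ell^{(2)}(u)}+\abs{\phi_\ell^{(2)}(u)}\le C_0\ell$ pointwise by \eqref{pcoh01}; $u=\phi_\ell^{(2)}(u)$ and hence (by the pointwise property of Sobolev functions recalled in the preliminaries) $\nabla u=\nabla\phi_\ell^{(2)}(u)$ a.e.\ on $\hat{R}^{\ell}(u)$ by \eqref{pcoh02}; and $\abs{\Omega\setminus\hat{R}^{\ell}(u)}\le(C_3\rho)^p\ell^{-p}$ by \eqref{pcoh04}, hence $\to0$ uniformly in $u\in W$. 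Every estimate below couples these facts with Hölder's inequality in the form of Remark~\ref{rem:sumnorm}(ii) (pairing $L^{p'}_{q'}$ with $L^p_q$ for the $Q$-term and $L^{{p^*}'}_{q'}$ with $L^{p^*}_q$ for the $g_j$-terms) and with the fact that $\tilde Q,\tilde g_1,\tilde g_2$ map bounded sets to bounded sets (Proposition~\ref{1propFcont}).

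For \eqref{compat1} I would push the smallness onto the truncated test function. Given $v,w\in W$ and $\varphi\in X$ with $\norm{\varphi}_X\le1$, set $a_n:=v+(I-\phi_n^{(2)})(w)$ and $E_n:=\Omega\setminus\hat{R}^{n}(w)$; then $a_n$ stays in a fixed bounded subset of $X$, and $a_n=v$, $\nabla a_n=\nabla v$ a.e.\ off $E_n$, so $\psi_m^{(2)}[F(a_n)][\varphi]-\psi_m^{(2)}[F(v)][\varphi]=\bigl(F(a_n)-F(v)\bigr)[\phi_m^{(2)}(\varphi)]$ reduces to an integral over $E_n$ of $[Q(\cdot,a_n,\nabla a_n)-Q(\cdot,v,\nabla v)]:\nabla\phi_m^{(2)}(\varphi)$ plus the corresponding $(g_1+g_2)$-term. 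By Hölder, the coefficient factors $\norm{Q(\cdot,a_n,\nabla a_n)-Q(\cdot,v,\nabla v)}_{L^{p'}_{q'}(E_n)}$ and $\norm{g_j(\cdot,a_n,\nabla a_n)-g_j(\cdot,v,\nabla v)}_{L^{{p^*}'}_{q'}(E_n)}$ are uniformly bounded, while the test-function factors are controlled, up to constants depending only on $m$, by $\abs{E_n}^{1/p}+\abs{E_n}^{1/q}$ (using $\abs{\nabla\phi_m^{(2)}(\varphi)},\abs{\phi_m^{(2)}(\varphi)}\le C_0m$), hence $\to0$ uniformly in $v,w,\varphi$ since $\abs{E_n}\to0$ uniformly in $w$. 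This yields \eqref{compat1}.

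For \eqref{compat2} the test function is $\zeta_n:=(I-\phi_n^{(2)})(\varphi)$, supported on the small set $S_n:=\Omega\setminus\hat{R}^{n}(\varphi)$ but with gradient not small there; the smallness must now be extracted from the \emph{coefficients}. First I would peel off $F_2$: since $\zeta_n$ is supported on $S_n$ with $\abs{S_n}\to0$ uniformly and bounded in $L^{p^*}_q$, Hölder and the second line of \eqref{g2equiint} give $(I-\psi_n^{(2)})[F_2(u)]\to0$ in $Y$ uniformly for $u$ in any bounded set, so the $F_2$-part of \eqref{compat2} drops out. For $F_1$, put $b:=v+\phi_m^{(2)}(w)$ (bounded in $X$ by equiboundedness of $\phi_m^{(2)}$); then $\bigl(F_1(b)-F_1(v)\bigr)[\zeta_n]$ is an integral over $S_n$ which Hölder bounds by $\norm{Q(\cdot,b,\nabla b)-Q(\cdot,v,\nabla v)}_{L^{p'}_{q'}(S_n)}\norm{\nabla\zeta_n}_{L^p_q}+\norm{g_1(\cdot,b,\nabla b)-g_1(\cdot,v,\nabla v)}_{L^{{p^*}'}_{q'}(S_n)}\norm{\zeta_n}_{L^{p^*}_q}$, and since $\norm{\zeta_n}_X$ is uniformly bounded it suffices to show the two difference-norms over $S_n$ vanish uniformly in $v,w\in W$. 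The decisive point is that $b-v=\phi_m^{(2)}(w)$ and $\nabla b-\nabla v=\nabla\phi_m^{(2)}(w)$ are pointwise $\le C_0m$, so the Hölder factor $(\abs{b-v}+\abs{b-v}^{p^*/p}+\abs{\nabla b-\nabla v})^\alpha$ from \eqref{QHcont}--\eqref{g1Hcont} is bounded by a constant $c(m)$, while monotonicity of $H_\alpha,I_\alpha$ in their last two arguments and \eqref{HIJdef} dominate $H_\alpha(\cdot,\abs{b}+\abs{v},\abs{\nabla b}+\abs{\nabla v})$ and $I_\alpha(\cdot,\dots)$ by constant multiples of $f^{q-1-\alpha}+g^{p-1-\alpha}$, respectively $f^{q-1-\alpha}+g^{p-p/p^*-\alpha}$, where $f:=h_q+2\abs{v}+2\abs{\nabla v}+\abs{\phi_m^{(2)}(w)}+\abs{\nabla\phi_m^{(2)}(w)}\in L^q$ and $g:=h_p+(2\abs{v}+\abs{\phi_m^{(2)}(w)})^{p^*/p}+2\abs{\nabla v}+\abs{\nabla\phi_m^{(2)}(w)}\in L^p$ have $L^q$, resp.\ $L^p$, norms bounded over $v,w\in W$. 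Because $\alpha>0$ and $q\le p<N$, the exponents $(q-1-\alpha)q'$, $(p-1-\alpha)p'$, $(p-p/p^*-\alpha){p^*}'$ are all strictly below $q$, resp.\ $p$, resp.\ $p$, so the elementary estimate $\int_{S_n}f^\beta\le\norm{f}_{L^q}^\beta\abs{S_n}^{1-\beta/q}$ (and its $g$-analogues) sends each of $\norm{f^{q-1-\alpha}}_{L^{q'}(S_n)}$, $\norm{g^{p-1-\alpha}}_{L^{p'}(S_n)}$, $\norm{g^{p-p/p^*-\alpha}}_{L^{{p^*}'}(S_n)}$ to $0$ uniformly; hence $\norm{Q(\cdot,b,\nabla b)-Q(\cdot,v,\nabla v)}_{L^{p'}_{q'}(S_n)}\to0$ and likewise for $g_1$, which completes \eqref{compat2} and thereby \eqref{F1}.

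I expect this last step — the $F_1$-part of \eqref{compat2} — to be the main obstacle: unlike for \eqref{compat1}, the smallness cannot be moved onto the test function, so one must use that $b-v$ is a \emph{pointwise bounded} perturbation to collapse the Hölder factor to a constant, leaving only subcritical powers of fixed $L^q\cap L^{p^*}$, resp.\ $L^q\cap L^p$, functions, which are uniformly equiintegrable on sets of vanishing measure.
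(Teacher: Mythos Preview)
Your argument is correct. The treatment of \eqref{compat1} is essentially identical to the paper's (both push the smallness onto the truncated test function via $|E_n|\to 0$ and the pointwise bound $|\phi_m^{(2)}(\varphi)|+|\nabla\phi_m^{(2)}(\varphi)|\le C_0m$), and your reduction of the $F_2$-part of \eqref{compat2} to \eqref{g2equiint} matches what the paper does through \eqref{F2}.

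The genuine difference is in the $F_1$-part of \eqref{compat2}. The paper does not invoke \eqref{QHcont}--\eqref{g1Hcont} directly here; instead it introduces the auxiliary functions $u_n:=\chi_{\tilde E_n}\phi_m^{(2)}(w)$ and $U_n:=\chi_{\tilde E_n}\nabla\phi_m^{(2)}(w)$ (with $\tilde E_n$ your $S_n$), observes that $(u_n,U_n)\to 0$ in $\tilde X$ uniformly because $\phi_m^{(2)}(w)$ is pointwise bounded and $|\tilde E_n|\to 0$, and then simply quotes the uniform continuity of $\tilde Q$ and $\tilde g_1$ from Proposition~\ref{1propFcont}. Your route---bounding the H\"older factor by $c(m)$ and then showing that the subcritical powers $f^{q-1-\alpha}$, $g^{p-1-\alpha}$, $g^{p-p/p^*-\alpha}$ are uniformly equiintegrable on $S_n$---is in effect an inlining of the proof of that uniform continuity, specialized to this situation. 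Both work; the paper's version is more modular (it reuses a black box), yours is more self-contained but duplicates an estimate already carried out.
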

\begin{proof}
Let $W\subset X$ be a bounded set, $v,w\in W$ and $\varphi\in X$ with $\norm{\varphi}_X\leq 1$.
We first show \eqref{compat1}.
Observe that due to \eqref{pcoh04}, the measure of 
\begin{align*}
	E_n=E_n(w):=\mysetr{x\in\Omega}{(I-\phi_n^{(2)})(w)(x)\neq 0 }
\end{align*}
converges to zero as $n\to \infty$, uniformly in $w\in W$. 
With $w^{(n)}:=(I-\psi_n^{(2)})(w)$, we have that
\begin{align*} 
	&\abs{\left(\psi_m^{(2)}F_1\big(v+w^{(n)}\big)
	-\psi_m^{(2)}F_1(v)\right)[\varphi]}\\
	&\leq ~\begin{aligned}[t] \int_{\Omega} & \Big(
	\big|Q\big(x,v+w^{(n)},\nabla v+\nabla w^{(n)}\big)
	-Q\big(x,v,\nabla v\big)\big|
	\,\big|\nabla \phi_m^{(2)}(\varphi)\big|\\
	& ~~ +\, \big|g_1\big(x,v+w^{(n)},\nabla v+\nabla w^{(n)}\big)
	-g_1\big(x,v,\nabla v\big)\big|\,\big|\phi_m^{(2)}\varphi
	\big| \Big) \,dx
	\end{aligned}\\
	&\leq ~\begin{aligned}[t]
	\tilde{C}_1 m 
	\int_{ E_n } & \Big( 
	H_0\big(x,\big|v+w^{(n)}\big|,\big|\nabla v+\nabla w^{(n)}\big|\big)
	+H_0\big(x,\big|v\big|,\big|\nabla v\big|\big)\\
	&~~+I_0\big(x,\big|v+w^{(n)}\big|,\big|\nabla v+\nabla w^{(n)}\big|\big)
	+I_0\big(x,\big|v\big|,\big|\nabla v\big|\big)
	\Big) \,dx,
	\end{aligned}
\end{align*}
for a constant $\tilde{C}_1\geq 1$ independent on $n$, $m$, $v$, $w$ and $\varphi$,
due to \eqref{pcoh01} and the growth conditions \eqref{Qgrowth} and \eqref{ggrowth}, respectively.
Hölder's inequality, the continuous embedding of $W_0^{1,p}$ into $L^{p^*}$ 
and the equiboundedness of the family $\phi_n^{(2)}$ in $W^{1,p}$ and $W^{1,q}$ entail that
\begin{align*}
	&\norm{\psi_m^{(2)}F_1\big(v+(I-\psi_n^{(2)})(w)\big)-\psi_m^{(2)}F_1(v)}_Y\\
	& \qquad \leq ~\begin{aligned}[t]\tilde{C}_2 m\Big[&
	\left(\norm{w}_{W^{1,p}(\Omega)}^{p-1}+\norm{v}_{W^{1,p}(\Omega)}^{p-1}+1\right)
	\abs{E_n}^{\frac{1}{p}}\\
	&+\left(\norm{w}_{W^{1,q}(\Omega)}^{q-1}+\norm{v}_{W^{1,q}(\Omega)}^{q-1}+1\right)
	\abs{E_n}^{\frac{1}{q}}\\
	&+\left(\norm{w}_{W^{1,p}(\Omega)}^{p^*-1}+\norm{v}_{W^{1,p}(\Omega)}^{p^*-1}+1\right)
	\abs{E_n}^{\frac{1}{p^*}}
	\Big]
	\end{aligned}
\end{align*}
for a suitable constant $\tilde{C}_2\geq \tilde{C}_1$, which shows \eqref{compat1} for $F=F_1$. 
The proof for the general case $F=F_1+F_2$ is completely analogous.
For the proof of \eqref{compat2}, consider the set
\begin{align*}
	\tilde{E}_n=\tilde{E}_n(\varphi):=\mysetr{x\in\Omega}{(I-\phi_n^{(2)})(\varphi)(x) \neq 0}
\end{align*}
and its indicator function $\chi_{\tilde{E}_n}:\Omega\to \{0,1\}$,
where $\varphi\in X$ is a test function.
Just as for $E_n$ above, we have that 
\begin{align}\label{phi2compat20}
	|\tilde{E}_n(\varphi)|~\underset{n\to\infty}{\To} 0,
	~~\text{uniformly in $\varphi$ with $\norm{\varphi}_X\leq 1$}.
\end{align}
In the following, let
\begin{align*}
	u_n(x):=\chi_{\tilde{E}_n}(x)\cdot\phi_m^{(2)}(w)(x)
	~~\text{and}~~
	U_n(x):=\chi_{\tilde{E}_n}(x)\cdot\nabla\phi_m^{(2)}(w)(x)~~\text{for $x\in \Omega$}.
\end{align*}
Since $|\phi_m^{(2)}(w)|+|\nabla\phi_m^{(2)}(w)|\leq C_0 m$ a.e.~on $\Omega$, \eqref{phi2compat20} implies
that
\begin{align}\label{phi2compat21}
	\norm{u_n}_{L^{p^*}(\Omega;\RR^M)\cap L^q(\Omega;\RR^M)}
	\underset{n\to\infty}{\To} 0
	~~\text{and}~~
	\norm{U_n}_{L^p(\Omega;\RR^{M\times N})\cap L^q(\Omega;\RR^{M\times N})}
	\underset{n\to\infty}{\To} 0,
\end{align}
uniformly in $w\in W$ and $\varphi\in \overline{B_1(0)}\subset X$.
Moreover,
\begin{align*}
	&\abs{\left((I-\psi_n^{(2)})F_1\big(v+\phi_m^{(2)}(w)\big)-(I-\psi_n^{(2)})F_1(v)\right)[\varphi]}\\
	&\leq ~ \int_{ \Omega } 
	\begin{aligned}[t]
		\Big(\big|Q(x,v+u_n,\nabla v+U_n)-Q(x,v,\nabla v)\big|
		\,\big|\nabla (I-\phi_n^{(2)})(\varphi)\big|&\\
		+\,\big|g_1(x,v+u_n,\nabla v+U_n)-g_1(x,v,\nabla v)\big|
		\,\big|(I-\phi_n^{(2)})(\varphi)\big|&\Big)\,dx
	\end{aligned}\\
	&\leq ~ \tilde{C}_3 
	\begin{aligned}[t]
		\Big(&\norm{Q(\cdot,v+u_n,\nabla v+U_n)-Q(\cdot,v,\nabla v)}_{L_{q'}^{p'}}	 \\
		&+\,\norm{g_1(\cdot,v+u_n,\nabla v+U_n)-g_1(\cdot,v,\nabla v)}_{L_{q'}^{{p^*}'}} \Big) 
		\norm{\varphi}_{W^{1,q}\cap W^{1,p}}, 
	\end{aligned}
\end{align*}
with a suitable constant $\tilde{C}_3$,
due to Hölders inequality \eqref{LpqHineq} (as always $r':=\frac{r}{r-1}$ for $r\in (0,\infty)$) 
and the equiboundedness of the $\phi_n^{(2)}$.
Since the Nemytskii operators associated to $Q$ and $g_1$ 
are uniformly continuous on bounded sets as shown in 
Proposition \ref{1propFcont}, \eqref{phi2compat21} entails \eqref{compat2} for $F=F_1$.
This also proves the general case $F=F_1+F_2$ since $F_2$ can be neglected in \eqref{compat2} 
due to \eqref{F2}.
\end{proof}
\begin{prop}\label{prop:F-3F1}
Assume that \eqref{Q0}--\eqref{QHcont} 
and \eqref{g0}--\eqref{g1Hcont} hold.
Then \eqref{F1} is satisfied for $F=F_1:X\to Y$ with
$\phi_n=\phi_n^{(3)}$ and $\psi_n=\psi_n^{(3)}$.
\end{prop}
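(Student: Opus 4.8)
The plan is to verify exactly the two compatibility conditions \eqref{compat1} and \eqref{compat2} for $F=F_1$, $\phi_n=\phi_n^{(3)}$ and $\psi_n=\psi_n^{(3)}$; together with the choice $R:=X'$ (which contains the closure of the range of $F_1$) and Proposition~\ref{prop:F-psi1psi2} (which supplies \eqref{copsi1}), this is all that \eqref{F1} requires. Throughout I would fix a bounded set $W\subset X$ and $m\in\NN$. The single observation driving both estimates is that for every $u\in X$ the function $\phi_m^{(3)}(u)=u-\phi_{1/m}(u)$ vanishes, together with its gradient, on $\hat R^{1/m}(u)$ by \eqref{pcospr02} and the basic Sobolev fact, hence is supported on $\Omega\setminus\hat R^{1/m}(u)$, a set of \emph{finite} measure bounded by $(C_3)^p m^p\norm{u}_{W^{1,p}}^p$ by \eqref{pcospr03} with $\beta=1$ (the intermediate annulus being empty). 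I would also repeatedly use that $\norm{(I-\phi_n^{(3)})(u)}_{W^{1,\infty}(\Omega)}\le C_0/n$ by \eqref{pcospr01}.

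For \eqref{compat1}: given $\varphi\in X$ with $\norm{\varphi}_X\le1$ and $v,w\in W$, set $\zeta:=\phi_m^{(3)}(\varphi)$ and $\omega:=(I-\phi_n^{(3)})(w)=\phi_{1/n}(w)$. Then $\bigl(\psi_m^{(3)}[F_1(v+\omega)]-\psi_m^{(3)}[F_1(v)]\bigr)[\varphi]=\bigl(F_1(v+\omega)-F_1(v)\bigr)[\zeta]$, and because $\zeta$ and $\nabla\zeta$ are supported on $A:=\Omega\setminus\hat R^{1/m}(\varphi)$ with $\abs A\le(C_3)^p m^p$, only the integral over $A$ survives. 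On $A$, the H\"older conditions \eqref{QHcont} and \eqref{g1Hcont} bound the differences of $Q$ and of $g_1$ by $\bigl(C_0/n+(C_0/n)^{p^*/p}\bigr)^\alpha$ times $H_\alpha$, resp.\ $I_\alpha$, evaluated at arguments controlled by $\abs v+\abs{\nabla v}+C_0$; since $v\in L^{p^*}$, $\nabla v\in L^p\cap L^q$, $h_p\in L^p$, $h_q\in L^q$ and $\abs A<\infty$, these $H_\alpha$- and $I_\alpha$-terms pair via H\"older's inequality with $\nabla\zeta\in L^p(A)\cap L^q(A)$, resp.\ $\zeta\in L^{p^*}(A)$ — the exponent bookkeeping being exactly as in the proof of Proposition~\ref{prop:F-2F1} — to produce a bound uniform in $v,w\in W$ and $\norm{\varphi}_X\le1$. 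Since the prefactor $\bigl(C_0/n+(C_0/n)^{p^*/p}\bigr)^\alpha\to0$, this gives \eqref{compat1}.

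For \eqref{compat2}: given $v,w\in W$, set $T:=\phi_m^{(3)}(w)$, supported on $B:=\Omega\setminus\hat R^{1/m}(w)$ with $\abs B\le(C_3)^p m^p\sup_{w\in W}\norm{w}_{W^{1,p}}^p$. The functional $g:=F_1(v+T)-F_1(v)\in X'$ is represented by $G_1:=\tilde Q(v+T,\nabla v+\nabla T)-\tilde Q(v,\nabla v)$ and $G_2:=\tilde g_1(v+T,\nabla v+\nabla T)-\tilde g_1(v,\nabla v)$, which vanish off $B$ (where $T$ and $\nabla T$ vanish) and whose norms in $L^{p'}(B)$, resp.\ $L^{{p^*}'}(B)$, are bounded uniformly in $v,w\in W$ by Proposition~\ref{1propFcont} (the Nemytskii operators $\tilde Q,\tilde g_1$ map bounded sets to bounded sets) together with $\abs B<\infty$. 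Hence for $\norm{\varphi}_X\le1$, writing $\eta:=(I-\phi_n^{(3)})(\varphi)=\phi_{1/n}(\varphi)$ with $\abs\eta+\abs{\nabla\eta}\le C_0/n$, I would estimate
\[
\bigl|\bigl((I-\psi_n^{(3)})[F_1(v+T)]-(I-\psi_n^{(3)})[F_1(v)]\bigr)[\varphi]\bigr|
=\Bigl|\int_B G_1:\nabla\eta+G_2\cdot\eta\,dx\Bigr|
\le\frac{C_0}{n}\bigl(\norm{G_1}_{L^1(B)}+\norm{G_2}_{L^1(B)}\bigr),
\]
and by H\"older on the finite-measure set $B$ the right-hand side is $\le C_0 n^{-1}C(m,W)\to0$, uniformly in $v,w\in W$ and $\norm{\varphi}_X\le1$. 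This yields \eqref{compat2}.

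The main obstacle here is conceptual rather than computational. Unlike for $\phi_n^{(1)}$ (where the relevant sequences are eventually zero) and unlike for $\phi_n^{(2)}$ (where the complementary truncation has support of vanishing measure), $(I-\phi_n^{(3)})(u)$ does \emph{not} converge to $0$ in $X$, nor in the $L^{p^*}$/$L^p$ topology of the Nemytskii operators — it only tends to $0$ in $W^{1,\infty}$, with possibly spreading support — so one cannot simply invoke the uniform continuity of $\tilde Q$ and $\tilde g_1$. The device that makes both estimates close is that the \emph{other} factor in each pairing — $\phi_m^{(3)}$ applied to the test function in \eqref{compat1}, and $\phi_m^{(3)}(w)$ in \eqref{compat2} — has support of finite measure at the fixed level $m$, which localizes the integrals to a set where the uniform $W^{1,\infty}$-smallness of $(I-\phi_n^{(3)})(\cdot)$ can be exploited via H\"older's inequality. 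Getting this dichotomy right, and checking the exponents in the H\"older pairings against the growth structure in \eqref{HIJdef}, is the only place that requires care.
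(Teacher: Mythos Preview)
Your proof is correct and follows essentially the same strategy as the paper: for both \eqref{compat1} and \eqref{compat2} you exploit that $\phi_m^{(3)}$ applied to one of the two functions produces a function supported on a set of uniformly finite measure (via \eqref{pcospr02}--\eqref{pcospr03}), while $(I-\phi_n^{(3)})$ applied to the other is $O(1/n)$ in $W^{1,\infty}$ (via \eqref{pcospr01}); the combination lets you close the estimates by H\"older on that finite-measure set. The only cosmetic difference is in how the final integrals are bounded: for \eqref{compat1} the paper localises $(I-\phi_n^{(3)})(w)$ to $E(\varphi)$ and invokes the uniform continuity of the Nemytskii operators from Proposition~\ref{1propFcont}, whereas you invoke the pointwise H\"older conditions \eqref{QHcont}--\eqref{g1Hcont} directly; for \eqref{compat2} the roles are reversed (the paper uses the growth bounds $H_0,I_0$, you use boundedness of $\tilde Q,\tilde g_1$), but these are equivalent ways of cashing in the same structural information.
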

\begin{proof}
Let $W\subset X$ be a bounded set, $v,w\in W$ and $\varphi\in X$ with $\norm{\varphi}_X\leq 1$.
We first show \eqref{compat1}.
Observe that due to \eqref{pcospr03},
\begin{align}\label{pF-3F1-1}
	\abs{E(\varphi)}\leq (C_3)^q m^q,~~\text{where}~~
	E(\varphi):=\big\{x\in\Omega\mid \phi_m^{(3)}(\varphi)(x)\neq 0\big\},
\end{align}
for every $\varphi$ with $\norm{\varphi}_X\leq 1$.
Below, we use the abbreviations
\begin{align*}
	u_n(x)=u_n(x,\varphi,w,m)&:=\chi_{E(\varphi)}(x)\cdot(I-\phi_n^{(3)})(w)(x)
	~~\text{and}~~\\
	U_n(x)=U_n(x,\varphi,w,m)&:=\chi_{E(\varphi)}(x)\cdot \nabla[(I-\phi_n^{(3)})(w)](x),
\end{align*}
for $x\in \Omega$, where $\chi_{E(\varphi)}:\Omega\to \{0,1\}$ is the indicator function of $E(\varphi)$.
Since by \eqref{pcospr01}, $(I-\phi_n^{(3)})(w)\to 0$ in $W^{1,\infty}$,
uniformly in $w\in W$, \eqref{pF-3F1-1} implies
that
\begin{align}\label{pF-3F1-2}
	\norm{u_n}_{L^{p^*}(\Omega;\RR^M)\cap L^q(\Omega;\RR^M)}
	\underset{n\to\infty}{\To} 0
	~~\text{and}~~
	\norm{U_n}_{L^p(\Omega;\RR^{M\times N})\cap L^q(\Omega;\RR^{M\times N})}
	\underset{n\to\infty}{\To} 0,
\end{align}
uniformly in $w\in W$ and $\varphi\in \overline{B_1(0)}\subset X$.
Moreover,
\begin{align*}
	&\abs{\psi_m^{(3)}\big(F_1\big(v+(I-\phi_n^{(3)})(w)\big)\big)[\varphi]
	-\psi_m^{(3)}\big(F_1(v)\big)[\varphi]}\\
	&\leq 
	~\begin{aligned}[t] \int_{\Omega} &
	\Big(\big|Q\big(x,v+u_n,\nabla v+U_n\big)-Q\big(x,v,\nabla v\big)\big|
	\,\big|\nabla \phi_m^{(3)}(\varphi)\big|\\
	& ~~ +\, \big|g_1\big(x,v+u_n,\nabla v+U_n\big)
	-g_1\big(x,v,\nabla v\big)\big|\,\big|\phi_m^{(3)}(\varphi)
	\big|\Big)\,dx
	\end{aligned}\\
	&\leq ~ \tilde{C}_1 
	\begin{aligned}[t]
		\Big(&\norm{Q(\cdot,v+u_n,\nabla v+U_n)-Q(\cdot,v,\nabla v)}_{L_{q'}^{p'}} \\
		&+\,\norm{g_1(\cdot,v+u_n,\nabla v+U_n)-g_1(\cdot,v,\nabla v)}_{L_{q'}^{{p^*}'}} \Big)
		\norm{\varphi}_{W^{1,q}\cap W^{1,p}}, 
	\end{aligned}
\end{align*}
with a suitable constant $\tilde{C}_1$,
due to Hölders inequality \eqref{LpqHineq} (as always $r':=\frac{r}{r-1}$ for $r\in (0,\infty)$) 
and the boundedness of $\phi_m^{(3)}$.
Since the Nemytskii operators associated to $Q$ and $g_1$ 
are uniformly continuous on bounded sets as shown in 
Proposition \ref{1propFcont}, \eqref{pF-3F1-2} entails \eqref{compat2} for $F=F_1$.

For the proof of \eqref{compat2}, consider the set
\begin{align*}
	\tilde{E}(w):=\big\{ x\in \Omega\mid \phi_m^{(3)}(w)(x)\neq 0 \big\}
\end{align*}
and its indicator function $\chi_{\tilde{E}(w)}:\Omega\to\{0,1\}$.
To simpliy notation, we define
\begin{align*}
	w^{[m]}:=\phi_m^{(3)}(w),~~\text{whence}~~w^{[m]}\in W^{[m]}:=\phi_m^{(3)}(W) \subset X.
\end{align*}
For $F=F_1$, the expression on the left hand side of \eqref{compat2} can be estimated es follows:
\begin{align*}
	&\abs{\left((I-\psi_n^{(3)})F_1\big(v+w^{[m]}\big)-(I-\psi_n^{(3)})F_1(v)\right)[\varphi]}\\
	&\leq ~ \int_{ \tilde{E}(w) }
	\begin{aligned}[t]
		\Big(&\big|Q\big(x,v+w^{[m]},\nabla v+\nabla w^{[m]}\big)-Q\big(x,v,\nabla v\big)\big|
		\,\big|\nabla (I-\phi_n^{(3)})(\varphi)\big|\\
		&~+\,\big|g_1\big(x,v+w^{[m]},\nabla v+\nabla w^{[m]}\big)-g_1\big(x,v,\nabla v\big)\big|
		\,\big|(I-\phi_n^{(3)})(\varphi)\big|\Big)\,dx
	\end{aligned}\\
	&\leq ~\begin{aligned}[t]
	C_0 \frac{1}{n} 
	\int_{ \tilde{E}(w) } & \Big( 
	H_0\big(x,\big|v+w^{[m]}\big|,\big|\nabla v+\nabla w^{[m]}\big|\big)
	+H_0\big(x,\big|v\big|,\big|\nabla v\big|\big)\\
	&~~+I_0\big(x,\big|v+w^{[m]}\big|,\big|\nabla v+\nabla w^{[m]}\big|\big)
	+I_0\big(x,\big|v\big|,\big|\nabla v\big|\big)
	\Big)\,dx,
	\end{aligned}
\end{align*}
due to \eqref{pcospr01} and the growth conditions \eqref{Qgrowth} and \eqref{ggrowth}, respectively.
Since $|\tilde{E}(w)|\leq (C_3)^q m^q \norm{w}^q_X$ due to \eqref{pcospr03}
and both $W$ and $W^{[m]}$ are bounded in $X$ (the latter because $\phi^{(3)}_m$ is bounded),
this shows \eqref{compat1} for $F=F_1$.
\end{proof}
Having collected all preliminaries of 
Theorem~\ref{thm:op} and Theorem~\ref{thm:abstractprop}, 
we conclude the section with the proof of its main result.
\begin{proof}[Proof of Theorem~\ref{lem:W1p-prop}]
Assume that $(u_n)\subset X$ is a bounded sequence 
such that $F(u_n)$ converges in $X'$ (or, equivalently, in $Y$).
If $F(u_n)$ does not converge, the corresponding assertion can be obtained using
Theorem~\ref{thm:op} instead of Theorem~\ref{thm:abstractprop} below; the precise argument 
for this case is analogous to the one carried out in the proof of Theorem~\ref{lem:energyop}.
Recall that $k_1(n)$, $k_2(n)$ and $k_3(n)$ denote the subsequences 
obtained in Lemma~\ref{lem:dec}, that $k(n)=k_2(k_1(k_3(n)))$ and 
that $U_n^0\ldots,U_n^4$ denote the summands of $u_{k(n)}$ defined in
\eqref{ldec0}. 
By \eqref{ldec1} (in case (i) in Lemma~\ref{lem:dec}), 
Theorem~\ref{thm:abstractprop} is applicable to the sequence
$v_n:=u_{k_2(n)}$, with $\phi_n=\phi_n^{(2)}$ and $\psi_n=\psi_n^{(2)}$.
Thus
\begin{align} \label{Fconvphi3}
\begin{alignedat}{2}
	&F\big(u_{k_2(n)}\big)-F\big(\phi_n^{(2)}(u_{k_2(n)})\big)&&\underset{n\to\infty}{\To} 0
	~~\text{and}~~\\
	&F\big(0\big)-F\big((I-\phi_n^{(2)})(u_{k_2(n)})\big)&&\underset{n\to\infty}{\To} 0
\end{alignedat}
\end{align}
in $Y$ (and hence in $X'$). 
Repeating the argument with
the families $\phi_n=\phi_n^{(1)}$ and $\psi_n=\psi_n^{(1)}$, 
\begin{align*}
  &\text{for $v_n:=\phi^{(2)}_{k_1(n)} \big(u_{k_2(k_1(n))}\big)$ (cf.~case (ii) in Lemma~\ref{lem:dec}), and}\\
  &\text{for $v_n:=(I-\phi^{(2)}_{k_1(n)}) \big(u_{k_2(k_1(n))}\big)$ (cf.~case (iii)), respectively,}
\end{align*}
the limits in \eqref{Fconvphi3} decompose further and we infer that
\begin{align*}
	&F(u_{k(n)})-F(U_n^0)\underset{n\to\infty}{\To}0,~~
  F(0)-F(U_n^j)\underset{n\to\infty}{\To}0~~\text{for $j=1,2$, and}\\
  &F(0)-F(V_n)\underset{n\to\infty}{\To}0,~~\text{where}~~
  V_n:=\big[(I-\phi^{(1)}_n)\circ \phi^{(2)}_{k_1(n)}\big] (u_{k_2(k_1(n))}).
\end{align*}
Due to Proposition~\ref{prop:F-3noF2}, the previous line is equivalent to 
\begin{align} \label{Fconvphi4}
	F_1(0)-F_1(V_n)\underset{n\to\infty}{\To}0,
\end{align}
since $V_n\to 0$ in $W^{1,\infty}_{\loc}$ (in the sense of
Remark~\ref{rem:ldec} (ii), to be precise).
Again by
Theorem~\ref{thm:abstractprop}, this time applied to the sequence $v_n:=V_{k_3(n)}$
with $\phi_n=\phi^{(3)}_n$, $\psi_n=\psi^{(3)}_n$ and $F=F_1$, 
\eqref{Fconvphi4} turns into
\begin{align*} 
	F_1(0)-F_1(U_n^3)\underset{n\to\infty}{\To}0
	~~\text{and}~~
	F_1(0)-F_1(U_n^4)\underset{n\to\infty}{\To}0.
\end{align*}
Invoking Proposition~\ref{prop:F-3noF2} once more, we obtain
the remaining two limits of the assertion. Here, note that
$U_n^4\to 0$ in $W^{1,\infty}$ by \eqref{pcospr01}, and  
thus $U_n^3=V_{k_3(n)}-U_n^4\to 0$ in $W^{1,\infty}_{\loc}$.
\end{proof}
%
\section{Variational problems}\label{sec:app1b}
In Theorem~\ref{lem:W1p-prop}, 
we observed that the nonlinear operator $F$
behaves asymptotically additive with respect to
the decomposition of a sequence $u_n$ 
obtained in Lemma~\ref{lem:dec}. If the system \eqref{1Fdef} in the previous section 
has variational structure, i.e., if
$F(u)[\varphi]$ is the first derivative of a functional $E$ at $u$ in direction $\varphi$ for every such test function, it is natural to ask whether the energy 
exhibits the same behavior. 
Theorem~\ref{lem:energyop} below
answers this question in the affirmative.
In the following, we consider a functional
\begin{equation}\label{1Edef}
\begin{aligned}
	&E(u):=\int_\Omega W(x,u,\nabla u)\,dx,\quad E:X\to \RR,\\
	&\text{with $X:=W_0^{1,p}(\Omega;\RR^M)\cap W_0^{1,q}(\Omega;\RR^M)$,}
\end{aligned}	
\end{equation}
where $1<q\leq p<\infty$ are fixed and
\begin{align*}
	{}&\begin{aligned}
		&W=W_1+W_2~~\text{with two Carathéodory functions}\\
		&W_1,W_2:~\Omega \times \RR^M \times \RR^{M\times N}\to \RR.
	\end{aligned}\label{W0}\tag{W:0} 
\end{align*}
Below, we use the abbreviations
\begin{align}
{}&\begin{alignedat}{2}
	&\tilde{H}_\alpha(x,s,t)&~:=~&C\,\big(h_q(x)+s+t\big)^{q-\alpha}
	\,+\,C\,\big(h_p(x)+s^{\frac{p^*}{p}}+t\big)^{p-\alpha},\\
	&\tilde{J}_\varrho(x,s,t)&~:=~&\abs{h_{\infty}(x)}\big(s+t\big)^q
	\,+\,\abs{h_q(x)}^\varrho\big(h_q(x)+s+t\big)^{q-\varrho}\\
	&&&+\abs{h_{p^*}(x)}^{\varrho}(h_{p^*}(x)+s+t^{\frac{p}{p^*}})^{p^*-\varrho}.
&\end{alignedat} \label{EHalphadef}
\end{align}
for arbitrary $s,t\geq 0$, $\alpha \in [0,1]$ and $\varrho\in (0,1]$. Here, 
$C>0$ is a constant, $p^*:=\frac{pN}{N-p}$ is the critical Sobolev exponent and 
$h_q$, $h_p$, $h_{p^*}$ and $h_\infty$ are fixed nonnegative functions as defined in \eqref{hqdef} 
in Section~\ref{sec:app1}.
In addition to \eqref{W0}, we assume the growth conditions
\begin{align*}
	{}&\begin{aligned}
		\abs{W_1(x,\mu,\xi)}&\leq \,\tilde{H}_0(x,\abs{\mu},\abs{\xi})~~\text{and}\\
		\abs{W_2(x,\mu,\xi)}&\leq \,\tilde{J}_\varrho(x,\abs{\mu},\abs{\xi})~~\text{for a $\varrho\in (0,1]$}
	\end{aligned}\label{Wgrowth}\tag{W:1}
\end{align*}
and Hölder continuity of $W_1$ in the last two variables:
\begin{align*}
	{}&\begin{aligned}
		\text{There is an $\alpha\in (0,1]$ such that}
	\end{aligned}\\
	{}&\begin{aligned}
		{}&\abs{W_1(x,\mu_1,\xi_1)-W_1(x,\mu_2,\xi_2)}\\
		{}&~\leq \tilde{H}_{\alpha}(x,\abs{\mu_1}+\abs{\mu_2},\abs{\xi_1}+\abs{\xi_2})
	\big(\abs{\mu_1-\mu_2}+\abs{\mu_1-\mu_2}^{\frac{p^*}{p}}+\abs{\xi_1-\xi_2}\big)^\alpha.
	\end{aligned}\label{WHcont}\tag{W:2}
\end{align*}
Here, $x\in\Omega$, $\mu,\mu_1,\mu_2\in \RR^M$ and $\xi,\xi_1,\xi_2\in \RR^{M\times N}$ are arbitrary. As a consequence of \eqref{W0}--\eqref{WHcont} we have
\begin{prop}\label{1propWcont}
Let $\Omega\subset \RR^N$ be open, $1<q\leq p<N$ and $p^*=\frac{pN}{N-p}$, and assume that \eqref{W0}--\eqref{WHcont} are satisfied.
Then the Nemytskii operators
\begin{alignat*}{2}
	&\tilde{W}_j:\tilde{X}\to L^1(\Omega),\quad &&\tilde{W}_j(u,U)(x):=W_j(x,u(x),U(x))~~(j=1,2)
\end{alignat*}
are well defined and continuous on
\begin{align*}
	\tilde{X}:=\big[L^q(\Omega;\RR^M)\cap L^{p^*}(\Omega;\RR^M)\big]\times 
	\big[L^q(\Omega;\RR^{M\times N})\cap L^p(\Omega;\RR^{M \times N})\big], &
\end{align*}
Both operators map bounded sets onto bounded sets.
Furthermore, $\tilde{W}_1$ is uniformly continuous on bounded subsets of $\tilde{X}$,
and $\tilde{W}_2$ satisfies
\begin{equation}\label{EW2equiint}
\begin{alignedat}{3}
	&\sup_{(u,U)\in \tilde{B}}~
	\int_{\Omega\setminus B_R(0)} \big|W_2(x,u,U)\big|\,dx
	&&\underset{R\to \infty}{~\To~} 0&&,~~\text{and}\\
	&\sup_{(u,U)\in \tilde{B}}~\sup_{E\subset \Omega,\abs{E}\leq \delta}~ 
	\int_E \big|W_2(x,u,U)\big|\,dx
	&&\underset{\delta \searrow 0}{~\To~} 0&&,
\end{alignedat}
\end{equation}
for every bounded subset $\tilde{B}\subset \tilde{X}$. 
\end{prop}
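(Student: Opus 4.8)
The plan is to follow the proof of Proposition~\ref{1propFcont} almost line by line, observing that the integrand $W$ here plays the role that $Q$ played there, only with every homogeneity exponent raised by one (an energy density instead of a flux): the auxiliary functions $\tilde{H}_\alpha$, $\tilde{J}_\varrho$ of \eqref{EHalphadef} replace $H_\alpha$, $J_\varrho$, and the target space is $L^1(\Omega)$ in place of $L^{p'}_{q'}$, respectively $L^{{p^*}'}_{q'}$. There are three things to establish: (A) $\tilde{W}_1$ and $\tilde{W}_2$ are well defined, continuous, and bounded as maps $\tilde{X}\to L^1(\Omega)$; (B) $\tilde{W}_1$ is uniformly continuous on bounded subsets of $\tilde{X}$; (C) $\tilde{W}_2$ satisfies \eqref{EW2equiint}.

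For (A) I would expand $\tilde{H}_0\big(x,|u(x)|,|U(x)|\big)$ and $\tilde{J}_\varrho\big(x,|u(x)|,|U(x)|\big)$ termwise, using the growth condition \eqref{Wgrowth}. By convexity the $\tilde{H}_0$-terms are dominated by constant multiples of $h_q^q,\abs{u}^q,\abs{U}^q$ and $h_p^p,\abs{u}^{p^*},\abs{U}^p$, all of which lie in $L^1(\Omega)$ with norm controlled by $\norm{(u,U)}_{\tilde X}$, since $h_q\in L^q$, $h_p\in L^p$, $u\in L^q\cap L^{p^*}$ and $U\in L^q\cap L^p$; hence $\tilde{W}_1(u,U)\in L^1$, uniformly on bounded sets. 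For $\tilde{J}_\varrho$, the first term is $\le \norm{h_\infty}_{L^\infty}(\abs{u}+\abs{U})^q\in L^1$, and the second and third are estimated by H\"older's inequality with exponents $(q/\varrho,\,q/(q-\varrho))$ and $(p^*/\varrho,\,p^*/(p^*-\varrho))$, factoring out the fixed functions $h_q^\varrho\in L^{q/\varrho}$ and $h_{p^*}^\varrho\in L^{p^*/\varrho}$ and using $\norm{\,\abs{U}^{p/p^*}\,}_{L^{p^*}}=\norm{U}_{L^p}^{p/p^*}$; so $\tilde{W}_2(u,U)\in L^1$, again uniformly on bounded sets. Continuity of $\tilde{W}_2$ then follows from these domination bounds by the standard Vitali/dominated-convergence argument, exactly as for $\tilde{g}_2$ in Proposition~\ref{1propFcont}.

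For (B) I would use \eqref{WHcont} and reproduce the $Q$-estimate from Proposition~\ref{1propFcont} with exponents shifted by one: putting $D(x):=\abs{u-v}+\abs{u-v}^{p^*/p}+\abs{U-V}$ and letting $S_1,S_2$ be the evident analogues of the functions there, \eqref{WHcont} gives pointwise $\abs{W_1(x,u,U)-W_1(x,v,V)}\le C\big(S_1^{q-\alpha}+S_2^{p-\alpha}\big)D^\alpha$, and H\"older's inequality (exponents $(q/(q-\alpha),q/\alpha)$ and $(p/(p-\alpha),p/\alpha)$) yields
\begin{align*}
	\norm{\tilde{W}_1(u,U)-\tilde{W}_1(v,V)}_{L^1}\le C\Big(\norm{S_1}_{L^q}^{q-\alpha}\norm{D}_{L^q}^{\alpha}+\norm{S_2}_{L^p}^{p-\alpha}\norm{D}_{L^p}^{\alpha}\Big).
\end{align*}
On a bounded set $\tilde{B}$ the factors $\norm{S_1}_{L^q}$, $\norm{S_2}_{L^p}$ stay bounded, while $\norm{D}_{L^q}+\norm{D}_{L^p}\le C(\tilde{B})\norm{(u,U)-(v,V)}_{\tilde X}$, using that $\norm{w}_{L^r}\le\norm{w}_{L^q}+\norm{w}_{L^{p^*}}$ for $r\in[q,p^*]$, that $\norm{\,\abs{w}^{p^*/p}\,}_{L^p}=\norm{w}_{L^{p^*}}^{p^*/p}$, and boundedness of $\tilde{B}$ to absorb the superlinear power $p^*/p>1$. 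Hence $\tilde{W}_1$ is H\"older continuous with exponent $\alpha$, uniformly on bounded sets.

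For (C), which I expect to be the main obstacle, I would treat the three terms of $\tilde{J}_\varrho$ separately. For the $h_q$- and $h_{p^*}$-terms the positive exponent $\varrho$ is decisive: H\"older's inequality bounds $\int_E$ of these terms by $\norm{h_q}_{L^q(E)}^\varrho$, respectively $\norm{h_{p^*}}_{L^{p^*}(E)}^\varrho$, times a factor bounded on $\tilde{B}$, and since $h_q,h_{p^*}$ are \emph{fixed} functions in $L^q$, $L^{p^*}$ these quantities tend to $0$ both as $\abs{E}\to 0$ and, taking $E=\Omega\setminus B_R(0)$, as $R\to\infty$. For the $h_\infty$-term, $\int_E\abs{h_\infty}(\abs{u}+\abs{U})^q\le\norm{h_\infty}_{L^\infty(E)}\norm{(\abs{u}+\abs{U})^q}_{L^1(\Omega)}$ settles the tightness limit via the decay built into \eqref{hqdef}; for the concentration limit I would split $E=(E\cap B_R(0))\cup(E\setminus B_R(0))$, bound the outer piece by $\norm{h_\infty}_{L^\infty(\Omega\setminus B_R(0))}$ times the uniform $L^1$-bound, and the inner piece by $\norm{h_\infty}_{L^\infty}\int_{E\cap B_R(0)}(\abs{u}+\abs{U})^q$, which is uniformly small for $\abs{E}$ small because $\abs{u}^q$ is bounded in $L^{p^*/q}$ (hence does not concentrate) and $\abs{U}^q$ in $L^{p/q}$. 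This last step — extracting equiintegrability of $\tilde{W}_2$ from the $h_\infty$-term over sets of small measure — is precisely the point the proof of Proposition~\ref{1propFcont} flagged as requiring $\varrho>0$ and the $L^\infty$-decay of $h_\infty$; everything else is routine once the bookkeeping of exponents is in place.
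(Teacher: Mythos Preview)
Your proposal is correct and mirrors the paper's own approach: the paper's proof simply says ``Apart from obvious modifications, this is the same as the proof of Proposition~\ref{1propFcont}. We omit the details,'' and you have faithfully supplied those details with the expected exponent shifts. One minor simplification: for the concentration limit of the $h_\infty$-term you do not need the $B_R$-splitting, since directly $\int_E h_\infty(\abs{u}+\abs{U})^q\le \norm{h_\infty}_{L^\infty}\big(\norm{u}_{L^{p^*}}^q\abs{E}^{1-q/p^*}+\norm{U}_{L^p}^q\abs{E}^{1-q/p}\big)$ already gives the uniform smallness.
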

\begin{proof}
Apart from obvious modifications, this is the same as the proof of Proposition~\ref{1propFcont}. 
We omit the details.
\end{proof}
Lemma~\ref{lem:dec} combined with Theorem~\ref{thm:op} and Theorem~\ref{thm:abstractprop}, respectively, 
leads to
\begin{thm}\label{lem:energyop}
Let $1< q\leq p<N$ and let $\Omega$ be a Lipschitz domain in $\RR^N$ in the sense of 
Definition~\ref{def:Lipschitzdom}. 
Let $(u_n)$ be a bounded sequence in 
$X=W_0^{1,p}(\Omega;\RR^M)\cap W_0^{1,q}(\Omega;\RR^M)$
and let 
\begin{align*}
	u_{k(n)}=U_n^0+U_n^1+U_n^2+U_n^3+U_n^4
\end{align*}
denote a subsequence of $u_n$ chosen via Lemma~\ref{lem:dec}
(with $k(n)=k_2(k_1(k_3(n)))$), where $U_n^0,\ldots,U_n^4\in X$ are its bounded component
sequences with the properties (a)--(e) listed therein.
Furthermore, let $E:X\to \RR$ be the functional defined in \eqref{1Edef},
and assume that \eqref{W0}--\eqref{WHcont} are satisfied.
Then we have that
\begin{equation}\label{calWconv}
\begin{aligned}
	&\big[\cW(u_{k(n)})-\cW(U_n^{0})\big]
	~+~
	\sum_{i=1}^4\, \big[\cW(0)-\cW(U_n^{i})\big]
	\underset{n\to\infty}{\To} 0~~\text{in $L^1(\Omega)$,}
\end{aligned}
\end{equation}
where $\cW(v):=W(\cdot,v,\nabla v)\in L^1(\Omega)$ for arbitrary $v\in X$.
In particular,
\begin{align*}
  &\big[E(u_{k(n)})-E(U_n^{0})\big]
	~+~
	\sum_{i=1}^4\, \big[E(0)-E(U^{i})\big]
	\underset{n\to\infty}{\To} 0~~\text{in $\RR$}.
\end{align*}
If $\cW(u_n)$ converges in $L^1(\Omega)$, then each of the five
summands in \eqref{calWconv} converges to zero.
\end{thm}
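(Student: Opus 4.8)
The plan is to follow, essentially line by line, the proof of Theorem~\ref{lem:W1p-prop}, replacing the dual operator $F$ by the energy--density operator $\cW\colon X\to L^1(\Omega)$, $\cW(v):=W(\cdot,v,\nabla v)$, and Proposition~\ref{1propFcont} by Proposition~\ref{1propWcont}. Concretely I would set up the abstract framework of Section~\ref{sec:abstractproper} with $D:=X$, with a suitable normed space $Y$ containing $L^1(\Omega)$ isometrically (one may take $Y:=L^1(\Omega)$, or, parallel to the choice in Section~\ref{sec:app1}, a larger space), with $R:=L^1(\Omega)$, and with the splitting $F_1:=\cW_1$, $F_2:=\cW_2$ induced by $W=W_1+W_2$. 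For each $j=1,2,3$ one picks an outer truncation family $\psi_n^{(j)}$ on $L^1(\Omega)$ assembled from spatial cut-offs $g\mapsto\chi_{B_n(0)}g$ and pointwise level truncations $g\mapsto\eta_{c_n}(g)$ with $c_n\to\infty$, chosen so that $\psi_n^{(j)}(g)\to g$ in $L^1(\Omega)$ for every fixed $g$; then \eqref{copsi1} and \eqref{copsi2} are immediate, and, by the remark following Theorem~\ref{thm:abstractprop}, $H=0$.

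The remaining hypotheses are verified exactly as in Propositions~\ref{prop:F-psi1psi2}--\ref{prop:F-3F1}. Proposition~\ref{1propWcont} gives \eqref{F0}, the uniform continuity of $\cW_1$ on bounded subsets of $X$ (via the continuous embedding $v\mapsto(v,\nabla v)$, $X\hookrightarrow\tilde X$), and, through \eqref{EW2equiint}, the fact that $\cW_2$ becomes negligible after $(I-\psi_n^{(j)})$ on bounded sets; together these yield \eqref{F2}. Compatibility, \eqref{compat1} and \eqref{compat2}, is then checked with the help of the growth conditions \eqref{Wgrowth}, the H\"older condition \eqref{WHcont}, the measure estimates \eqref{pcoh04} and \eqref{pcospr03}, the pointwise bounds $|\phi^{(2)}_m(w)|+|\nabla\phi^{(2)}_m(w)|\le C_0m$ and $|(I-\phi^{(3)}_n)(w)|+|\nabla(I-\phi^{(3)}_n)(w)|\le C_0/n$, and the uniform continuity of the Nemytskii operator $\tilde W_1$ on bounded subsets of $\tilde X$. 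As for Theorem~\ref{lem:W1p-prop}, the step corresponding to $\phi_n^{(3)}$ is carried out for $F=\cW_1$ alone, the $\cW_2$-contribution being removed afterwards by the analogue of Proposition~\ref{prop:F-3noF2} (continuity of $\tilde W_2$ at $0$ together with \eqref{EW2equiint}, using that the relevant component sequences converge to $0$ in $W^{1,p}_{\loc}$ and $L^{p^*}_{\loc}$). Applying Theorem~\ref{thm:abstractprop} three times — first with $\phi_n^{(2)}$ to $u_{k_2(\,\cdot\,)}$, then twice with $\phi_n^{(1)}$ to $\phi^{(2)}_{k_1(\,\cdot\,)}(u)$ and to $(I-\phi^{(2)}_{k_1(\,\cdot\,)})(u)$, and finally with $\phi_n^{(3)}$ to $\bigl[(I-\phi^{(1)}_n)\circ\phi^{(2)}_{k_1(n)}\bigr](u)$, exactly as in the proof of Theorem~\ref{lem:W1p-prop} — then shows that each of the five summands of \eqref{calWconv} tends to $0$ in $L^1(\Omega)$ once $\cW(u_n)$ converges; if $\cW(u_n)$ need not converge one uses Theorem~\ref{thm:op} instead and obtains only that the whole sum tends to $0$. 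The assertion for $E$ follows at once by applying the bounded linear functional $g\mapsto\int_\Omega g\,dx$ on $L^1(\Omega)$ to \eqref{calWconv}.

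The part I expect to be the real work is compatibility for $\phi^{(2)}_n$ and $\phi^{(3)}_n$, where there is an extra difficulty relative to Section~\ref{sec:app1}: since the energy density $W_1$ grows one power higher than the coefficients $Q,g_1$, the growth majorants $\tilde H_0(\,\cdot\,,|v|,|\nabla v|)$ of $|\cW_1(v)|$ lie only in $L^1(\Omega)$ and not in a strictly smaller Lebesgue or Orlicz space, so one cannot, as in Propositions~\ref{prop:F-2F1} and \ref{prop:F-3F1}, simply integrate such a majorant over a set of small measure and gain a power of that measure. This is precisely why the $\psi_n^{(j)}$ must truncate the \emph{values} of the $L^1$-function: for \eqref{compat1} the difference $\psi^{(j)}_m[\cW_1(v+(I-\phi^{(j)}_n)(w))]-\psi^{(j)}_m[\cW_1(v)]$ is then $\le 2c_m$ pointwise and supported on a set of measure tending to $0$ uniformly over $v,w$ in a given bounded set; for \eqref{compat2} one rewrites, using \eqref{WHcont}, $\cW_1(v+\phi^{(j)}_m(w))-\cW_1(v)$ on the set cut out by $(I-\psi_n^{(j)})$ as $\tilde W_1(v+u_n,\nabla v+U_n)-\tilde W_1(v,\nabla v)$ with $\norm{(u_n,U_n)}_{\tilde X}\to0$ uniformly — the $L^\infty$-bounds on the truncations make the restriction to the relevant small set small in $\tilde X$ — and invokes the uniform continuity of $\tilde W_1$, the $\cW_2$-part being absorbed by \eqref{EW2equiint}. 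All purely bookkeeping matters — boundedness of $U_n^0,\dots,U_n^4$, the choice of $k_1,k_2,k_3$ via \eqref{ldec1}, the telescoping of the five limits — are taken over verbatim from Lemma~\ref{lem:dec} and the proof of Theorem~\ref{lem:W1p-prop}.
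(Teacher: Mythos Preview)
Your plan is correct and coincides with the paper's proof: one takes $Y=R=L^1(\Omega)$, splits $\cW=\cW_1+\cW_2$, verifies \eqref{copsi1}, \eqref{copsi2} and \eqref{F0}--\eqref{F2} for each pair $(\phi_n^{(j)},\psi_n^{(j)})$ in propositions parallel to those of Section~\ref{sec:app1}, handles the $j=3$ step for $\cW_1$ alone and restores $\cW_2$ via the analogue of Proposition~\ref{prop:F-3noF2}, and then applies Theorem~\ref{thm:op} (or Theorem~\ref{thm:abstractprop} in the convergent case) along the cases (i)--(iv) of Lemma~\ref{lem:dec}. One point of your sketch needs sharpening: the paper's outer truncations are $\psi_n^{(1)}(g)=\nu(|\cdot|-n)\,g$, $\psi_n^{(2)}(g)=\eta_n\circ g$, and crucially $\psi_n^{(3)}(g)=g-\eta_{1/n}\circ g$ (a \emph{low}-level cut, not $\eta_{c_n}$ with $c_n\to\infty$); with this choice the compatibility argument for $j=3$ has the roles of \eqref{compat1} and \eqref{compat2} reversed relative to your description for $j=2$---\eqref{compat2} uses the pointwise bound $|\eta_{1/n}(\cdot)|\le 1/n$ on the set $\{\phi_m^{(3)}(w)\neq 0\}$ of bounded measure, while \eqref{compat1} uses uniform continuity of $\tilde W_1$ after restricting to the bounded-measure set $\{|\cW_1|>1/m\}$---which is exactly what makes the $L^1$-only growth of $\tilde H_0$ harmless.
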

\begin{rem} \label{rem:W1p-PS}
If both Theorem~\ref{lem:W1p-prop} (with a suitable $F$, for instance the Fréchet derivative $DE$ of $E$) and Theorem~\ref{lem:energyop} 
(with a suitable $E$) are applicable to a bounded sequence $(u_n)\subset X$, it is clear 
that the same subsequence $u_{k(n)}$ and the same decomposition thereof can be used in both results simultaneously. The choice of $u_{k(n)}$ and its component sequences $U_n^0,\ldots,U_n^4$,
being due to Lemma~\ref{lem:dec}, only depends on the sequence $u_n$ and the truncations operators
involved, but not on $F$ or $E$. For the same reason,
one fixed decomposition of $u_{k(n)}$ can be used even if multiple (possibly infinitely many) operators $F$ and functionals $E$ are involved. 
\end{rem}
\subsubsection*{Proof of Theorem~\ref{lem:energyop}}
We follow the lines of the proof of Theorem~\ref{lem:W1p-prop} with 
different maps $\psi_n$. 
Again, we first lay out a suitable abstract setting.
We consider the map
\begin{align*}
  \cW:D\to R,~\text{with}~
  D:=X=(W_0^{1,p}\cap W_0^{1,q})(\Omega;\RR^M)~\text{and}~
  R:=Y:=L^1(\Omega).
\end{align*}
For each family of truncation operators $\phi_n^{(j)}$ ($j=1,2,3$) 
defined in Section~\ref{secCOOpExamples},
the corresponding maps $\psi_n^{(j)}:L^1(\Omega)\to L^1(\Omega)$ are defined as follows: 
For every $v\in L^1(\Omega)$, every $n\in \NN$ and every $x\in \Omega$ let
\begin{alignat}{3}
	\label{Epsi1def}
		&\psi_n^{(1)}(v)(x)&&:=\nu(\abs{x}-n)\cdot v(x),
\intertext{%
where $\nu:\RR\to [0,1]$ is the smooth nonincreasing function introduced in Definition~\ref{def:trunc1}.
(Hence $\psi_n^{(1)}$ and $\phi_n^{(1)}$ are the same map 
operating on different spaces.) Moreover, let
}	
	  \label{Epsi2def}
  	&\psi_n^{(2)}(v)(x)&&:=\eta_n(v(x)),\\
  	\label{Epsi3def}
	  &\psi_n^{(3)}(v)(x)&&:=v(x)-\eta_{1/n}(v(x)),
\end{alignat}
where $\eta_\lam:\RR\to\RR$ is the truncation of a scalar at height $\lam$ defined in Lemma~\ref{lem:Lpequiintloc}. 
For later use, note that $\eta_\lam:\RR\to\RR$ is globally Lipschitz continuous:
\begin{equation}
\begin{aligned}\label{etanLipschitz}
	\abs{\eta_\lam(t_1)-\eta_\lam(t_2)}
	\leq \abs{t_1-t_2},~~
	\text{for every $t_1,t_2\in \RR$ and every $\lam\in(0,\infty)$}.&
\end{aligned}
\end{equation}
Last but not least, we write $\cW=\cW_1+\cW_2$, where
\begin{align*}
 \cW_1(u):=W_1(\cdot,u,\nabla u)\in L^1(\Omega)~~\text{and}~~
 \cW_2(u):=W_2(\cdot,u,\nabla u)\in L^1(\Omega).
\end{align*}
The propositions below provide the
preliminaries of Theorem~\ref{thm:op} and Theorem~\ref{thm:abstractprop} in the present setting,
i.e., \eqref{copsi1}, \eqref{copsi2} and \eqref{F0}--\eqref{F2}. 
Throughout, we assume that
	$\Omega \subset \RR^N$ is a Lipschitz domain and $1< q\leq p<\infty$.
\begin{prop}\label{prop:E-psi1psi2}
For every $j=1,2,3$,
the family $\psi_n^{(j)}:L^1(\Omega)\to L^1(\Omega)$ satisfies 
\eqref{copsi1} and \eqref{copsi2}. As to the latter, we even have that
\begin{align}\label{Epsi12limit}
	\psi_n^{(j)}(f)\underset{n\to \infty}{\To}f~\text{in $L^1(\Omega)$, for every fixed $f\in L^1(\Omega)$}.
\end{align}
\end{prop}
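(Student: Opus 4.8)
The plan is to verify \eqref{copsi1} and the stronger form \eqref{Epsi12limit} of \eqref{copsi2} by elementary pointwise estimates, exploiting that $0\le\nu\le 1$ and that the scalar truncations $\eta_\lam$ are contractions, cf.~\eqref{etanLipschitz}.

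For \eqref{copsi1} I would first note that all three families are globally Lipschitz continuous from $L^1(\Omega)$ into itself (in particular well defined there, since each map sends $0$ to $0$), with a Lipschitz constant independent of $n$. Since $0\le\nu\le 1$,
\[
	\big|\psi_n^{(1)}(w_1)(x)-\psi_n^{(1)}(w_2)(x)\big|=\nu(|x|-n)\,|w_1(x)-w_2(x)|\le |w_1(x)-w_2(x)|
\]
for a.e.~$x\in\Omega$, so $\|\psi_n^{(1)}(w_1)-\psi_n^{(1)}(w_2)\|_{L^1}\le\|w_1-w_2\|_{L^1}$; for $j=2$ the same bound follows by applying \eqref{etanLipschitz} pointwise to $\eta_n$, and for $j=3$, writing $\psi_n^{(3)}(v)=v-\eta_{1/n}(v)$ and using \eqref{etanLipschitz} once more yields the Lipschitz constant $2$. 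Consequently the supremum in \eqref{copsi1} is at most $2\delta$ for every $n$ and every bounded $W$, hence tends to $0$ as $\delta\searrow 0$.

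For \eqref{copsi2} it suffices to prove \eqref{Epsi12limit}. In each case I would identify the pointwise difference $\psi_n^{(j)}(f)-f$ and apply the dominated convergence theorem with dominating function $|f|\in L^1(\Omega)$: for $j=1$, $|\psi_n^{(1)}(f)-f|=(1-\nu(|x|-n))\,|f|\le\chi_{\{|x|\ge n\}}\,|f|\to 0$ a.e.; for $j=2$, $|\psi_n^{(2)}(f)(x)-f(x)|=|\eta_n(f(x))-f(x)|\le\chi_{\{|f|>n\}}\,|f(x)|\to 0$ a.e., since $|f|<\infty$ a.e.; for $j=3$, $|\psi_n^{(3)}(f)(x)-f(x)|=|\eta_{1/n}(f(x))|=\min\{1/n,|f(x)|\}\to 0$ a.e. Since each integrand is bounded by $|f|$, the corresponding $L^1$-norms converge to $0$.

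The only point that deserves care is the case $j=3$ on a domain of infinite measure: the naive bound $\|\psi_n^{(3)}(f)-f\|_{L^1}\le n^{-1}|\Omega|$ is then useless, so the dominated convergence argument (rather than a uniform pointwise estimate) is genuinely needed. Everything else is routine.
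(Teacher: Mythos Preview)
Your proof is correct and follows essentially the same approach as the paper: global Lipschitz continuity (uniform in $n$) for \eqref{copsi1}, and pointwise convergence plus dominated convergence with majorant $|f|$ for \eqref{Epsi12limit}. One minor sharpening: for $j=3$ the map $t\mapsto t-\eta_{1/n}(t)$ is itself $1$-Lipschitz (it is piecewise linear with slopes $0$ and $1$), so the Lipschitz constant can be taken equal to $1$ rather than $2$, as the paper notes; of course this does not affect the argument.
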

\begin{proof}
The maps $\psi_n^{(1)}$ are linear and equibounded by definition.
For $j=2,3$, each $\psi_n^{(j)}$ is globally Lipschitz continuous with constant $1$
as a consequence of \eqref{etanLipschitz}.
Moreover, for $j=1,2,3$, we have that
$|\psi_n^{(j)}(f)(x)|\leq \abs{f(x)}$ and that 
$\psi_n^{(j)}(f)(x)\to f(x)$ for a.e.~$x\in \Omega$, 
due to the definitions of $\psi_n^{(j)}$.
This implies \eqref{Epsi12limit} by dominated convergence.
\end{proof}
\begin{prop}\label{prop:E-12F0F2}
Assume that \eqref{W0}--\eqref{WHcont} hold.
Then for $j=1,2$, 
$\tilde{F}:X\to L^1(\Omega)$, $\tilde{F}:=\cW=\cW_1+\cW_2$, satisfies \eqref{F0} and \eqref{F2}
with $\phi_n=\phi_n^{(j)}$ and $\psi_n=\psi_n^{(j)}$.
\end{prop}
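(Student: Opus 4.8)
The plan is to choose $F_1:=\cW_1$ as the uniformly continuous part required in \eqref{F2} and to verify \eqref{F0} and \eqref{F2} separately. For \eqref{F0}, note first that the linear map $u\mapsto(u,\nabla u)$ is bounded from $X=W_0^{1,p}\cap W_0^{1,q}$ into $\tilde X$: the four norm components are controlled since $\|u\|_{L^q}+\|\nabla u\|_{L^q}\le\|u\|_{W^{1,q}}$, $\|\nabla u\|_{L^p}\le\|u\|_{W^{1,p}}$, and $\|u\|_{L^{p^*}}\le C\|u\|_{W^{1,p}}$ by the continuous embedding $W_0^{1,p}\hookrightarrow L^{p^*}$ valid for arbitrary domains when $p<N$. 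Hence $\cW=\tilde W\circ(u\mapsto(u,\nabla u))$ with $\tilde W=\tilde W_1+\tilde W_2$, and continuity of $\cW:X\to L^1(\Omega)$ follows from the continuity of $\tilde W_1,\tilde W_2$ on $\tilde X$ proved in Proposition~\ref{1propWcont}; moreover $D=X$ is trivially a closed additive subgroup of $X$. The same argument, together with the uniform continuity of $\tilde W_1$ on bounded subsets of $\tilde X$ (again Proposition~\ref{1propWcont}) and the fact that bounded sets of $X$ map to bounded sets of $\tilde X$, shows that $\cW_1:X\to L^1(\Omega)=R$ is uniformly continuous on bounded sets, which is the regularity demanded of $F_1$ in \eqref{F2}.

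It then remains to show, for every bounded $W\subset X$, that $(I-\psi_n^{(j)})[\cW(w)]-(I-\psi_n^{(j)})[\cW_1(w)]\to 0$ in $L^1(\Omega)$ uniformly in $w\in W$. For $j=1$ this is immediate: $\psi_n^{(1)}$ is linear (multiplication by $\nu(\abs{\cdot}-n)$), so the expression equals $(I-\psi_n^{(1)})[\cW_2(w)]=(1-\nu(\abs{\cdot}-n))\,\cW_2(w)$, which is supported in $\Omega\setminus B_n(0)$; its $L^1$-norm is therefore at most $\int_{\Omega\setminus B_n(0)}\abs{W_2(x,w,\nabla w)}\,dx$, and this tends to $0$ uniformly in $w\in W$ by the first line of \eqref{EW2equiint}.

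The delicate case is $j=2$, where $\psi_n^{(2)}(v)(x)=\eta_n(v(x))$ is nonlinear and $\cW_2$ cannot be factored out. The plan is to write the difference pointwise as
\[
	\cW_2(w)(x)-\bigl[\eta_n(\cW(w)(x))-\eta_n(\cW_1(w)(x))\bigr],
\]
and to observe that, since $\eta_n$ is the identity on $[-n,n]$, this vanishes at every $x$ with $\abs{\cW(w)(x)}\le n$ and $\abs{\cW_1(w)(x)}\le n$, while on the complementary set $A_n(w):=\{\abs{\cW(w)}>n\}\cup\{\abs{\cW_1(w)}>n\}$ it is bounded by $2\abs{\cW_2(w)(x)}$, using the $1$-Lipschitz bound \eqref{etanLipschitz} for $\eta_n$. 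By Chebyshev's inequality $\abs{A_n(w)}\le n^{-1}\bigl(\norm{\cW(w)}_{L^1}+\norm{\cW_1(w)}_{L^1}\bigr)$, and since $\cW$ and $\cW_1$ map bounded sets to bounded sets (Proposition~\ref{1propWcont}), $\sup_{w\in W}\abs{A_n(w)}\le C/n\to 0$. Plugging this into the second line of \eqref{EW2equiint}, i.e.\ the equiintegrability of $\{\cW_2(w):w\in W\}$ over sets of small measure, yields $\sup_{w\in W}\int_{A_n(w)}\abs{W_2(x,w,\nabla w)}\,dx\to 0$, which is exactly the required convergence.

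The main obstacle is precisely this $j=2$ step: one must simultaneously use that $\eta_n$ agrees with the identity below height $n$ (to confine the error to a set $A_n(w)$ of vanishing measure) and the equiintegrability property \eqref{EW2equiint} of $\cW_2$ (to make the integral of $\abs{\cW_2(w)}$ over $A_n(w)$ small), since the naïve Lipschitz estimate only produces the pointwise bound $2\abs{\cW_2(w)}$ with no decay. Everything else reduces to bookkeeping with Proposition~\ref{1propWcont}.
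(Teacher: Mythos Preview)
Your proof is correct and follows essentially the same approach as the paper's own proof. In particular, for $j=2$ your set $A_n(w)$ coincides with the paper's $\hat T_n(w)$, and the argument via Chebyshev's inequality combined with the equiintegrability of $\cW_2$ from \eqref{EW2equiint} is exactly what the paper does; you have simply spelled out a few more details (e.g., the boundedness of $u\mapsto(u,\nabla u)$ from $X$ into $\tilde X$).
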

\begin{proof}
Due to Proposition~\ref{1propWcont}, we have \eqref{F0} and the uniform continuity of $\cW_1$ on bounded subsets of $X$. Now fix a bounded set $S\subset X$. The first line in \eqref{EW2equiint} implies that
\begin{align*}
	&\big\|[(I-\psi_n^{(1)})\circ \cW](w)-[(I-\psi_n^{(1)})\circ \cW_1](w)\big\|_{L^1(\Omega)}\\
	&\qquad \leq \int_{\Omega \setminus B_{n}(0)} \abs{W_2(x,w,\nabla w)}\,dx
	~\underset{n\to\infty}{\To} 0,~~
	\text{uniformly in $w\in S$,}
\end{align*}
which concludes the proof of \eqref{F2} for $j=1$. For $j=2$, we consider the set
\begin{align*}
	\hat{T}_n=\hat{T}_n(w):=\left\{~x\in\Omega~:~ |\cW(w)(x)|>n 
	~\text{or}~|\cW_1(w)(x)|>n~ \right\},
\end{align*}
where $w\in S$.
Observe that 
\begin{align}\label{hatTnto0}
	|\hat{T}_n|\leq \frac{1}{n}\big(\norm{\cW(w)}_{L^1}+\norm{\cW_1(w)}_{L^1}\big)
	~\underset{n\to\infty}{\To} 0,~~\text{uniformly in $w\in S$},
\end{align}
since the images $\cW_1(S)$ and $\cW(S)$ are bounded in $L^1(\Omega)$.
Due to \eqref{etanLipschitz}, we have the estimate
\begin{align*}
	&\big\|[(I-\psi_n^{(2)})\circ \cW](w)-[(I-\psi_n^{(2)})\circ \cW_1](w)\big\|_{L^1(\Omega)}\\
	&= \int_{\hat{T}_n} \big|(I-\eta_n)\big(\cW(w)(x)\big)-
	(I-\eta_n)\big(\cW_1(w)(x)\big)\big|\,dx\\
	&\leq 2 \int_{\hat{T}_n} \abs{W_2(x,w,\nabla w)}\,dx.
\end{align*}
As a consequence of the second line in \eqref{EW2equiint},
the last integral above converges to zero, uniformly in $w\in S$. 
\end{proof}
Proposition~\ref{prop:F-3noF2}
and Proposition~\ref{prop:F-3F0F2} in the previous section 
are replaced as follows:
\begin{prop}\label{prop:E-3noF2}
Assume \eqref{W0}--\eqref{WHcont},
and let $T_n$ be a bounded sequence in $X$
such that $T_n\to 0$ in $W^{1,p}_{\loc}$ and $L^{p^*}_{\loc}$.
Then $[\cW(T_n)-\cW(0)]-[\cW_1(T_n)-\cW_1(0)]\to 0$ in $X'$.
\end{prop}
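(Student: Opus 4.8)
The plan is to reduce the claim to an $L^1(\Omega)$-estimate for $\cW_2$ alone. Since $\cW=\cW_1+\cW_2$, the expression $[\cW(T_n)-\cW(0)]-[\cW_1(T_n)-\cW_1(0)]$ coincides with $\cW_2(T_n)-\cW_2(0)=W_2(\cdot,T_n,\nabla T_n)-W_2(\cdot,0,0)$, so it suffices to show that $\norm{W_2(\cdot,T_n,\nabla T_n)-W_2(\cdot,0,0)}_{L^1(\Omega)}\to 0$ as $n\to\infty$ (as in Proposition~\ref{prop:F-3noF2}, the conclusion is understood in $L^1(\Omega)$). First I would note that, because $T_n$ is bounded in $X=W_0^{1,p}\cap W_0^{1,q}$ and $W_0^{1,p}$ embeds continuously into $L^{p^*}$, the pairs $(T_n,\nabla T_n)$ form a bounded subset $\tilde{B}$ of $\tilde{X}$; hence Proposition~\ref{1propWcont} applies, and in particular \eqref{EW2equiint} holds uniformly over $(T_n,\nabla T_n)_n$, and it holds as well for the constant pair $(0,0)$ since $\{(0,0)\}$ is bounded in $\tilde{X}$.

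Next I would split $\Omega=(\Omega\cap B_R(0))\cup(\Omega\setminus B_R(0))$. On the exterior part, the first line of \eqref{EW2equiint} yields, for $R=R(\eps)$ large enough, $\int_{\Omega\setminus B_R(0)}\abs{W_2(x,T_n,\nabla T_n)}\,dx<\eps/2$ uniformly in $n$, together with the same bound for $\int_{\Omega\setminus B_R(0)}\abs{W_2(x,0,0)}\,dx$; thus the tail contributes less than $\eps$ to the $L^1$-norm of the difference, uniformly in $n$. On the fixed bounded domain $\Omega\cap B_R(0)$ the hypothesis $T_n\to 0$ in $W^{1,p}_{\loc}$ and $L^{p^*}_{\loc}$ means that $(T_n,\nabla T_n)\to(0,0)$ in the space $\tilde{X}$ formed over $\Omega\cap B_R(0)$ (the finite measure of this set turns the local $W^{1,p}$- and $L^{p^*}$-convergence into convergence in all the relevant $L^q$, $L^p$, $L^{p^*}$ norms). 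Applying Proposition~\ref{1propWcont} with $\Omega$ replaced by $\Omega\cap B_R(0)$ — the restrictions of the weights $h_q,h_p,h_{p^*},h_\infty$ still lie in the required spaces — the Nemytskii operator $\tilde{W}_2$ is continuous there, whence $\int_{\Omega\cap B_R(0)}\abs{W_2(x,T_n,\nabla T_n)-W_2(x,0,0)}\,dx\to 0$ as $n\to\infty$. Combining the two bounds — first fix $R$ to control the tail uniformly in $n$, then let $n\to\infty$ on the ball — gives the assertion.

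I expect this to be essentially routine; the one point that needs care is the interaction between the unbounded domain and the merely local convergence of $T_n$. One cannot invoke continuity of the Nemytskii operator on all of $\Omega$, where convergence is unavailable, nor simply ignore the behaviour near infinity; the resolution is to peel off the exterior region uniformly in $n$ using the equiintegrability and tightness of $W_2$ encoded in \eqref{EW2equiint}, and to use the strong local convergence of $(T_n,\nabla T_n)$ only on bounded pieces of $\Omega$.
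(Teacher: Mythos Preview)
Your proof is correct and follows essentially the same approach as the paper: reduce to $\cW_2(T_n)-\cW_2(0)\to 0$ in $L^1(\Omega)$, control the exterior $\Omega\setminus B_R(0)$ uniformly in $n$ via the first line of \eqref{EW2equiint}, and handle the bounded piece $\Omega\cap B_R(0)$ by the continuity of $\tilde W_2$ at $0$ combined with the local strong convergence of $(T_n,\nabla T_n)$. The paper's proof is just a terse one-liner citing these two ingredients; you have merely unpacked the splitting argument explicitly, and your remark that the target space should be read as $L^1(\Omega)$ rather than $X'$ is correct.
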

\begin{proof}
It suffices to show that 
$\norm{W_2(\cdot,T_n,\nabla T_n)-W_2(\cdot,0,0)}_{L^1(\Omega)}\to 0$ 
as $n\to\infty$,
which is a consequence of the first line in \eqref{EW2equiint} and
the continuity of the Nemytskii operator $\tilde{W}_2$ at $0$
(cf.~Proposition~\ref{1propWcont}).
\end{proof}
\begin{prop}\label{prop:E-3F0F2}
Assume that \eqref{W0}--\eqref{WHcont} hold.
Then $\tilde{F}:=\cW_1:X\to \RR$ satisfies \eqref{F0} and \eqref{F2}
with $\phi_n=\phi_n^{(3)}$ and $\psi_n=\psi_n^{(3)}$.
\end{prop}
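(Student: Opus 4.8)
The plan is to derive this proposition directly from Proposition~\ref{1propWcont}, in complete analogy with Proposition~\ref{prop:F-3F0F2} in the previous section. First I would observe that the domain in the abstract setting fixed above is $D=X$, the whole Banach space, which is trivially a closed additive subgroup of itself; hence the only substantive content of \eqref{F0} is the continuity of $\cW_1:X\to L^1(\Omega)$. This is immediate: by Proposition~\ref{1propWcont} the Nemytskii operator $\tilde{W}_1$ is continuous on $\tilde{X}$ and even uniformly continuous on bounded subsets of $\tilde{X}$, while $u\mapsto(u,\nabla u)$ is a bounded linear, hence uniformly continuous, map from $X$ into $\tilde{X}$; composing the two shows that $\cW_1$ is continuous on $X$ and uniformly continuous on bounded subsets of $X$.

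Next I would check \eqref{F2}. Since in the third truncation step we verify the abstract hypotheses for $\tilde{F}=\cW_1$ itself --- the ``subcritical'' part $\cW_2$ having already been split off, cf.~Proposition~\ref{prop:E-3noF2} --- the obvious choice is to take the auxiliary operator $F_1:=\cW_1$. It is uniformly continuous on bounded subsets of $X$ by the first step, and the difference $(I-\psi_n^{(3)})[\cW_1(w)]-(I-\psi_n^{(3)})[\cW_1(w)]$ vanishes identically, so the uniform convergence to zero in $L^1(\Omega)$ demanded by \eqref{F2} holds for trivial reasons (and, in fact, for any admissible family $\psi_n$). This establishes \eqref{F0} and \eqref{F2} with $\phi_n=\phi_n^{(3)}$ and $\psi_n=\psi_n^{(3)}$, which is the assertion.

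I do not expect any real obstacle here: the proposition is a bookkeeping statement confirming that the abstract machinery of Section~\ref{sec:abstractproper} applies to $\cW_1$ with the third truncation operator, and all the analytic work --- the growth bounds \eqref{Wgrowth}, the H\"older estimate \eqref{WHcont}, and the resulting uniform continuity of the Nemytskii operator --- is already contained in Proposition~\ref{1propWcont}. The only point requiring a moment's attention is to read the axiom \eqref{F2} correctly: because $F$ and $F_1$ coincide in this step, \eqref{F2} imposes no further requirement on $\psi_n^{(3)}$ beyond what has already been verified in Proposition~\ref{prop:E-psi1psi2}.
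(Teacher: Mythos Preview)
Your proof is correct and follows exactly the same approach as the paper: both invoke Proposition~\ref{1propWcont} for the continuity and uniform continuity of $\cW_1$ on bounded sets, and both observe that \eqref{F2} is trivially satisfied upon taking $F_1:=\cW_1=\tilde{F}$. The paper's proof is just a two-line version of yours.
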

\begin{proof}
Due to Proposition~\ref{1propWcont}, we have \eqref{F0} 
as well as the uniform continuity of $\tilde{F}_1:=\cW_1=\tilde{F}$ 
on bounded subsets of $X$ as required in \eqref{F2}. 
\end{proof}
This leaves us with the proof of \eqref{F1}, i.e.,
the compatibility of $\phi_n^{(j)}$ and $\psi_n^{(j)}$ with respect to $\cW$ 
(or $\cW_1$, in case $j=3$) in the sense of \eqref{compat1} and \eqref{compat2}.
\begin{prop}\label{prop:E-1F1}
Assume that \eqref{W0}--\eqref{WHcont} hold.
Then \eqref{F1} is satisfied for $\tilde{F}:X\to L^1(\Omega)$, $\tilde{F}:=\cW=\cW_1+\cW_2$, with
$\phi_n=\phi_n^{(1)}$ and $\psi_n=\psi_n^{(1)}$.
\end{prop}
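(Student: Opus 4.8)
The plan is to observe that, exactly as in the proof of Proposition~\ref{prop:F-1F1}, the purely multiplicative nature of the cutoffs $\phi_n^{(1)}$ and $\psi_n^{(1)}$ forces the sequences appearing in the compatibility conditions \eqref{compat1} and \eqref{compat2} to vanish \emph{identically} as soon as $n\ge m+1$, so the required limits are $0$ for trivial reasons. First I would take $R:=Y=L^1(\Omega)$, which certainly contains the closure of the range of $\cW$, and recall from Proposition~\ref{prop:E-psi1psi2} that the linear equibounded family $\psi_n^{(1)}$ satisfies \eqref{copsi1}; it then only remains to verify that $\phi_n^{(1)}$ and $\psi_n^{(1)}$ are compatible with respect to $\tilde{F}=\cW$ in the sense of Definition~\ref{def:compat}.

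For \eqref{compat1}: since $\nu(r)=1$ for $r\le 0$, the function $(I-\phi_n^{(1)})(w)=(1-\nu(\abs{\cdot}-n))\,w$ vanishes a.e.\ on $B_n(0)\cap\Omega$, hence $v+(I-\phi_n^{(1)})(w)=v$ a.e.\ there; by the locality property of Sobolev gradients recalled in Section~\ref{sec:intro} (Lemma 7.7 in \cite{GiTru83B}) their gradients also agree a.e.\ on $B_n(0)\cap\Omega$, so the Carath\'eodory composition gives $\cW(v+(I-\phi_n^{(1)})(w))=\cW(v)$ a.e.\ on $B_n(0)\cap\Omega$. Applying $\psi_m^{(1)}$, which is multiplication by $\nu(\abs{\cdot}-m)$ and hence supported in $B_{m+1}(0)$, and using $B_{m+1}(0)\subset B_n(0)$ for $n\ge m+1$, one obtains $\psi_m^{(1)}[\cW(v+(I-\phi_n^{(1)})(w))]=\psi_m^{(1)}[\cW(v)]$ identically; thus the supremum over $v,w\in W$ is $0$ whenever $n\ge m+1$, and the limit in \eqref{compat1} is $0$.

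For \eqref{compat2}: symmetrically, $\phi_m^{(1)}(w)=\nu(\abs{\cdot}-m)\,w$ is supported in $B_{m+1}(0)$, so (again by the gradient locality lemma) $\cW(v+\phi_m^{(1)}(w))=\cW(v)$ a.e.\ on $\Omega\setminus B_{m+1}(0)$; and $I-\psi_n^{(1)}$ is multiplication by $1-\nu(\abs{\cdot}-n)$, which vanishes on $B_n(0)$, hence is supported in $\Omega\setminus B_n(0)\subset\Omega\setminus B_{m+1}(0)$ for $n\ge m+1$. Therefore $(I-\psi_n^{(1)})[\cW(v+\phi_m^{(1)}(w))]=(I-\psi_n^{(1)})[\cW(v)]$ identically for $n\ge m+1$, and the supremum over $v,w\in W$ vanishes there as well.

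Combining these two points with Proposition~\ref{prop:E-psi1psi2}, \eqref{F1} holds with $R=L^1(\Omega)$ and $\psi_n=\psi_n^{(1)}$. I do not expect any genuine obstacle: the only step requiring a word of care is the chain ``two Sobolev functions coincide on a set $\Rightarrow$ so do their gradients there $\Rightarrow$ the integrand $\cW=W(\cdot,\cdot,\nabla\cdot)$ coincides there'', which is precisely the Sobolev locality lemma; everything else is bookkeeping of the supports determined by the cutoff $\nu$.
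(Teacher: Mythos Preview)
Your proposal is correct and follows exactly the same approach as the paper's proof, which simply states that ``by definition of $\phi_n^{(1)}$ and $\psi_n^{(1)}$, the members of the sequences considered in \eqref{compat1} and \eqref{compat2} are zero for every $n\geq m+1$.'' You have merely spelled out in detail why this is so; the only superfluous step is your appeal to the Sobolev locality lemma, since $(I-\phi_n^{(1)})(w)$ and $\phi_m^{(1)}(w)$ vanish on \emph{open} sets, where the weak gradient is automatically zero.
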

\begin{proof}
By definition of $\phi_n^{(1)}$ and $\psi_n=\psi_n^{(1)}$,
the members of the sequences considered in \eqref{compat1} and \eqref{compat2} 
are zero for every $n\geq m+1$.
\end{proof}
\begin{prop}\label{prop:E-2F1}
Assume that \eqref{W0}--\eqref{WHcont} hold.
Then \eqref{F1} is satisfied for $\tilde{F}:X\to L^1(\Omega)$, $\tilde{F}:=\cW=\cW_1+\cW_2$, with
$\phi_n=\phi_n^{(2)}$ and $\psi_n=\psi_n^{(2)}$.
\end{prop}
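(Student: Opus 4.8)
The plan is to follow the pattern of Proposition~\ref{prop:F-2F1}, but the argument simplifies because here the outer truncation $\psi_n^{(2)}$ is the \emph{pointwise} map $v\mapsto\eta_n(v)$ on $L^1(\Omega)$ (cf.\ \eqref{Epsi2def}): by \eqref{etanLipschitz} and the monotonicity of $\eta_\lam$, both $\psi_m^{(2)}$ and $I-\psi_n^{(2)}$ are $1$-Lipschitz, $\psi_m^{(2)}$ has range bounded by $m$, and $(I-\psi_n^{(2)})(f)$ vanishes pointwise wherever $\abs{f}\leq n$. Throughout, fix a bounded set $S\subset X$, $v,w\in S$ and $m\in\NN$.

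For \eqref{compat1} I would argue as follows. Put $E_n(w):=\{(I-\phi_n^{(2)})(w)\neq 0\}$. On $\Omega\setminus E_n(w)$ the function $(I-\phi_n^{(2)})(w)$ and, by the Sobolev property recalled in the introduction, also its gradient vanish a.e., so $\cW(v+(I-\phi_n^{(2)})(w))=\cW(v)$ a.e.\ there; applying $\eta_m$ pointwise, the difference $\psi_m^{(2)}[\cW(v+(I-\phi_n^{(2)})(w))]-\psi_m^{(2)}[\cW(v)]$ vanishes a.e.\ outside $E_n(w)$ and is bounded by $2m$ pointwise on $E_n(w)$. By \eqref{pcoh02} and \eqref{pcoh04} (with $r=q$), $\abs{E_n(w)}\leq\abs{\Omega\setminus\hat R^n(w)}\leq(C_3)^qn^{-q}\norm{w}_{W^{1,q}}^q$, which tends to $0$ uniformly in $w\in S$, so $\norm{\psi_m^{(2)}[\cW(v+(I-\phi_n^{(2)})(w))]-\psi_m^{(2)}[\cW(v)]}_{L^1}\leq 2m\abs{E_n(w)}\to 0$ uniformly.

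For \eqref{compat2} I would first use \eqref{F2}, established in Proposition~\ref{prop:E-12F0F2} for $j=2$, together with the equiboundedness of $\phi_m^{(2)}$ (which makes $\{v+\phi_m^{(2)}(w)\mid v,w\in S\}$ bounded in $X$), to reduce to the same statement with $\cW$ replaced by $\cW_1$. Set $B_n:=\{\abs{\cW_1(v+\phi_m^{(2)}(w))}>n\}\cup\{\abs{\cW_1(v)}>n\}$; since $\cW_1$ maps bounded sets to bounded sets in $L^1$ (Proposition~\ref{1propWcont}), Chebyshev's inequality gives $\abs{B_n}\leq C(S,m)/n\to 0$ uniformly. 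Outside $B_n$ the difference $(I-\psi_n^{(2)})[\cW_1(v+\phi_m^{(2)}(w))]-(I-\psi_n^{(2)})[\cW_1(v)]$ vanishes a.e., and on $B_n$ it is bounded pointwise by $\abs{\cW_1(v+\phi_m^{(2)}(w))-\cW_1(v)}$. Applying the H\"older continuity \eqref{WHcont}, the a.e.\ bound $\abs{\phi_m^{(2)}(w)}+\abs{\nabla\phi_m^{(2)}(w)}\leq C_0 m$ from \eqref{pcoh01} (so that the H\"older-increment factor in \eqref{WHcont} is bounded by a constant $c(m)$), and monotonicity of $\tilde H_\alpha$, one splits the two summands of $\tilde H_\alpha$ (from \eqref{EHalphadef}) and applies H\"older's inequality on $B_n$ with exponents $q/(q-\alpha),q/\alpha$ for the $h_q$-part and $p/(p-\alpha),p/\alpha$ for the $h_p$-part, obtaining $\int_{B_n}\abs{\cW_1(v+\phi_m^{(2)}(w))-\cW_1(v)}\,dx\leq C(S,m)\big(\abs{B_n}^{\alpha/q}+\abs{B_n}^{\alpha/p}\big)\to 0$ uniformly in $v,w\in S$; combined with \eqref{F2} this yields \eqref{compat2} for $\cW$.

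The only slightly delicate point is the last estimate: one must check that the ``leading'' integrals $\int_{B_n}(h_q+2\abs{v}+2\abs{\nabla v}+2C_0m)^q\,dx$ and its $p$-analogue stay bounded uniformly over $S$. This is where restriction to $B_n$ (rather than to $\Omega$) is essential, since it is precisely what prevents the otherwise divergent constant contributions $(C_0m)^q$, $(C_0m)^{p^*}$ from spoiling integrability; boundedness of the remaining terms follows from $\abs{B_n}\leq C(S,m)/n$, the embedding $W_0^{1,q}(\Omega)\hookrightarrow L^q(\Omega)$ (immediate from the definition of $W_0^{1,q}$ as a closure in the $W^{1,q}$-norm), and $W_0^{1,p}(\Omega)\hookrightarrow L^{p^*}(\Omega)$. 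Everything else is routine; in particular \eqref{compat1} is entirely elementary and shorter than its counterpart in Proposition~\ref{prop:F-2F1}, because the outer truncation $\psi_m^{(2)}$ is itself bounded.
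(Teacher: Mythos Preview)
Your proof is correct and follows essentially the same route as the paper. For \eqref{compat1} your argument is verbatim the paper's; for \eqref{compat2} you likewise introduce $B_n$ (the paper's $\tilde T_n$), reduce to $\cW_1$ via \eqref{F2}, and estimate $\int_{B_n}\abs{\cW_1(v+\phi_m^{(2)}(w))-\cW_1(v)}\,dx$ --- the only cosmetic difference is that the paper packages this last step as uniform continuity of $\tilde W_1$ (Proposition~\ref{1propWcont}) applied to the perturbation $(\chi_{B_n}\phi_m^{(2)}(w),\chi_{B_n}\nabla\phi_m^{(2)}(w))\to 0$ in $\tilde X$, whereas you unroll that continuity result by hand via H\"older's inequality on $B_n$.
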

\begin{proof}
Let $S\subset X$ be a bounded set, let $v,w\in S$ and fix $m\in\NN$.
First observe that the measure of $T_n=T_n(w):=\{ x\in\Omega\mid (I-\phi_n^{(2)})(w)(x)\neq 0 \}$ 
converges to zero as $n\to \infty$ uniformly in $w\in S$, due to
\eqref{pcoh04}. 
Hence
\begin{align*}
	&\big\|(\psi_m^{(2)}\circ\cW)\big(v+(I-\phi_n^{(2)})(w)\big)-(\psi_m^{(2)}\circ\cW)(v)\big\|_{L^1(\Omega)}\\
	&\qquad= ~
	\int_{T_n}
	\big|\eta_m(\cW\big(v+(I-\phi_n^{(2)})(w)\big)(x))
	-\eta_m(\cW(v)\big|\,dx
	~\leq~ 
	2 m \abs{T_n} ~\underset{n\to\infty}{\To}~ 0,
\end{align*}
uniformly in $v,w\in S$, which shows \eqref{compat1} for $\tilde{F}=\cW$. 
Next, we prove \eqref{compat2} for $\tilde{F}=\cW_1$. Consider the set
\begin{align*}
	\tilde{T}_n=\tilde{T}_n(v,w):=\left\{~x\in\Omega~\left|~ 
	|\cW_1\big(v+\phi_m^{(2)}(w)\big)(x)|>n 
	~\text{or}~|\cW_1(v)(x)|>n \right.\right\} 
\end{align*}
and its indicator function
$\chi_{\tilde{T}_n}$.
Since $\cW_1$ maps bounded subsets of $X$ onto bounded sets in $L^1(\Omega)$, 
we have that 
\begin{align}\label{Ephi2compat20}
	|\tilde{T}_n|~\underset{n\to\infty}{\To} 0,~~\text{uniformly in $v,w\in S$}.
\end{align}
To shorten notation in the following, we define
\begin{align*}
	u_n(x):=\chi_{\tilde{T}_n}(x)\cdot\phi_m^{(2)}(w)(x)
	~~\text{and}~~
	U_n(x):=\chi_{\tilde{T}_n}(x)\cdot\nabla\phi_m^{(2)}(w)(x)~~\text{for $x\in \Omega$}.
\end{align*}
Since $|\phi_m^{(2)}(w)|+|\nabla\phi_m^{(2)}(w)|\leq C_0 m$ a.e.~on $\Omega$, 
\eqref{Ephi2compat20} implies that
\begin{align}\label{Ephi2compat21}
	\norm{u_n}_{L^{p^*}(\Omega;\RR^M)\cap L^q(\Omega;\RR^M)}
	\underset{n\to\infty}{\To} 0
	~~\text{and}~~
	\norm{U_n}_{L^p(\Omega;\RR^{M\times N})\cap L^q(\Omega;\RR^{M\times N})}
	\underset{n\to\infty}{\To} 0,
\end{align}
uniformly in $v,w\in S$.
Moreover,
\begin{align*}
	&\big\|(I-\psi_n^{(2)})[\cW_1\big(v+\phi_m^{(2)}(w)\big)]-(I-\psi_n^{(2)})[\cW_1
	(v)]\big\|_{L^1(\Omega)}\\
	&= ~ \int_{ \tilde{T}_n }
	\begin{aligned}[t]
		\big|&(I-\eta_n)[\cW_1\big(v+\phi_m^{(2)}(w)\big)(x)]
		-(I-\eta_n)[\cW_1(v)(x)]\big|	\,dx
	\end{aligned}\\
	&\leq ~ 2\int_{ \Omega } 
	\begin{aligned}[t]
		\big|W_1\big(x,v+u_n,\nabla v+U_n)-W_1(x,v,\nabla v)\big|
		\,&\,dx,
	\end{aligned}
\end{align*}
due to \eqref{etanLipschitz}. By the uniform continuity of the Nemytskii operator associated to $W_1$
on bounded sets as shown in Proposition \ref{1propWcont}, 
\eqref{Ephi2compat21} entails \eqref{compat2} for $\tilde{F}=\cW_1$.
Using \eqref{F2}, we infer \eqref{compat2} for $\tilde{F}=\cW$.
\end{proof}
\begin{prop}\label{prop:E-3F1} 
Assume that \eqref{W0}--\eqref{WHcont} hold.
Then \eqref{F1} is satisfied for $\tilde{F}:=\cW_1:X\to L^1(\Omega)$, with
$\phi_n=\phi_n^{(3)}$ and $\psi_n=\psi_n^{(3)}$.
\end{prop}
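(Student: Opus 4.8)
The plan is to verify the two compatibility conditions \eqref{compat1} and \eqref{compat2} for $\tilde{F}=\cW_1$, $\phi_n=\phi_n^{(3)}$ and $\psi_n=\psi_n^{(3)}$; this is the energy analogue of Proposition~\ref{prop:F-3F1}. Once these are established, \eqref{F1} follows at once, since $R=Y=L^1(\Omega)$ has already been fixed (it contains the closure of the range of $\cW_1$) and $\psi_n^{(3)}$ satisfies \eqref{copsi1} by Proposition~\ref{prop:E-psi1psi2}. Throughout I would fix a bounded set $S\subset X$ and keep two structural facts at hand. First, by \eqref{pcospr01} the perturbation $(I-\phi_n^{(3)})(w)$ tends to $0$ in $W^{1,\infty}(\Omega;\RR^M)$ uniformly in $w\in S$, while by \eqref{phi1} the family $\{v+(I-\phi_n^{(3)})(w):v,w\in S,\ n\in\NN\}$ stays bounded in $X$. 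Second, for a fixed $m$, $\phi_m^{(3)}(w)$ vanishes off $E_m(w):=\Omega\setminus\hat{R}^{1/m}(w)$ by \eqref{pcospr02} (and so does its gradient, by the Sobolev lemma recalled in Section~\ref{sec:intro}), and $\abs{E_m(w)}\le (C_3)^q m^q\norm{w}_X^q$ by \eqref{pcospr03} (taking $r=q$, $\beta=1$), hence is bounded uniformly in $w\in S$. I would also use that $\cW_1=\tilde{W}_1$ maps bounded subsets of $X$ onto bounded subsets of $L^1(\Omega)$ (Proposition~\ref{1propWcont}).

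For \eqref{compat2} the argument is short. Since $\phi_m^{(3)}(w)$ and $\nabla\phi_m^{(3)}(w)$ vanish off $E_m(w)$, the functions $\cW_1(v+\phi_m^{(3)}(w))$ and $\cW_1(v)$ agree off $E_m(w)$; hence so do $(I-\psi_n^{(3)})[\cW_1(v+\phi_m^{(3)}(w))]$ and $(I-\psi_n^{(3)})[\cW_1(v)]$, and their difference is supported in $E_m(w)$. As $I-\psi_n^{(3)}$ acts pointwise as $\eta_{1/n}$ with $\abs{\eta_{1/n}(t)}\le 1/n$, this difference is bounded by $2/n$ on $E_m(w)$, so its $L^1$-norm is at most $(2/n)\abs{E_m(w)}\le (2/n)(C_3)^q m^q\sup_{w\in S}\norm{w}_X^q\to 0$, uniformly in $v,w\in S$. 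I expect this to be the step that conceptually matters most: $L^1$-boundedness of $\cW_1(S)$ by itself would not yield uniformity, because $\eta_{1/n}$ is not uniformly small on bounded $L^1$-families that spread out over sets of infinite measure; it is the uniformly controlled, finite measure of the support $E_m(w)$ of the inner perturbation that makes the estimate work.

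For \eqref{compat1} I would combine a Chebyshev localisation with the H\"older continuity \eqref{WHcont}. Since $\psi_m^{(3)}=I-\eta_{1/m}$ vanishes on $[-1/m,1/m]$, the function $\psi_m^{(3)}[\cW_1(u)]$ is supported on $\{\abs{\cW_1(u)}>1/m\}$, whose measure is at most $m\norm{\cW_1(u)}_{L^1}$; thus $\psi_m^{(3)}[\cW_1(v+(I-\phi_n^{(3)})(w))]-\psi_m^{(3)}[\cW_1(v)]$ is supported in a set $A_n=A_n(v,w)$ of measure bounded uniformly in $n$ and in $v,w\in S$. Using that $\psi_m^{(3)}$ is $1$-Lipschitz (\eqref{etanLipschitz}, Proposition~\ref{prop:E-psi1psi2}) and then \eqref{WHcont}, I would bound the $L^1(\Omega)$-norm of this difference by $\int_{A_n}\abs{W_1(x,v+(I-\phi_n^{(3)})(w),\nabla v+\nabla(I-\phi_n^{(3)})(w))-W_1(x,v,\nabla v)}\,dx$, hence, for $n$ large, by $\varepsilon_n^\alpha\int_{A_n}\tilde{H}_\alpha(x,2\abs{v}+1,2\abs{\nabla v}+1)\,dx$, where $\varepsilon_n:=C_0/n+(C_0/n)^{p^*/p}+C_0/n\to 0$ comes from \eqref{pcospr01}. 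The remaining integral is bounded uniformly in $v,w\in S$ and $n$ by H\"older's inequality on the finite-measure set $A_n$: $h_q$, $\abs{v}$, $\abs{\nabla v}\in L^q$, $h_p$, $\abs{v}^{p^*/p}$, $\abs{\nabla v}\in L^p$, constants are integrable on $A_n$ since $\abs{A_n}$ is bounded, and the exponents $q-\alpha\le q$, $p-\alpha\le p$ are admissible. Multiplying by $\varepsilon_n^\alpha$ gives \eqref{compat1}.

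Apart from the point emphasised in the second paragraph — one must confine all integrals to sets of uniformly bounded measure before invoking the $L^\infty$-smallness of $I-\psi_n^{(3)}$, respectively the H\"older-smallness of the inner perturbation — the remaining estimates are routine applications of H\"older's inequality together with the growth and continuity properties of $W_1$ collected in Proposition~\ref{1propWcont}. Having verified \eqref{compat1} and \eqref{compat2}, one then records that these are exactly what \eqref{F1} requires for $\tilde{F}=\cW_1$ with the already fixed $R$ and the family $\psi_n^{(3)}$, which completes the proof.
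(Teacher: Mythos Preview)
Your proof is correct and follows essentially the same route as the paper: for \eqref{compat2} you exploit the uniformly bounded support of $\phi_m^{(3)}(w)$ together with the pointwise bound $|\eta_{1/n}|\le 1/n$, and for \eqref{compat1} you localise via Chebyshev to a set of uniformly bounded measure and then use the $W^{1,\infty}$-smallness of $(I-\phi_n^{(3)})(w)$. The only cosmetic difference is that the paper phrases the final step of \eqref{compat1} through the uniform continuity of the Nemytskii operator $\tilde{W}_1$ from Proposition~\ref{1propWcont} (after introducing cut-off functions $u_n,U_n$), whereas you invoke \eqref{WHcont} and H\"older directly; these amount to the same computation.
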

\begin{proof}
Let $S\subset X$ be a bounded set, let $v,w\in S$ and fix $m\in\NN$.
First observe that the measure of $T(w):=\{ x\in\Omega\mid \phi_m^{(3)}(w)(x)\neq 0 \}$ 
is bounded uniformly in $w\in S$ (for fixed $m$), due to
\eqref{pcospr04}. 
Hence
\begin{align*}
	&\big\|(I-\psi_n^{(3)})\big[\cW_1\big(v+\phi_m^{(3)}(w)\big)\big]-(I-\psi_n^{(3)})
	\big[\cW_1(v)\big]\big\|_{L^1(\Omega)}\\
	&\quad= ~
	\int_{T(w)} \big|\eta_{1/n}\big[\cW_1\big(v+\phi_m^{(3)}(w)\big)(x)\big]
	-\eta_{1/n}\big[\cW_1(v)(x)\big]\big|\,dx	\\ 
	&\quad \leq~ 
	2 \frac{1}{n} \abs{T(w)}~\underset{n\to\infty}{\To}~0,~~
	\text{uniformly in $w\in S$},
\end{align*}
which shows \eqref{compat2} for $\tilde{F}=\cW_1$.
Next, we prove \eqref{compat1},
consider the set
\begin{align*}
	\tilde{T}_n:=\mysetr{x\in\Omega}{
	\big|\cW_1\big(v+(I-\phi_n^{(3)})(w)\big)(x)\big|>\frac{1}{m}~~\text{or}~~
	\big|\cW_1(v)(x)\big|>\frac{1}{m} },
\end{align*}
($\tilde{T}_n=\tilde{T}_n(v,w)$) and its indicator function
$\chi_{\tilde{T}_n}:\Omega\to \{0,1\}$.
We have that
\begin{align}\label{pE-3F1-1}
	\big|\tilde{T}_n\big|~\leq~
	m\left(\big\|\cW_1\big(v+(I-\phi_n^{(3)})(w)\big)\big\|_{L^1}
	+\big\|\cW_1(v)\big\|_{L^1}\right)
	~\leq~ 2mC,
\end{align}
where $C>0$ is a constant independent of $n\in \NN$ and $v,w\in S$. Here, recall that
$S$ is bounded, the family $\phi_n^{(3)}$ is equibounded, and
$\cW_1$ maps bounded sets onto bounded set, cf.~Proposition~\ref{1propWcont}.
In the following, let
\begin{align*}
	u_n(x):=\chi_{\tilde{T}_n}(x)\cdot \big(I-\phi_n^{(3)}\big)(w)(x)
	~~\text{and}~~
	U_n(x):=\chi_{\tilde{T}_n}(x)\cdot \nabla \big[\big(I-\phi_n^{(3)}\big)(w)\big](x).
\end{align*}
Combining \eqref{pE-3F1-1} and \eqref{pcospr01},
we infer that
\begin{align}\label{pE-3F1-2} 
	\norm{u_n}_{[L^{p^*}\cap L^q](\Omega;\RR^M)}
	\underset{n\to\infty}{\To} 0
	~~\text{and}~~
	\norm{U_n}_{[L^p\cap L^q](\Omega;\RR^{M\times N})}	
	\underset{n\to\infty}{\To} 0,
\end{align}
uniformly in $v,w\in S$.
Moreover,
\begin{align*}
	&\Big\|\psi_m^{(3)}\big[\cW_1\big(v+(I-\phi_n^{(3)})(w)\big)\big]-\psi_m^{(3)}
	\big[\cW_1(v)\big]\Big\|_{L^1(\Omega)}\\
	&= ~ \int_{ \Omega }
	\begin{aligned}[t]
		\Big|&(I-\eta_{1/m})\big[W_1(x,v+u_n,\nabla v+U_n)\big]
		-(I-\eta_{1/m})\big[W_1(x,v,\nabla v)\big]\Big|\,dx
	\end{aligned}\\
	&\leq ~ 2\int_{ \Omega }
	\begin{aligned}[t]
		\Big|&W_1(x,v+u_n,\nabla v+U_n)-W_1\big(x,v,\nabla v\big)\Big|\,dx
	\end{aligned}
\end{align*}
due to \eqref{etanLipschitz}. By the uniform continuity of the Nemytskii operator associated to $W_1$
on bounded sets as shown in Proposition \ref{1propWcont}, 
\eqref{pE-3F1-2} entails \eqref{compat1} for $\tilde{F}=\cW_1$.
\end{proof}
Having collected all preliminaries of 
Theorem~\ref{thm:op} and Theorem~\ref{thm:abstractprop}, 
we conclude the section with the proof of its main result.
\begin{proof}[Proof of Theorem~\ref{lem:energyop}]
Let $(u_n)\subset X$ be a bounded sequence.
We only show \eqref{calWconv}, with the help of Theorem~\ref{thm:op}. If
$\cW(u_n)$ converges in $L^1(\Omega)$, the convergence of each summand
of the sequence in \eqref{calWconv} can be obtained using 
Theorem~\ref{thm:abstractprop} instead,
closely following the lines of the proof of Theorem~\ref{lem:W1p-prop}.
Recall that $k_1(n)$, $k_2(n)$ and $k_3(n)$ denote the subsequences 
obtained in Lemma~\ref{lem:dec}, that $k(n)=k_2(k_1(k_3(n)))$ and 
that $U_n^0\ldots,U_n^4$ denote the summands of $u_{k(n)}$ defined in
\eqref{ldec0}. 
By \eqref{ldec1}, 
Theorem~\ref{thm:op} is applicable to the sequence $v_n$
with the families $\phi_n=\phi_n^{(i)}$ and $\psi_n=\psi_n^{(i)}$, where
\begin{alignat*}{3}
	&i=2~~\text{and}~~&&v_n:=u_{k_2(n)}~~&&\text{(cf.~case (i) in Lemma~\ref{lem:dec})},\\
  &i=1~~\text{and}~~&&v_n:=\phi^{(2)}_{k_1(n)} \big(u_{k_2(k_1(n))}\big)
  ~~&&\text{(cf.~case (ii)), and}\\
  &i=1~~\text{and}~~&&v_n:=(I-\phi^{(2)}_{k_1(n)}) \big(u_{k_2(k_1(n))}\big) 
  ~~&&\text{(cf.~case (iii)),}
\end{alignat*}
respectively. Thus we infer that
\begin{equation}
\begin{aligned}\label{lEop1}
	\left[\cW(u_{k(n)})-\cW(U_n^0)\right]
	&+\left[\cW(0)-\cW(U_n^1)\right]\\
	&+\left[\cW(0)-\cW(U_n^2)\right]
	+\left[\cW(0)-\cW(V_{k_3(n)})\right]
		\underset{n\to\infty}{\To}0,\\
\end{aligned}
\end{equation}
where $V^3_m :=\big[(I-\phi^{(1)}_m)\circ \phi^{(2)}_{k_1(m)}\big] (u_{k_2(k_1(m))})$ for $m\in\NN$.
Moreover, 
\begin{align} \label{lEop2}
	\left[\cW_1\big(V^3_{k_3(n)}\big)-\cW_1\big(U_n^3\big)\right]
	+\left[\cW_1(0)-\cW_1\big(U_n^4\big)\right]\underset{n\to\infty}{\To}0,
\end{align}
due to Theorem~\ref{thm:op}, this time applied to the sequence $v_n:=V_{k_3(n)}$
with $\phi_n=\phi^{(3)}_n$, $\psi_n=\psi^{(3)}_n$ and $\tilde{F}=\cW_1$
(cf.~case (iv) in Lemma~\ref{lem:dec}). 
Finally, as a consequence of Proposition~\ref{prop:F-3noF2}, we 
may replace $\cW_1$ with $\cW$ in \ref{lEop2}, 
since $V^3_{k_3(n)}\to 0$ in $W^{1,\infty}_{\loc}$ (in the sense of
Remark~\ref{rem:ldec} (ii), to be precise), $U_n^4\to 0$ in $W^{1,\infty}$ by \eqref{pcospr01}, and  
thus $U_n^3=V_{k_3(n)}-U_n^4\to 0$ in $W^{1,\infty}_{\loc}$. Hence
\eqref{lEop2} turns into
\begin{align} \label{lEop3}
	\left[\cW\big(V^3_{k_3(n)}\big)-\cW\big(U_n^3\big)\right]
	+\left[\cW(0)-\cW\big(U_n^4\big)\right]\underset{n\to\infty}{\To}0.
\end{align}
The assertion is a consequence of \eqref{lEop1} and \eqref{lEop3} combined.
\end{proof}
%
\section{Quasilinear systems in $\mbf{W^{2,p}}$ with $\mbf{p>N}$}\label{sec:app2}
In this section, we recover a characterization of properness obtained in 
\cite{RaStu01a} for $\Omega=\RR^N$ and in \cite{GeStu05a} for exterior domains with boundary of class $C^2$), namely that properness is equivalent to
"properness at 0" for elliptic operators.
Moreover, we generalize it to domains with boundary of class $C^2$, including the case of unbounded boundary. 
This is possible since our approach allows us to avoid the limit problems used in 
\cite{RaStu01a} and \cite{GeStu05a} which are hard to define in general
if $\Omega$ is unbounded but not an exterior domain. For the same reason, we do not need well defined 
limits, asymptotic periodicity or similar properties of the coefficient functions as 
$\abs{x}\to \infty$ ($x\in \Omega$).
As in \cite{RaStu01a} and \cite{GeStu05a} we consider a map
\begin{equation}\label{2Fdef}
\begin{aligned}
	&F:X \to L^p(\Omega;\RR^M),~
	\text{with $X:=W^{2,p}(\Omega;\RR^M)\cap W_0^{1,p}(\Omega;\RR^M)$},\\
	&F(u):=-\sum_{\alpha,\beta=1}^N A_{\alpha\beta}(\cdot,u,\nabla u)\,\partial^2_{\alpha\beta}u+b(\cdot,u,\nabla u).
\end{aligned}
\end{equation}
Here, $\Omega\subset \RR^N$, $p>N$,
\begin{alignat*}{3}
	&\begin{aligned}
	A_{\alpha\beta}:~\Omega \times \RR^M \times \RR^{M\times N}\to \RR^{M\times M}\quad
	\text{is a Carathéodory function,}
	\end{aligned}
	\label{2A0}\tag{A:0}
\intertext{and for every $\mu\in \RR^M$, $\xi\in \RR^{M\times N}$ and for a.e.~$x\in\Omega$,}
	&\begin{aligned}
	&b(x,\mu,\xi)=b_0(x,\mu,\xi)\cdot \mu+B(x,\mu,\xi):\xi+b(x,0,0),
	~~\text{where}\\
	&b_0:~\Omega \times \RR^M \times \RR^{M\times N}\to \RR^{M\times M}~\text{and}~
	B:~\Omega \times \RR^M \times \RR^{M\times N}\to \RR^{M\times(M\times N)}\\
	&\text{are Carathéodory functions and $b(\cdot,0,0)\in L^p(\Omega;\RR^M)$.}
	\end{aligned}\label{2b0}\tag{b:0}
\end{alignat*}
In the first line of \eqref{2b0}, $\cdot$ and $:$ denote the euclidean scalar products in $\RR^M$ and 
$\RR^{M\times N}$, respectively.
Our growth conditions are
\begin{align*}
  &\abs{A_{\alpha\beta}(x,\mu,\xi)} \leq f(\abs{\mu}+\abs{\xi}),
  \label{2Agrowth}\tag{A:1}\\
	&\abs{b_0(x,\mu,\xi)}\leq f(\abs{\mu}+\abs{\xi})
	\quad\text{and}\quad
	\abs{B(x,\mu,\xi)} \leq f(\abs{\mu}+\abs{\xi}),
	\label{2bgrowth}\tag{b:1}
\end{align*}
for every $\alpha,\beta\in \{1,\ldots,N\}$, $\mu\in \RR^M$ and $\xi\in \RR^{M\times N}$ and a.e.~$x\in \Omega$ 
where \begin{equation}\label{fhpdef}
	f:[0,\infty)\to [1,\infty)\quad\text{is an increasing function}.
\end{equation}
Moreover, we assume that the $A_{\alpha\beta}$, $b_0$ and $B$
are 
"equicontinuous bundle maps" (this is the terminology used 
in \cite{RaStu01a} and \cite{GeStu05a}, apart from the fact that we do not assume 
continuity in $x$ which does not come in until later),  
i.e., the family
\begin{alignat*}{3}
	&\begin{aligned}
		(A_{\alpha_\beta}(x,\cdot,\cdot))_{x\in \Omega}~
		\text{is equicontinuous at every point in $\RR^M\times \RR^{M\times N}$,}
	\end{aligned}
	\label{2Aequi}\tag{A:2}
\intertext{and the families} 	 
	&\begin{aligned}
		&(b_0(x,\cdot,\cdot))_{x\in \Omega}~\text{and}~(B(x,\cdot,\cdot))_{x\in \Omega}~
		\text{are equicontinuous}\\
		&\text{at every point in $\RR^M\times \RR^{M\times N}$.}
	\end{aligned}	
	\label{2bequi}\tag{b:2}
\end{alignat*}
We do not need stronger growth conditions for large values of $\mu$ or $\xi$, because $X$ is continuously embedded in $W^{1,\infty}(\Omega;\RR^M)$, even in the Hölder space $C^{1,\alpha}(\overline{\Omega})$ with $\alpha=1-N/p$, at least as long as the boundary of $\Omega$ is sufficiently smooth, a Lipschitz domain for instance. Here, recall that on unbounded domains, the Hölder norm is defined as
\begin{align*}
	\norm{u}_{C^{0,\alpha}(\overline{\Omega})}:=
	\sup \mysetl{\frac{\abs{u(x)-u(y)}}{\abs{x-y}^\alpha}}
	{x,y\in \overline{\Omega},~x\neq y,~\abs{x-y}\leq 1}+\norm{u}_{C^{0}(\overline{\Omega})}
\end{align*}
As a consequence of our assumptions, we have
uniform continuity of the corresponding Nemytskii operators on bounded sets:
\begin{prop}\label{prop:2Fcont}
Let 
$p\in (N,\infty)$ 
and assume that \eqref{2A0}--\eqref{2Aequi} and \eqref{2b0}--\eqref{2bequi} are satisfied.
Then the Nemytskii operators
\begin{alignat*}{2}
	&\tilde{A}_{\alpha\beta}:\tilde{X}
	\to L^{\infty}(\Omega;\RR^{M\times M}),\qquad 
	&&\tilde{A}_{\alpha\beta}(u,U)(x):=A_{\alpha\beta}(x,u(x),U(x)),\\
	&\tilde{b}: \tilde{X}
	\to L^p(\Omega;\RR^{M}),\qquad 
	&&\tilde{b}(u,U)(x):=b(x,u(x),U(x))
\end{alignat*}
are well defined and map bounded sets onto bounded sets. Moreover, they are uniformly 
continuous on bounded subsets of $\tilde{X}$. Here,
\begin{align*}
	&\tilde{X}:=\big[L^{p}(\Omega;\RR^M)\cap L^{\infty}(\Omega;\RR^M)\big]\times 
	\big[L^p(\Omega;\RR^{M \times N})\cap L^\infty(\Omega;\RR^{M \times N}) \big], 
\end{align*}
with norm $\norm{(u,U)}_{\tilde{X}}:=\norm{u}_{L^p}+\norm{u}_{L^\infty}+\norm{U}_{L^p}+\norm{U}_{L^\infty}$.
\end{prop}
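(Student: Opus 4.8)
The plan is to handle the three ingredients of the statement in turn: (i) measurability together with the growth-based a~priori bounds, which give well-definedness and boundedness on bounded sets; (ii) the upgrade of pointwise equicontinuity of the bundle maps to uniform equicontinuity on the relevant compact sets, which yields uniform continuity of the associated Nemytskii operators into $L^\infty$; and (iii) the product structure of $b$ from \eqref{2b0}, which reduces $L^p$-continuity of $\tilde b$ to the $L^\infty$-continuity obtained in step (ii).

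First I would dispose of well-definedness and boundedness. Since $A_{\alpha\beta}$, $b_0$ and $B$ are Carath\'eodory functions, composing them with the measurable maps $u$, $U$ gives measurable functions on $\Omega$ (standard). If $\norm{(u,U)}_{\tilde X}\le\rho$, then $\abs{u(x)}+\abs{U(x)}\le\rho$ for a.e.~$x$, so by \eqref{2Agrowth} and the monotonicity of $f$, $\abs{A_{\alpha\beta}(x,u(x),U(x))}\le f(\rho)$ a.e., hence $\tilde A_{\alpha\beta}(u,U)\in L^\infty$ with $\norm{\tilde A_{\alpha\beta}(u,U)}_{L^\infty}\le f(\rho)$; the same estimate holds for the Nemytskii operators $\tilde b_0$, $\tilde B$ of $b_0$, $B$, using \eqref{2bgrowth}. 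For $\tilde b$ I use the decomposition in \eqref{2b0}, $\tilde b(u,U)=\tilde b_0(u,U)\cdot u+\tilde B(u,U):U+b(\cdot,0,0)$, and estimate in $L^p$: $\norm{\tilde b_0(u,U)\cdot u}_{L^p}\le f(\rho)\norm{u}_{L^p}$, $\norm{\tilde B(u,U):U}_{L^p}\le f(\rho)\norm{U}_{L^p}$, and $b(\cdot,0,0)\in L^p(\Omega;\RR^M)$ by assumption; this gives $\tilde b:\tilde X\to L^p(\Omega;\RR^M)$, bounded on bounded sets.

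The key step is uniform continuity of $\tilde A_{\alpha\beta}$, $\tilde b_0$, $\tilde B$ from bounded subsets of $\tilde X$ into $L^\infty$. Fix $\rho>0$ and let $K\subset\RR^M\times\RR^{M\times N}$ be the closed ball of radius $\rho$. I would first record the elementary fact that a family of functions equicontinuous at every point of a compact set is automatically \emph{uniformly} equicontinuous on that set: given $\eps>0$, cover $K$ by finitely many balls $B(z_i,\delta_i/2)$, where $\delta_i$ is a modulus of equicontinuity of the family at $z_i$ for $\eps/2$, and put $\delta:=\min_i\delta_i/2$. Applying this to $(A_{\alpha\beta}(x,\cdot,\cdot))_{x\in\Omega}$ on $K$ (equicontinuous at every point by \eqref{2Aequi}), we get $\delta>0$ with $\abs{A_{\alpha\beta}(x,\mu_1,\xi_1)-A_{\alpha\beta}(x,\mu_2,\xi_2)}<\eps$ for all $x\in\Omega$ whenever $(\mu_j,\xi_j)\in K$ and $\abs{\mu_1-\mu_2}+\abs{\xi_1-\xi_2}<\delta$. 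If now $(u,U),(v,V)$ lie in the $\rho$-ball of $\tilde X$ with $\norm{(u,U)-(v,V)}_{\tilde X}<\delta$, then for a.e.~$x$ both $(u(x),U(x))$ and $(v(x),V(x))$ lie in $K$ and are within distance $\delta$, so $\abs{\tilde A_{\alpha\beta}(u,U)(x)-\tilde A_{\alpha\beta}(v,V)(x)}<\eps$ a.e.; thus $\norm{\tilde A_{\alpha\beta}(u,U)-\tilde A_{\alpha\beta}(v,V)}_{L^\infty}\le\eps$. The same argument, with \eqref{2bequi}, gives uniform continuity of $\tilde b_0$ and $\tilde B$ into $L^\infty$ on bounded sets.

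Finally I would assemble $L^p$-continuity of $\tilde b$: write
$\tilde b(u,U)-\tilde b(v,V)=[\tilde b_0(u,U)-\tilde b_0(v,V)]\cdot u+\tilde b_0(v,V)\cdot(u-v)+[\tilde B(u,U)-\tilde B(v,V)]:U+\tilde B(v,V):(U-V)$,
and bound in $L^p$ using the $L^\infty$-bounds from step (ii) and H\"older's inequality: the first and third terms are $\le$ (uniform $L^\infty$-modulus of continuity of $\tilde b_0$ resp.\ $\tilde B$)$\cdot\rho$, while the second and fourth are $\le f(\rho)\norm{u-v}_{L^p}$ resp.\ $\le f(\rho)\norm{U-V}_{L^p}$; all four are made arbitrarily small by shrinking $\norm{(u,U)-(v,V)}_{\tilde X}$. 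The only mildly delicate point in the whole argument is the passage from pointwise equicontinuity of the bundle maps to uniform equicontinuity on the compact balls $K$; once that is in hand, everything else is routine.
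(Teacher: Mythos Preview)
Your proof is correct and follows essentially the same approach as the paper: both reduce to uniform equicontinuity of the bundle maps on compact subsets of $\RR^M\times\RR^{M\times N}$ (the paper cites Lemma~2.5(i) in \cite{GeStu05a} for this, while you give the finite-cover argument explicitly) and then use the product structure \eqref{2b0} of $b$ to pass from $L^\infty$-continuity of $\tilde b_0,\tilde B$ to $L^p$-continuity of $\tilde b$. Your four-term decomposition for $\tilde b(u,U)-\tilde b(v,V)$ simply spells out what the paper's last sentence leaves implicit.
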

Proofs are collected at the end of this section.
In the present framework, the abstract result, Theorem~\ref{thm:abstractprop}, 
leads to 
\begin{thm}\label{thm:2Fprop}
Let $p>N$ and let $\Omega\subset \RR^N$ be a domain which is sufficiently smooth such that 
$W^{2,p}(\Omega)$ is continuously embedded in $W^{1,\infty}(\Omega)$. Moreover, 
let $F$ be the operator defined in \eqref{2Fdef} and assume that \eqref{2A0}--\eqref{2Aequi} and \eqref{2b0}--\eqref{2bequi} are satisfied. Then 
\begin{align*}
	\text{(i)}~&\text{$F$ is proper on closed bounded subsets of $X$}
\intertext{if and only if}
 	\text{(ii)}~&\text{every bounded sequence $(u_n)\subset X$ such that} \\
	&\begin{alignedat}{3}
		&&&&\text{(a)}&
		\begin{array}[t]{l}
			\text{$F(u_n)$ converges in $L^p(\Omega;\RR^M)$ and}\\
			\text{$u_n$ is tight in $W^{2,p}(\Omega;\RR^M)$}
		\end{array}\\
		&\text{or}~~&&&\text{(b)}&
		\begin{array}[t]{l}
			\text{$F(u_n)\to F(0)$ in $L^p(\Omega;\RR^M)$ and for every $R>0$}\\
			\text{there exists an $n_0\in \NN$ such that $u_n(x)=0$}\\
			\text{for every $x\in B_R(0)\cap \Omega$ and every $n\geq n_0$}
		\end{array}
	\end{alignedat}\\
	&\text{has a subsequence which converges in $X$.}
\end{align*}
Here, "tight" is meant in the sense of
Definition~\ref{def:concentrate}.
\end{thm}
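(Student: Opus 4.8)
The implication (i)$\Rightarrow$(ii) is immediate: in each of the alternatives (a) and (b) the sequence $F(u_n)$ converges in $L^p(\Omega;\RR^M)$, so properness of $F$ on closed bounded sets directly yields a subsequence converging in $X$. The plan for the converse is to feed the abstract machinery of Section~\ref{sec:abstractproper} with the single family of truncation operators $\phi_n^{(1)}$ from Section~\ref{ssec:coouterregions}, which cuts off the outer regions of $\Omega$. Concretely, I would put $D=X=W^{2,p}(\Omega;\RR^M)\cap W_0^{1,p}(\Omega;\RR^M)$, $Y=R=L^p(\Omega;\RR^M)$, take $\phi_n=\phi_n^{(1)}$ (a family of truncation operators on $X$ by Proposition~\ref{prop:codomain} together with Remark~\ref{remcutoffdef}~(iv), so that \eqref{phi1} and \eqref{phi3} hold), and as outer truncation the linear maps $\psi_n^{(1)}(v)(x):=\nu(\abs{x}-n)\,v(x)$ on $L^p$. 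Exactly as in Proposition~\ref{prop:E-psi1psi2}, the $\psi_n^{(1)}$ are equibounded, satisfy \eqref{copsi1} by linearity, and $\psi_n^{(1)}(v)\to v$ in $L^p$ for every $v$ by dominated convergence, so \eqref{copsi2} holds, with limit the identity.

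Next I would verify \eqref{F0}--\eqref{F2}. For \eqref{F0} and \eqref{F2} I would take $F_1:=F$, so that the remainder in \eqref{F2} is identically zero; the point is that $F$ itself is uniformly continuous on bounded subsets of $X$. Writing
$F(u)-F(v)=-\sum_{\alpha,\beta}[A_{\alpha\beta}(\cdot,u,\nabla u)-A_{\alpha\beta}(\cdot,v,\nabla v)]\,\partial^2_{\alpha\beta}u
-\sum_{\alpha,\beta}A_{\alpha\beta}(\cdot,v,\nabla v)\,\partial^2_{\alpha\beta}(u-v)
+[b(\cdot,u,\nabla u)-b(\cdot,v,\nabla v)]$, the first and third groups are controlled by Proposition~\ref{prop:2Fcont} (uniform continuity of the Nemytskii operators $\tilde{A}_{\alpha\beta}$ and $\tilde{b}$ on bounded subsets of $\tilde{X}$), since for $p>N$ the embedding $W^{2,p}\hookrightarrow W^{1,\infty}$ makes the linear map $u\mapsto(u,\nabla u)$ send bounded subsets of $X$ into bounded subsets of $\tilde{X}$; the middle group is estimated by the $L^\infty$-bound on $A_{\alpha\beta}(\cdot,v,\nabla v)$ coming from \eqref{2Agrowth} and $\norm{\partial^2(u-v)}_{L^p}\leq\norm{u-v}_X$. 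For the compatibility \eqref{F1}, I would argue as in Proposition~\ref{prop:F-1F1} and Proposition~\ref{prop:E-1F1}: since $F$ is a local (second-order) differential operator, $(I-\phi_n^{(1)})(w)$ together with its derivatives of order $\le 2$ vanishes on $\{\abs{x}\le n\}$ and $\phi_m^{(1)}(w)$ together with its derivatives of order $\le2$ vanishes on $\{\abs{x}\ge m+1\}$, so $\psi_m^{(1)}$, which only sees $\{\abs{x}\le m+1\}$, annihilates the difference in \eqref{compat1} once $n\ge m+1$, and likewise $(I-\psi_n^{(1)})$ annihilates the difference in \eqref{compat2} once $n\ge m+1$; both suprema are thus eventually zero.

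With all hypotheses of Theorem~\ref{thm:op} and Theorem~\ref{thm:abstractprop} in place, I would run the argument as follows. Let $(u_n)\subset X$ be bounded with $F(u_n)\to G$ in $L^p$. By \eqref{phi3} pass to a subsequence $u_{k(n)}$ satisfying \eqref{uknequiint}, which by Proposition~\ref{prop:codomain} means precisely that $\phi_n^{(1)}(u_{k(n)})$ is tight in $W^{2,p}(\Omega;\RR^M)$. Since $\psi_n^{(1)}(y)\to y$ for every $y$, the constant $H$ in Theorem~\ref{thm:abstractprop} vanishes, and the theorem gives
\begin{align*}
	F\bigl(\phi_n^{(1)}(u_{k(n)})\bigr)\underset{n\to\infty}{\To}G
	\quad\text{and}\quad
	F\bigl((I-\phi_n^{(1)})(u_{k(n)})\bigr)\underset{n\to\infty}{\To}F(0)
\end{align*}
in $L^p(\Omega;\RR^M)$. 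Now $\phi_n^{(1)}(u_{k(n)})$ is bounded in $X$, tight in $W^{2,p}$, and its $F$-images converge, so it is of type (ii)(a); while $(I-\phi_n^{(1)})(u_{k(n)})(x)=(1-\nu(\abs{x}-n))u_{k(n)}(x)$ is bounded in $X$, has $F$-images converging to $F(0)$, and vanishes on $B_R(0)\cap\Omega$ as soon as $n\ge R$, so it is of type (ii)(b). Applying (ii) to each and passing to a common subsequence, both sequences converge in $X$, hence so does their sum $u_{k(n)}$, which establishes (i).

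The main obstacle I anticipate is the verification that the \emph{full} second-order operator $F$ is uniformly continuous on bounded subsets of $X$, i.e.\ controlling the interplay between the unbounded factor $\partial^2u$ and the coefficient functions: one must exploit that $p>N$ forces $A_{\alpha\beta}(\cdot,u,\nabla u)\in L^\infty(\Omega)$ with norm bounded on bounded subsets of $X$, so that the leading term can be handled by an $L^\infty\!\cdot\!L^p$ product and reduced to Proposition~\ref{prop:2Fcont}. Once this and the (elementary) localization behind \eqref{F1} are settled, \eqref{F0}--\eqref{F2} are routine and the abstract theorems do the rest.
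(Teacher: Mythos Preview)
Your proposal is correct and follows essentially the same route as the paper: set $D=X$, $Y=R=L^p$, use $\phi_n=\phi_n^{(1)}$ with $\psi_n$ its $L^p$-avatar, verify \eqref{F0}--\eqref{F2} with $F_1=F$ via Proposition~\ref{prop:2Fcont}, observe that the compatibility conditions \eqref{compat1}--\eqref{compat2} are eventually trivially zero by locality, and then apply Theorem~\ref{thm:abstractprop} (with $H=0$) to split $u_{k(n)}$ into a tight piece of type (a) and an escaping piece of type (b). You even spell out more explicitly than the paper why the full $F$ is uniformly continuous on bounded sets (handling the $A_{\alpha\beta}\cdot\partial^2 u$ term via the $L^\infty\cdot L^p$ estimate afforded by $p>N$), which is indeed the only non-trivial point behind \eqref{F2}.
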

As a matter of fact, we are always able to find a convergent subsequence
of every bounded sequence of type (a) in Theorem~\ref{thm:2Fprop}
if we assume that the leading part of $F$ is elliptic
in the following sense:
\begin{align*}
	&\begin{aligned}
	&\det\Big(\sum_{\alpha,\beta=1}^N\eta_\alpha\eta_\beta A_{\alpha\beta}(x,\mu,\xi)\Big)
	\geq \gamma(x,\mu,\xi) \abs{\eta}^{2M},	\\
	&\text{where}~~\gamma:\overline{\Omega}\times\RR^M\times\RR^{M\times N}\to (0,\infty)~~\text{is continuous},
	\end{aligned}
	\label{2Aelliptic}\tag{A:3}
\end{align*}
for every $\eta=(\eta_\alpha)\in\RR^N$, $\mu\in \RR^M$, $\xi\in \RR^{M\times N}$ and a.e.~$x\in \Omega$.
Here, note that the assumption on $\gamma$ implies that 
$\gamma$ is bounded from below by a positive constant
on every compact subset of $\overline{\Omega}\times\RR^M\times\RR^{M\times N}$.
We also need that
\begin{align*}
	A_{\alpha\beta}:\overline{\Omega}\times\RR^M\times \RR^{M\times N}\to \RR^{M\times M}~\text{is continuous,}
	\label{2Acont}\tag{A:4}
\end{align*}
as well as some additional regularity of the boundary of $\Omega$, cf.~\eqref{C2Omega} below.
We exploit \eqref{2Aelliptic} and \eqref{2Acont} in form of the following a priori estimate
for linear elliptic systems with continuous coefficients:
\begin{lem}[cf.~Theorem 17 in \cite{Ko62a}]\label{lem:2Aapriori}
Assume \eqref{2Aelliptic} and \eqref{2Acont}, and let $\Omega_k\subset \Omega$ be 
a {bounded} subdomain with boundary of class $C^2$.
Then for every fixed 
$v\in C^{1}(\overline{\Omega}_k;\RR^M)$ and every $w\in W^{2,p}(\Omega_k;\RR^M)\cap W_0^{1,p}(\Omega_k;\RR^M)$
we have that
\begin{align*}
	\norm{w}_{W^{2,p}(\Omega_k;\RR^M)}\leq 
	C \big(\big\|\tilde{A}(v)w\big\|_{L^p(\Omega_k;\RR^M)}+\norm{w}_{L^p(\Omega_k;\RR^M)}\big),&\\
	\text{where}~\tilde{A}(v)w:=\sum_{\alpha,\beta=1}^N A_{\alpha\beta}(\cdot,v,\nabla v)\partial^2_{\alpha\beta}w&.
\end{align*}
Here, $C\geq 0$ is a constant which depends on $v$, $\Omega_k$ and $A_{\alpha\beta}$ 
but not on $w$. 
\end{lem}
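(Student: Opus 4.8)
The plan is to reduce the stated a priori estimate to the known local estimate for linear elliptic systems with \emph{constant} coefficients via a localization and freezing-of-coefficients argument, exactly in the spirit of the classical Agmon--Douglis--Nirenberg theory (which is the content of the cited Theorem 17 in \cite{Ko62a}). The coefficients here are the functions $x\mapsto A_{\alpha\beta}(x,v(x),\nabla v(x))$; since $v\in C^1(\overline{\Omega}_k;\RR^M)$ and each $A_{\alpha\beta}$ is continuous on $\overline{\Omega}\times\RR^M\times\RR^{M\times N}$ by \eqref{2Acont}, these composed coefficients are continuous on the compact set $\overline{\Omega}_k$, hence \emph{uniformly} continuous there. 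The ellipticity assumption \eqref{2Aelliptic}, together with the continuity of $\gamma$ and the compactness of the relevant parameter set $\{(x,v(x),\nabla v(x))\mid x\in\overline{\Omega}_k\}$, guarantees that the Legendre--Hadamard (Petrowsky) ellipticity constant of the frozen operator is bounded below by a positive number uniformly in the freezing point. This is what makes the perturbation argument uniform.

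\textbf{Step 1 (frozen interior/boundary estimates).} First I would invoke the standard $W^{2,p}$ estimate for the constant-coefficient elliptic system $\tilde A(v)(x_0)\,w := \sum_{\alpha,\beta}A_{\alpha\beta}(x_0,v(x_0),\nabla v(x_0))\,\partial^2_{\alpha\beta}w$, valid on small balls in the interior and on small half-balls at the boundary after flattening the (class $C^2$) boundary of $\Omega_k$; this is exactly Theorem 17 of \cite{Ko62a}. The estimate has the form $\|w\|_{W^{2,p}(B)}\le C_0(\|\tilde A(v)(x_0)w\|_{L^p(B')}+\|w\|_{L^p(B')})$ with $C_0$ depending only on $p$, $N$, $M$, the ellipticity constant, a modulus of continuity and the sup norm of the frozen coefficients, and on the $C^2$ norm of the boundary chart --- none of which depend on $w$ nor, after the uniformity observations above, on the base point $x_0$.

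\textbf{Step 2 (freezing).} On a small neighborhood $U$ of a point $x_0\in\overline{\Omega}_k$, write $\tilde A(v)w = \tilde A(v)(x_0)w + \big(\tilde A(v)-\tilde A(v)(x_0)\big)w$. The second term is bounded in $L^p(U)$ by $\omega(\operatorname{diam}U)\,\|D^2 w\|_{L^p(U)}$, where $\omega$ is the (common) modulus of continuity of the composed coefficients on $\overline{\Omega}_k$. Choosing the radius $r_0$ small enough that $C_0\,\omega(2r_0)<\tfrac12$, the bad term is absorbed into the left-hand side, yielding a local estimate $\|w\|_{W^{2,p}(B_{r_0}(x_0)\cap\Omega_k)}\le 2C_0(\|\tilde A(v)w\|_{L^p}+\|w\|_{L^p})$ with $r_0$ and $C_0$ \emph{independent of} $x_0$. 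I would carry this out with suitable cutoff functions $\zeta$ supported in $B_{r_0}(x_0)$: the commutator terms $[\tilde A(v),\zeta]w$ involve only first derivatives of $w$, hence are controlled by $\|w\|_{W^{1,p}}$, which in turn is absorbed by interpolation ($\|w\|_{W^{1,p}}\le \varepsilon\|w\|_{W^{2,p}}+C_\varepsilon\|w\|_{L^p}$) — noting that $w$ vanishes on $\partial\Omega_k$ so the boundary charts and the zero trace are respected throughout.

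\textbf{Step 3 (patching).} Cover the compact set $\overline{\Omega}_k$ by finitely many such balls $B_{r_0}(x_i)$, $i=1,\dots,L$, subordinate a smooth partition of unity, and sum the local estimates; the interpolation/absorption of the $W^{1,p}$ terms is done once more globally on $\Omega_k$. This produces the asserted inequality $\|w\|_{W^{2,p}(\Omega_k;\RR^M)}\le C(\|\tilde A(v)w\|_{L^p(\Omega_k;\RR^M)}+\|w\|_{L^p(\Omega_k;\RR^M)})$ with $C$ depending on $v$ (through the modulus of continuity and sup norm of the composed coefficients), on $\Omega_k$ (through its $C^2$ boundary charts and its diameter), and on the $A_{\alpha\beta}$, but not on $w$.

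\textbf{The main obstacle} I anticipate is bookkeeping at the boundary: one must verify that after flattening a $C^2$ portion of $\partial\Omega_k$ the transformed system is still elliptic with a uniformly controlled constant and that the transformed coefficients inherit a uniform modulus of continuity, and that the Dirichlet condition $w\in W_0^{1,p}$ is preserved under the localization (so that the boundary estimates of \cite{Ko62a}, which are stated for zero boundary data, apply verbatim). Since $\partial\Omega_k$ is compact and of class $C^2$, only finitely many charts are needed, so all these constants are controlled; but this is exactly the step where care is required. Everything else is the routine freeze-and-absorb scheme, and I would not spell out the elementary interpolation and commutator estimates in detail.
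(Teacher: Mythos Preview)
The paper does not actually prove this lemma: it is stated with the attribution ``cf.~Theorem~17 in \cite{Ko62a}'' and used as a black box in the proof of Proposition~\ref{prop:2Alocconv}. Your proposal supplies precisely the standard freeze-and-absorb argument (in the Agmon--Douglis--Nirenberg spirit) that underlies such $W^{2,p}$ estimates for linear elliptic systems with continuous coefficients on bounded $C^2$ domains, and the outline is correct; in particular, your observations that \eqref{2Acont} together with $v\in C^1(\overline{\Omega}_k)$ yields uniformly continuous composed coefficients on the compactum $\overline{\Omega}_k$, and that \eqref{2Aelliptic} with the continuity of $\gamma$ gives a uniform Petrowsky ellipticity bound, are exactly the ingredients needed to make the localization uniform in the base point. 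So there is nothing to compare---you have filled in what the paper takes for granted.
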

In view of Lemma~\ref{lem:2Aapriori}, we are lead to study domains $\Omega$ of the following type:
\begin{equation}\label{C2Omega}
\begin{aligned}
	&\text{$\Omega=\bigcup{}_{k\in\NN} \Omega_k$,~~where
	$(\Omega_k)_{k\in\NN}$ is a sequence of subdomains}\\
	&\text{with boundary of class $C^2$	such that every bounded subset $B\subset \Omega$}\\
	&\text{is contained in $\Omega_{k_0}$ for a suitable $k_0=k_0(B)\in \NN$, and}\\
	&\text{$\Omega_k\subset B_{k}(0)\cap \Omega_{k+1}$~~and~~
	$\partial \Omega_k\cap \overline{B_{k-\frac{1}{2}}(0)}\subset \partial\Omega$,
	~~for every $k$.}
\end{aligned}
\end{equation}
This leads to a characterization of properness for elliptic operators:
\begin{cor}\label{cor:2Fprop}
Let $p>N$ and let $\Omega\subset\RR^N$ be a domain
which is sufficiently smooth such that 
$W^{2,p}(\Omega)$ is continuously embedded in $W^{1,\infty}(\Omega)$
and which satisfies \eqref{C2Omega}.
Moreover, let $F$ be the operator defined in \eqref{2Fdef} and assume that \eqref{2A0}--\eqref{2Acont} and \eqref{2b0}--\eqref{2bequi} are valid. Then the following
three statements are equivalent:
\begin{align*}
	(i)~&\begin{aligned}[t]
	&\text{$F$ is proper on closed bounded subsets of $X$.}
	\end{aligned}\\
	(ii)~&\begin{aligned}[t]
	&\text{Every bounded sequence $(u_n)\subset X$ such that}\\
	&\begin{alignedat}{2}
		\qquad &&& \begin{array}[t]{l}
		\text{$F(u_n)\to F(0)$ in $L^p(\Omega)$ and}\\
		\text{for every $R>0$ there exists an $n_0\in \NN$ such that}\\
		\text{$u_n=0$ on $B_R(0)\cap \Omega$ for every $n\geq n_0$}
		\end{array}
	\end{alignedat}\\
	&\text{has a convergent subsequence.}
	\end{aligned}\\
	(iii)~& \begin{aligned}[t]
	&\text{Every bounded sequence $(u_n)\subset X$ such that}\\
	&\begin{alignedat}{2}
		\qquad &&&\begin{array}[t]{l}
		\text{$F(u_n)\to F(0)$ in $L^p(\Omega)$ and 
		$u_n\rightharpoonup	0$ weakly in $X$}
		\end{array}
	\end{alignedat}\\
	&\text{has a convergent subsequence.}
	\end{aligned}
\end{align*}
\end{cor}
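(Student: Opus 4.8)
The plan is to establish the three equivalences by closing the cycle $(i)\Rightarrow(iii)\Rightarrow(ii)\Rightarrow(i)$, using Theorem~\ref{thm:2Fprop} together with the fact (announced in the remark following it, and relying on the ellipticity \eqref{2Aelliptic}) that under the present hypotheses every bounded sequence of \emph{type (a)} in Theorem~\ref{thm:2Fprop} already possesses a convergent subsequence.

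The implication $(i)\Rightarrow(iii)$ is immediate: a sequence as in $(iii)$ is bounded and has $F(u_n)$ convergent in $L^p(\Omega;\RR^M)$, so properness of $F$ hands us a convergent subsequence; the weak convergence $u_n\rightharpoonup 0$ plays no role here. For $(iii)\Rightarrow(ii)$, let $(u_n)$ be as in $(ii)$. Since $p>1$, the space $X$ is reflexive (a closed subspace of the reflexive space $W^{2,p}(\Omega;\RR^M)$), so the bounded sequence $(u_n)$ has weakly convergent subsequences; and the hypothesis that $u_n$ vanishes on $B_R(0)\cap\Omega$ for all large $n$, for every fixed $R>0$, forces every weak subsequential limit to be $0$, hence $u_n\rightharpoonup 0$ in $X$. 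Thus $(u_n)$ also satisfies the hypotheses of $(iii)$, which yields the desired convergent subsequence.

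It remains to show $(ii)\Rightarrow(i)$. By Theorem~\ref{thm:2Fprop}, $F$ is proper on closed bounded subsets of $X$ precisely when every bounded sequence of type (a) \emph{and} every bounded sequence of type (b) there has a convergent subsequence. Type (b) coincides with the hypothesis of $(ii)$, hence is covered by assumption. For type (a) --- a bounded sequence with $F(u_n)$ convergent in $L^p$ that is tight in $W^{2,p}(\Omega;\RR^M)$ in the sense of Definition~\ref{def:concentrate} --- one argues as in the remark after Theorem~\ref{thm:2Fprop}: on each bounded $C^2$-subdomain $\Omega_k$ from \eqref{C2Omega} the embedding $W^{2,p}(\Omega_k)\hookrightarrow C^1(\overline{\Omega_k})$ is compact (since $p>N$), so along a diagonal subsequence $u_n$ converges in $C^1_{\mathrm{loc}}$; writing $\tilde A(u_n)u_n=-F(u_n)+b(\cdot,u_n,\nabla u_n)$ and using Proposition~\ref{prop:2Fcont}, the equicontinuity \eqref{2Aequi}, the continuity \eqref{2Acont} and the boundedness of the second derivatives, one finds that $\tilde A(v)u_n$ is Cauchy in $L^p(\Omega_k)$, with $v$ the local $C^1$-limit; Lemma~\ref{lem:2Aapriori} then makes $u_n$ Cauchy in $W^{2,p}(\Omega_k)$, and finally tightness of $(u_n)$ --- combined with the fact that $\Omega\setminus\Omega_k\subset\Omega\setminus B_R(0)$ once $B_R(0)\cap\Omega\subset\Omega_k$ --- promotes local $W^{2,p}$-convergence to convergence in $W^{2,p}(\Omega;\RR^M)$, hence in $X$. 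Therefore all the sequences required by Theorem~\ref{thm:2Fprop} converge along subsequences, and $F$ is proper.

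The genuine analytic difficulty is concentrated entirely in the type-(a) case, i.e.\ in the interplay of the linear a priori estimate of Lemma~\ref{lem:2Aapriori}, the compact local embedding, and the tightness bookkeeping against the exhaustion \eqref{C2Omega}; but since this is already isolated in Theorem~\ref{thm:2Fprop} and its accompanying remark, within the proof of the corollary it reduces to a citation. The only genuinely new (and elementary) point is the step $(iii)\Rightarrow(ii)$, where the vanishing-on-balls condition is converted into weak nullity in $X$ via reflexivity.
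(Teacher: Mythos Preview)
Your overall strategy---closing the cycle $(i)\Rightarrow(iii)\Rightarrow(ii)\Rightarrow(i)$ and invoking Theorem~\ref{thm:2Fprop} for the last implication---matches the paper's proof exactly, and your handling of $(i)\Rightarrow(iii)$ and $(iii)\Rightarrow(ii)$ is correct.

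There is, however, a genuine gap in your sketch of the type-(a) case. You write that ``Lemma~\ref{lem:2Aapriori} then makes $u_n$ Cauchy in $W^{2,p}(\Omega_k)$,'' but that lemma requires the function $w$ to lie in $W_0^{1,p}(\Omega_k;\RR^M)$, i.e.\ to vanish on \emph{all} of $\partial\Omega_k$. The functions $u_n$ only vanish on $\partial\Omega$; on the ``artificial'' part $\partial\Omega_k\setminus\partial\Omega$ they generally do not. The paper handles this (in Proposition~\ref{prop:2Alocconv}) by multiplying with a cutoff $\eta\in C_0^\infty(\RR^N)$ that equals $1$ on $\overline{B}_{k-1}(0)$ and vanishes outside $\overline{B}_{k-3/4}(0)$: the last condition in \eqref{C2Omega} then guarantees $\eta u_n\in W_0^{1,p}(\Omega_k;\RR^M)$, the a~priori estimate is applied with $w=\eta(u_n-u)$ and $v=u$, and one has to deal with the commutator $\eta\tilde A(u)[u_n-u]-\tilde A(u)[\eta(u_n-u)]$, which contains only first-order derivatives of $u_n$ and is therefore harmless by compact embedding. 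Without this cutoff step the argument does not go through; in particular, the remark after Theorem~\ref{thm:2Fprop} merely \emph{announces} that type-(a) sequences converge under ellipticity, it does not supply a proof you can cite.
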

\begin{rem} \label{rem:cor2Fprop}
If $\Omega=\RR^N$ and the coefficient functions of $F$ are asymptotically periodic (as $\abs{x}\to\infty$), 
the equivalence of (i) and (iii) was shown in Theorem 6.5 in \cite{RaStu01a} (see also Theorem 5.7 in
\cite{GeStu05a}).
Also note that the characterization given in (ii) is easier to check than (iii), since it provides a stronger assumption on the sequence: every bounded sequence of functions
which eventually leaves every bounded subset of $\Omega$ as in (ii) converges to zero weakly in $X$,
but the converse does not hold in general. 
\end{rem}
%
\subsection*{Proofs of Proposition~\ref{prop:2Fcont}, Theorem~\ref{thm:2Fprop} and Corollary~\ref{cor:2Fprop}}
\begin{proof}[Proof of Proposition~\ref{prop:2Fcont}]
In view of \eqref{2A0}, \eqref{2Agrowth}, \eqref{2b0} and \eqref{2bgrowth} it is clear that $\tilde{A}_{\alpha\beta}$ and $\tilde{b}$ are well defined. 
Now consider a bounded set $W\subset \tilde{X}$. 
First, we claim that $\tilde{A}_{\alpha\beta}$ is uniformly continuous on $W$.
There is a compact set $K\subset\RR^M\times \RR^{M\times M}$ such that
for every $(u,U)\in W$, $(u(x),U(x))\in K$ for a.e.~$x\in \Omega$.
Moreover, \eqref{2A0} and \eqref{2Aequi} imply that 
the family $(b(x,\cdot,\cdot))_{x\in \Omega}$ is uniformly equicontinuous on every compact subset of
$K\subset\RR^M\times \RR^{M\times M}$, i.e., for every $\eps>0$ there exists a $\delta>0$ such that
\begin{equation}\label{Auniequicont} 
\begin{aligned}
	&\abs{A_{\alpha\beta}(x,\mu_1,\xi_1)-A_{\alpha\beta}(x,\mu_2,\xi_2)}<\frac{\eps}{2}\\
	&\text{whenever $(\mu_1,\xi_1),(\mu_2,\xi_2)\in K$ and $\abs{(\mu_1,\xi_1)-(\mu_2,\xi_2)}<\delta$,}
\end{aligned}
\end{equation}
for a.e.~$x\in \Omega$, cf.~Lemma 2.5(i) in \cite{GeStu05a}. 
Since the range of all functions in $W$ is contained in $K$,
\eqref{Auniequicont} entails the assertion.
Analogously, the Nemytskii operators associated to $b_0$ and $B$, i.e.,
\begin{align*}
	&\tilde{b}_0:\tilde{X}
	\to L^{\infty}(\Omega;\RR^M)\quad \text{and}\quad
	\tilde{B}:\tilde{X}	\to L^{\infty}(\Omega;\RR^{M\times N})
\end{align*}
are uniformly continuous on $W$. By \eqref{2b0}, this entails the uniform continuity of $\tilde{b}$ on $W$.
\end{proof}
\begin{proof}[Proof of Theorem~\ref{thm:2Fprop}]
First observe that "only if" is trivial. We obtain the converse implication 
with the aid of Theorem~\ref{thm:abstractprop}. For that purpose define $R:=Y:=L^p(\Omega;\RR^M)$ and let 
$\phi^{(1)}_n:W^{2,p}(\Omega;\RR^M)\to W^{2,p}(\Omega;\RR^M)$
be the operators introduced in Definition~\ref{def:trunc1} which cut off the outer regions of $\Omega$. 
The $\phi^{(1)}_n$ form a family of truncation operators on $X$ 
due to Proposition~\ref{prop:codomain}. Here, note that $\phi^{(1)}_n$ maps the closed subspace $X=W_0^{1,p}(\Omega;\RR^M)\cap W^{2,p}(\Omega;\RR^M)\subset W^{2,p}(\Omega;\RR^M)$ onto itself. Moreover, let $\psi_n:L^p(\Omega;\RR^M)\to L^p(\Omega;\RR^M)$ denote the same operators, now considered as an endomorphism of $L^p(\Omega;\RR^M)$. The family $\psi_n$ satisfies \eqref{copsi1}, and in place of \eqref{copsi2}, we even have the stronger property 
\begin{align*}
	\psi_n(h)\to h\quad\text{in $L^p(\Omega;\RR^M)$, for every fixed $h\in L^p(\Omega;\RR^M)$}.
\end{align*}
Due to Proposition~\ref{prop:2Fcont}, both
\eqref{F0} and \eqref{F2} hold, the latter with $F_1:=F$ and $F_2:=0$.
Last but not least, \eqref{F1} is also valid since the expressions on the left hand side 
of \eqref{compat1} and \eqref{compat2}, respectively, both are zero for every $n\geq m+1$. 
Hence Theorem~\ref{thm:abstractprop} is applicable with 
$\phi_n=\phi_n^{(1)}$ and $\psi_n=\psi_n^{(1)}$. 
To show that $F$ is proper, fix a bounded sequence $(u_n)\subset X$
such that $F(u_n)\to G$ in $L^p(\Omega;\RR^M)$ for a suitable limit $G$. Due to \eqref{phi3},
there is a subsequence $u_{k(n)}$ of $u_n$ satisfying \eqref{uknequiint},
whence Theorem~\ref{thm:abstractprop} yields that 
\begin{alignat*}{2}
&F(U_n^0)\to G,~~&&\text{where}~~U_n^0:=\phi^{(1)}_n(u_{k(n)}),~~\text{and}\\
&F(U_n^1)\to F(0),~~&&\text{where}~~U_n^1:=(I-\phi^{(1)}_n)(u_{k(n)}).
\end{alignat*}
Together with Proposition~\ref{prop:codomain}, this implies that $U_n^0$ and $U_n^1$ are bounded sequences in $X$ with the additional properties stated in (a) and (b), respectively. Thus by assumption,
both $U_n^0$ and $U_n^1$ converge in $X$, at least for a suitable subsequence (not relabeled), whence
the corresponding subsequence of $u_{k(n)}=U_n^0+U_n^1$ converges as well.
\end{proof} 
For the proof of Corollary~\ref{cor:2Fprop}, we need the following auxiliary result based on Lemma~\ref{lem:2Aapriori}:
\begin{prop}\label{prop:2Alocconv}
Let $p>N$ and assume that the domain $\Omega\subset\RR^N$ satisfies \eqref{C2Omega}. Moreover, assume that \eqref{2Aelliptic}, \eqref{2Acont}, \eqref{2b0} and \eqref{2bgrowth} are valid, and let 
$(u_n)\subset W^{2,p}(\Omega;\RR^M)\cap W^{1,p}_0(\Omega;\RR^M)$ be a bounded sequence such that $F(u_n)$ converges in $L^p(\Omega_{k};\RR^M)$ for every $k\in \NN$. 
Then $u_n$ has a subsequence (independent of $k$) which converges in 
$W^{2,p}(\Omega_{k-1};\RR^M)$ for every $k\geq 2$.
\end{prop}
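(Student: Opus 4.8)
The plan is to upgrade local $C^1$-convergence of a subsequence — obtained from the compact embedding $W^{2,p}(\Omega_k;\RR^M)\hookrightarrow\hookrightarrow C^1(\overline{\Omega}_k;\RR^M)$, which is available because $p>N$ and each $\Omega_k$ is a bounded $C^2$-domain — to local $W^{2,p}$-convergence by means of the a priori estimate in Lemma~\ref{lem:2Aapriori}, applied after multiplying by a cutoff that restores homogeneous Dirichlet data. Since $(u_n)$ is bounded in $W^{2,p}(\Omega_k;\RR^M)$ for each $k$, I would first extract, successively over $k$ and then diagonally, a subsequence (not relabeled) and a function $u$ with $u_n\to u$ in $C^1(\overline{\Omega}_k;\RR^M)$ for every $k$; in particular $u\in C^1(\overline{\Omega}_k;\RR^M)$. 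Fix $k\geq 2$ and pick a radial $\zeta\in C_0^\infty(\RR^N)$ with $\zeta\equiv 1$ on $B_{k-1}(0)$ and $\supp\zeta\subset\overline{B_{k-\frac{1}{2}}(0)}$. By \eqref{C2Omega} one has $\Omega_{k-1}\subset B_{k-1}(0)\subset\Omega_k$ and $\partial\Omega_k\cap\supp\zeta\subset\partial\Omega_k\cap\overline{B_{k-\frac{1}{2}}(0)}\subset\partial\Omega$, so $w_{n,m}:=\zeta(u_n-u_m)$ lies in $W^{2,p}(\Omega_k;\RR^M)\cap W_0^{1,p}(\Omega_k;\RR^M)$ and agrees with $u_n-u_m$ on $\Omega_{k-1}$. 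Hence it suffices to prove $\|w_{n,m}\|_{W^{2,p}(\Omega_k;\RR^M)}\to 0$ as $n,m\to\infty$.

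Applying Lemma~\ref{lem:2Aapriori} with $v:=u$ and $w:=w_{n,m}$ gives $\|w_{n,m}\|_{W^{2,p}(\Omega_k)}\leq C(\|\tilde{A}(u)w_{n,m}\|_{L^p(\Omega_k)}+\|w_{n,m}\|_{L^p(\Omega_k)})$ with a constant depending only on $u$, $\Omega_k$ and the $A_{\alpha\beta}$. The second summand tends to $0$ because $u_n\to u$ in $C^1(\overline{\Omega}_k)\subset L^p(\Omega_k)$. For the first, I would expand $\partial^2_{\alpha\beta}w_{n,m}$ by the Leibniz rule: the terms carrying a derivative of $\zeta$ are bounded functions (recall $A_{\alpha\beta}(\cdot,u,\nabla u)\in L^\infty(\Omega_k)$ by \eqref{2Agrowth}) times $\nabla(u_n-u_m)$ or $(u_n-u_m)$, hence tend to $0$ in $L^p(\Omega_k)$ by the $C^1$-convergence, leaving $\zeta\sum_{\alpha,\beta}A_{\alpha\beta}(\cdot,u,\nabla u)\,\partial^2_{\alpha\beta}(u_n-u_m)$. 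Using the definition \eqref{2Fdef} of $F$ to write $\sum_{\alpha,\beta}A_{\alpha\beta}(\cdot,u_n,\nabla u_n)\partial^2_{\alpha\beta}u_n=b(\cdot,u_n,\nabla u_n)-F(u_n)$, I would rewrite $\sum_{\alpha,\beta}A_{\alpha\beta}(\cdot,u,\nabla u)\partial^2_{\alpha\beta}(u_n-u_m)$ as the sum of $\sum_{\alpha,\beta}[A_{\alpha\beta}(\cdot,u,\nabla u)-A_{\alpha\beta}(\cdot,u_n,\nabla u_n)]\partial^2_{\alpha\beta}u_n$, the corresponding term with $m$ in place of $n$, $b(\cdot,u_n,\nabla u_n)-b(\cdot,u_m,\nabla u_m)$, and $-(F(u_n)-F(u_m))$.

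It then remains to check that each of these tends to $0$ in $L^p(\Omega_k)$. For the first two, $\|A_{\alpha\beta}(\cdot,u_n,\nabla u_n)-A_{\alpha\beta}(\cdot,u,\nabla u)\|_{L^\infty(\Omega_k)}\to 0$ by the equicontinuity \eqref{2Aequi} of the bundle maps together with the uniform convergence $u_n\to u$, $\nabla u_n\to\nabla u$ on $\overline{\Omega}_k$ (the uniform-equicontinuity argument from the proof of Proposition~\ref{prop:2Fcont}), while $\|\partial^2_{\alpha\beta}u_n\|_{L^p(\Omega_k)}$ stays bounded. The term $b(\cdot,u_n,\nabla u_n)-b(\cdot,u_m,\nabla u_m)$ tends to $0$ in $L^p(\Omega_k)$ by continuity of the Nemytskii operator $\tilde{b}$, arguing as in Proposition~\ref{prop:2Fcont} on the bounded domain $\Omega_k$, since $(u_n,\nabla u_n)$ converges in $L^p(\Omega_k)\cap L^\infty(\Omega_k)$. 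Finally $\|F(u_n)-F(u_m)\|_{L^p(\Omega_k)}\to 0$ is the hypothesis. Hence $\|\tilde{A}(u)w_{n,m}\|_{L^p(\Omega_k)}\to 0$, so $(u_n)$ is Cauchy, hence convergent, in $W^{2,p}(\Omega_{k-1};\RR^M)$; as $k\geq 2$ was arbitrary and the diagonal subsequence does not depend on $k$, this proves the proposition.

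The hard part will be the boundary mismatch: Lemma~\ref{lem:2Aapriori} requires $w$ to vanish on all of $\partial\Omega_k$, whereas $u_n-u_m$ only vanishes on $\partial\Omega$. The cutoff $\zeta$ cures this at the price of passing to $\Omega_{k-1}$ and producing the lower-order Leibniz terms above; the more delicate point is that the a priori estimate only allows a single frozen coefficient field $\tilde{A}(v)$, so one is forced to freeze $v=u$ and then absorb the difference $\tilde{A}(u_n)-\tilde{A}(u)$ as an error term — which is exactly where \eqref{2Aequi} together with the $C^1$-convergence of $(u_n)$ is indispensable.
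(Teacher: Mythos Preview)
Your argument is correct and follows essentially the same strategy as the paper's proof: a cutoff to restore Dirichlet data on $\partial\Omega_k$, the a priori estimate of Lemma~\ref{lem:2Aapriori} with frozen coefficients $v=u$, and the equicontinuity of the bundle maps to control the freezing error. The only differences are cosmetic --- you work with the Cauchy formulation $u_n-u_m$ where the paper compares $u_n$ to the weak $W^{2,p}$-limit $u$ --- though note that the inclusion $B_{k-1}(0)\subset\Omega_k$ you assert is not guaranteed by \eqref{C2Omega} and is not needed anyway (only $\Omega_{k-1}\subset B_{k-1}(0)$ is used, and that does follow).
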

\begin{proof}
It is enough to show that for every (fixed) $k\geq 2$, any subsequence $u_n^{(k-1)}$ of $u_n$ 
has another subsequence $u^{(k)}_n$ which converges in $W^{2,p}(\Omega_{k-1};\RR^M)$, where $u_n^{(1)}:=u_n$. 
A suitable diagonal sequence $u^{(n)}_{m(n)}$ then has the asserted property. 
We omit the superscript in the following. Fix $k\geq 2$ and let $u$ denote the weak limit of $u_n$ in $W^{2,p}(\Omega;\RR^M)\cap W^{1,p}_0(\Omega;\RR^M)$ (if necessary, pass to a suitable subsequence first). 
We choose a function $\eta\in C_0^\infty(\RR^N)$ such that $\eta=1$ on $\overline{B}_{k-1}(0)$ and $\eta=0$ on $\RR^N\setminus \overline{B}_{k-\frac{3}{4}}(0)$. 
Using \eqref{C2Omega}, we infer that $\eta\cdot u_n\in W^{1,p}_0(\Omega_k;\RR^M)$ for every $n$. Moreover,
$\eta u_n\rightharpoonup \eta u$ weakly in $W^{2,p}(\Omega_k;\RR^M)\cap W^{1,p}_0(\Omega_k;\RR^M)$.
By compact embedding we may assume that $u_n\to u$ in $C^1(\overline{\Omega}_k;\RR^M)\cap L^p(\Omega_k;\RR^M)$
and $\eta(u_n-u)\to 0$ in $L^p(\Omega_k;\RR^M)$ with respect to the strong topologies. 
Due to Lemma~\ref{lem:2Aapriori} (with $w:=\eta(u_n-u)$ and $v:=u$), 
it now suffices to show that $\tilde{A}(u)[\eta(u_n-u)]\to 0$ strongly in $L^p(\Omega_k;\RR^M)$. 
Since $u_n\to u$ in $C^1(\overline{\Omega}_k;\RR^M)$, we have that
$A_{\alpha\beta}(\cdot,u_n,\nabla u_n)\to A_{\alpha\beta}(\cdot,u,\nabla u)$ in 
$L^\infty(\Omega_k;\RR^{M\times M})$, whence
$(\tilde{A}(u_n)-\tilde{A}(u))u_n\to 0$ in $L^p(\Omega_k;\RR^M)$. Moreover,
$\tilde{b}(u_n,\nabla u_n)=b(\cdot,u_n,\nabla u_n)$ has a strong limit in $L^p(\Omega_k;\RR^M)$ 
by continuity of the Nemytskii operator $\tilde{b}$.
Since $\tilde{A}(u)u_n=F(u_n)-(\tilde{A}(u_n)-\tilde{A}(u))
u_n-\tilde{b}(u_n,\nabla u_n)$, we infer that $\tilde{A}(u)u_n$ converges strongly in $L^p(\Omega_k;\RR^M)$, 
which in turn implies that the same holds for $\eta\tilde{A}(u)[u_n-u]$. A direct calculation shows that 
$\eta\tilde{A}(u)[u_n-u]-\tilde{A}(u)[\eta(u_n-u)]$ only contains derivatives up to first order of $u_n$, 
which entails that this difference converges strongly in $L^p(\Omega_k;\RR^M)$, too.
Altogether, we infer that $\tilde{A}(u)[\eta(u_n-u)]$ has a strong limit in $L^p(\Omega_k;\RR^M)$. 
This limit is zero since $\eta(u_n-u)\rightharpoonup 0$ weakly in $W^{2,p}(\Omega_k;\RR^M)$.
\end{proof}
\begin{proof}[Proof of Corollary~\ref{cor:2Fprop}]
Obviously, (i) implies (iii), and (iii) implies (ii) as mentioned above. 
It remains to show that (ii) entails (i). By assumption, it is enough to verify statement (ii) of Theorem~\ref{thm:2Fprop} in case (a). Consider a bounded sequence such that $F(u_n)$ converges in $L^p(\Omega)$ and such that $u_n$ is tight in $W^{2,p}(\Omega;\RR^M)$. 
We have to show that $u_n$ has a subsequence which converges in $W^{2,p}(\Omega;\RR^M)$. 
Due to Proposition~\ref{prop:2Alocconv}, we already know that
a suitable subsequence (not relabeled) converges in $W^{2,p}(\Omega_k;\RR^M)$ for every $k\in \NN$.
Let $\eps>0$ and let $u$ denote the weak limit 
of $u_n$ in $W^{2,p}(\Omega;\RR^M)\cap W_0^{2,p}(\Omega;\RR^M)$ 
and thus also the strong limit in $W^{2,p}(\Omega_k;\RR^M)$ for every $k\in \NN$. 
Since $u_n-u$ is tight in $W^{2,p}(\Omega;\RR^M)$, we have that
\begin{align*}
	\norm{u_n-u}^p_{W^{2,p}(\Omega;\RR^M)}&
	= \norm{u_n-u}^p_{W^{2,p}(\Omega\setminus \Omega_k;\RR^M)}
	+ \norm{u_n-u}^p_{W^{2,p}(\Omega_k;\RR^M)}\\
	& < \frac{\eps}{2} + \norm{u_n-u}^p_{W^{2,p}(\Omega_k;\RR^M)}
\end{align*}
uniformly in $n\in \NN$, for every $k\geq k_0$ if $k_0=k_0(\eps)$ is sufficiently large. 
With $k:=k_0$ fixed, we also get $\norm{u_n-u}^p_{W^{2,p}(\Omega_k;\RR^M)}<\frac{\eps}{2}$
for every $n\geq n_0=n_0(\eps,k_0)$, due to the convergence in $W^{2,p}(\Omega_k;\RR^M)$. Hence 
$\norm{u_n-u}^p_{W^{2,p}(\Omega;\RR^M)}
< \frac{\eps}{2} + \frac{\eps}{2}=\eps$
whenever $n\geq n_0$.
\end{proof}

\appendix
\section{Truncation of gradients on unbounded domains}
%
The results presented here form the basis for the definition of
the truncation operators $\phi_n^{(2)}$ and $\phi_n^{(3)}$ in Section~\ref{secCOOpExamples}.
So-called maximal operators play an important role:
\begin{defn}[Maximal operator]\label{def:maxop}
For $v\in L^1_{\loc}(\RR^N)$ and $x\in \RR^N$ let 
\begin{align*}
 (\cM(v))(x):=\sup_{r\in (0,\infty)}\frac{1}{\abs{B_r(x)}}\int_{B_r(x)}\abs{v}\,dy.
 \end{align*}
\end{defn}
We recall a basic property of $\cal M$:
\begin{lem}[e.g., \cite{Stei70B}]\label{lem:MaxOpcont}
For $p\in(1, \infty]$, $\cM$ maps $L^p(\RR^N)$ into $L^p(\RR^N)$, and
\begin{align} \label{Mopbounded}
	\norm{\cM(v)}_{L^p(\RR^N)}\leq C \norm{v}_{L^p(\RR^N)}
\end{align}
for every $v\in L^p(\RR^N)$, where $C=C(N,p)$ is a constant.
\end{lem}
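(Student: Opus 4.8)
The plan is to follow the classical route: dispose of the endpoint $p=\infty$ trivially, establish a weak-type $(1,1)$ bound by a Vitali covering argument, and then interpolate by hand. For $p=\infty$ one simply observes that each average $\tfrac{1}{\abs{B_r(x)}}\int_{B_r(x)}\abs{v}\,dy$ is at most $\norm{v}_{L^\infty(\RR^N)}$, so passing to the supremum over $r>0$ gives $\norm{\cM(v)}_{L^\infty(\RR^N)}\leq\norm{v}_{L^\infty(\RR^N)}$, which is \eqref{Mopbounded} with $C=1$.

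The core step is the weak-$(1,1)$ inequality $\abs{\{x\in\RR^N\mid \cM(v)(x)>\lambda\}}\leq 5^N\lambda^{-1}\norm{v}_{L^1(\RR^N)}$ for $v\in L^1(\RR^N)$ and $\lambda>0$. To prove it I would fix a compact subset $K$ of the superlevel set; for each $x\in K$ there is a ball $B_{r(x)}(x)$ with $\int_{B_{r(x)}(x)}\abs{v}\,dy>\lambda\abs{B_{r(x)}(x)}$, and by compactness finitely many of these cover $K$. The Vitali covering lemma then yields a pairwise disjoint subfamily $B_{r_1}(x_1),\dots,B_{r_m}(x_m)$ whose fivefold dilates still cover $K$, whence
\[
	\abs{K}\leq 5^N\sum_{i=1}^m\abs{B_{r_i}(x_i)}<\frac{5^N}{\lambda}\sum_{i=1}^m\int_{B_{r_i}(x_i)}\abs{v}\,dy\leq\frac{5^N}{\lambda}\norm{v}_{L^1(\RR^N)},
\]
using disjointness in the last inequality. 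Inner regularity of Lebesgue measure finishes this step.

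Finally I would interpolate (the Marcinkiewicz argument, written out concretely). Given $1<p<\infty$ and $v\in L^p(\RR^N)$, for each $\lambda>0$ split $v=v_1+v_2$ with $v_1:=v\,\chi_{\{\abs{v}>\lambda/2\}}\in L^1(\RR^N)$ and $\norm{v_2}_{L^\infty(\RR^N)}\leq\lambda/2$. Sublinearity of $\cM$ gives $\cM(v)\leq\cM(v_1)+\lambda/2$, hence $\{\cM(v)>\lambda\}\subset\{\cM(v_1)>\lambda/2\}$, and the weak-$(1,1)$ bound yields $\abs{\{\cM(v)>\lambda\}}\leq 2\cdot 5^N\lambda^{-1}\int_{\{\abs{v}>\lambda/2\}}\abs{v}\,dy$. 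Substituting this into the layer-cake identity $\norm{\cM(v)}_{L^p(\RR^N)}^p=p\int_0^\infty\lambda^{p-1}\abs{\{\cM(v)>\lambda\}}\,d\lambda$, exchanging the order of integration by Tonelli, and evaluating the inner integral $\int_0^{2\abs{v(y)}}\lambda^{p-2}\,d\lambda$ — which converges precisely because $p>1$ — gives $\norm{\cM(v)}_{L^p(\RR^N)}^p\leq C(N,p)\norm{v}_{L^p(\RR^N)}^p$.

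The main obstacle is the Vitali covering lemma underlying the weak-$(1,1)$ estimate; everything after that is bookkeeping (sublinearity of $\cM$, the distribution-function formula, Tonelli). It is worth recording that the estimate genuinely fails for $p=1$, which is exactly why the hypothesis $p>1$ in the statement cannot be dropped. Alternatively, once the weak-$(1,1)$ and strong-$(\infty,\infty)$ endpoints are in hand one may quote the Marcinkiewicz interpolation theorem as a black box, replacing the third paragraph by a one-line citation.
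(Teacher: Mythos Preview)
Your argument is correct and is precisely the classical proof (trivial $L^\infty$ endpoint, weak-$(1,1)$ via Vitali covering, then Marcinkiewicz interpolation spelled out by hand). The paper does not supply its own proof of this lemma at all; it simply cites Stein's book as a reference, so there is nothing to compare beyond noting that your write-up is essentially the standard argument one finds there.
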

\begin{rem}\label{rem:truncgradp1}
Lemma~\ref{lem:MaxOpcont} does not hold for $p=1$. More precisely, 
$\cM(v)$ is integrable on $\RR^N$ if and only if $v=0$ a.e.. 
Moreover, \eqref{Mopbounded} does not hold for $p=1$ with a uniform constant $C$ even if 
the norm on the left hand side is taken over some bounded domain instead of $\RR^N$.
\end{rem}
In case $\Omega=\RR^N$, a scalar function can be truncated as follows:
\begin{lem}[The case $\Omega=\RR^N$, e.g., \cite{EvGa92B}]
\label{lem:hcutoffrN}
Let $p\in [1,\infty)$ and $u\in W^{1,p}(\RR^N)$. For every $\lam>0$, there exists a function $\bar{u}\in W^{1,p}(\RR^N)\cap W^{1,\infty}(\RR^N)$ such that
\begin{align}
	{}&
	\abs{\bar{u}(x)}+\abs{\nabla \bar{u}(x)}\leq \bar{C} \lam\quad\text{for a.e.}~x\in\RR^N,
	\label{phcutoffrNbound}\\
	{}&
	u(x)=\bar{u}(x)\quad \text{for a.e.}~x\in R^\lam,~~\text{and}
	\label{phcutoffrNsupp}\\
	{}&	
	\abs{\RR^N\setminus R^\lam}\leq \bar{C} \frac{1}{\lam} 
	\int_{\left\{\abs{u}+\abs{\nabla u}>\frac{\lam}{2}\right\}}\abs{u}+\abs{\nabla u}\,dx.
	\label{phcutoffrNR}
\end{align}
Here,
\begin{align*}
	R^\lam:=\left\{x\in \RR^N~\left|~ \cM(\abs{u}+\abs{\nabla u})(x)\leq \lam\right\}\right.,
\end{align*}
and $\bar{C}=\bar{C}(N)\geq 1$ is a constant independent of $u$, $\lam$ and $p$. 
Furthermore,
\begin{align} \label{phcutoffrNbaruMineq}
	\abs{\bar{u}(x)}+\abs{\nabla \bar{u}(x)}
	\leq \bar{C}\min\{\lam,\cM(\abs{u}+\abs{\nabla u})(x)\}~\text{for a.e.~$x\in \RR^N$}.
\end{align}
Both $\overline{u}$ and $R^\lam$ depend on $u$ and $\lam$, but not on $p$.
\end{lem}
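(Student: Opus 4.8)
The plan is to carry out the classical Lipschitz-truncation argument (as in~\cite{EvGa92B}), whose only nonelementary ingredients are the weak-$(1,1)$ estimate for the maximal operator $\cM$ and a pointwise oscillation bound for $u$ in terms of $\cM(\abs{\nabla u})$. First I would record two pointwise facts valid for a.e.\ $x\in\RR^N$, i.e.\ at the Lebesgue points of $u$ and of $\nabla u$: namely $\abs{u(x)}\le\cM(\abs{u})(x)$ and $\abs{\nabla u(x)}\le\cM(\abs{\nabla u})(x)$, whence $\abs{u(x)}+\abs{\nabla u(x)}\le 2\cM(\abs{u}+\abs{\nabla u})(x)$; in particular $\abs{u(x)}+\abs{\nabla u(x)}\le 2\lam$ and $\abs{u(x)}\le\lam$ for a.e.\ $x\in R^\lam$.

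The heart of the matter is the estimate
\begin{align*}
	\abs{u(x)-u(y)}\le C_N\abs{x-y}\big(\cM(\abs{\nabla u})(x)+\cM(\abs{\nabla u})(y)\big),
\end{align*}
valid whenever $x,y$ are Lebesgue points of $u$. I would obtain it in the standard way: express $u(x)$ as the limit of averages of $u$ over balls shrinking to $x$, estimate the difference of averages over two balls with comparable radii by an average of $\abs{\nabla u}$ via Poincar\'e's inequality on balls, and sum the resulting geometric series; do the same at $y$, comparing through a ball of radius $\sim\abs{x-y}$ containing both points. Restricting to $x,y\in R^\lam$, this shows that the precise representative of $u$, restricted to the Lebesgue points lying in $R^\lam$, is bounded by $\lam$ and Lipschitz with constant $2C_N\lam$. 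I then extend this restriction from $R^\lam$ to all of $\RR^N$ by the McShane--Whitney formula (legitimate since $u$ is scalar here), obtaining $\bar u\in W^{1,\infty}(\RR^N)$ with $\norm{\bar u}_{L^\infty(\RR^N)}\le\lam$ and Lipschitz constant at most $2C_N\lam$, hence $\abs{\bar u(x)}+\abs{\nabla\bar u(x)}\le\bar C\lam$ a.e.\ for a dimensional $\bar C\ge 1$ (this is~\eqref{phcutoffrNbound}), while $u=\bar u$ a.e.\ on $R^\lam$ gives~\eqref{phcutoffrNsupp}. That $\bar u\in W^{1,p}(\RR^N)$ then follows from $u\in W^{1,p}$, the identity $\bar u=u$ on $R^\lam$ (so $\nabla\bar u=\nabla u$ a.e.\ there, by the Sobolev lemma quoted in the preliminaries), and the boundedness of $\bar u$ and $\nabla\bar u$ on the finite-measure set $\RR^N\setminus R^\lam$ established below. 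For~\eqref{phcutoffrNbaruMineq} I distinguish two regimes: on $R^\lam$ one has $\abs{\bar u}+\abs{\nabla\bar u}=\abs{u}+\abs{\nabla u}\le 2\cM(\abs{u}+\abs{\nabla u})=2\min\{\lam,\cM(\abs{u}+\abs{\nabla u})\}$, using $\cM(\abs{u}+\abs{\nabla u})\le\lam$ on $R^\lam$; on $\RR^N\setminus R^\lam$ one has $\min\{\lam,\cM(\abs{u}+\abs{\nabla u})\}=\lam$ and $\abs{\bar u}+\abs{\nabla\bar u}\le\bar C\lam$ by~\eqref{phcutoffrNbound}; enlarging $\bar C$ if needed yields the claim a.e.

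Finally, for~\eqref{phcutoffrNR}, set $f:=\abs{u}+\abs{\nabla u}\in L^p(\RR^N)$ and split $f=f_1+f_2$ with $f_1:=f\chi_{\{f\le\lam/2\}}$ and $f_2:=f\chi_{\{f>\lam/2\}}$; here $f_2\in L^1(\RR^N)$, trivially if $p=1$ and because $f\le(2/\lam)^{p-1}f^p$ on $\{f>\lam/2\}$ if $p>1$. Since $\cM f_1\le\lam/2$ everywhere, $\RR^N\setminus R^\lam=\{\cM f>\lam\}\subset\{\cM f_2>\lam/2\}$, and the weak-$(1,1)$ inequality for $\cM$ gives $\abs{\RR^N\setminus R^\lam}\le\frac{C_N'}{\lam}\norm{f_2}_{L^1(\RR^N)}=\frac{C_N'}{\lam}\int_{\{f>\lam/2\}}(\abs{u}+\abs{\nabla u})\,dx$, which is~\eqref{phcutoffrNR} after absorbing constants into $\bar C$. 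Independence of $\bar u$ and $R^\lam$ from $p$ is clear since the construction never refers to $p$. The main obstacle is the oscillation estimate of the second paragraph; the rest is routine bookkeeping with the maximal function, Lebesgue points, and the weak-$(1,1)$ bound.
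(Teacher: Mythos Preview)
Your proposal is correct and is precisely the classical Lipschitz-truncation argument from \cite{EvGa92B} that the paper cites; the paper's own proof is just a reference to Evans--Gariepy together with the observation that \eqref{phcutoffrNbaruMineq} follows from \eqref{phcutoffrNbound}, \eqref{phcutoffrNsupp} and the definition of $R^\lam$, which is exactly your two-regime argument. Your derivation of \eqref{phcutoffrNR} via the splitting $f=f\chi_{\{f\le\lam/2\}}+f\chi_{\{f>\lam/2\}}$ and the weak-$(1,1)$ bound is the sharpened estimate the paper alludes to when it says ``\eqref{phcutoffrNR} is shown along the way in the proof'' in \cite{EvGa92B}.
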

\begin{proof}
See \cite{EvGa92B}, Theorem 3 in Section 6.6.3. The estimate for $\abs{\RR^N\setminus R^\lam}$ stated there is slightly weaker than~\eqref{phcutoffrNR}, but~\eqref{phcutoffrNR} is shown along the way in the proof. 
The last assertion~\eqref{phcutoffrNbaruMineq} is a consequence of \eqref{phcutoffrNbound}, the definition of $\cM$, the definition of $R^\lam$ and \eqref{phcutoffrNsupp}.
\end{proof}
We need an analogous result for $u\in W_0^{1,p}(\Omega)$ with $\Omega\neq \RR^N$. The main difficulty arises from the fact that the support of $\bar{u}$ is possibly larger than that of $u$. In particular, the "bad set" $\RR^N\setminus R^\lam$ is not entirely contained in $\Omega$ if the gradient of $u$ is large very close to the boundary. 
For bounded $\Omega$ 
this problem can be overcome as shown in \cite{DoHuMue00a} (Lemma 4.1); see also 
the appendix of \cite{FrieJaMue02a}. 
The proof therein works for some unbounded domains as well, but not without
imposing restrictions on the global shape of $\Omega$. 
The crucial assumption in \cite{DoHuMue00a} is that for every $x\in \Omega$, $\abs{B_{2r}(x)\setminus \Omega}\geq c r^N$, where $r:=\dist{x}{\RR^N\setminus \Omega}$ and $c>0$ is a constant 
independent of $x$.
For instance,
exterior domains (i.e., domains with bounded complement in $\RR^N$) are not admissible. 
By contrast, our approach below admits arbitrary domains with the following boundary regularity. 
\begin{defn}[Lipschitz domain]\label{def:Lipschitzdom}
We call $\Omega\subset \RR^N$ a \emph{Lipschitz domain}, if it is open and connected and if there exist constants $\varrho>0$, $L\geq 1$ and $K_0\in\NN$, a countable (or finite) set $Y\subset \partial\Omega$, 
a family of radii $r(y)>0$ and a family of bijective maps $\theta_y: \RR^N \supset B_{2r(y)}(y)\to B_{2r(y)}(0)\subset \RR^N$ ($y\in Y$) such that 
\begin{align*}
  & \text{both $\theta_y$ and $\theta_y^{-1}$ are Lipschitz continuous with Lipschitz constant $L$,}\\
	& \theta_y(\Omega \cap B_{2r(y)}(y))=\{x=(x_1,\ldots,x_N)\in B_{2r}(0)\mid x_1>0\},\\
	& \overline{\Omega}\cap B_{\varrho}(z)\subset \bigcup{}_{y\in Y} B_{r(y)}(y)~~\text{for each $z\in \partial \Omega$, and}\\
	& \text{for every $x\in \RR^N$, the set $\{y\in Y \mid x\in B_{r(y)}(y)\}$ has at most $K_0$ elements.}
\end{align*}
\end{defn}
Note that $\Omega$ may be unbounded, but $\varrho$ and $L$ are required to be uniform. 
For such domains, Lemma~\ref{lem:hcutoffrN} generalizes to
\begin{thm}[Truncation of gradients of scalar functions]\label{thm:hcutoffOmega}
Let $\Omega\subset \RR^N$ be a Lipschitz domain, $p\in [1,\infty)$ 
and $u\in W_0^{1,p}(\Omega)$. 
For every $\lam >0$, there exists a function $\phi_\lam(u)\in W_0^{1,p}(\Omega)\cap W^{1,\infty}(\Omega)$ and a measurable set $\hat{R}^\lam=\hat{R}^\lam(u)\subset \Omega$ with the 
following properties:
\begin{align}
	{}&
	\abs{\nabla \phi_\lam(u)(x)}+\abs{\phi_\lam(u)(x)}\leq C_0 \lam\quad\text{for a.e.}~x\in\Omega,
	\label{phcutoffomegaubound}\\
	{}&
	u(x)=\phi_\lam(u)(x)\quad \text{for a.e.}~x\in \hat{R}^\lam,
	\label{phcutoffomegausupp}\\
	{}&	
	\abs{\Omega \setminus \hat{R}^\lam}\leq C_1 \abs{\RR^N\setminus R^\lam}
	\leq C_2 \frac{1}{\lam} \int_{\left\{\abs{u}+\abs{\nabla u}>\frac{\lam}{2}\right\}}
	\abs{u}+\abs{\nabla u}\,dx
	\label{phcutoffomegaRhat0}
\end{align}
and 
\begin{equation}
	{}\begin{aligned}
	&\abs{\left\{x\in \Omega\mid\abs{\phi_\lam(u)(x)}+\abs{\nabla \phi_\lam(u)(x)}>\delta\right\}}\\
	&~ \leq C_1 \abs{\left\{x\in \RR^N\mid C_0
	\min\{\lam,\cM(\abs{u}+\abs{\nabla u})(x)\}>\delta\right\}}
	~ \text{for every $\delta\geq 0$}.
	\end{aligned}\label{phcutoffomegauhatMuineq}
\end{equation}
Here, $C_i=C_i(N,\varrho,L,K_0)\geq 1$ ($i=0,1,2$) are constants independent of $u$, $\lam$ and $p$
(where $\varrho,L,K_0$ are the constants of the Lipschitz domain $\Omega$), and
\begin{align*}
	R^\lam=R^\lam(u):=\left\{x\in \RR^N~\left|~ 
	\cM(\abs{u}+\abs{\nabla u})(x)\leq \lam\right\}\right.
\end{align*}
(for the application of $\cM$ extend $u$ with zero outside of $\Omega$). 
Both $\phi_\lam(u)$ and $\hat{R}^\lam(u)$ are independent of $p$. In particular, if $u$
also is an element of $W_0^{1,q}(\Omega)$ for some $q\in [1,\infty)$, then so is $\phi_\lam(u)$.
\end{thm}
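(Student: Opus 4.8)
The plan is to reduce everything to the whole-space truncation of Lemma~\ref{lem:hcutoffrN} and then to repair, via the Lipschitz charts of Definition~\ref{def:Lipschitzdom}, the one thing that goes wrong. Applied to the zero extension of $u$, Lemma~\ref{lem:hcutoffrN} already produces a function $\bar u\in W^{1,p}(\RR^N)\cap W^{1,\infty}(\RR^N)$ together with \emph{all} the quantitative bounds we need, phrased through $\cM(\abs{u}+\abs{\nabla u})$ and $R^\lam$; its only defect is that $\bar u$ need not vanish on $\partial\Omega$, so $\bar u|_\Omega$ need not lie in $W_0^{1,p}(\Omega)$. Indeed, near boundary points at which $\abs{\nabla u}$ is large the bad set $\RR^N\setminus R^\lam$ protrudes out of $\Omega$ (and, since the relevant averaging radius is not uniformly small, it may protrude far), and on that protrusion $\bar u$ interpolates uncontrollably between the vanishing exterior values and the interior ones. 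Hence the entire work is confined to a neighbourhood of $\partial\Omega$, and the point of the hypotheses in Definition~\ref{def:Lipschitzdom} is precisely that the constants $L$, $\varrho$ and $K_0$ are \emph{uniform}, so that no quantity will end up depending on the individual radii $r(y)$ — this is what lets the argument cover exterior domains and other unbounded domains excluded by the measure-density hypothesis of \cite{DoHuMue00a}.

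First I would fix, once and for all, a locally finite cover of $\overline\Omega$ consisting of the boundary balls $B_{r(y)}(y)$ ($y\in Y$) together with one interior set $\Omega_0:=\{x\in\Omega\mid\operatorname{dist}(x;\partial\Omega)>\varrho/4\}$, with overlap bounded in terms of $N$ and $K_0$ only, and a subordinate partition of unity $(\zeta_i)$ with $\norm{\nabla\zeta_i}_{L^\infty}\le C/\varrho$. On the interior patch I keep $\bar u$; since $\zeta_0$ is supported away from $\partial\Omega$, the term $\zeta_0\bar u$ automatically vanishes near the boundary. On a boundary ball I flatten the boundary via $\theta_y$ (a bi-Lipschitz change of variables, under which Sobolev functions and their weak gradients transform in the usual way), so that $v:=u\circ\theta_y^{-1}$ lives on a half-ball and has zero trace on the flat part because $u\in W_0^{1,p}(\Omega)$; I extend $v$ to the full ball by the \emph{odd} reflection $x_1\mapsto-x_1$, apply the localized whole-space truncation of Lemma~\ref{lem:hcutoffrN} to this reflected function, and then antisymmetrize the resulting McShane-type truncation in $x_1$ — the good set $R^\lam$ of a function that is odd in $x_1$ is symmetric, and replacing a truncation $w(x)$ by $\tfrac12(w(x)-w(\sigma x))$, with $\sigma$ the reflection, does not increase its Lipschitz constant, keeps it equal to $v$ on the symmetric good set, and forces it to vanish on $\{x_1=0\}$. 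Pulling back by $\theta_y$, the local truncation $T_y$ on patch $y$ vanishes on $\partial\Omega\cap B_{2r(y)}(y)$ and is Lipschitz with constant $\le C(N,L)\lam$. I then set $\phi_\lam(u):=\sum_i\zeta_i\,T_i$, where $T_i$ is the local truncation on patch $i$ ($T_0=\bar u$), and let $\hat R^\lam$ be the set of points of $\Omega$ at which \emph{every} $T_i$ coincides with $u$. By construction $\phi_\lam(u)\in W_0^{1,p}(\Omega)\cap W^{1,\infty}(\Omega)$, since each $\zeta_iT_i$ lies in $W_0^{1,p}$ of its chart domain, and nothing in the construction refers to $p$.

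The four assertions then follow by routine bookkeeping. For \eqref{phcutoffomegaubound} one writes, on each overlap, $\nabla\phi_\lam(u)=\sum_i\zeta_i\nabla T_i+\sum_i(\nabla\zeta_i)(T_i-T_{i_0})$ using $\sum_i\zeta_i\equiv1$, and each $T_i$ is bounded by $C_0\lam$ and $C(N,L)\lam$-Lipschitz while $\abs{\nabla\zeta_i}\le C/\varrho$, whence the bound with $C_0=C_0(N,\varrho,L,K_0)$. Assertion \eqref{phcutoffomegausupp} is immediate from the definition of $\hat R^\lam$, on which every $T_i$, and hence their convex combination, equals $u$. For \eqref{phcutoffomegaRhat0}: outside $\hat R^\lam$ some $T_i$ differs from $u$, which on the interior patch happens only on $(\RR^N\setminus R^\lam)\cap\Omega$ and on a boundary patch only on the $\theta_y$-preimage of the bad set of the reflected function; the odd reflection at worst doubles that set inside each chart, the charts are bi-Lipschitz with constant $L$ and distort Lebesgue measure by a factor $\le L^N$, and they overlap at most $K_0$ times, giving $\abs{\Omega\setminus\hat R^\lam}\le C_1\abs{\RR^N\setminus R^\lam}$ with $C_1=C_1(N,\varrho,L,K_0)$; the remaining inequality in \eqref{phcutoffomegaRhat0} is exactly the one supplied by Lemma~\ref{lem:hcutoffrN}, after absorbing harmless constant factors in the maximal-function level into $C_2$. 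Finally \eqref{phcutoffomegauhatMuineq} is obtained by transporting the pointwise bound \eqref{phcutoffrNbaruMineq} through the charts: on patch $i$ one has $\abs{\phi_\lam(u)(x)}+\abs{\nabla\phi_\lam(u)(x)}\le C_0\min\{\lam,\cM(\abs{u}+\abs{\nabla u})(x')\}$ for a point $x'$ that is a bi-Lipschitz image of $x$, and combining this over the boundedly many overlapping patches with the $L^N$-distortion of measure yields the displayed level-set estimate. The last sentence of the theorem is clear: $\phi_\lam(u)-u$ is bounded and supported in the finite-measure set $\Omega\setminus\hat R^\lam$, and each $\zeta_iT_i$ lies in $W_0^{1,q}$ of its chart domain whenever $u\in W_0^{1,q}(\Omega)$, so $\phi_\lam(u)\in W_0^{1,q}(\Omega)$ as well.

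The main obstacle is the boundary-flattening step, and specifically the compatibility of the odd reflection with the maximal-function/McShane construction behind Lemma~\ref{lem:hcutoffrN}: one must check that $R^\lam$ is symmetric for odd data, that the truncation can be chosen odd without inflating its Lipschitz bound, and that every constant produced by $\theta_y$ — the distortion of $\cM$, of Lebesgue measure, and of difference quotients — is controlled solely by the uniform data $N,\varrho,L,K_0$ and never by the individual $r(y)$. Everything else is the standard partition-of-unity gluing.
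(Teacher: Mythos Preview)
Your approach is genuinely different from the paper's, and as written it has a real gap.

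The paper does \emph{not} localize or glue. It applies Lemma~\ref{lem:hcutoffrN} once to the zero extension of $u$, obtaining $\bar u$, and then sets
\[
\phi_\lam(u)(x):=\operatorname{med}\bigl(-h_\lam(x),\,\bar u(x),\,h_\lam(x)\bigr),
\qquad
h_\lam(x):=\bar C\,\lam\,L^{2}\max\{1,\varrho^{-1}\}\,\operatorname{dist}\!\bigl(x;\RR^N\setminus\Omega\bigr).
\]
The cone $h_\lam$ is globally Lipschitz with constant depending only on $(N,L,\varrho)$ and vanishes on $\partial\Omega$, so $\phi_\lam(u)\in W_0^{1,p}(\Omega)\cap W^{1,\infty}(\Omega)$ and \eqref{phcutoffomegaubound} is immediate. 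One defines $\hat R^\lam:=R^\lam\cap\{|\bar u|\le h_\lam\}$, and the only nontrivial point is the inequality $|\Omega\cap\{|\bar u|>h_\lam\}|\le C_1\,|(\RR^N\setminus\Omega)\setminus R^\lam|$. This is proved by a one-line reflection argument in each chart: if $x\in\Omega$ is close to $\partial\Omega$ and $|\bar u(x)|>h_\lam(x)$, then since $\bar u$ is $\bar C\lam$-Lipschitz and $h_\lam(x)$ dominates $\bar C\lam$ times (twice) the distance to the boundary, the reflected point $\rho x$ satisfies $\bar u(\rho x)\neq 0=u(\rho x)$, hence $\rho x\notin R^\lam$. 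The charts $\theta_y$ enter only to carry out this reflection and contribute a measure-distortion factor $L^{2N}$; summing over $y\in Y$ costs the overlap constant $K_0$. Nothing is ever differentiated at chart scale, so the individual radii $r(y)$ never enter the constants.

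That last point is exactly where your construction breaks. Definition~\ref{def:Lipschitzdom} imposes \emph{no} lower bound on the $r(y)$; only $\varrho$, $L$, $K_0$ are uniform. A partition of unity subordinate to $\{B_{r(y)}(y)\}_{y\in Y}$ has $\|\nabla\zeta_y\|_{L^\infty}$ of order $1/r(y)$, not $1/\varrho$, and you cannot replace this cover by one at scale $\varrho$ while retaining a single flattening chart on each patch: the covering hypothesis only guarantees $\overline\Omega\cap B_\varrho(z)\subset\bigcup_y B_{r(y)}(y)$, which may require many small charts. In your gluing identity the cross term $\sum_i(\nabla\zeta_i)(T_i-T_{i_0})$ is therefore bounded only by $CK_0\lam/\inf_y r(y)$, which is not controlled by $(N,\varrho,L,K_0)$; \eqref{phcutoffomegaubound} with the stated constant fails. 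A second, lesser issue you pass over: to invoke Lemma~\ref{lem:hcutoffrN} on a chart you must extend the odd-reflected function from the ball $B_{2r(y)}(0)$ to all of $\RR^N$, and that extension adds bad set near $\partial B_{2r(y)}(0)$ which has to be related back to $\RR^N\setminus R^\lam$ through the bi-Lipschitz distortion of $\cM$. This is fixable with care, but the $r(y)$-dependence of the partition of unity is not --- it is precisely what the paper's global cone cutoff is designed to avoid.
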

Several consequences of \eqref{phcutoffomegaRhat0} are also useful:
\begin{cor}\label{cor:hcoOmega}
In the situation of Theorem~\ref{thm:hcutoffOmega}, we have in addition that
\begin{equation}
\begin{aligned}
	\abs{\Omega \setminus \hat{R}^\lam(u)}
	&~\leq~	(C_3)^r \frac{1}{\lam^r} \int_{\left\{\abs{u}+\abs{\nabla u}>\frac{\lam}{2}\right\}}
	\abs{u}^r+\abs{\nabla u}^r\,dx
\end{aligned}\label{phcutoffomegaRhat}
\end{equation}
for every $\lam>0$ and $r\in [1,\infty)$, with 
$C_3:=4 C_2$ (not necessarily optimal). Moreover,
\begin{equation}
\begin{aligned}
  \abs{\Omega \setminus \hat{R}^\lam(u)}
  ~\leq~	& (C_3)^r \frac{1}{\lam^{r}} 
  \int_{\left\{\frac{1}{2}\lam^{\beta}\geq \abs{u}+\abs{\nabla u}> \frac{\lam}{2}\right\}}
  \abs{u}^r+\abs{\nabla u}^r\,dx\\
	& + (C_3)^r \frac{1}{\lam^{1+\beta(r-1)}}
	\int_{\left\{\abs{u}+\abs{\nabla u}>\frac{1}{2}\lam^{\beta}\right\}}
	\abs{u}^r+\abs{\nabla u}^r\,dx
\end{aligned}\label{hcoOsmalllam}
\end{equation} 
for every $\lam\in (0,1]$, $r\in [1,\infty)$ and $\beta\in (0,1]$. 
In particular,
\begin{equation}\label{hcoOlim}
	\lam^p\abs{\Omega \setminus \hat{R}^\lam(u)}\to 0~~\text{both as $\lam\to\infty$ and as $\lam\to 0$}
\end{equation}
for arbitrary but fixed $u$.
\end{cor}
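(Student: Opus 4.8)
The plan is to derive all of \eqref{phcutoffomegaRhat}, \eqref{hcoOsmalllam} and \eqref{hcoOlim} from the single inequality \eqref{phcutoffomegaRhat0} of Theorem~\ref{thm:hcutoffOmega} by elementary manipulations of the superlevel sets of $w:=\abs{u}+\abs{\nabla u}$, followed by a dominated--convergence argument for the two limits in \eqref{hcoOlim}. Throughout I write $g_r:=\abs{u}^r+\abs{\nabla u}^r$, which lies in $L^1(\Omega)$ as soon as $r\le p$ (in particular for $r=p$).

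For \eqref{phcutoffomegaRhat} I would start from the last bound in \eqref{phcutoffomegaRhat0},
\[
	\abs{\Omega\setminus\hat{R}^\lam}\le C_2\,\frac1\lam\int_{\{w>\lam/2\}}w\,dx ,
\]
and use, on the set $\{w>\lam/2\}$, the two pointwise inequalities $w=w\cdot 1\le w\,(2w/\lam)^{r-1}=2^{r-1}\lam^{1-r}w^r$ (valid since $2w/\lam>1$ and $r-1\ge0$) and $w^r=(\abs{u}+\abs{\nabla u})^r\le 2^{r-1}(\abs{u}^r+\abs{\nabla u}^r)$ (convexity of $t\mapsto t^r$). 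This yields
\[
	\abs{\Omega\setminus\hat{R}^\lam}\le C_2\,4^{r-1}\,\frac1{\lam^r}\int_{\{w>\lam/2\}}g_r\,dx ,
\]
and since $C_2\ge1$ one has $C_2\,4^{r-1}\le 4^r C_2^r=(4C_2)^r$, which is exactly \eqref{phcutoffomegaRhat} with $C_3=4C_2$.

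For \eqref{hcoOsmalllam}, where $\lam\in(0,1]$ and $\beta\in(0,1]$ so that $\tfrac12\lam^\beta\ge\tfrac{\lam}{2}$, I would instead split
\[
	\int_{\{w>\lam/2\}}w\,dx=\int_{\{\lam/2<w\le\frac12\lam^\beta\}}w\,dx+\int_{\{w>\frac12\lam^\beta\}}w\,dx ,
\]
apply on the first set the bound $w\le 2^{r-1}\lam^{1-r}w^r$ (since $w>\lam/2$ there) and on the second the bound $w\le 2^{r-1}\lam^{\beta(1-r)}w^r$ (since $w>\tfrac12\lam^\beta$ there), followed in each case by $w^r\le2^{r-1}g_r$; multiplying by $C_2/\lam$ and absorbing $C_2\,4^{r-1}$ into $(4C_2)^r$ as above gives \eqref{hcoOsmalllam} verbatim.

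Finally, for \eqref{hcoOlim} I would fix $u$ and take $r=p$. As $\lam\to\infty$, \eqref{phcutoffomegaRhat} gives $\lam^p\abs{\Omega\setminus\hat{R}^\lam}\le (C_3)^p\int_{\{w>\lam/2\}}g_p\,dx$; since $\abs{\{w>\lam/2\}}\to0$ by Chebyshev and $g_p\in L^1(\Omega)$, absolute continuity of the integral forces this to $0$. As $\lam\to0$, I would use \eqref{hcoOsmalllam} with $r=p$ and a fixed $\beta\in(0,1)$, which gives
\[
	\lam^p\abs{\Omega\setminus\hat{R}^\lam}\le (C_3)^p\Bigl[\,\int_{\{\lam/2<w\le\frac12\lam^\beta\}}g_p\,dx+\lam^{(p-1)(1-\beta)}\int_{\{w>\frac12\lam^\beta\}}g_p\,dx\,\Bigr];
\]
the first integral tends to $0$ because the characteristic functions of the sets $\{\lam/2<w\le\frac12\lam^\beta\}$ converge to $0$ a.e.\ (these sets shrink to the empty set as $\lam\to0$) and are dominated by $g_p\in L^1$, while the second term tends to $0$ because the exponent $(p-1)(1-\beta)$ is positive when $p>1$ and the integral stays bounded by $\norm{g_p}_{L^1}$. (When $\abs{\Omega}<\infty$ the small-$\lam$ assertion is trivial, since $\abs{\Omega\setminus\hat{R}^\lam}\le\abs{\Omega}$.)

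The only non-routine point is the limit $\lam\to0$: there the bare $L^1$-estimate \eqref{phcutoffomegaRhat0} is useless, since $\lam\cdot\bigl(\lam^{-1}\int_{\{w>\lam/2\}}w\bigr)\to\norm{w}_{L^1}\neq0$, and one must genuinely use the two-scale refinement \eqref{hcoOsmalllam}, exploiting the freedom in $\beta$ to trade a power of $\lam$ against $L^p$-integrability of $\abs{u}+\abs{\nabla u}$. Everything else is bookkeeping of constants.
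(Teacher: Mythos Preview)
Your proof is correct and follows essentially the same route as the paper: both derive \eqref{phcutoffomegaRhat} and \eqref{hcoOsmalllam} from \eqref{phcutoffomegaRhat0} via the pointwise estimate $w\le 2^{r-1}\lam^{1-r}w^r$ on $\{w>\lam/2\}$ (respectively $\{w>\tfrac12\lam^\beta\}$) together with $w^r\le 2^{r-1}g_r$, and then obtain \eqref{hcoOlim} from \eqref{phcutoffomegaRhat} for $\lam\to\infty$ and from \eqref{hcoOsmalllam} with $\beta<1$ for $\lam\to 0$. Your write-up is in fact more explicit than the paper's, and your caveat that the $\lam\to 0$ argument needs $p>1$ is apt: for $p=1$ the exponent $(p-1)(1-\beta)$ vanishes and the limit can indeed fail (e.g.\ $\abs{\RR^N\setminus R^\lam}$ typically behaves like $\lam^{-1}$), so the small-$\lam$ part of \eqref{hcoOlim} should be read under the standing assumption $p>1$ used throughout Section~\ref{secCOOpExamples}.
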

%
%
The proof of Corollary~\ref{cor:hcoOmega} is carried out first. In particular, we observe that \eqref{phcutoffomegaRhat0} implies \eqref{phcutoffomegaRhat}, 
a property also employed in the proof of Theorem~\ref{thm:hcutoffOmega}.
\begin{proof}[Proof of Corollary~\ref{cor:hcoOmega}]
Since $(C_3)^r=(4 C_2)^r\geq 2^{2r-1} C_2$ (for $C_2\geq 1$),
inequality \eqref{phcutoffomegaRhat} is a direct consequence of \eqref{phcutoffomegaRhat0} and 
the elementary estimate
\begin{align*}
	\abs{u}+\abs{\nabla u}\leq \frac{2^{r-1}}{\lam^{r-1}} (\abs{u}+\abs{\nabla u})^{r}\leq 
	\frac{2^{2r-1}}{\lam^{r-1}} (\abs{u}^r+\abs{\nabla u}^r) \quad
	\text{on}~\left\{\abs{u}+\abs{\nabla u}>\frac{\lam}{2}\right\}.
\end{align*}
By similar reasoning, \eqref{phcutoffomegaRhat0} also entails \eqref{hcoOsmalllam}.
Finally note that $1+\beta(p-1)<p$ if $\beta<1$ and that
\begin{align*}
 \int_{\left\{\abs{u}+\abs{\nabla u}>\frac{\lam}{2}\right\}}\abs{u}^p+\abs{\nabla u}^p\,dx
 \underset{\lam\to\infty}{\To}0~~\text{and}~~
 \int_{\left\{\frac{1}{2}\lam^\beta\geq \abs{u}+\abs{\nabla u}\right\}}\abs{u}^p+\abs{\nabla u}^p\,dx
 \underset{\lam\to 0}{\To}0
\end{align*}
for fixed $u$, whence \eqref{hcoOlim} follows from \eqref{phcutoffomegaRhat} and \eqref{hcoOsmalllam}, respectively.
\end{proof}
\begin{proof}[Proof of Theorem~\ref{thm:hcutoffOmega}]
Below, we abbreviate $\hat{u}=\phi_\lam(u)$, the function to be constructed.
In view of Lemma~\ref{lem:hcutoffrN}, we may assume that $\Omega\neq \RR^N$, and since $\Omega$ is a Lipschitz domain, we even have that $\abs{\RR^N\setminus \Omega}>0$.
In the following, $u$ is considered as an element of $W^{1,p}(\RR^N)$ by
extending it with zero outside of $\Omega$. 
In particular, this ensures that the definition of $R^\lam$ coincides with the one in Lemma~\ref{lem:hcutoffrN}.
Now let $\bar{u}\in W^{1,p}(\RR^N)$ denote the function obtained in Lemma~\ref{lem:hcutoffrN}. Recall that $\bar{u}$ satisfies $\abs{\bar{u}}+\abs{\nabla \bar{u}}\leq \bar{C}\lam$ a.e.~on $\RR^N$, for a suitable constant $\bar{C}\geq 1$ (cf.~\eqref{phcutoffrNbound}). For $x\in \RR^N$, we define 
\begin{align*}
	&\hat{u}(x):=
	\left\{\begin{array}{r@{\quad}l@{~~}r@{\,}l@{\,}l}
	h_\lam(x) & \text{if}&h_\lam(x)<&\bar{u}(x),&\\
	\bar{u}(x) & \text{if}&-h_\lam(x)\leq& \bar{u}(x) \leq& h_\lam(x),\\
	-h_\lam(x) & \text{if}&-h_\lam(x)>&\bar{u}(x),&
	\end{array}\right.
\intertext{where}	
	&h_\lam(x):=\bar{C}\lam L^2 \max\left\{1,\varrho^{-1}\right\} \dist{x}{\RR^N \setminus \Omega}.
\end{align*} 
Here, $\varrho>0$ and $L\geq 1$ are the constants of the Lipschitz domain $\Omega$ introduced in Definition~\ref{def:Lipschitzdom}. Note that $\hat{u}$ is independent of $p$ just as $\bar{u}$.
Accordingly, we define 
\begin{align*}
	\hat{R}^\lam:=R^\lam \cap \{x\in \Omega\mid -h_\lam(x)\leq \bar{u}(x) \leq h_\lam(x)\}.
\end{align*}
As an immediate consequence, (\ref{phcutoffrNsupp}) entails (\ref{phcutoffomegausupp}).
Also note that the function $h_\lam:\RR^N\to \RR$ is Lipschitz continuous with Lipschitz constant $\bar{C}L^2\max\left\{1,\varrho^{-1}\right\}\lam$. In particular, $h_\lam$ is weakly differentiable
with gradient $\nabla h_\lam\in L^{\infty}(\RR^N)$, and thus $\hat{u}\in W^{1,\infty}(\RR^N)$. 
Furthermore,
\begin{equation} 
\begin{aligned}\label{propcuomegabound}
	\abs{\hat{u}} \leq \bar{C}\lam ~~\text{and}~~
	\abs{\nabla \hat{u}}+\abs{\hat{u}}\leq C_0 \lam \quad\text{a.e.~in $\RR^N$}, &\\
	\text{where}~C_0:=\bar{C}+\bar{C}L^2\max\left\{1,\varrho^{-1}\right\}.&
\end{aligned}
\end{equation}
In particular, we obtain \eqref{phcutoffomegaubound}.
Assuming that \eqref{phcutoffomegaRhat0} is valid, so is \eqref{phcutoffomegaRhat},
as shown in the proof of Corollary~\ref{cor:hcoOmega}. 
In this case, $\phi_\lam(u)=\hat{u}$ is an element of $W^{1,p}(\RR^N)$:
both $\abs{\hat{u}}^p$ and $\abs{\nabla \hat{u}}^p$ are integrable on $\RR^N$, 
due to \eqref{phcutoffomegaRhat} and \eqref{phcutoffomegaubound}. 
Since $\hat{u}$ is continuous on $\RR^N$ (by the embedding $W^{1,\infty}(\RR^N)\subset C_B(\RR^N)$) and $\hat{u}=0$ on $\RR^N\setminus \Omega$, we infer that $\hat{u}\in W_0^{1,p}(\Omega)$.

It remains to show that~\eqref{phcutoffomegaRhat0} and~\eqref{phcutoffomegauhatMuineq} are satisfied.
Since 
\begin{align*}
	\Omega\setminus \hat{R}^\lam=(\Omega\setminus R^\lam)\cup (\Omega\cap \{\abs{\bar{u}}> h_\lam\}),
\end{align*}
\eqref{phcutoffomegaRhat0} follows provided that
\begin{align}\label{phcutoffomega1a}
	\abs{\Omega\cap \{\abs{\bar{u}}> h_\lam\}}~\leq~ C_1 \abs{(\RR^N\setminus \Omega)\cap (\RR^N\setminus R^\lam)},
\end{align} 
for a constant $C_1=C_1(N,\Omega)\geq 1$. 
Before we verify \eqref{phcutoffomega1a}, let us observe that \eqref{phcutoffomega1a} also implies \eqref{phcutoffomegauhatMuineq}:
It is enough to prove~\eqref{phcutoffomegauhatMuineq}
for every $\delta\in [0,C_0\lam)$
because $\{\abs{\hat{u}}+\abs{\nabla \hat{u}}>\delta\}$
is of measure zero if $\delta\geq C_0\lam$. 
Due to~\eqref{phcutoffrNbaruMineq},
\begin{align*}
	\abs{\hat{u}(x)}+\abs{\nabla \hat{u}(x)}
	=\abs{\bar{u}(x)}+\abs{\nabla \bar{u}(x)}
	\leq C_0\cM(\abs{u}+\abs{\nabla u})(x) 
\end{align*}
for a.e.~$x\in \Omega$ such that $\abs{\bar{u}(x)}\leq h_\lam(x)$, 
whence
\begin{align}
&\begin{aligned}
	{}&\abs{\Omega\cap \{\abs{\bar{u}}\leq h_\lam\} \cap \left\{
	\abs{\hat{u}}+\abs{\nabla \hat{u}}>\delta\right\}}\\
	{}&\qquad\qquad\qquad \leq~ \abs{\Omega \cap \left\{C_0
	\cM(\abs{u}+\abs{\nabla u})>\delta\right\}}
\end{aligned}\label{phcutoffomega1b1}
\intertext{for every $\delta>0$. On the other hand, 
\eqref{phcutoffomega1a} entails that}
&\begin{aligned}
	{}&\abs{\Omega\cap \{\abs{\bar{u}}>h_\lam\} \cap \left\{
	\abs{\hat{u}}+\abs{\nabla \hat{u}}>\delta\right\}}\\
	{}&\qquad\qquad\qquad \leq~ C_1\abs{(\RR^N\setminus \Omega) \cap \left\{C_0
	\cM(\abs{u}+\abs{\nabla u})>\delta\right\}}
\end{aligned}\label{phcutoffomega1b2}
\end{align}
for every $\delta\in [0,C_0\lam)$ since in that case,
\begin{align*}
	\RR^N\setminus R^\lam =\left\{ \cM(\abs{u}+\abs{\nabla u})>\lam \right\}
	\subset 
	\left\{ C_0\cM(\abs{u}+\abs{\nabla u})>\delta \right\}.
\end{align*}
Summing \eqref{phcutoffomega1b1} and \eqref{phcutoffomega1b2} yields \eqref{phcutoffomegauhatMuineq}
for every $\delta\in [0,C_0\lam)$. 

The proof of~\eqref{phcutoffomega1a} is carried out in three steps:

{\bf(i) A local estimate for an affine piece of the boundary}\\
We consider the following local situation: Assume that for a $r>0$,
\begin{align*}
	B_{2r}(0)\cap \Omega~=~\{x=(x_1,\ldots,x_N)\in B_{2r}(0)\mid x_1>0\},
\end{align*} 
and let $\rho_1$ denote the reflection with respect to the hyperplane $\{x_1=0\}$, i.e.,
$x=(x_1,x_2,\ldots,x_N)\mapsto \rho_1 x:=(-x_1,x_2,\ldots,x_N)$.
We claim that in this case
\begin{align}\label{phcutoffomega2a0}
	\rho_1\left(B_r(0)\cap \Omega\cap \{\abs{\bar{u}}> L^{-2}h_\lam\}\right)
	~\subset~ (B_r(0) \setminus \Omega)\cap (B_r(0) \setminus R^\lam), 
\end{align}
at least up to a set of measure zero which we ignore.
To see this, consider an arbitrary $x\in B_r(0)\cap \Omega$ such that $\abs{\bar{u}(x)}> L^{-2}h_\lam(x)$.
In particular, $\abs{\bar{u}(x)}> 2\bar{C}\lam\dist{x}{\partial \Omega}$ due to the definition of $h_\lam$.
On the other hand, 
\begin{align*}
	\abs{\bar{u}(x)-\bar{u}(\rho_1 x)}~\leq~ \bar{C}\lam\abs{x-\rho_1 x}
	~=~2\bar{C}\lam\dist{x}{\partial \Omega},
\end{align*}
whence
$\bar{u}(\rho_1 x)\neq 0=u(\rho_1 x)$. Thus
the reflection $\rho_1 x$ of $x$ is not an element of $R^\lam$ 
for almost every $x\in B_r(0)\cap \Omega\cap \{\abs{\bar{u}}> L^{-2}h_\lam\}$, 
which shows~\eqref{phcutoffomega2a0}. 
In particular, we have the estimate
\begin{align}\label{phcutoffomega2a}
	\abs{B_r(0)\cap \Omega\cap \{\abs{\bar{u}}> L^{-2}h_\lam\}}
	~\leq~ \abs{(B_r(0) \setminus \Omega)\cap (B_r(0) \setminus R^\lam)},
\end{align} 
since the Lebesgue measure is invariant under $\rho_1$.
 
{\bf (ii) A local estimate for Lipschitz boundary}\\
The estimate analogous to~\eqref{phcutoffomega2a} for a general piece of Lipschitz boundary is obtained by using the local maps $\theta_y$ ($y\in Y\subset \partial \Omega$) of Definition~\ref{def:Lipschitzdom} to reduce the problem to the special case discussed in (i): For every $y\in Y$,
\begin{align}\label{phcutoffomega3a}
	\abs{B_r(y)\cap \Omega\cap \{\abs{\bar{u}}> h_\lam\}}\leq L^{2N} 
	\abs{(B_r(y) \setminus \Omega)\cap (B_r(y) \setminus R^\lam)},
\end{align} 
where $r=r(y)>0$ and $L\geq 1$ are the constants of the Lipschitz domain $\Omega$.

{\bf (iii) Proof of the global estimate~\eqref{phcutoffomega1a} by a covering argument}\\
Since $\abs{\bar{u}}\leq \bar{C}\lam$ a.e.~in $\RR^N$, the definition of $h_\lam$ and the properties of the Lipschitz domain imply that
\begin{align*}
	\Omega\cap \{\abs{\bar{u}}> h_\lam\}~& \subset~\Omega\cap \{\bar{C}\lam> h_\lam\}~\\
	&\subset~\Omega\cap \bigcup_{z\in \partial \Omega} B_{\varrho}(z)~\subset~
	\Omega\cap \bigcup_{y\in Y}  B_{r(y)}(y),
\end{align*}
for every $\lam>0$. 
Applying~\eqref{phcutoffomega3a} at every $y\in Y$ and summing over $y$ yields
\eqref{phcutoffomega1a}, with $C_1:=K_0 L^{2N}$. 
\end{proof}

\bibliographystyle{plain}
\bibliography{07_bib}

\end{document}